\newtheorem{theorem}{Theorem}[section]
\newtheorem{proposition}[theorem]{Proposition}
\newtheorem{corollary}[theorem]{Corollary}
\newtheorem{assumption}[theorem]{Assumption}
\newtheorem{definition}[theorem]{Definition}
\newtheorem{convention}[theorem]{Convention}
\newtheorem{procedure}[theorem]{Procedure}
\def\th@remark{%
  \thm@headfont{\bfseries}%
  \normalfont 
  \thm@preskip\topsep \divide\thm@preskip\tw@
  \thm@postskip\thm@preskip
}
\theoremstyle{remark}
\newtheorem{remark}[theorem]{Remark}
\DeclarePairedDelimiter\ceil{\lceil}{\rceil}
\DeclarePairedDelimiter\floor{\lfloor}{\rfloor}
\title{Index Calculations}
\newcommand{\mcal}{\mathcal}
\newcommand{\bb}{\mathbb}
\newcommand{\ep}{\epsilon}
\newcommand{\db}{\bar{\partial}}
\newcommand{\p}{\partial}
\newcommand{\dt}{\delta}
\newcommand{\la}{\langle} 
\newcommand{\ra}{\rangle }
\newcommand{\cas}[1]{#1^{\text{\Lightning}}}
\title{Computing Embedded Contact Homology in Morse-Bott Settings}
\author{Yuan Yao}
\begin{document}
\maketitle
\begin{abstract}
Given a contact three manifold $Y$ with a nondegenerate contact form $\lambda$, and an almost complex structure $J$ compatible with $\lambda$, its embedded contact homology $ECH(Y,\lambda)$ is defined (\cite{bn}) and only depends on the contact structure. In this paper we explain how to compute ECH for Morse-Bott contact forms whose Reeb orbits appear in $S^1$ families, assuming the almost complex structure $J$ can be chosen to satisfy certain transversality conditions (this is the case for instance for boundaries of concave or convex toric domains, or if all the curves of ECH index one have genus zero).
We define the ECH chain complex for a Morse-Bott contact form via an enumeration of ECH index one cascades. We prove using gluing results from \cite{Yaocas} that this chain complex computes the ECH of the contact manifold.  This paper and \cite{Yaocas} fill in some technical foundations for previous calculations in the literature (\cite{choi2016combinatorial}, \cite{ECHT3}). 
\end{abstract}

\tableofcontents
\section{Introduction}
\subsection{Embedded contact homology}
In this article we develop some tools to compute the embedded contact homology (ECH) of contact 3-manifolds in Morse-Bott settings.

ECH is a Floer theory defined for a pair $(Y,\lambda)$, where $Y$ is a three dimensional contact manifold with nondegenerate contact form $\lambda$ (for an introduction see \cite{bn}). The ECH chain complex is generated by orbit sets of the form $\alpha = \{(\gamma_i,m_i)\}$. Here $\gamma_i$ are distinct simply covered Reeb orbits of $\lambda$; and the $m_i$ is a positive integer which we call the multiplicity of $\gamma_i$. To describe the differential, consider the symplectization $(\bb{R}\times Y, d(e^s \lambda))$ of $Y$ with almost complex structure $J$. Here $s$ denotes the variable in the $\bb{R}$ direction; and $J$ is a generic $\lambda$-compatible almost complex structure (see Definition \ref{compatibleJ}). The differential of ECH, which we write as $\p$, is defined by counting holomorphic currents of ECH index $I=1$ in the symplectization. More precisely, the coefficient
$\langle \p \alpha,\beta \rangle $
is defined by counts of $J$-holomorphic currents
that approach $\alpha$ as $s\rightarrow \infty$ and $\beta$ as $s\rightarrow -\infty$, where convergence to $\alpha,\beta$ is in the sense of currents.
The resulting homology, which we write as $ECH(Y,\xi)$, is an invariant of the contact structure $\xi = \textup{ker} \lambda$.
See Section \ref{ECH review} below for a more precise review of ECH and the ECH index.

In part due to its gauge theoretic origin, ECH has had spectacular applications to understanding symplectic problems and dynamics in low dimensions; for instance sharp symplectic embedding obstructions of four dimensional symplectic ellipsoids (\cite{Mcduffemb}), closing lemmas for Reeb flows on contact 3-manifolds (\cite{irie}), the Arnold chord conjecture (\cite{arnoldchord1,arnoldchord2}), and quantitative refinements of the Weinstein conjecture \cite{1Reeb2}. Several computations (e.g. \cite{ECHT3, choi2016combinatorial,lebow}) and applications (e.g. \cite{beyondech}) of ECH have assumed results from its Morse-Bott version, which we develop in detail in this paper.

\subsection{Morse-Bott theory}
The original definition of ECH requires we use non-degenerate contact forms. However, in practice many contact forms we encounter carry Morse-Bott degeneracies, for which the Reeb orbits are no longer isolated but instead show up in families with weaker non-degeneracy conditions imposed (for a more precise description, see Definition 3.2 in \cite{oh_wang_2018}). Although all Morse-Bott contact forms can be perturbed to non-degenerate ones, it is often useful to be able to compute ECH directly in the Morse-Bott setting, where often the enumeration of $J$-holomorphic curves is easier.

For ECH, since we only consider 3-manifolds, the two Morse-Bott cases are either when the Reeb orbits come in a two dimensional family, or come in one dimensional families. For the first case it then follows that the entire contact manifold is foliated by periodic Reeb orbits. ECH with this kind of Morse-Bott degeneracy has been computed in many cases by \cite{nelson2020embedded}, see also \cite{farris}.

The other case is when Reeb orbits show up in one dimensional $S^1$ families, i.e. we see tori foliated by Reeb orbits. We shall call these tori Morse-Bott tori. It is with this case we concern ourselves in this paper (for a description of what the contact form looks like, see Proposition \ref{prop_locform}). Examples of this include boundaries of toric domains, and torus bundles over the circle see \cite{Hermann, intoconcave, danconvex, lebow}.

For now we consider $(Y^3,\lambda)$ a contact 3-manifold where $\lambda$ is a Morse-Bott contact form all of whose Reeb orbits appear in $S^1$ families. Later for the case of boundary of convex or concave toric domains (Sections \ref{section concave},\ref{section convex}) we allow the case of both nondegenerate Reeb orbits and $S^1$ families of Reeb orbits. We consider the symplectization with a generic $\lambda$ compatible almost complex structure $J$ (see Definition \ref{compatibleJ})
\[
(\bb{R} \times Y^3, d(e^s \lambda)).
\]
Following the recipe described in \cite{BourPhd}, to compute ECH in the Morse-Bott setting we shall count holomorphic cascades of ECH index one. The philosophy behind this is as follows: given $\lambda$, a Morse-Bott contact form with Reeb orbits in Morse-Bott tori, we can perturb
\[
\lambda \longrightarrow \lambda_\dt
\]
where $\lambda_\dt$ with $\dt>0$ is a nondegenerate contact form up to a certain action level $L>>0$. This perturbation requires the following information. For each circle of orbits parameterized by $S^1$, choose a Morse function $f$ on $S^1$ with two critical points. The effect of this perturbation is so that each Morse-Bott torus splits into two nondegenerate Reeb orbits (corresponding to the critical points of $f$): one is an elliptic orbit and the other is a hyperbolic orbit. We also need to perturb the $\lambda$-compatible almost complex structure on the symplectization into a $\lambda_\dt$ compatible almost complex structure, $J_\dt$. Since $\lambda_\dt$ is nondegenerate up to action $L$, we can define the ECH chain complex up to action $L$ in this case by counting ECH index one $J_\dt$-holomorphic curves. The idea is to take $\dt \rightarrow 0$ and see what these ECH index one holomorphic curves degenerate into.

By a compactness theorem in \cite{SFT} (see also \cite{BourPhd, Yaocas}), such $J_\dt$-holomorphic curves degenerate into $J$-holomorphic cascades. For a definition of $J$-holomorphic cascade, see \cite{Yaocas}. Roughly speaking, a $J$-holomorphic cascade, which we shall write as $\cas{u}$, consists of a sequence of $J$-holomorphic curves $\{u^1,..,u^n\}$ that have ends on Morse-Bott tori. We think of the curves $u^i$ as living on different levels, with $u^i$ one level above $u^{i+1}$. Between adjacent levels there is the data of a single number $T_i\in [0,\infty]$ described as follows.  Suppose a positive end of $u^{i+1}$ is asymptotic to a simply covered Reeb orbit $\gamma$ with multiplicity $n$. This $\gamma$ corresponds to a point on $S^1 $ (the $S^1$ that parameterizes the family of Morse-Bott Reeb orbits). Then if we follow the upwards gradient flow of $f$ for time $T_i$ starting at the point corresponding to the Reeb orbit $\gamma$, we arrive at a Reeb orbit $\tilde{\gamma}$, and a negative end of $u^i$ is asymptotic to $\tilde{\gamma}$ with the same multiplicity $n$. We assume all positive ends of $u^{i+1}$ and negative ends of $u^{i}$ are matched up in this way. For an illustration of a cascade\footnote{This figure and the accompanying explanations are taken from Figure 1 in \cite{Yaocas}.}, see Figure 1. 

\begin{figure}[h]
\centering
\includegraphics[width=.4\linewidth]{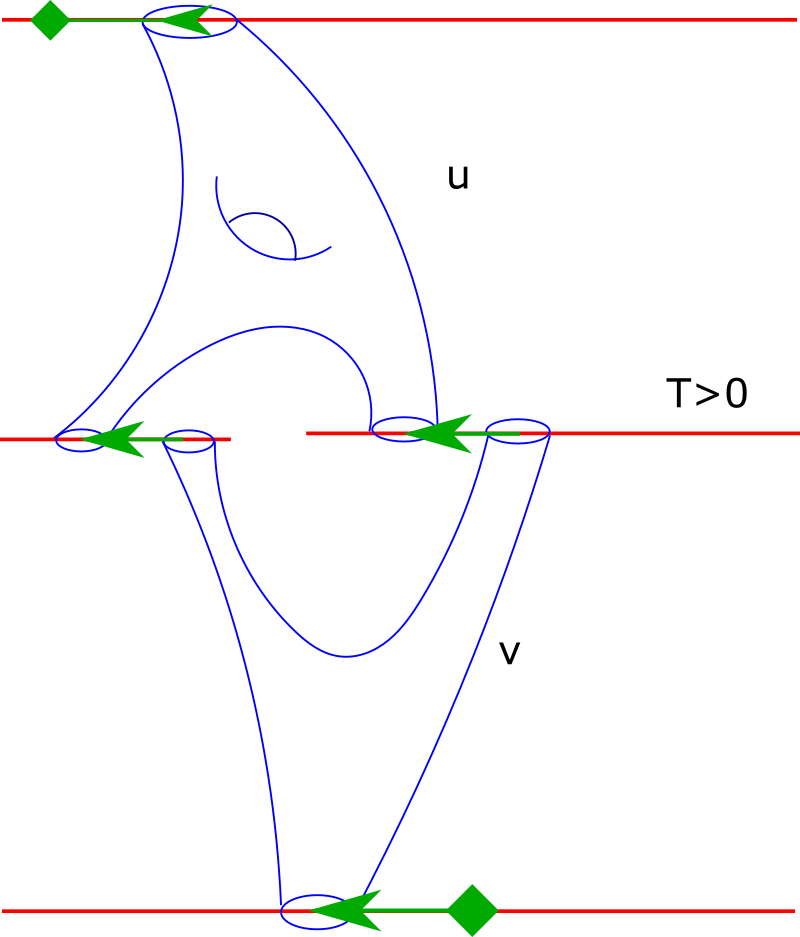}
\caption{A schematic picture of a cascade: the cascade $\cas{u}$ consists of two levels, $u$ and $v$. Horizontal lines correspond to Morse-Bott tori. Moving in the horizontal direction along these horizontal lines corresponds to moving to different Reeb orbits in the same $S^1$ family. Arrows correspond to gradient flows, and diamonds correspond to critical points of Morse functions on $S^1$ families of Reeb orbits. Between the holomorphic curves $u$ and $v$, there is a single parameter $T$ that tells us how long positive ends of $v$ must follow the gradient flow to meet a negative end of $u$.}
\label{fig:cascade}
\end{figure}
\subsection{Main results}
The Morse-Bott ECH chain complex which we write as $(C_*^{MB},\p_{MB})$ (see section \ref{section:computing using cascades}) can be described as follows. Its generators are collections of Morse-Bott tori, equipped with a multiplicity and additional data, which we write as $\alpha = \{(\mcal{T}_j,\pm, m_j)\}$. Here $\mcal{T}_j$ denotes a Morse-Bott torus; we call $m_j$ the multiplicity; and a choice of $+$ or $-$. See Section \ref{MBT as ECH} for a description. Suppose we can choose a $\lambda$ compatible almost complex structure $J$ which is ``good'' (see definition \ref{def:good j}), meaning certain transversality conditions (Definition \ref{def:transversality conditions}) are satisfied. The differential in the Morse-Bott chain complex $\p_{MB}$ counts ECH index one cascades between Morse-Bott ECH generators. The ECH index of a cascade is described in Section \ref{Section:ECH index}. We describe what it means for an cascade to be asymptotic to a Morse-Bott ECH generator in Section \ref{MBT as ECH}. For a description of what ECH index one cascades look like, see Corollary \ref{conditions on currents}, Prop. \ref{nice cascades}.
We prove that
\begin{theorem} \label{maintheorem_intro}
Let $\lambda$ be a Morse-Bott contact form on the contact 3-manifold $Y$ whose Reeb orbits all appear in $S^1$ families. Assuming the almost complex structure $J$ is good (see Definition \ref{def:good j}), the homology of the Morse-Bott ECH chain complex computes the ECH of the contact manifold $ECH(Y,\xi)$.
\end{theorem}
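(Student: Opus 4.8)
\emph{Proof idea.} The plan is to compare the Morse-Bott complex with the ordinary ECH complex of a nondegenerate perturbation, one action level at a time, and then to pass to a limit. Fix $L\gg 0$ and, as described in the introduction, perturb $\lambda$ to a contact form $\lambda_\delta$ that is nondegenerate up to action $L$, using a Morse function with two critical points on each $S^1$ family of Reeb orbits, together with a generic $\lambda_\delta$-compatible almost complex structure $J_\delta$ obtained by perturbing $J$ (Definition \ref{compatibleJ}). Let $ECH_*^L(Y,\lambda_\delta,J_\delta)$ denote the resulting filtered ECH complex and $C_*^{MB,L}$ the action-$<L$ subcomplex of $C_*^{MB}$. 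I will argue that for $\delta$ small (depending on $L$) there is an isomorphism of chain complexes
\[
(C_*^{MB,L},\p_{MB})\;\cong\;(ECH_*^L(Y,\lambda_\delta,J_\delta),\p_{J_\delta}),
\]
from which the theorem follows by letting $L\to\infty$.

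First, the generators correspond. For $\delta$ small the simply covered Reeb orbits of $\lambda_\delta$ of action $<L$ are precisely the two orbits $e_{\mcal T}$ (elliptic) and $h_{\mcal T}$ (hyperbolic) into which each Morse-Bott torus $\mcal T$ splits, so an admissible ECH orbit set of action $<L$ has the form $\prod_j e_{\mcal T_j}^{a_j}h_{\mcal T_j}^{b_j}$ with $b_j\in\{0,1\}$; I match it with the Morse-Bott generator $\{(\mcal T_j,\pm_j,a_j+b_j)\}$, the sign recording whether $b_j=0$ or $b_j=1$. Using Section \ref{MBT as ECH} and Section \ref{Section:ECH index} one checks that this bijection preserves the ECH grading, and since the actions of $e_{\mcal T}$ and $h_{\mcal T}$ converge as $\delta\to 0$ to that of the underlying Morse-Bott orbit, that it respects the action filtration; thus it identifies the generating sets on the two sides.

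Second, and this is the heart, the differentials agree. This rests on two inputs. The SFT compactness theorem of \cite{SFT} (see also \cite{BourPhd,Yaocas}) shows that for $\delta_n\to 0$ any sequence of ECH index one $J_{\delta_n}$-holomorphic currents between generators of action $<L$ has a subsequence converging to a $J$-holomorphic cascade; the hypothesis that $J$ is good (Definition \ref{def:good j}), i.e.\ that the transversality conditions of Definition \ref{def:transversality conditions} hold, excludes limit configurations containing multiply covered or negative ECH index components and forces the limit to be a single ECH index one cascade of the restricted type described in Corollary \ref{conditions on currents} and Proposition \ref{nice cascades}. Conversely, the gluing results of \cite{Yaocas} show that for $\delta$ small every ECH index one cascade between the matched Morse-Bott generators is glued from a unique, transversely cut out $J_\delta$-holomorphic current of ECH index one, with a prescribed sign. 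Since the moduli space of ECH index one cascades between two fixed generators of bounded action is compact and, modulo the $\bb{R}$-symmetries, zero-dimensional, hence finite, a standard argument by contradiction and compactness upgrades these two facts to: for all sufficiently small $\delta$, gluing is a sign-preserving bijection between the ECH index one $J_\delta$-holomorphic currents from $\alpha_\delta$ to $\beta_\delta$ and the ECH index one cascades from $\alpha$ to $\beta$. Hence $\langle\p_{MB}\alpha,\beta\rangle=\langle\p_{J_\delta}\alpha_\delta,\beta_\delta\rangle$ and the generator bijection above is an isomorphism of chain complexes; in particular $\p_{MB}^2=0$ on $C_*^{MB,L}$ and, letting $L\to\infty$, on all of $C_*^{MB}$.

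Finally one removes the truncation. As $\delta\to 0$ the action level $L(\delta)$ up to which $\lambda_\delta$ is nondegenerate tends to infinity; choosing $L_k\to\infty$ and $\delta_k\to 0$ with $\delta_k$ admissible for $L_k$, the isomorphisms above are compatible with the inclusions $C_*^{MB,L_k}\hookrightarrow C_*^{MB,L_{k+1}}$ and, on the ECH side, with the filtered maps $ECH_*^{L_k}(Y,\lambda_{\delta_k})\to ECH_*^{L_{k+1}}(Y,\lambda_{\delta_{k+1}})$ coming from the (exact, completed) symplectic cobordisms relating the $C^\infty$-close forms $\lambda_{\delta_k}$; passing to direct limits and using the standard properties of filtered ECH — in particular that $ECH(Y,\xi)=\varinjlim_k ECH_*^{L_k}(Y,\lambda_{\delta_k})$ (\cite{bn}) while $C_*^{MB}=\varinjlim_k C_*^{MB,L_k}$ — yields $H_*(C_*^{MB},\p_{MB})\cong ECH(Y,\xi)$. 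The main obstacle throughout is the second step: proving that goodness of $J$ really does rule out every bad degeneration of an ECH index one curve, and importing from \cite{Yaocas} that gluing is a \emph{bijective}, orientation-coherent inverse to this degeneration that captures all ECH index one currents; pinning down the signs and the combinatorial multiplicities of cascades so that they reproduce the ECH differential, and checking compatibility of the action-$L$ isomorphisms with the maps used to pass to the colimit, are the remaining technical points.
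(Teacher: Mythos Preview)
Your proposal is correct and follows essentially the same approach as the paper: match generators as in Section~\ref{MBT as ECH}, then use SFT compactness plus the goodness of $J$ (via Corollary~\ref{conditions on currents} and Proposition~\ref{nice cascades}) together with the gluing theorem from \cite{Yaocas} and a contradiction argument to identify differentials for small $\delta$; this is exactly the proof of Theorem~\ref{theorem:cobordism in general}.

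Two minor remarks. First, the paper works over $\bb{Z}_2$, so the sign and orientation-coherence issues you flag are irrelevant here; you can drop all references to ``sign-preserving'' and ``prescribed sign''. Second, your removal of the action truncation via cobordism maps between the different $\lambda_{\delta_k}$ is more elaborate than what the paper does (it simply invokes Convention~3.1 and takes $L\to\infty$ without constructing such maps); while your version is more honest about what needs checking, importing cobordism maps in filtered ECH is itself nontrivial and not developed in this paper, so you should either point to a precise reference for those maps or adopt the paper's simpler convention.
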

A slightly more precise version of this theorem that we prove is Theorem \ref{theorem:cobordism in general}.

We next find some instances there is enough transversality to compute ECH using the Morse-Bott chain complex.

\begin{theorem} \label{thm:list of transversality conditions}
Let $\lambda$ be a Morse-Bott contact form on the contact 3-manifold $Y$ whose Reeb orbits all appear in $S^1$ families.
We can choose a generic $J$ so that
\begin{itemize}
    \item Every reduced cascade (See Definition \ref{def:reduced cascade}) of $\leq 3$ levels is transversely cut out (see Definition \ref{def:transversality conditions}).
    \item Every reduced cascade where all of the (nontrivial) $J$-holomorphic components of the reduced cascade (in all of its levels) are distinct up to translation in the symplectization direction is transversely cut out (see Definition \ref{def:transversality conditions}).
\end{itemize}
If we can show through some other means that we can choose a small perturbation of $J$ to $J_\dt$ satisfying conditions of Theorem \ref{generic path J} so that for small enough $\dt$, all ECH index one curves degenerate into cascades whose reduced version satisfy either of the above conditions, then consider the Morse-Bott ECH chain complex $(C_*^{MB},\p_{MB})$ as described more precisely in Section \ref{section:computing using cascades}. For the differential $\p_{MB}$, if we restrict to ``good'' cascades (see Sections \ref{Section:ECH index}, \ref{section:computing using cascades} for the notion of ``good'') of ECH index one whose reduced versions are of the above form, the differential is well defined and the chain complex $(C_*^{MB},\p_{MB})$ computes $ECH(Y,\xi)$.
\end{theorem}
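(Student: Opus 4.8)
The plan is to prove the two transversality bullets by a Sard--Smale argument, and then to observe that, under the standing hypothesis on $J_\dt$, the proof of Theorem \ref{theorem:cobordism in general} goes through with this weaker transversality input; so the only additional thing to check is that every cascade counted by $\p_{MB}$, and every cascade appearing in the analysis of $\p_{MB}^2$, lies in one of the two transversally-cut-out classes.

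First I would set up the universal moduli space of reduced cascades over a separable Banach manifold $\mathcal{J}$ of $\lambda$-compatible almost complex structures, equipped with a Floer-type norm so that $\mathcal{J}$ is dense in the $C^\infty$ topology. The linearization of the cascade equation at a reduced cascade splits as the direct sum of the linearized Cauchy--Riemann operators of its nontrivial $J$-holomorphic components together with the finite-dimensional data recording the gradient-flow times $T_i$ and the matching conditions on the Morse--Bott tori, so surjectivity of the full operator reduces to surjectivity of the universal linearization on each component away from that gluing data. For the second bullet, where all nontrivial components are pairwise distinct modulo $\mathbb{R}$-translation, the ECH index and partition constraints reviewed in Section \ref{ECH review} strongly constrain the components: in the index-one and index-two families that are relevant, a nontrivial non-trivial-cylinder component must be somewhere injective, so the standard trick of perturbing $J$ near a point lying on exactly one component and off the images of all the others makes the universal linearization surjective; Sard--Smale then produces a comeager set of $J$ for which every such reduced cascade is cut out transversally. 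For the first bullet I would argue similarly, using that reduced cascades of at most three levels have bounded combinatorial type, so that the finitely many shapes can be enumerated, and any coincidence of components (which in this range can only be covers of trivial cylinders) is handled directly via automatic transversality in the relevant index range. Intersecting the two comeager sets gives the desired generic $J$.

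Next, granting that a perturbation $J_\dt$ as in Theorem \ref{generic path J} exists with the property that, for $\dt$ small, every ECH index-one $J_\dt$-holomorphic curve degenerates as $\dt\to 0$ into a cascade whose reduction is of one of the two listed forms, I would run the argument of Theorem \ref{theorem:cobordism in general}. Compactness (\cite{SFT}, cf. \cite{BourPhd,Yaocas}) provides the degeneration map from index-one $J_\dt$-curves to index-one cascades, and the gluing theorem of \cite{Yaocas}, applied at the transversally-cut-out cascades supplied above, provides the inverse correspondence; hence $\langle\p_{MB}\alpha,\beta\rangle$ is a finite signed count, equal (after the identification of generators in Section \ref{MBT as ECH}) to $\langle\p_\dt\alpha,\beta\rangle$ up to the action level $L$, so $\p_{MB}$ is well defined. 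For $\p_{MB}^2=0$ I would examine the index-two moduli space of cascades: its ends, again by the compactness and gluing results, are two-story configurations, each story an index-one cascade, and by the standing hypothesis these stories (and their concatenation) are again of the listed reduced type, hence transversally cut out; the usual signed count of boundary points of a compact one-manifold then yields $\p_{MB}^2=0$. Finally, since $(C_*^{MB},\p_{MB})$ is thereby identified with the $\dt$-perturbed ECH complex truncated at action $L$, letting $L\to\infty$ and using the invariance of $ECH$ under the $C^0$-small perturbation $\lambda \longrightarrow \lambda_\dt$ identifies the homology with $ECH(Y,\xi)$.

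The step I expect to be the main obstacle is the transversality claim of the second paragraph, specifically ruling out obstructed multiply covered components. The two hypotheses in the theorem are tailored precisely so that this difficulty does not arise --- either through bounded combinatorics or through the reduction to somewhere-injective curves via the ECH index inequality --- so the real work is to verify carefully that they genuinely suffice, i.e. that under the assumed behavior of $J_\dt$ no ``bad'' cascade carrying an obstructed cover can contribute to $\p_{MB}$ or to the boundary of the index-two cascade moduli space, and that the gluing results of \cite{Yaocas} apply uniformly at all the configurations one must glue.
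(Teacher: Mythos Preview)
Your overall strategy --- Sard--Smale on a universal moduli space of reduced cascades, then run the cobordism argument of Theorem \ref{theorem:cobordism in general} --- is the same as the paper's, and the broad outline is correct. But you have misidentified the transversality obstruction, and this leads to confusion in both bullets.

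In a \emph{reduced} cascade every nontrivial component already has multiplicity one, so ``obstructed multiply covered components'' are not the issue; there are none by definition. The real obstruction is that the cascade lives in a fiber product cut out by the evaluation maps $EV^\pm$ of Definition \ref{def:transversality conditions}, and when the \emph{same somewhere-injective curve} appears as a component in several different levels, a single variation $Y\in T\mathcal{I}$ of $J$ moves all of its copies simultaneously. The paper's appendix makes this explicit: if a pair of rigid curves $(u,v)$ repeats $n$ times, the relevant linearized matching map has domain of dimension $2+n$ and target of dimension $2n$, so surjectivity fails for $n$ large. Your remark that one can ``perturb $J$ near a point lying on exactly one component and off the images of all the others'' is exactly what breaks down when components coincide, and this is why the second bullet restricts to cascades whose nontrivial components are pairwise distinct. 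For the first bullet, the paper's point is simply that with at most three levels this dimension-count obstruction cannot occur, so the standard universal-moduli-space argument goes through; it is not handled by automatic transversality for trivial cylinders as you suggest.

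Two smaller points. First, you do not need a separate argument for $\p_{MB}^2=0$: the paper's cobordism argument shows directly that for small $\dt$ the count of good ECH index one cascades equals the count of $J_\dt$-holomorphic ECH index one curves, so the Morse--Bott chain complex is identified with the nondegenerate one and inherits $\p^2=0$. Second, finiteness of the differential requires a compactness argument for the restricted class of cascades; the paper handles this by a diagonal-subsequence trick (if $\cas{u}_n\to\cas{u}$, pass to $J_{\dt_n}$-curves converging to $\cas{u}$ and invoke the standing hypothesis to conclude $\cas{u}$ is again of the allowed form), which you should supply rather than assume.
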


For a discussion how these conditions arise and a proof of this theorem, see the Appendix.
This list is by no means exhaustive. We expect there are many other situations where transversality can be achieved; the particulars will depend on the specific details of the contact manifold for which we are computing the ECH chain complex. In particular, for the case relevant for boundaries of convex and concave toric domains, we have the following:

\begin{theorem}
Let $\lambda$ be a contact form on the contact 3-manifold $Y$ whose Reeb orbits apppear either in Morse-Bott $S^1$ families or are non-degenerate. Let $\dt>0$, and $\lambda_\dt$ be the nondegenerate perturbation of $\lambda$ that perturbs each $S^1$ family of Reeb orbits into two nondegenerate ones. If for $\dt>0$ small enough, all $J_\dt$ holomorphic curves of ECH index one in $\bb{R} \times Y^3$ have genus zero, then the embedded contact homology of $Y$ can be computed from the Morse-Bott chain complex $(C_*^{MB,tree},\p_{MB}^{tree})$ (see Section \ref{section:ECH index one curves of genus zero}) using an enumeration of tree like cascades.
\end{theorem}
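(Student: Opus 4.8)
The plan is to reduce the statement to the transversality hypotheses of Theorem~\ref{maintheorem_intro} (more precisely Theorem~\ref{theorem:cobordism in general}), which already say that a good choice of $J$ makes the Morse-Bott complex compute $ECH(Y,\xi)$, and then to supply the required transversality from the genus-zero hypothesis via automatic transversality for punctured curves in dimension four. First I would fix an action level $L$ and a generic $J$, and invoke the SFT compactness theorem of \cite{SFT} (in the form used in \cite{BourPhd, Yaocas}): as $\dt \to 0$, every ECH index one $J_\dt$-holomorphic curve below action $L$ converges to an ECH index one $J$-holomorphic cascade $\cas{u} = \{u^1,\dots,u^n\}$. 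Since each $J_\dt$-curve has connected genus-zero domain, and the degeneration of a smooth genus-zero curve can only produce a tree of genus-zero components --- a non-separating node, which would raise the arithmetic genus, cannot appear in such a limit --- each $u^i$ has genus zero and the graph recording how the positive ends of $u^{i+1}$ are matched to the negative ends of $u^i$ along the Morse-Bott tori is a tree. These are precisely the tree-like cascades enumerated by $\p_{MB}^{tree}$, and by the gluing theorem of \cite{Yaocas} every ECH index one tree-like cascade conversely arises as such a limit, so the enumeration defining $\p_{MB}^{tree}$ is the correct one.

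Next I would establish transversality. Combining the structural description of ECH index one cascades (Corollary~\ref{conditions on currents}, Proposition~\ref{nice cascades}) with the genus-zero constraint, the nontrivial components of a tree-like ECH index one cascade are somewhere injective genus-zero curves in the four-dimensional symplectization, and I would check that their Fredholm indices and the parities of their asymptotic orbits satisfy the hypothesis of Wendl's automatic transversality criterion for punctured holomorphic curves in dimension four; this makes each such component unobstructed for \emph{every} compatible $J$. The remaining components are multiple covers of trivial cylinders over Reeb orbits, which are regular and do not obstruct. The reduced-cascade moduli space is then a fiber product of these (now regular) component moduli spaces over the evaluation maps into the $S^1$ families, cut out against broken upward gradient trajectories of the Morse functions $f$; choosing the $f$'s generically (and, if necessary, perturbing $J$ near the Morse-Bott tori) makes the evaluation maps transverse to the relevant ascending and descending manifolds, so the reduced cascade is transversely cut out in the sense of Definition~\ref{def:transversality conditions}. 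In other words $J$ is good (Definition~\ref{def:good j}) with respect to all cascades that actually occur.

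With transversality in place I would run the argument of Theorem~\ref{maintheorem_intro}, specialized to tree-like cascades: the counts defining $\p_{MB}^{tree}$ are finite by the compactness results of \cite{Yaocas}, the codimension-one boundary strata of the ECH index two (tree-like) cascade moduli spaces give $(\p_{MB}^{tree})^2=0$, and the cobordism/chain-homotopy comparison of Theorem~\ref{theorem:cobordism in general} between the Morse-Bott complex and the honest nondegenerate ECH complex of $\lambda_\dt$ (truncated at action $L$, then letting $L \to \infty$) identifies $H_*(C_*^{MB,tree},\p_{MB}^{tree})$ with $ECH(Y,\xi)$.

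The main obstacle is the transversality bookkeeping in the second step: one must verify that the automatic transversality inequality genuinely holds for every component of every ECH index one cascade that can occur, which requires extracting from the ECH index inequality and the partition conditions precise control of each component's Fredholm index and of how many of its ends lie at even (elliptic) orbits, and separately confirming that a multiply covered component must be a cover of a trivial cylinder. A secondary difficulty, which automatic transversality does not address at all, is the fiber-product transversality along the Morse-Bott matching: automatic transversality controls the curves themselves, but not the interaction of their asymptotic evaluation maps with the gradient flow on the $S^1$ families, so a genericity argument for the auxiliary Morse data --- and possibly an $S^1$-equivariant or domain-dependent perturbation near the Morse-Bott tori --- is still needed, and one must also rule out contributions from ``bad'' cascades in the sense of Section~\ref{Section:ECH index}.
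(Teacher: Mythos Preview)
Your proposal takes a genuinely different route from the paper, and in doing so misses the key simplification that makes the genus-zero case tractable.

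The paper's point is that for genus-zero curves one can abandon the original cascade convention (a \emph{single} flow time $T_i$ between adjacent levels $u^i$ and $u^{i+1}$) and instead use ``tree-like'' cascades in which \emph{each} internal edge connecting two nontrivial components carries its own independent flow-time parameter $T_{i,j}$. This is possible precisely because the glued curve has genus zero: the pregluing does not need the flow times to be synchronized to be well defined. With independent flow times the problematic fiber-product constraint disappears entirely. The transversality needed then reduces to two elementary genericity conditions on $J$: (i) every somewhere-injective genus-zero curve with the given free/fixed end data is regular, and (ii) any two such rigid curves whose free ends land on the same Morse-Bott torus from opposite sides do not hit the same Reeb orbit. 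Both are achieved by a standard Sard--Smale argument; no automatic transversality is invoked. The ECH index inequality for tree-like cascades is then simply $I(\cas{u}) - n \geq \sum_i \operatorname{Ind}(\tilde v_i) - 1$, and $I=1$ forces each $\tilde v_i$ to be rigid, embedded, and the cascade to be reduced with nonzero flow times.

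Your approach via Wendl's automatic transversality is not what the paper does, and you correctly identify its weaknesses yourself: you have not checked that the criterion $\operatorname{ind} > 2g - 2 + \#\Gamma_{\mathrm{even}}$ actually holds for every component (it need not, since ECH index one curves can have many ends on elliptic orbits), and automatic transversality says nothing about the evaluation-map fiber product, which in your setup still carries the single-flow-time constraint and is exactly the obstruction the paper is trying to avoid. The fix is not to perturb auxiliary Morse data or use domain-dependent $J$; it is to change the compactification so that the constraint is no longer there.
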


To be more precise, for the above theorem we need to use a slightly different description of cascades which we call ``tree like'' cascades, which is explained in Sections \ref{section:ECH index one curves of genus zero}, \ref{section concave}, \ref{section convex}.
Consequently, we can prove
\begin{theorem}
For boundaries of concave toric domains or convex toric domains, in the nondegenerate case after a choice of generic almost complex structure all curves of ECH index one have genus zero. Therefore the ECH of boundaries of concave/convex toric domains can be computed using the Morse-Bott ECH chain complex $(C_*^{MB,tree},\p_{MB}^{tree})$, via counts of tree-like ECH index one cascades.
\end{theorem}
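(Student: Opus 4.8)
This statement is a corollary of the preceding theorem, so I will organize the plan around the one new input it requires. Recall that the preceding theorem reduces the Morse--Bott computation of $ECH(\partial X_\Omega)$ to a single hypothesis: that for the nondegenerate perturbation $\lambda_\dt$ of the toric contact form and a generic $\lambda_\dt$-compatible $J_\dt$, every $J_\dt$-holomorphic curve of ECH index one in $\bb{R}\times\partial X_\Omega$ has genus zero. So the only thing to prove is this genus bound for nondegenerate toric contact forms (such as the perturbations $\lambda_\dt$, which are nondegenerate up to the relevant action level $L$, or boundaries of irrational ellipsoids); the conclusion that $ECH(\partial X_\Omega)$ is computed by $(C_*^{MB,tree},\p_{MB}^{tree})$ via tree-like cascades then follows from the preceding theorem applied to $Y=\partial X_\Omega$, together with the usual direct limit over action levels.

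To prove the genus bound I would first normalize the curve. Let $\mcal{C}$ be a $J_\dt$-holomorphic current of ECH index one contributing to the ECH differential. By the classification of ECH index one currents (the partition conditions together with embeddedness, see \cite{bn}), $\mcal{C}=\mcal{C}_0\sqcup C$, where $\mcal{C}_0$ is a union of trivial cylinders over Reeb orbits with multiplicities, $C$ is embedded and connected away from $\mcal{C}_0$, $C\cap\mcal{C}_0=\emptyset$, and $I(C)=\mathrm{ind}(C)=1$. Since $C$ is a punctured Riemann surface, $g(C)=0$ is equivalent to $\chi(C)=2-\#(\text{ends of }C)$, so it suffices to pin down $\chi(C)$.

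Next I would combine three formulas, with respect to a fixed trivialization $\tau$ of $\xi$ over the relevant orbits: the relative adjunction formula $c_\tau(C)=\chi(C)+Q_\tau(C)+w_\tau(C)$ (the double-point term vanishes because $C$ is embedded), the ECH index formula $I(C)=c_\tau(C)+Q_\tau(C)+\mathrm{CZ}^I_\tau(C)=1$, and the Fredholm index formula $\mathrm{ind}(C)=-\chi(C)+2c_\tau(C)+\mathrm{CZ}^{\mathrm{ind}}_\tau(C)=1$. Eliminating $c_\tau$ and $Q_\tau$ expresses $\chi(C)$ purely in terms of the writhes of the ends and the Conley--Zehnder data. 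Because $C$ has ECH index one, the writhe bounds are saturated at each end (this is exactly the equality case of the ECH index inequality), so each writhe is the extremal value determined by the rotation number of the orbit and its partition. This is where the toric structure enters: the Reeb orbits of $\partial X_\Omega$ and of $\lambda_\dt$ are enumerated by the edges and lattice points of $\Omega$, and their rotation numbers and Conley--Zehnder indices (as explicit functions of the multiplicity) are read off from the edge vectors of $\Omega$, as in the combinatorial bookkeeping of \cite{choi2016combinatorial}. Substituting these into the saturated writhe and Conley--Zehnder expressions, I would check that $\chi(C)+\#(\text{ends})=2$ in the concave case (Section \ref{section concave}); the convex case follows from the symmetry interchanging positive and negative ends, i.e.\ from the mirror combinatorics used in Section \ref{section convex}.

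The step I expect to be the main obstacle is this last one: verifying, uniformly over all concave (resp.\ convex) $\Omega$ and over all orbit sets that can occur below a fixed action level, that the saturated writhe and Conley--Zehnder contributions always force $g(C)=0$ and never permit $g(C)\ge 1$. In essence this is a finite arithmetic check driven by the lattice geometry of $\Omega$ --- the edge slopes and the floor functions appearing in the Conley--Zehnder indices --- but packaging it so that it applies to all toric domains simultaneously, rather than case by case, is the delicate part. Once this is done, invoking the preceding theorem with $Y=\partial X_\Omega$ completes the proof.
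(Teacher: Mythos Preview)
Your reduction to the preceding theorem is correct, and you are right that the only new content is the genus bound for the nondegenerate perturbation. However, your proposed route to the genus bound has a genuine gap.

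You plan to combine relative adjunction, $I(C)=1$, and $\mathrm{ind}(C)=1$, eliminate $c_\tau$ and $Q_\tau$, and then read off $\chi(C)$ from the saturated writhe and Conley--Zehnder data. But if you carry out this elimination you will find it collapses to the tautology $w_\tau(C)=\mathrm{CZ}^I_\tau(C)-\mathrm{CZ}^{\mathrm{ind}}_\tau(C)$, which is precisely the statement that the writhe bound is saturated. No independent constraint on $\chi(C)$ survives: the three identities you invoke hold for embedded ECH index one curves in \emph{any} contact $3$-manifold, and in general such curves need not have genus zero. The toric structure must therefore enter through something other than the Conley--Zehnder combinatorics alone.

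The paper's proof supplies two geometric ingredients you are missing. First, it computes $c_\tau(C)$ directly from the lattice paths and then bounds pieces of the Fredholm index formula from below using \emph{positivity of intersections} of $C$ with the trivial cylinders over $\gamma_1$ and $\gamma_2$; this is a linking-number argument and already forces $g=0$ whenever $C$ has an end on $\gamma_1$ or $\gamma_2$. Second, for curves with no such ends, it proves a \emph{local energy inequality} coming from the moment-map slicing of $\partial X_\Omega$, and uses it to show that the polygonal paths $\Lambda_+$ and $\Lambda_-$ associated to the asymptotics cannot cross. This no-crossing lemma, combined with the Fredholm index formula, then rules out $g\ge 1$ by a short case analysis on the $e$/$h$ labels. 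Neither step is a ``finite arithmetic check driven by the lattice geometry''; both are genuine positivity constraints on holomorphic curves in toric targets, and your outline does not contain them.
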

For a definition of convex and concave toric domains, see Sections \ref{section concave}, \ref{section convex}.

We mention some previous computations of ECH that have assumed Morse-Bott theory of the flavour we develop in this paper, notably in \cite{ECHT3} for the case of $T^3$, and \cite{choi2016combinatorial} for certain toric contact 3-manifolds, and \cite{lebow} for the case of $T^2$ bundles over $S^1$. This paper and the gluing paper \cite{Yaocas} fill in the foundations for these results.

\begin{remark}
The above theorems say for genus zero curves we have all the transversality we need by simply restricting to cascades of ECH index one and choosing a generic $J$; however this result is not strict, there could well be other scenarios where transversality can be achieved. For instance we expect with some more care we can show the moduli space of cascades of ECH index one and genus one can be shown to be transverse. For discussion of general difficulties see the Appendix.
\end{remark}

\subsection{Some technical details}

For ECH in the nondegenerate setting (see \cite{bn}), as we review in Section \ref{ECH review}, the Fredholm index of a somewhere injective curve is bounded from above by its ECH index. Further, the ECH index is superadditive under unions of $J$-holomorphic curves in symplectizations. Using the fact that after choosing a generic almost complex structure, all somewhere injective curves are transversely cut out, it follows that by restricting to only ECH index one curves we do not need to consider multiply covered nontrivial curves. With this, one defines the ECH differential in the nondegenerate setting via counts of ECH index one $J$-holomorphic curves.

Parts of the above story continue to hold in the case of cascades if we assume can choose $J$ to be good (Definition \ref{def:good j}), as we explain below.

We first note that the notion of an ECH index continues to make sense for cascades, as we explain in Section \ref{Section:ECH index}. The case of cascades, however, is more complicated, in two directions.
\begin{itemize}
    \item During the degeneration process for $\lambda_\dt$ as $\dt\rightarrow 0$, simple curves may degenerate into cascades that have multiply covered components;
    \item For generic $J$, and even if we restrict to cascades all of whose curves are somewhere injective, the cascade need not be transversely cut out. 
\end{itemize}
The second bullet point is the most problematic. This happens because by requiring there is a single parameter between adjacent levels, we are imposing restrictions on the evaluation maps on the ends of the curves in a cascade. Hence a cascade lives in a fiber product, which need not be transversely cut out even if we restrict to only somewhere injective curves. For an explanation of this, see the Appendix. 

However, if we take as an \emph{assumption} that $J$ is good (which isn't always possible, it will depend on the specific contact manifold), then all cascades built out of somewhere injective curves that we consider are transversely cut out. Then we can address the first bullet point by using a version of the ECH index inequality for cascades .

To explain the ECH index inequality for cascades, consider the following. Given a cascade, we can pass to a reduced cascade, which means we replace all multiply covered curves with the underlying simple curves. See Section \ref{degenerations} for a precise description of this process. The reduced cascade also lives in a fiber product because of the conditions we imposed on its ends. By the assumption that $J$ is good (and consequently transversality assumptions in Definition \ref{def:transversality conditions} are satisfied), the reduced cascade is transversely cut out. To each reduced cascade we can associate to it a virtual dimension, which is the dimension of the moduli space of curves that lies in the same configuration as the reduced cascade. We prove that the ECH index of the cascade bounds the Fredholm index of the reduced cascade from above; and that equality holds only if the original cascade had no multiply covered components (and is well behaved in various ways, see Section \ref{Section:ECH index}).

In \cite{Yaocas}, we proved a correspondence theorem between certain cascades and $J$-holomorphic curves.

\begin{theorem}[\cite{Yaocas}]
Given a ``transverse and rigid'' (see Definition 3.4 in \cite{Yaocas}) height one $J$-holomorphic cascade $\cas{u}$ , it can be glued to a rigid $J_\dt$-holomorphic curve $u_\dt$ for $\dt>0$ sufficiently small. The construction is unique in the following sense: if $\{\dt_n\}$ is a sequence of numbers that converge to zero as $n\rightarrow \infty$, and $\{u'_{\dt_n}\}$ is sequence of $J_{\dt_n}$-holomorphic curves converging to $\cas{u}$, then for large enough $n$, the curves $u_{\dt_n}'$ agree with $u_{\dt_n}$ up to translation in the symplectization direction.
\end{theorem}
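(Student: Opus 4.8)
The plan is to prove the correspondence by a gluing argument in the style of symplectic field theory: from the data of the cascade $\cas{u}$ one builds, for each small $\dt>0$, an approximately $J_\dt$-holomorphic curve by pregluing, and then corrects it to a genuine solution by a Newton iteration whose constants are controlled uniformly as $\dt\to 0$. Write $\cas{u}=\{u^1,\dots,u^n\}$ with gluing parameters $T_1,\dots,T_{n-1}\in[0,\infty]$ recording the gradient-flow times between consecutive levels (there is no breaking between stories to glue, since the cascade has height one). To preglue, on the parts of the $u^i$ that stay away from the Morse-Bott tori I would take $u_\dt^{\mathrm{pre}}$ to be a small perturbation of $u^i$ (possible because $J_\dt\to J$), suitably translated in the $\bb{R}$-direction; near each Morse-Bott torus $\mcal{T}$ where a negative end of $u^i$ and a positive end of $u^{i+1}$ sit I would splice in a thin ``gradient-flow cylinder'', namely the $J_\dt$-holomorphic cylinder in the thickened neighbourhood of $\mcal{T}$ that shadows the gradient flow of the auxiliary Morse function on the $S^1$-family (the Morse--Bott model of \cite{BourPhd}). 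The matching condition in the definition of a cascade --- that the two asymptotic parameters differ by time-$T_i$ gradient flow --- is exactly what makes this splicing yield $\db_{J_\dt}u_\dt^{\mathrm{pre}}$ exponentially small in an appropriate weighted Sobolev norm. The degenerate cases $T_i=0$ (direct gluing, no flow segment) and $T_i=\infty$ (the flow breaks at a critical point, so the neck is interrupted by an elliptic or hyperbolic orbit of $\lambda_\dt$) require slightly different interpolation profiles but are handled the same way.

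Next I would set up the nonlinear $\db_{J_\dt}$-operator $\mcal{F}_\dt$ on a Banach manifold of maps $C^1$-close to $u_\dt^{\mathrm{pre}}$, using exponentially weighted Sobolev spaces with weights adapted to the asymptotic operators of $\lambda_\dt$ at the punctures. The key point --- and the step I expect to be the main obstacle --- is to show that the linearization $D_\dt:=D\mcal{F}_\dt(u_\dt^{\mathrm{pre}})$ is surjective with a right inverse bounded uniformly in $\dt$. This I would deduce from the transversality hypothesis by ``gluing right inverses'': transversality of $\cas{u}$ says that the linearized cascade problem --- the direct sum of the operators $D_{u^i}$, coupled with the linearized matching and evaluation constraints and the linearized gradient flow --- is surjective, and rigidity says its kernel is exactly the one-dimensional space of overall $\bb{R}$-translations. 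Patching the right inverses of the $D_{u^i}$ with the Morse-theoretic inverse on the neck cylinders, and using that the neck pieces have vanishing cokernel once $\dt$ is small, produces the uniform right inverse for $D_\dt$. The delicate part is that the neck lengths are governed by $\dt$ and the $T_i$, and at a $T_i=\infty$ neck the linearized operator acquires a small eigenvalue whose contribution must be tracked through the Morse-index bookkeeping, so that the patched right inverse does not blow up as $\dt\to 0$.

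With the uniform right inverse, the (uniformly) small pregluing error, and the quadratic estimate for $\mcal{F}_\dt$ that follows from elliptic estimates and the geometry of the target, the contraction mapping principle then yields, for $\dt$ small enough, a zero $u_\dt$ of $\mcal{F}_\dt$ in a small ball around $u_\dt^{\mathrm{pre}}$, unique up to translation in the $\bb{R}$-direction. This $u_\dt$ is a $J_\dt$-holomorphic curve, and it is rigid because $D\mcal{F}_\dt(u_\dt)$ inherits a one-dimensional kernel from $D_\dt$.

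Finally, for the uniqueness clause: suppose $\{u'_{\dt_n}\}$ converges to $\cas{u}$ in the sense of \cite{SFT}. I would argue that the compactness theorem forces $u'_{\dt_n}$, after a translation, into an arbitrarily small weighted-norm neighbourhood of $u_{\dt_n}^{\mathrm{pre}}$ once $n$ is large --- this requires upgrading the SFT notion of convergence to the quantitative closeness used in the gluing estimates, and ruling out that part of $u'_{\dt_n}$ escapes into a longer neck or a different combinatorial configuration, which is exactly where the index count coming from the rigidity of $\cas{u}$ is used. The uniqueness statement in the contraction mapping then forces $u'_{\dt_n}=u_{\dt_n}$ up to translation in the symplectization direction, as claimed.
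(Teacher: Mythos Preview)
Your outline is the standard SFT gluing strategy and is essentially correct in spirit; however, note that this paper does \emph{not} supply its own proof of this theorem. The statement is imported wholesale from the companion paper \cite{Yaocas} (it is quoted in the introduction, and again as Theorem~\ref{gluing theorem} with the citation ``Theorem 3.5 in \cite{Yaocas}''), and the present paper only uses it as a black box. So there is no ``paper's own proof'' to compare against here; the actual analysis --- pregluing, weighted Sobolev setup, uniform right inverse, contraction mapping, and the surjectivity-of-gluing argument for uniqueness --- is carried out in \cite{Yaocas}, and your sketch matches the shape one expects that argument to take.

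One small correction to your sketch: for a \emph{height-one} transverse-and-rigid cascade the flow times satisfy $T_i\in(0,\infty)$; the case $T_i=\infty$ would make the cascade height $\geq 2$ and is outside the scope of this statement, and $T_i=0$ is excluded by the rigidity/transversality hypothesis (cf.\ Corollary~\ref{conditions on currents}(b)). So the ``degenerate cases'' paragraph in your plan is not needed for this particular theorem, and indeed the $T_i=\infty$ neck with its small eigenvalue would introduce exactly the kind of non-uniform estimate you flag as delicate --- the point of restricting to height one is to avoid it.
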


In this paper, using index calculations, we show that if $J$ is good (some instances of which are described in Theorems \ref{thm:list of transversality conditions}), then essentially all ECH index one cascades are transverse and rigid\footnote{Technically we need to restrict ourselves to \emph{good} ECH index one cascades. This is a fairly minor point, but see Proposition 5.32 and surrounding discussion.}. Thus the gluing theorem above is then used to show the Morse-Bott chain complex computes $ECH(Y,\lambda)$. In the cases where we use ``tree like'' cascades, for instance for boundaries of convex or concave toric domains, the definitions are slightly different, but essentially the same story holds true and we can always choose a generic $J$ so that the Morse-Bott chain complex computes $ECH(Y,\lambda)$.

Finally in the Appendix we explain why the usual techniques for achieving transversality fails for cascades. \\

\textbf{Acknowledgements}
I would like to thank my advisor Michael Hutchings for his consistent help and support throughout this project. I would like to acknowledge the support of the Natural Sciences and Engineering Research Council of Canada (NSERC), PGSD3-532405-2019.
Cette recherche a été financée par le Conseil de recherches en sciences naturelles et en génie du Canada (CRSNG), PGSD3-532405-2019.

\section{ECH review} \label{ECH review}
For a thorough introduction to ECH see \cite{bn}. We will summarize much of the material from \cite{bn} and \cite{Hutchings2002} for convenience of the reader.

Let $(Y^3,\lambda)$ be a contact 3 manifold with nondegenerate contact form $\lambda$. The generator of ECH are collections $\Theta$, where each $\Theta$ is a set of Reeb orbits with multiplicities
\[
\Theta:= \{ (\gamma_i,m_i) | \gamma_i \, \text{are pairwise distinct simply covered Reeb orbits},\, m_i \in \bb{Z}_+\}.
\]
We require $m_i=1$ if $\gamma_i$ is a hyperbolic orbit. 
Then the chain for ECH are just
\[
C_*(\lambda') := \bigoplus_{\Theta_i} \bb{Z}_2 \la \Theta_i \ra.
\]
\begin{remark}
There is a decomposition of ECH according to homology class of $\Theta_i$ in $H_1(Y)$. ECH can also be defined using $\bb{Z}$ coefficients. We will not address these issues here.
\end{remark}
Let $\alpha, \beta$ be ECH generators. Consider the symplectization of $Y$, defined as the symplectic manifold $(\bb{R} \times Y, \omega := d(e^a \lambda))$, where $a$ denotes the $\bb{R}$ coordinate. Equip it with a generic $\lambda$ compatible almost complex structure $J$. By compatible we mean the following
\begin{definition}\label{compatibleJ}
Let $\lambda$ be a contact form (not necessarily nondegenerate) on a contact 3-manifold. Let $J$ be a almost complex structure on the symplectization $(\bb{R} \times Y, \omega := d(e^a \lambda))$. We say $J$ is compatible with $\lambda$ if
\begin{enumerate}
    \item $J$ is invariant in the $\bb{R}$ direction;
    \item Let $R$ denote the Reeb vector field, then $J\partial_s =R$;
    \item Let $\xi$ denote the contact structure, then $J\xi =\xi$ and $d\lambda(\cdot, J\cdot)$ defines a metric on $\xi$. 
\end{enumerate}
\end{definition}

Then the coefficient $\la\p \alpha, \beta \ra$ is defined by

\begin{equation}
\la\p \alpha, \beta \ra
:=   \left\lbrace  
  \begin{tabular}{@{}l@{}}
 $\bb{Z}_2$\, \textup{count of holomorphic currents}\, $\mcal{C}$\, \textup{of ECH index} \,$I=1$,\\
\textup{so that as}  $s\rightarrow +\infty, \, \mcal{C}$ \,\text{approaches}\, $\alpha$  \textup{as a current, and as} $s\rightarrow -\infty$, \\
 $\mcal{C}$\, \text{approaches} $\beta$ \,\text{as a current}.
   \end{tabular}
  \right\rbrace
 \end{equation}
A holomorphic current $\mcal{C}$ is by definition a collection $\{(C_i,m_i)\}$ where each $C_i$ is a somewhere injective $J$ holomorphic curve and $m_i \in \bb{Z}_{>0}$ accounts for the multiplicity of this curve. The ECH index $I$ of a holomorphic curve $C$ (or more generally a relative 2 homology class in $H_2(\alpha,\beta,Y)$, see section below for a definition) is defined by
\begin{equation}
    I(C) := Q_\tau(C) +c_\tau(C) +CZ^I(C) 
\end{equation}
where $Q_\tau(C)$ is the relative intersection number, $c_\tau(C)$ is the relative Chern class, and $CZ$ is a sum of Conley Zehnder indices used in ECH. We will review these terms in the upcoming subsections.
\subsection{Relative first Chern class}
Let $\alpha,\beta$ be orbit sets. We define the relative homology group $H_2(\alpha,\beta,Y)$ to be the set of 2-chains $\Sigma$ with
\[
\p \Sigma = \alpha -\beta
\]
modulo boundary of 3 chains. This is an affine space over $H_2(Y)$, and each $J$ holomorphic curve defines a relative homology class.

We fix trivializations $\tau$ of the contact structure $\xi$ over each Reeb orbit in $Y$. We then define the relative first Chern class $c_\tau$ with respect to this choice of trivialization. For a given homology class in $H_2(\alpha,\beta,Y)$, choose a representative $Z\in H_2(\alpha,\beta,Y)$ that is embedded near its boundaries $\alpha,\beta$. We assume $Z$ is a smooth surface. Let $\iota: Z \rightarrow Y$ be the inclusion. Then consider the bundle $\iota^*\xi$ over $Z$. Let $\psi$ be a section of this bundle that is constant with respect to the trivialization $\tau$ near each of the Reeb orbits, and perturb $\psi$ so that all of its zeroes are transverse. Then $c_\tau(Z)$ is defined to be the algebraic count of zeroes of $\psi$. See \cite{bn} for a more thorough explanation and that this is well defined.
\subsection{Writhe}
Let $C$ be a somewhere injective $J$ holomorphic curve in the symplectization of $Y$, $(\bb{R}\times Y,d(e^a \lambda))$ (with generic
$\lambda$-compatible complex structure $J$) that is asymptotic to $\alpha$ as $s\rightarrow +\infty$ and $\beta$ as $s\rightarrow -\infty$. For simplicity we focus on $s\rightarrow +\infty$ end. It is known (see for example \cite{siefring}) that for $s$ sufficiently large, $C\cap \{s\}\times Y$ is a union of embedded curves near each orbit of $\alpha$. For each orbit $\gamma_i$ of $\alpha$, the curves $C\cap \{s\}\times Y$ forms a braid $\xi_i^+$. We use the trivialization $\tau$ to identify the braids $\xi_i^+$ with braids in $S^1 \times D^2$. We can define the writhe of $\xi_i^+$ by identifying $S^1\times D^2$ with an annulus times an interval, projecting $\xi_i^+$ to the annulus, and counting crossings with signs. The same sign convention is clearly explained in \cite{hutching_revisited}.

Then given a somewhere injective $J$-holomorphic curve $C$ that is not the trivial cylinder, with braids $\zeta_i^+$ associated to the $i$-th Reeb orbit it approaches as $s\rightarrow +\infty$ and braids $\zeta_j^-$ associated to the $j$th Reeb orbit it approaches as $s\rightarrow -\infty$ we define its writhe to be
\[
w_\tau(C) := \sum_i w_\tau(\zeta_i^+) - \sum_j w_\tau (\zeta_j^-).
\]
We also recall the writhe of the braid $\zeta_i^+$ can be bounded by expressions in terms of the Conley-Zehnder indices.
\begin{proposition}
Let $C$ be a somewhere injective holomorphic curve that is not a trivial cylinder which is asymptotic to $\gamma_i$ with total multiplicity $n_i$. Suppose there are $k_i$ distinct ends of $C$ that are asymptotic to $\gamma_i$, with covering multiplicities $q_i^j$. Then the writhe associated to the braid $\zeta_i^+$ corresponding to Reeb orbit $\gamma_i$ is bounded above by
\begin{equation}
    w_\tau (\zeta_i^+) \leq \sum _j^ {n_i} CZ(\gamma_i^j) - \sum_j^{k_i}CZ(\gamma_i^{q_i^j})
\end{equation}
A similar bound holds for braids at $s\rightarrow -\infty$ with signs reversed.
\end{proposition}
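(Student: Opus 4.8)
The plan is to follow the derivation of the writhe bound due to Hutchings (cf. \cite{hutching_revisited}), which rests on Siefring's asymptotic analysis \cite{siefring} of punctured holomorphic curves together with a combinatorial lemma about floor and ceiling functions. First I would dispose of the hyperbolic case: if $\gamma_i$ is hyperbolic then by the ECH generator condition its multiplicity in $\alpha$ equals $1$, so $n_i = k_i = q_i^1 = 1$, the right-hand side is $CZ(\gamma_i) - CZ(\gamma_i) = 0$, and the single embedded end contributes a trivial braid with $w_\tau(\zeta_i^+) = 0$. So assume $\gamma_i$ is elliptic with rotation number $\theta_i$ with respect to $\tau$, so that $CZ(\gamma_i^m) = \lfloor m\theta_i\rfloor + \lceil m\theta_i\rceil$, and recall the winding-number formalism: each eigenfunction of the asymptotic operator $A_{\gamma_i^m}$ has a well-defined winding number with respect to $\tau$, monotone in the eigenvalue, with $\alpha_\tau(\gamma_i^m):=\max\{$winding of eigenfunctions of negative eigenvalue$\} = \lfloor m\theta_i\rfloor$ and $\beta_\tau(\gamma_i^m):=\min\{$winding of eigenfunctions of positive eigenvalue$\} = \lceil m\theta_i\rceil$.

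Next I would invoke Siefring's asymptotic expansion at each positive end: near the $j$-th end of $C$ at $\gamma_i$, of covering multiplicity $q_i^j$, the curve is to leading order the graph over $\gamma_i^{q_i^j}$ of an eigenfunction of $A_{\gamma_i^{q_i^j}}$ with negative eigenvalue; let $\eta_i^j$ be its winding number, so $\eta_i^j \le \alpha_\tau(\gamma_i^{q_i^j}) = \lfloor q_i^j\theta_i\rfloor$. Standard braid estimates (again Hutchings, using positivity of intersection of $C$ with its $\bb{R}$-translates) then give, for the sub-braid $\zeta_i^j$ of the $j$-th end, the self-writhe bound $w_\tau(\zeta_i^j) \le \eta_i^j(q_i^j-1)$, and for two distinct sub-braids the linking bound $\ell_\tau(\zeta_i^j,\zeta_i^{j'}) \le \min(q_i^{j'}\eta_i^j,\, q_i^j\eta_i^{j'})$. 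Since $w_\tau(\zeta_i^+) = \sum_j w_\tau(\zeta_i^j) + 2\sum_{j<j'}\ell_\tau(\zeta_i^j,\zeta_i^{j'})$, the proposition is reduced to the combinatorial inequality
\[
\sum_{j=1}^{k_i}\eta_i^j(q_i^j-1) + 2\sum_{j<j'}\min(q_i^{j'}\eta_i^j,\,q_i^j\eta_i^{j'}) \;\le\; \sum_{m=1}^{n_i}\bigl(\lfloor m\theta_i\rfloor+\lceil m\theta_i\rceil\bigr) - \sum_{j=1}^{k_i}\bigl(\lfloor q_i^j\theta_i\rfloor+\lceil q_i^j\theta_i\rceil\bigr),
\]
to be proved under the constraints $\eta_i^j \le \lfloor q_i^j\theta_i\rfloor$ and $\sum_j q_i^j = n_i$.

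To prove this inequality I would first note the left side is monotone nondecreasing in each $\eta_i^j$ (the only point needing checking is the $\min$ terms, which is straightforward), so it suffices to set $\eta_i^j = \lfloor q_i^j\theta_i\rfloor$. Then I would induct on $k_i$, merging two ends at a time: the increment of the right side under merging multiplicities $a,b$ into $a+b$ telescopes to $\sum_{m=a+1}^{a+b}\bigl(\lfloor m\theta_i\rfloor + \lceil m\theta_i\rceil\bigr) - \lfloor b\theta_i\rfloor - \lceil b\theta_i\rceil$ plus similar, while the increment of the left side is controlled by elementary superadditivity inequalities for the floor function such as $\lfloor a\theta\rfloor + \lfloor b\theta\rfloor + \min(a\lfloor b\theta\rfloor, b\lfloor a\theta\rfloor) \le \text{(the merged contribution)}$; organizing the merges in order of increasing ratio $\lfloor q_i^j\theta_i\rfloor/q_i^j$ makes the linking bound tight enough for the induction to close. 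This is precisely the combinatorial core of the ECH index inequality, and it is cleanest to package it via the auxiliary functions Hutchings uses and cite his lemmas; I expect this to be the main obstacle, the subtle point being that the linking bound depends on the ordering of these ratios (with an extra case when two ratios coincide) and the per-end terms $\eta_i^j(q_i^j-1)$, which vanish precisely for simply covered ends, must be aligned correctly against the $-\sum_j CZ(\gamma_i^{q_i^j})$ term.

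Finally, the $s\to-\infty$ statement follows by the mirror-image argument: at negative ends the leading eigenfunction has positive eigenvalue, so its winding number is bounded \emph{below} by $\beta_\tau(\gamma_i^{q_i^j}) = \lceil q_i^j\theta_i\rceil$, the writhe and linking inequalities reverse, and one obtains $-w_\tau(\zeta_i^-) \le \sum_{m=1}^{n_i}CZ(\gamma_i^m) - \sum_{j=1}^{k_i}CZ(\gamma_i^{q_i^j})$, which is the asserted bound with signs reversed.
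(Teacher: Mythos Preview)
The paper does not prove this proposition; it appears in the ECH review section as a result recalled from the literature (specifically \cite{Hutchings2002}, \cite{hutching_revisited}, with the asymptotic input from \cite{siefring}). Your proposal is exactly the standard argument from those references, and is the right approach.

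Two small points to flag. First, your disposal of the hyperbolic case invokes the ECH generator condition to force $n_i=1$, but the proposition as stated concerns an arbitrary somewhere injective curve and does not assume the asymptotics form an ECH generator; you should instead handle hyperbolic $\gamma_i$ directly via the same winding/linking analysis (the eigenvalue crossing picture is simpler since the extremal winding numbers coincide or differ by one in a controlled way). Second, at positive ends the linking bound should read $\ell_\tau(\zeta_i^j,\zeta_i^{j'}) \le \max(q_i^{j'}\eta_i^j,\, q_i^j\eta_i^{j'})$, not $\min$; compare the paper's own Morse--Bott analogue later on, where the $s\to+\infty$ linking bound is stated with $\max$. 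With $\min$ the combinatorial inequality you wrote would be too strong. The rest of your outline --- Siefring asymptotics, winding bound $\eta_i^j\le\lfloor q_i^j\theta_i\rfloor$, self-writhe bound, and the floor/ceiling combinatorics reduced to Hutchings' lemmas --- is correct and is what the cited references do.
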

We will derive an analogue of this bound for the Morse-Bott case. For now we recall another definition:
\begin{definition}
Let $C$ be a somewhere injective $J$-holomorphic curve that is not a trivial cylinder. For each $\gamma_i$ that $C$ is asymptotic to as $s\rightarrow +\infty$, form the sum $CZ^I(\gamma_i): = \sum _{j=1}^ {n_i} CZ(\gamma_i^j)$ as above, and for each $\gamma_i'$ that $C$ is asymptotic to as $s\rightarrow -\infty$, we form an analogous sum, then we define
\begin{equation}
    CZ^I(C) : = \sum_{\substack{\gamma_i, \\C \,
   \text{is asymptotic to}\, \gamma_i,\\
   \text{as } \, s\rightarrow +\infty}}CZ^I(\gamma_i) - \sum_{\substack{\gamma_i', \\C \,
   \text{is asymptotic to} \, \gamma_i',\\
   \text{as } \, s\rightarrow -\infty}}CZ^I(\gamma_i').
\end{equation}
\end{definition}
This is the Conley-Zehnder index term that appears in the definition of ECH index.
\subsection{Relative adjunction formula}
In this section we review the relative adjunction formula (see \cite{bn, Hutchings2002}). We first review the notion of relative intersection pairing, which is a map depending on the trivialization $\tau$:
\[
Q_\tau: H_2(\alpha,\beta,Y) \times H_2(\alpha,\beta,Y) \rightarrow \bb{Z}
\]
as follows. Let $S$ and $S'$ be surfaces representing relative homology classes in $H_2(\alpha,\beta,Y)$. If we identify $\bb{R} \times Y$ with $ (-1,1)\times Y \subset [-1,1] \times Y$, then we have by definition
\[
\p S =\p S' = \sum_i m_i \{1\} \times \alpha_i -  \sum_i n_i \{-1\} \times \beta_i 
\]

We make the following requirements on the representatives $S$ and $S'$:
\begin{enumerate}
    \item The projections to $Y$ of the intersections of $S$ and $S'$ with $(1-\ep,1]\times Y$ and $[0,\ep) \times Y$ are embeddings.
    \item Each end of $S$ or $S'$ covers Reeb orbits $\alpha_i$ (resp $\beta_i$) with multiplicity $1$.
    \item The image of $S$ (after projecting to $Y$ in a neighborhood $S^1 \times D^2$ of $\alpha_i$ determined by the trivialization $\tau$) do not intersect, and do not rotate with respect to the chosen trivialization $\tau$ as one goes around $\alpha_i$. Further, the image of different ends of $S$ approaching $\alpha_i$ lie on distinct rays in a neighborhood of $\alpha_i$. More concretely using trivialization $\tau$ to identify a neighborhood of $\alpha_i$ with $S^1\times \bb{R}^2$, ends of $S$ approach $\alpha_i$ along different rays in $\bb{R}^2$.  We make a similar requirement for $\beta_i$. We make a similar requirement for $S'$.
    \item All interior intersections between $S$ and $S'$ are transverse.
\end{enumerate}
Representatives satisfying all of the above conditions are called $\tau$ -representatives in \cite{Hutchings2002}, which is a definition we will adopt. 
Then given $\tau$ representatives as listed above, $Q_\tau(S,S')$ is defined to be the algebraic count of intersections between $S$ and $S'$.

We are now ready to state the relative adjunction formula, see also \cite{Hutchings2002}. 
\begin{proposition}
If $C$ is a somewhere injective $J$ holomorphic curve,
\begin{equation}
    c_\tau(C)=\chi(C)+Q_\tau(C)+w_\tau(C) -2\dt (C)
\end{equation}
where $\dt(C)\geq 0$ is defined to be an algebraic count of singularities of $C$. Each singularity is positive due to the fact $C$ is $J$-holomorphic.
\end{proposition}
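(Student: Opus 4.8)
The plan is to deduce the relative adjunction formula from the two classical ingredients of any adjunction identity: a $J$-complex splitting of the ambient tangent bundle along $C$ into tangent and normal parts, and a geometric count of zeros of a section of the normal bundle via intersections with a nearby pushoff curve. Throughout I would work with the normalization $\iota\colon \Sigma \to \bb{R}\times Y$ of the somewhere injective curve $C$; this is an immersion away from finitely many branch points, and near each puncture $\Sigma$ is a half-cylinder on which $\iota$ converges exponentially to a Reeb cylinder (Siefring's asymptotics, already cited). I would also fix the trivialization $\tau$ of $\xi$ over the Reeb orbits; together with the canonical trivialization of the complex line $\underline{\bb{C}} = \bb{R}\,\partial_a \oplus \bb{R}\, R$ it trivializes $T(\bb{R}\times Y)$ near the ends of $C$, and all relative Chern and Euler numbers below are taken with respect to this data.

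\emph{Step 1: reduce to the normal bundle.} I would first show $c_\tau(C) = \chi(C) + c_\tau(N_\Sigma)$, where $N_\Sigma$ is the generalized normal bundle of $\iota$ (the derivative normal bundle away from branch points, extended across them in the standard way). Along $\Sigma$ one has a $J$-complex splitting $\iota^*T(\bb{R}\times Y) \cong T\Sigma \oplus N_\Sigma$. Near a puncture the asymptotic convergence forces $d\iota(T\Sigma)$ to be $C^0$-close to $\underline{\bb{C}}$, hence $N_\Sigma$ is asymptotic to $\xi$ and inherits the $\tau$-trivialization, while $T\Sigma$ carries its cylindrical-end trivialization. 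Additivity of the relative first Chern number gives $c_\tau(\iota^*T(\bb{R}\times Y)) = c_\tau(T\Sigma) + c_\tau(N_\Sigma)$; the left-hand side equals $c_\tau(\xi|_C) = c_\tau(C)$ since $\underline{\bb{C}}$ contributes $0$, and $c_\tau(T\Sigma) = \chi(\Sigma) = \chi(C)$ is the standard relative Euler number of a punctured Riemann surface with its cylindrical trivialization.

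\emph{Step 2: compute $c_\tau(N_\Sigma)$.} This is the heart of the argument. I would choose a section $\eta$ of $N_\Sigma$ that is nonzero and $\tau$-constant near every puncture and has only transverse zeros, and let $C' = \exp_C(\varepsilon\eta)$ be the corresponding curve for small $\varepsilon > 0$, so that $c_\tau(N_\Sigma)$ is by definition the signed count of zeros of $\eta$. I would then identify the zeros of $\eta$ with intersection points of $C$ and $C'$: over the immersed locus the correspondence is one-to-one and sign preserving, while near each singular point of $C$ (double point or branch point) the pushoff of the normalization creates additional transverse intersections of $C$ with $C'$ — none of them zeros of $\eta$ — contributing $2\dt(C)$ in total, all positive by local positivity of intersections for $J$-holomorphic curves. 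Hence the number of intersection points of $C$ and $C'$ inside a large compact truncation $[-R,R]\times Y$ equals $c_\tau(N_\Sigma) + 2\dt(C)$. Finally I would compare this count with $Q_\tau(C)$: since $C'$ represents the same class in $H_2(\alpha,\beta,Y)$ as $C$, we have $Q_\tau(C) = Q_\tau([C],[C'])$, and computing the latter with $\tau$-representatives (whose ends do not braid) instead of with the genuine pair $(C,C')$ (whose ends braid around one another exactly according to the $\tau$-framing, with total linking $w_\tau(C)$) produces the discrepancy $w_\tau(C)$; thus $\#(C\cap C')_{[-R,R]} = Q_\tau(C) + w_\tau(C)$. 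Equating the two expressions and rearranging gives $c_\tau(N_\Sigma) = Q_\tau(C) + w_\tau(C) - 2\dt(C)$, and combining with Step 1 yields the proposition, including $\dt(C)\ge 0$ and the positivity of each singularity from the Micallef--White local model.

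The main obstacle is the bookkeeping in Step 2. One must check that the pushoff section $\eta$ is $\tau$-constant near the ends in precisely the sense used to define the writhe, so that the end-linking of $C$ with $C'$ is exactly $w_\tau(C)$ and is not shifted by the asymptotic winding numbers; and one must verify that a singular point contributing $k$ to $\dt(C)$ produces exactly $2k$ transverse, positively counted intersections of $C$ with its pushoff, which relies on the normal form for singularities of $J$-holomorphic curves and is where $J$-holomorphicity (not mere smoothness) is essential. The asymptotic statements invoked in both steps — that $N_\Sigma$ looks like $\xi$ and $T\Sigma$ looks like $\underline{\bb{C}}$ near the punctures — are standard and can be quoted from the asymptotic analysis already referenced.
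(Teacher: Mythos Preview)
Your proposal is correct and follows the standard approach; the paper's own ``proof'' is simply the one-line remark that this is a purely topological formula and that the argument from \cite{Hutchings2002} carries over verbatim to the Morse--Bott setting. What you have written is precisely a sketch of that argument (split the ambient tangent bundle into tangent and normal parts, compute the relative Euler number of the normal bundle via a pushoff, and identify the discrepancy between the intersection count and $Q_\tau$ with the writhe), so there is no substantive difference to compare.
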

\subsection{ECH index inequality}
We have now defined all of the terms that appear in the ECH index inequality. We compare this with the Fredholm index.
Let $C$ be a somewhere injective $J$-holomorphic  curve, let $Ind(C)$ denote the Fredhom index of $C$, which in this case is given by
\[
-\chi(C) + 2c_\tau(C) + CZ^{Ind}(C).
\]
Here $CZ^{Ind}(C)$ is defined as follows. If $C$ is positively asymptotic to $\gamma$ with $k$ ends, each of multiplicity $q_k$, then the contribution to $CZ^{Ind}(C)$ from $\gamma$ is given by $\sum_k CZ(\gamma^{q_k})$. Similarly if $C$ is asymptotic to $\gamma$ at the negative ends, then its contribution to $CZ^{Ind}(C)$ is $-\sum_k CZ(\gamma^{q_k})$.
\begin{theorem}
Let $C$ denote a somewhere injective $J$-holomorphic curve as above,
then we have the following inequality
\begin{equation}
    Ind(C) \leq I(C) -2\dt (C).
\end{equation}
\end{theorem}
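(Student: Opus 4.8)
The plan is to combine the two ingredients just reviewed — the writhe bound in terms of Conley--Zehnder indices and the relative adjunction formula — and then compare the resulting expression with the Fredholm index formula. First I would start from the definition of the ECH index $I(C) = Q_\tau(C) + c_\tau(C) + CZ^I(C)$ and substitute the relative adjunction formula $c_\tau(C) = \chi(C) + Q_\tau(C) + w_\tau(C) - 2\dt(C)$ to eliminate $c_\tau(C)$. This yields $I(C) = \chi(C) + 2Q_\tau(C) + w_\tau(C) + CZ^I(C) - 2\dt(C)$. In parallel I would write out $Ind(C) = -\chi(C) + 2c_\tau(C) + CZ^{Ind}(C)$ and again use the adjunction formula on one copy of $c_\tau(C)$, so that $Ind(C) = -\chi(C) + 2(\chi(C) + Q_\tau(C) + w_\tau(C) - 2\dt(C)) + CZ^{Ind}(C) = \chi(C) + 2Q_\tau(C) + 2w_\tau(C) + CZ^{Ind}(C) - 4\dt(C)$. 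Hmm — I should be careful here: the cleaner route is to keep one $c_\tau$ in $Ind(C)$ and substitute adjunction only where it produces cancellation against $I(C)$; I would experiment with which substitution makes the bookkeeping shortest, but either way the $\chi$ and $Q_\tau$ terms line up after accounting for the factor of $2$ discrepancies.

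Next I would form the difference $I(C) - 2\dt(C) - Ind(C)$ and show it is nonnegative. After the substitutions above, the $\chi(C)$ and $Q_\tau(C)$ contributions cancel, and what remains is a combination of $w_\tau(C)$, the Conley--Zehnder sums $CZ^I(C)$ and $CZ^{Ind}(C)$, and the singularity count $\dt(C)$. The point is that for each Reeb orbit $\gamma_i$ at the positive end, the writhe bound of the earlier Proposition gives $w_\tau(\zeta_i^+) \leq \sum_j^{n_i} CZ(\gamma_i^j) - \sum_j^{k_i} CZ(\gamma_i^{q_i^j}) = CZ^I(\gamma_i) - (\text{the } CZ^{Ind} \text{ contribution of } \gamma_i)$, with the sign-reversed analogue at the negative ends. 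Summing over all ends, the writhe term is dominated exactly by $CZ^I(C) - CZ^{Ind}(C)$, which is precisely the combination needed to make the remaining terms telescope, leaving only a nonnegative multiple of $\dt(C)$. I would track the coefficients carefully to confirm that the leftover is $\geq 0$ (and in fact that the bound is sharp modulo the adjunction defect), since $\dt(C) \geq 0$ by positivity of intersections of $J$-holomorphic curves.

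The main obstacle I anticipate is purely bookkeeping rather than conceptual: getting the factors of $2$ and the signs of the $\dt(C)$ contributions consistent between the two applications of the adjunction formula, and making sure the per-orbit writhe bounds are being summed with the correct orientation conventions at the negative ends (where the inequality flips sign). There is also a subtlety in that the writhe bound is stated for $C$ not a trivial cylinder, so I would note that the trivial cylinder case is either excluded or handled trivially (both sides vanish). Finally I would remark that equality $Ind(C) = I(C) - 2\dt(C)$ forces the writhe bounds to be equalities at every orbit, which is the partition-condition statement that feeds into the later analysis; but for the purposes of this theorem it suffices to establish the inequality, so I would not dwell on the equality case here.
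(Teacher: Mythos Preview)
Your approach is correct and is exactly the standard argument from \cite{Hutchings2002}: combine the relative adjunction formula with the writhe bound to reduce $I(C)-2\dt(C)-Ind(C)$ to $CZ^I(C)-CZ^{Ind}(C)-w_\tau(C)\geq 0$. Note, however, that the paper does not supply its own proof of this theorem; it appears in the ECH review section as a known result quoted from \cite{bn,Hutchings2002}, so there is nothing to compare against beyond observing that your outline reproduces the original argument. Your intermediate computation got a bit tangled (the second substitution of adjunction into $Ind(C)$ is unnecessary and produces spurious $\dt$ terms); the clean route is a single application of adjunction to $I(C)-Ind(C) = Q_\tau - c_\tau + \chi + CZ^I - CZ^{Ind} = -w_\tau + 2\dt + CZ^I - CZ^{Ind}$, from which the inequality is immediate.
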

An immediate corollary of the above is
\begin{corollary}
Let $\mcal{C}$ be a $J$-holomorphic current of $I(\mcal{C}) =1$. Then for generic $J$, the current $\mcal{C}$ must satisfy
\begin{enumerate}
    \item It contains an unique connected embedded curve $C$ of multiplicity one that is not a trivial cylinder. The ends of $C$ approach Reeb orbits according to partition conditions. (See \cite[Section 3]{bn} for a discussion of partition conditions). We will review the relevant partition conditions in the Morse-Bott setting later).
    \item The other components of $\mcal{C}$ are trivial cylinders with multiplicities.
\end{enumerate}
\end{corollary}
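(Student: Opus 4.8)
The plan is to deduce this corollary directly from the ECH index inequality $Ind(C) \le I(C) - 2\delta(C)$ together with the superadditivity of the ECH index under unions of holomorphic currents in a symplectization, and the fact that for generic $J$ all somewhere injective curves are transversely cut out (so have nonnegative Fredholm index, with trivial cylinders having Fredholm index $0$). First I would write $\mcal{C} = \{(C_i, d_i)\}$ with the $C_i$ distinct somewhere injective curves. Among the $C_i$, separate the trivial cylinders (over Reeb orbits of $\alpha$, equivalently $\beta$) from the genuinely non-trivial components. Trivial cylinders contribute $0$ to the ECH index in any multiplicity, so by superadditivity $I(\mcal{C}) \ge \sum_{C_i \text{ non-trivial}} I(d_i C_i) \ge \sum_{C_i \text{ non-trivial}} I(C_i)$, where the second inequality uses $I(d C) \ge I(C)$ for $d \ge 1$ (again superadditivity, applied to the union of $d$ copies, or the standard monotonicity of the ECH index in multiplicity). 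Since $I(\mcal{C}) = 1$, at most one non-trivial component can be present and it must satisfy $I(C) \le 1$; and $I(C) = 0$ would force (via the index inequality and $Ind(C) \ge 0$ for generic $J$) $Ind(C) = 0$ and $\delta(C) = 0$, but a somewhere injective non-trivial curve of $Ind = 0$ occurs in a $0$-dimensional moduli space modulo $\bb{R}$-translation, hence together with translation gives a $1$-dimensional family — this is fine, the point is rather that $I=0$ non-trivial curves do not obstruct; I should instead argue that if the unique non-trivial component $C$ appeared with multiplicity $d \ge 2$ then $I(dC) \ge 2 I(C) - (\text{lower order}) $, more carefully: by superadditivity $1 = I(\mcal{C}) \ge I(dC) + (\text{contributions of trivial cylinders}) \ge I(dC)$, and one shows $I(dC) \ge 2$ whenever $C$ is non-trivial and somewhere injective with $I(C) \ge 1$, forcing $d = 1$; the case $I(C) = 0$ is excluded because then $C$ would be a trivial cylinder by the analysis of the index inequality (a somewhere injective curve with $I = 0$ in a symplectization, after genericity, must be $\bb{R}$-invariant).

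Concretely the key steps, in order, are: (1) invoke that for generic $J$ every somewhere injective curve $C$ has $Ind(C) \ge 0$ unless it is a trivial cylinder (in which case $Ind = 0$), by the transversality theorem; (2) combine with the index inequality to get $I(C) \ge Ind(C) + 2\delta(C) \ge 0$ for all somewhere injective $C$, with $I(C) = 0$ only if $C$ is a trivial cylinder (using the partition-condition/writhe analysis that a somewhere injective non-trivial curve has $I(C) \ge 1$ — this is where the partition conditions referenced in item (a) enter); (3) apply superadditivity of $I$ over the components of $\mcal{C}$ counted with multiplicity, $I(\mcal{C}) \ge \sum_i I(d_i C_i)$, to conclude that the total non-trivial ECH index is $\le 1$; (4) show $I(d C) \ge 2$ for $d \ge 2$ and $C$ a non-trivial somewhere injective curve, again by superadditivity ($I(dC) \ge I((d-1)C) + I(C) \ge \ldots$) and $I(C) \ge 1$, hence the unique non-trivial component has multiplicity exactly one; (5) re-run the index inequality on this unique non-trivial $C$ with $I(C) = 1$ to get $Ind(C) \le 1$ and $\delta(C) = 0$, so $C$ is embedded, and extract the partition conditions on its ends from the equality/near-equality case of the writhe bounds.

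I expect the main obstacle to be step (2)/(5): carefully justifying that a somewhere injective non-trivial curve satisfies $I(C) \ge 1$, and that $I(C) = 1$ forces $C$ embedded with the prescribed partition conditions at its ends. This is precisely the content of the equality discussion for the ECH index inequality in \cite{bn, Hutchings2002}, and reproducing it requires the writhe bounds (the Proposition recalled above) together with the relative adjunction formula to control $\delta(C)$ and the braids $\zeta_i^\pm$; the partition conditions emerge from analyzing when the writhe inequality $w_\tau(\zeta_i^+) \le \sum_j CZ(\gamma_i^j) - \sum_j CZ(\gamma_i^{q_i^j})$ is sharp. Everything else is formal bookkeeping with superadditivity, which is itself a consequence of the relative intersection pairing being (up to controlled error terms) subadditive on disjoint unions and the linking/writhe contributions being nonnegative. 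I would therefore present the proof as: reduce to the index inequality + superadditivity, cite \cite{bn} for the sharp-case analysis, and spell out only the multiplicity elimination argument (step 4) in detail.
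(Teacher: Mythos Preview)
Your approach is correct and is exactly the standard argument the paper has in mind; the paper gives no proof at all, simply declaring the result an ``immediate corollary'' of the index inequality, so there is nothing to compare beyond confirming that the ingredients you list (index inequality, superadditivity of $I$, generic transversality for somewhere injective curves) are the right ones.

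One imprecision worth fixing: in your step (1) you write $Ind(C)\ge 0$ for nontrivial somewhere injective $C$, and in the first paragraph you muse about $Ind(C)=0$ curves giving ``a $0$-dimensional moduli space modulo $\bb{R}$-translation.'' This is backwards. The Fredholm index is the dimension of the moduli space \emph{before} quotienting by $\bb{R}$; since a nontrivial curve sits in a free $\bb{R}$-family, transversality forces $Ind(C)\ge 1$. That is the clean reason $I(C)\ge Ind(C)+2\delta(C)\ge 1$ for nontrivial somewhere injective $C$, and no partition-condition input is needed at this step. The partition conditions enter only at the end, as you correctly say in step (5), from the equality case of the writhe bound when $I(C)=Ind(C)=1$.
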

\begin{convention}\label{nontrivial}
In this paper we describe a correspondence between ECH index 1 currents in the nondegenerate setting and ECH index 1 cascades in the Morse-Bott setting. We will only care about the nontrivial part of the ECH index 1 current, as the trivial cylinders correspond trivially in the non-degenerate and Morse-Bott situations. Hence when we say cascade, or a sequences of ECH index one curves/currents degenerating into a cascade, unless stated otherwise, we will always be considering what happens to the nontrivial part of the ECH index one current, and what cascade it corresponds to.
\end{convention}

\subsection{\texorpdfstring{$J_0$}{J0} index and finiteness}
We recall (without proof) the following proposition (see \cite{Hutchings2002},\cite{bn}):
\begin{proposition}
Let $\alpha,\beta$ be ECH generators. We choose a generic $J$, and let $\mcal{M}^{I=1}(\alpha,\beta)/\bb{R}$ denote the moduli space of ECH index $=1$ currents from $\alpha$ to $\beta$ modulo the action of $\bb{R}$. Then $\mcal{M}^{I=1}(\alpha,\beta)/\bb{R}$ is a finite collection of points.
\end{proposition}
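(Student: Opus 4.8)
The plan is to deduce this from three ingredients already in place: the ECH index inequality together with its corollary (to identify the shape of an $I=1$ current), transversality of somewhere injective curves for generic $J$ (to see the moduli space is $0$-dimensional, hence discrete), and SFT compactness together with additivity of the ECH index under breaking (to see it is compact); a discrete, sequentially compact space is finite.

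First I would show $\mcal{M}^{I=1}(\alpha,\beta)/\bb{R}$ is discrete. By the Corollary following the ECH index inequality, for generic $J$ any current $\mcal{C}$ with $I(\mcal{C})=1$ decomposes as $\mcal{C}=C\cup T$, where $C$ is a connected, embedded, somewhere injective, multiplicity-one curve that is not a trivial cylinder, and $T$ is a union of trivial cylinders with multiplicities. Since trivial cylinders have ECH index $0$ and $I$ is superadditive under unions, $I(C)\le I(\mcal{C})=1$; combining $Ind(C)\le I(C)-2\dt(C)$ with $\dt(C)\ge 0$ and with the fact that for generic $J$ a non-trivial somewhere injective curve has $Ind\ge 1$ (otherwise its free $\bb{R}$-orbit, a copy of $\bb{R}$, would sit inside a $0$-dimensional moduli space), we obtain $Ind(C)=I(C)=1$ and $\dt(C)=0$. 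For generic $J$ the moduli space of such embedded curves is a smooth $1$-manifold, and the $\bb{R}$-action on it is free (as $C$ is not $\bb{R}$-invariant), so modulo $\bb{R}$ it is a $0$-manifold. Finally $T$ is determined by $C$ together with the requirement $\p\mcal{C}=\alpha-\beta$, so $\mcal{C}\mapsto C$ identifies $\mcal{M}^{I=1}(\alpha,\beta)/\bb{R}$ with a discrete set.

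Next I would prove compactness. Let $\{\mcal{C}_n\}\subset\mcal{M}^{I=1}(\alpha,\beta)$. By SFT/Gromov compactness for holomorphic currents in symplectizations (\cite{SFT}, see also \cite{bn}), a subsequence converges to a broken holomorphic current with levels $\mcal{C}^{(1)},\dots,\mcal{C}^{(N)}$, where $\mcal{C}^{(i)}$ runs from an orbit set $\gamma^{(i-1)}$ to $\gamma^{(i)}$, with $\gamma^{(0)}=\alpha$ and $\gamma^{(N)}=\beta$ (the intermediate $\gamma^{(i)}$ need not be ECH generators, but $I$ is defined for all orbit sets). Concatenating relative homology classes and using that $I$ depends only on the relative class and the orbit sets gives additivity: $\sum_i I(\mcal{C}^{(i)})=I(\mcal{C}_n)=1$. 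For generic $J$ every holomorphic current $\mcal{D}$ in the symplectization satisfies $I(\mcal{D})\ge 0$, with equality if and only if $\mcal{D}$ is a (possibly empty) union of trivial cylinders — this follows from the index inequality, superadditivity of $I$ under unions, and transversality, exactly as above. Hence exactly one level has $I=1$ and the rest are unions of trivial cylinders, which forces $\gamma^{(i-1)}=\gamma^{(i)}$ on those levels; so the broken current is really a single $I=1$ current from $\alpha$ to $\beta$, and modulo $\bb{R}$ the subsequence converges in $\mcal{M}^{I=1}(\alpha,\beta)/\bb{R}$. Thus $\mcal{M}^{I=1}(\alpha,\beta)/\bb{R}$ is sequentially compact, and a sequentially compact discrete space is finite (an infinite set of distinct points in a discrete space has no convergent subsequence), which finishes the proof.

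The main obstacle is the compactness step, specifically ruling out genuine multi-level breaking. Everything hinges on two facts that need the machinery carefully in place: additivity of the ECH index under breaking — which requires that the relative homology classes concatenate and that $I$ is computed orbit-set-by-orbit-set even at intermediate orbit sets that may fail the hyperbolic-multiplicity condition — and the ``no negative ECH index'' statement for generic $J$, which rests on the index inequality, superadditivity of $I$ under unions of (possibly multiply covered) components, and transversality of somewhere injective curves. One also must ensure the compactness theorem is invoked in the category of currents and that the limiting object genuinely has the stated broken form; this is where \cite{SFT} and \cite{bn} are used.
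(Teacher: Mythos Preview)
Your argument is correct and is the standard one. Note, however, that the paper does not actually give its own proof of this proposition: it is explicitly ``recalled (without proof)'' from \cite{Hutchings2002,bn}, and the paper only lists two ingredients---the $J_0$ index and Gromov compactness for currents---because their Morse--Bott analogues are developed later.

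Two small points of comparison are worth making. First, you never invoke $J_0$. This is fine: once you use Taubes' compactness theorem for currents (the proposition the paper quotes from \cite{Taubescompactness}), no a~priori genus bound is needed, so $J_0$ is unnecessary for finiteness per se. The paper flags $J_0$ because in the Morse--Bott setting the analogous finiteness argument (Theorem~\ref{thm:finiteness}) really does use the $J_0$ bound to control genus before applying SFT compactness level by level. Second, be careful with your citation: you write ``SFT/Gromov compactness for holomorphic currents \cite{SFT}'', but \cite{SFT} (BEHWZ) is the compactness theorem for \emph{curves} with bounded genus, not for currents. The relevant input for your argument is the current compactness theorem of Taubes that the paper states just before this proposition; your closing paragraph already hints at this distinction, but the citation should point there rather than to \cite{SFT}.
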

We will mention two results that go into this proof, for we will need analogous constructions in the Morse-Bott context.
\begin{definition}
Let $\alpha =\{(\alpha_i,m_i)\},\beta=\{(\beta_i,n_i)\}$ be ECH generators, let $Z\in H_2(\alpha, \beta, Y)$ be a relative homology class. We define:
\begin{equation}
    J_0(\alpha,\beta,Z) =-c_\tau(Z)+Q_\tau(Z) + CZ^{J_0}(\alpha,\beta)
\end{equation}
where
\begin{equation}
    CZ^{J_0}(\alpha,\beta):=\sum_i\sum_{k=1}^{m_i-1}CZ(\alpha_i^k)-\sum_i\sum_{k=1}^{n_i-1}CZ(\beta_i^k)
\end{equation}
\end{definition}
We have the following proposition bounding the topological complexity of holomorphic curves counted by ECH index 1 conditions:
\begin{proposition}
Let $\mcal{C} \in \mcal{M}^{I=1}(\alpha,\beta)$, which decomposes as $\mcal{C} = C_0 \cup C$ where $C_0$ is a union of trivial cylinders, and $C$ is somewhere injective and nontrivial. Let $n_i^+$ denote the number of positive ends $C$ has at $\alpha_i$, plus 1 if $C_0$ includes cylinders of the form $\bb{R} \times \alpha_i$, define $n_j^-$ analogously for $\beta$ and negative ends of $C$ then
\begin{equation}
    -\chi(C)  +\sum_i(n_i^+-1)+\sum_j(n_j^--1) \leq J_0(C).
\end{equation}
\end{proposition}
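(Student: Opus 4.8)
## Proof Proposal

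The plan is to reduce the claimed inequality to the relative adjunction formula together with the writhe bound, all of which have been recorded earlier in the excerpt. The key observation is that the quantity $J_0$ is built from the same ingredients as the ECH index $I$ — namely $c_\tau$, $Q_\tau$, and a sum of Conley--Zehnder indices — but with the Chern class term appearing with the opposite sign and with a different, smaller Conley--Zehnder contribution. So the strategy is to write down the relative adjunction formula for the somewhere injective nontrivial curve $C$, namely $c_\tau(C) = \chi(C) + Q_\tau(C) + w_\tau(C) - 2\dt(C)$, solve for $\chi(C)$, and substitute into $J_0(C) = -c_\tau(C) + Q_\tau(C) + CZ^{J_0}(C)$. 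This yields an expression for $-\chi(C) - J_0(C)$ in terms of $2c_\tau - 2Q_\tau$, the writhe $w_\tau(C)$, the defect $2\dt(C)$, and the $CZ^{J_0}$ sum; one then needs to show this is $\leq -\sum_i(n_i^+ - 1) - \sum_j(n_j^- - 1)$.

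First I would set up notation carefully: for each Reeb orbit $\alpha_i$ that $C$ (or $C_0$) approaches positively, record the multiplicity $m_i$ of $\alpha_i$ in the orbit set $\alpha$, the number $k_i$ of ends of $C$ at $\alpha_i$ and their covering multiplicities $q_i^j$, and note that $n_i^+ = k_i$ or $k_i + 1$ according to whether $C_0$ contributes a trivial cylinder over $\alpha_i$; similarly at the negative ends. Next I would invoke the writhe bound from the earlier proposition, $w_\tau(\zeta_i^+) \leq \sum_{j=1}^{n_i} CZ(\alpha_i^j) - \sum_{j=1}^{k_i} CZ(\alpha_i^{q_i^j})$ (with signs reversed at negative ends), and sum over all orbits to bound $w_\tau(C)$. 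Then I would combine the adjunction formula, the writhe bound, and the definitions of $CZ^{J_0}$, $CZ^I$, and $CZ^{Ind}$ to collect all the Conley--Zehnder terms; the point is that the CZ terms must telescope or cancel down to exactly the combinatorial count $\sum_i(n_i^+ - 1) + \sum_j(n_j^- - 1)$, up to the sign-definite correction $-2\dt(C) \leq 0$. The bookkeeping with the $m_i - 1$ in $CZ^{J_0}$ versus the $n_i$ total multiplicity in the writhe bound versus the $k_i$ ends is the place where the $(n_i^+ - 1)$ terms should emerge — in particular the discrepancy between whether $C_0$ carries a trivial cylinder over $\alpha_i$ accounts for the difference between using $m_i$ and the multiplicity $\sum_j q_i^j$ of $C$ alone at $\alpha_i$.

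The main obstacle, as usual in this circle of arguments, will be the careful handling of the Conley--Zehnder terms and the partition/multiplicity bookkeeping: I must be precise about which orbits get trivial cylinders in $C_0$, how the covering multiplicities $q_i^j$ of the ends of $C$ relate to the total multiplicity $m_i$ appearing in $\alpha$, and exactly how the sum $\sum_{k=1}^{m_i - 1} CZ(\alpha_i^k)$ in $CZ^{J_0}$ interacts with the $\sum_{j=1}^{n_i} CZ(\alpha_i^j) - \sum_{j=1}^{k_i} CZ(\alpha_i^{q_i^j})$ coming from the writhe bound. There is also a subtlety that the writhe bound as stated is for $C$ somewhere injective and not a trivial cylinder, which is exactly our hypothesis, so that is fine; but I would double-check that the bound is applied to $C$ and not to the full current $\mcal{C}$. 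Once the CZ terms are shown to reorganize into the combinatorial quantity $\sum_i (n_i^+ - 1) + \sum_j (n_j^- - 1)$, the inequality follows immediately from $\dt(C) \geq 0$. This is essentially the argument in \cite{Hutchings2002} adapted to the present notation, so I expect no genuinely new difficulty, only the need for meticulous index accounting.
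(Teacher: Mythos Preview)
The paper does not actually prove this proposition: it appears in the ``ECH review'' section (Section~\ref{ECH review}) as a recall of known results from \cite{Hutchings2002} and \cite{bn}, stated without proof. Your outlined approach --- combine the relative adjunction formula with the writhe bound and track the Conley--Zehnder bookkeeping to extract the $\sum_i(n_i^+-1)+\sum_j(n_j^--1)$ terms --- is precisely the standard argument from those references, and is correct in outline.
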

Finally we state the version of Gromov compactness for currents. Let $\alpha, \beta$ be orbit sets, we define a broken holomorphic current from $\alpha,\beta$ to be a finite sequence of $J$ nontrivial holomorphic currents $(\mcal{C}^0,..,\mcal{C}^k)$ in $\bb{R} \times Y$ such that there exists orbit sets $\alpha =\gamma^0,\gamma^1,..,\gamma^{k+1}=\beta$ so that $\mcal{C}^i \in \mcal{M}(\gamma^i,\gamma^{i+1})$ (this notation means $\mcal{C}^i$ is a current from the orbit set $\gamma^i$ to $\gamma^{i+1}$). By nontrivial we mean a current is not entirely composed of unions of trivial cylinders. We say a sequence of holomorphic currents $\{\mcal{C}_{v\geq1} \} \in \mcal{M}(\alpha,\beta)$ converges to $(\mcal{C}^0,..,\mcal{C}^k)$ if for each $i=0,..,k$ there are representatives $\mcal{C}_\nu^i$ of $\mcal{C}_\nu \in \mcal{M}(\alpha,\beta)/\bb{R}$ such that the sequence $\{\mcal{C}_{v\geq1} \}$ converges as a current and as a point set on compact sets to $\mcal{C}^i$.
\begin{proposition}(\cite{bn}, \cite{Taubescompactness} Prop 3.3 )
Any sequence $\{\mcal{C}_v\} $ of holomorphic currents in $\mcal{M}(\alpha,\beta)/\bb{R}$ has a subsequence which converges to a broken holomorphic current $(\mcal{C}^0,..,\mcal{C}^k)$. Further if we denote $\{\mcal{C}_v\} $ the convergent subsequence,
we have the equality
\begin{equation}
    [\mcal{C}_v] = \sum_{i=0}^k[\mcal{C}^i] \in H_2(\alpha,\beta,Y)
\end{equation}
\end{proposition}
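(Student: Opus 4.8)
The plan is to run the standard Gromov/SFT compactness argument in the symplectization, phrased in the language of currents as in \cite{Taubescompactness} and \cite{bn}, and then to analyze the neck regions carefully in order to read off the homological identity. First, the a priori bounds: since $\lambda$ is a global primitive of $d\lambda$ on $Y$, the $d\lambda$-energy $\int_{\mcal{C}_v} d\lambda$ equals $\mcal{A}(\alpha)-\mcal{A}(\beta)$ (where $\mcal{A}$ denotes the symplectic action), a number independent of $v$ and of the relative homology class of $\mcal{C}_v$; combined with the translation structure of $\omega=d(e^a\lambda)$ and the monotonicity lemma for $J$-holomorphic curves, this gives a uniform bound on the mass of $\mcal{C}_v$ over each slab $[-R,R]\times Y$ and rules out concentration of area at points. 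By nondegeneracy of $\lambda$ there is a positive lower bound $c_0$ for the actions of the (finitely many) Reeb orbits of action $\le\mcal{A}(\alpha)$; hence the covering multiplicities of the ends of $\mcal{C}_v$, the multiplicities of its somewhere injective components, and the number of those components are all uniformly bounded, and the relative adjunction formula together with $\dt(C)\ge 0$ bounds the genus of each component. Passing to a subsequence, we may assume all of this combinatorial data is constant in $v$.

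Next, the extraction of the broken current. Applying the compactness theorem of \cite{SFT} to each somewhere injective component of $\mcal{C}_v$ — whose genus, number of ends and asymptotic orbits are now fixed — and using that $(\bb{R}\times Y,\omega)$ carries no nonconstant closed holomorphic curves (as $\omega$ is exact), one obtains, after a further subsequence, a holomorphic building as the limit of each component, with the various components sharing a common level decomposition determined by the $\bb{R}$-translations needed to resolve them. The associated currents give $\bb{R}$-translates $\mcal{C}^i_v$ of $\mcal{C}_v$ converging, as currents and as point sets on compact subsets, to nontrivial holomorphic currents $\mcal{C}^0,\dots,\mcal{C}^k$ with $\mcal{C}^i\in\mcal{M}(\gamma^i,\gamma^{i+1})$ for a chain of orbit sets $\alpha=\gamma^0,\dots,\gamma^{k+1}=\beta$. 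The number of levels is finite: any level that is not a union of trivial cylinders contains a somewhere injective curve of strictly positive $d\lambda$-area — a somewhere injective curve of zero $d\lambda$-area is forced to be a trivial cylinder — so such a level carries $d\lambda$-area $\mcal{A}(\gamma^i)-\mcal{A}(\gamma^{i+1})$ bounded below by the least positive difference of actions among the finitely many orbit sets of action $\le\mcal{A}(\alpha)$, while the levels' $d\lambda$-areas sum to $\mcal{A}(\alpha)-\mcal{A}(\beta)$.

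To obtain $[\mcal{C}_v]=\sum_{i=0}^k[\mcal{C}^i]$ in $H_2(\alpha,\beta,Y)$, fix $v$ large and truncation parameters so that $\mcal{C}_v$ is, over the region where the $i$-th level is resolved, a graph over a compact truncation of $\mcal{C}^i$, the truncated pieces being joined by long neck pieces lying in sets $[s^-_v,s^+_v]\times U_j$, where $U_j$ is a fixed small tubular neighborhood of the simple orbits underlying the breaking orbit set $\gamma^j$. Each neck piece wraps $U_j$ with well-defined multiplicities, hence is homologous rel boundary inside $[s^-_v,s^+_v]\times U_j$ to an integer combination of the core cylinders $[s^-_v,s^+_v]\times\gamma^j_\ell$; pushing these onto a slice $\{s\}\times\gamma^j$ shows each neck piece represents a relative class supported on a $1$-dimensional set, hence $0$ in $H_2$. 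Capping the truncated limits along the breaking orbits by these core cylinders then exhibits the identity.

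The step that requires real work, and the main obstacle, is showing that this decomposition is exhaustive — that no area, and therefore no $2$-dimensional homological content, is lost either to bubbling (here excluded by exactness of $\omega$) or into the lengthening necks; equivalently, that $\mcal{C}^i_v\to\mcal{C}^i$ is without loss of area on compact sets and the neck pieces carry $d\lambda$-area tending to $0$. This is precisely where the monotonicity lemma and the fixed $d\lambda$-energy from the first step are indispensable. By contrast, the a priori bounds and the extraction of the building via \cite{SFT} are routine once those inputs are in hand.
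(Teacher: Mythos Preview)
The paper does not prove this proposition at all: it is quoted from \cite{bn} and \cite{Taubescompactness} (Prop.~3.3) as background. The proofs in those references do \emph{not} pass through SFT compactness for parametrized maps; Taubes works directly with the currents, using local area bounds from monotonicity to extract a limiting current on each compact slab and then a diagonal/rescaling argument to produce the levels. No genus bound is ever needed. Your route through \cite{SFT} is therefore a genuinely different strategy, and it carries one step that does not go through as written.

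The gap is the sentence ``the relative adjunction formula together with $\dt(C)\ge 0$ bounds the genus of each component.'' Adjunction gives
\[
-\chi(C)=Q_\tau(C)-c_\tau(C)+w_\tau(C)-2\dt(C),
\]
and while $w_\tau$ is controlled by the (fixed) asymptotics via the writhe bounds, the combination $Q_\tau(C)-c_\tau(C)$ depends on the relative class $[C]\in H_2(\alpha,\beta,Y)$, which is affine over $H_2(Y)$. Nothing you have established to this point pins down $[\mcal{C}_v]$, and the $d\lambda$-energy $\mcal{A}(\alpha)-\mcal{A}(\beta)$ is blind to $H_2(Y)$ since $d\lambda$ is exact. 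So when $H_2(Y)$ is infinite there is no a priori genus bound available, and you cannot invoke \cite{SFT}. This is precisely why the ECH literature uses Taubes' currents compactness here (genus-free) and reserves the $J_0$-type genus bound for \emph{after} one knows the homology class, e.g.\ in proving finiteness of $\mcal{M}^{I=1}(\alpha,\beta)/\bb{R}$. If you want to salvage your approach you would first have to show, independently of SFT, that only finitely many classes in $H_2(\alpha,\beta,Y)$ are represented by $J$-holomorphic currents---which already requires a currents-level compactness argument of the Taubes type. Your analysis of the neck regions for the homology identity is fine once a broken limit is in hand.
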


\section{Morse-Bott setup and SFT type compactness} \label{degenerations}
Let $(Y,\lambda)$ be a contact 3 manifold with Morse-Bott contact form $\lambda$. Throughout we assume the Morse-Bott orbits come in families of tori. 
\begin{convention}
Throughout this paper we fix action level $L>0$ and only consider ECH generators of action level up to $L$. This is implicit in all of our constructions and will not be mentioned further. We construct Morse-Bott ECH up to action level $L$, and the full ECH is recovered by taking $L \rightarrow \infty$.
\end{convention}

The following theorem, which is a special case of a more general result in  \cite{oh_wang_2018}, gives a characterization of the neighborhood of Morse-Bott Tori. Let $\lambda_0$ denote the standard contact form on $(z,x,y) \in S^1\times S^1 \times \mathbb{R}$ of the form
\[
\lambda_0= dz-ydx.
\]
\begin{proposition} \cite{oh_wang_2018} \label{prop_locform} 
Let $(Y,\lambda)$ be a contact 3 manifold with Morse-Bott contact form $\lambda$. We assume the Morse-Bott orbits come in families of tori $\mcal{T}_i$  with minimal period $T_i$. Then we can choose coordinates around each Morse-Bott torus so that a neighborhood of $\mcal{T}_i$ is described by $S^1\times S^1 \times (-\ep,\ep)$, and the contact form $\lambda$  in this coordinate system looks  like:
\[
\lambda = h(x,y,z) \lambda_0
\]
where $h(x,y,z)$ satisfies:
\[
h(x,0,z)=1, dh(x,0,z) =0
\]
Here we identify $z\in S^1 \sim \bb{R}/2\pi T_i \bb{Z}$
\end{proposition}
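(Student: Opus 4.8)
The plan is to prove Proposition~\ref{prop_locform} as a relative Darboux/Moser normalization. Fix one Morse-Bott torus $\mathcal{T} = \mathcal{T}_i$ with minimal period $T = T_i$. The strategy has three stages: (i) build coordinates on $\mathcal{T}$ itself adapted to the Reeb flow; (ii) extend these to a tubular neighborhood so that $\lambda$ agrees with $\lambda_0$ to first order along $\mathcal{T}$; (iii) run a relative Gray stability argument to upgrade this to $\ker\lambda = \ker\lambda_0$, so that $\lambda = h\lambda_0$, and then read off the properties of $h$ from the construction.

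For stage (i): since $\mathcal{T}$ is swept out by closed Reeb orbits of common minimal period $T$, the Reeb flow is a free $S^1$-action on $\mathcal{T}$, so $\mathcal{T} \to \mathcal{T}/S^1 \cong S^1$ is a principal circle bundle over a circle, hence trivial. This gives coordinates $(z,x) \in (\mathbb{R}/2\pi T\mathbb{Z}) \times S^1$ with $R = \partial_z$ on $\mathcal{T}$. Since $\iota_R d\lambda = 0$ and $R$ is tangent to the $2$-dimensional $\mathcal{T}$, the pullback $\lambda|_{\mathcal{T}}$ is closed, and as $\lambda(\partial_z) = \lambda(R) = 1$ it has the form $dz + a(x)\,dx$ with $a(x)\,dx$ closed; after a shift of $z$ by a function of $x$ (choosing the transversal $\{x = \mathrm{const}\}$ so that the transverse period of $\lambda|_{\mathcal{T}}$ vanishes) we may assume $\lambda|_{\mathcal{T}} = dz$. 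For stage (ii): because $Y$ and $\mathcal{T}$ are oriented the normal bundle of $\mathcal{T}$ is trivial, and the composite $\xi|_{\mathcal{T}} \hookrightarrow TY|_{\mathcal{T}} \to N\mathcal{T}$ is onto (its kernel $\xi \cap T\mathcal{T}$ is $1$-dimensional); choosing a nonvanishing section of $N\mathcal{T}$ lifted to $\xi|_{\mathcal{T}}$ and using it to build a tubular neighborhood $S^1 \times S^1 \times (-\epsilon,\epsilon)$ gives a coordinate $y$ with $\partial_y|_{y=0} \in \xi$. Then $\lambda$ annihilates $\partial_x,\partial_y$ and sends $\partial_z \mapsto 1$ along $\{y=0\}$, so $\lambda = dz + O(y)$. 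Absorbing the $dz$-part of the linear term into a conformal factor (harmless for $\ker\lambda$), removing the $dy$-part by a further change of the $z$-coordinate that vanishes to second order along $\mathcal{T}$, and rescaling $y$ by a positive function — legitimate since the contact condition forces the coefficient of $y\,dx$, which equals $-d\lambda(\partial_x,\partial_y)|_{y=0}$, to be nonzero, after possibly flipping the sign of $y$ — I would reduce to $\lambda = dz - y\,dx + O(y^2) = \lambda_0 + O(y^2)$.

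For stage (iii): now $\lambda$ and $\lambda_0$ agree to first order along $\mathcal{T} = \{y=0\}$, so set $\lambda_t = \lambda_0 + t(\lambda - \lambda_0)$, which is contact on a fixed neighborhood of $\mathcal{T}$ for all $t \in [0,1]$ since $\lambda - \lambda_0 = O(y^2)$ and the contact condition holds on $\{y=0\}$. I would solve the Moser equation $\mathcal{L}_{X_t}\lambda_t + (\lambda - \lambda_0) = \mu_t \lambda_t$ with $X_t \in \ker\lambda_t$, so that $\mu_t = (\lambda - \lambda_0)(R_t)$ and $X_t$ is then determined by nondegeneracy of $d\lambda_t$ on $\ker\lambda_t$, and integrate $X_t$ to an isotopy $\phi_t$ with $\phi_0 = \mathrm{id}$. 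Because $\lambda - \lambda_0$ vanishes to second order along $\mathcal{T}$, so do $\mu_t$ and $X_t$; hence $\phi_1$ fixes $\mathcal{T}$ (and equals the identity there to first order), and $\phi_1^*\lambda = h\lambda_0$ with $h = \exp\!\big(\int_0^1 \mu_s\circ\phi_s\,ds\big) > 0$. Finally $h|_{y=0} = 1$ and $dh|_{y=0} = 0$, since $\mu_s = (\lambda - \lambda_0)(R_s) = O(y^2)$ forces $\int_0^1 \mu_s \circ \phi_s\,ds = O(y^2)$.

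I expect the main obstacle to be the relative Moser/Gray stability step of stage (iii): one must check that the interpolating family stays contact on a neighborhood that does not shrink to $\mathcal{T}$, and — crucially for the boundary behaviour of $h$ — that the generating vector field really vanishes to second order along $\mathcal{T}$. The only other delicate point is the normalization $\lambda|_{\mathcal{T}} = dz$ in stage (i), which uses that the transverse period of $\lambda|_{\mathcal{T}}$ can be made to vanish by choosing the transversal appropriately; this, together with the remaining routine verifications, is precisely what the cited theorem of \cite{oh_wang_2018} provides.
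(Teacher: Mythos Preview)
The paper does not give its own proof of this proposition; it simply cites \cite{oh_wang_2018} and refers to a sketch in \cite{Yaocas}. Your relative Gray/Moser outline is exactly the standard route to such a normal form and is presumably what those references carry out, so your proposal is correct in substance.

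One point deserves sharpening, though. In stage (ii) you ``absorb the $dz$-part of the linear term into a conformal factor'' and then proceed in stage (iii) as if $\lambda - \lambda_0 = O(y^2)$ and as if the final $h$ is given entirely by the Gray integral $\exp\big(\int_0^1 \mu_s\circ\phi_s\,ds\big)$. If that conformal factor $g$ were genuinely nontrivial (say $g = 1 + c(x,z)y + O(y^2)$), it would have to be carried along and would contribute $\partial_y h|_{y=0} = c$, spoiling the conclusion $dh|_{y=0}=0$. In fact this step is unnecessary: once you have arranged $\partial_z|_{y=0} = R$ and $\lambda(\partial_y)|_{y=0}=0$, the $y\,dz$ linear term vanishes automatically, since
\[
\partial_y\big(\lambda(\partial_z)\big)\big|_{y=0}
= d\lambda(\partial_y,\partial_z)\big|_{y=0} + \partial_z\big(\lambda(\partial_y)\big)\big|_{y=0}
= d\lambda(\partial_y,R)\big|_{y=0} + 0 = 0,
\]
using $\iota_R d\lambda = 0$. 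So after your coordinate changes alone (removing the $y\,dy$ term and rescaling $y$) you already have $\lambda = \lambda_0 + O(y^2)$, and then your stage (iii) argument goes through cleanly with $h = 1 + O(y^2)$ as claimed. Your identification of the transverse-period normalization in stage (i) as the remaining delicate point, deferred to \cite{oh_wang_2018}, is accurate.
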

See \cite{Yaocas} Theorem Proposition 2.2 for a sketch of the proof.
By the Morse-Bott assumption there are only finitely many such tori up to fixed action $L$.
We assume we have chosen such neighborhoods around all Morse Bott Tori ${\mcal{T}_i}$. Next we shall perturb them to nondegenerate Reeb orbits by perturbing the contact form in a neighborhood of each torus as described below. This is the same perturbation as in \cite{Yaocas}.

Let $\delta>0$, let $f:x\in \mathbb{R}/\bb{Z} \rightarrow \mathbb{R}$ be a smooth Morse function with maximum at $x=1/2$ and minimum $x=0$. Let $g(y):\bb{R} \rightarrow \bb{R}$ be a bump function that is equal to $1$ on $[-\ep_{\mcal{T}_i},\ep_{\mcal{T}_i}]$ and zero outside $[-2\ep_{\mcal{T}_i},2\ep_{\mcal{T}_i}]$. Here $\ep_{\mcal{T}_i}$ is a small number chosen for each $\mcal{T}_i$ small enough so that the normal form in the above theorem applies to all Morse-Bott tori of action $<L$, and that all such chosen neighborhoods these Morse-Bott tori are disjoint. Then in neighborhood of the Morse-Bott tori $\mcal{T}_i$ we perturb the contact form as
\[
\lambda \longrightarrow \lambda_\delta:= e^{\delta gf}\lambda.
\]
We can describe the change in Reeb dynamics as follows:
\begin{proposition}
For fixed action level $L>0$ there exists $\delta>0$ small enough so that the Reeb dynamics of $\lambda_\delta$ can be described as follows. In the trivialization specified by Proposition \ref{locform}, each Morse-bott torus splits into two non-degenerate Reeb orbits corresponding to the two critical points of $f$. One of them is hyperbolic of index $0$, the other is elliptic with rotation angle $|\theta| <C\dt <<1$ and hence its Conley-Zehnder index is $\pm 1$. There are no additional Reeb orbits of action $<L$.
\end{proposition}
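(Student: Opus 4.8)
The plan is to carry out the computation in the normal coordinates $(z,x,y)\in S^1\times S^1\times(-\ep,\ep)$ of Proposition \ref{prop_locform}, where $\lambda_\delta = H\lambda_0$ with $\lambda_0 = dz - y\,dx$ and $H = e^{\delta g(y) f(x)}\,h(x,y,z)$. Solving $\iota_{R_\delta}\lambda_\delta = 1$ and $\iota_{R_\delta}d\lambda_\delta = 0$ and using $h(x,0,z)\equiv 1$, $dh(x,0,z)\equiv 0$ together with $g\equiv 1$, $g'(0)=0$ near $y=0$, one obtains on the Morse-Bott torus
\[
R_\delta|_{y=0} \;=\; e^{-\delta f(x)}\,\partial_z \;-\; \delta f'(x)\,e^{-\delta f(x)}\,\partial_y .
\]
Hence the only $R_\delta$-orbits lying in $\{y=0\}$ are the circles over the two critical points of $f$, i.e.\ over the maximum $x=1/2$ and minimum $x=0$; each is a genuine closed $R_\delta$-orbit, of period $e^{\delta f(x_0)}$ times the period of $\mcal{T}_i$, and these together with their iterates of action $<L$ are the candidate new orbits. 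I would also record that $R_\delta\to\partial_z$ uniformly as $\delta\to 0$, so that $dz(R_\delta)>0$ on a slightly shrunk neighborhood once $\delta$ is small; combined with a Gronwall bound on $\dot y = O(\delta)+O(y^2)$ this controls $z$-winding numbers and keeps orbits of action $<L$ which start near $\{y=0\}$ inside $N(\mcal{T}_i)$.

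To determine the type of the orbit $\gamma_{x_0}$ over a critical point $x_0$ of $f$, I would linearize $R_\delta$ to first order off $\{y=0\}$ and, using $z$ as time, obtain the transverse variational equation
\[
\xi' = a(x_0,z)\,\eta + O(\delta), \qquad \eta' = -\,\delta\, f''(x_0)\,\xi + O(\delta^2),
\]
where $a(x,z) := \partial_y^2 h(x,0,z)$ and $\bar a$ denotes its integral over one period of $\mcal{T}_i$; the Morse-Bott (transverse non-degeneracy) condition on $\mcal{T}_i$ is exactly $\bar a\neq 0$, and the unperturbed return map is the shear $\big(\begin{smallmatrix}1&\bar a\\0&1\end{smallmatrix}\big)$ in the $(\partial_x,\partial_y)$ frame. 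Since the variational flow is symplectic, so its generator is traceless, a variation-of-parameters computation of the trace of the return map $P_\delta$ collapses to $\mathrm{tr}(P_\delta) = 2 - c\,\bar a\, f''(x_0)\,\delta + O(\delta^2)$ with $c>0$ the period. As $f''<0$ at the maximum and $f''>0$ at the minimum, exactly one of the two orbits has $\mathrm{tr}(P_\delta)>2$ — positive hyperbolic, hence Conley-Zehnder index $0$ and hyperbolic under all iterates — while the other has $\mathrm{tr}(P_\delta)\in(-2,2)$ with $\mathrm{tr}(P_\delta)\to 2$, i.e.\ elliptic with rotation angle $\theta$ satisfying $|\theta|\to 0$ as $\delta\to 0$ (the trace expansion gives the explicit rate); consequently $k|\theta|<2\pi$ for every iterate of action $<L$, and the Conley-Zehnder index of all such iterates is $\pm 1$, the sign fixed by that of $\bar a$.

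It remains to exclude further closed $R_\delta$-orbits of action $<L$. Outside $\bigcup_i N(\mcal{T}_i)$ we have $\lambda_\delta = \lambda$, so such an orbit would be a closed $R_\lambda$-orbit of action $<L$, which by the Morse-Bott hypothesis lies on some $\mcal{T}_i\subset N(\mcal{T}_i)$ — a contradiction. For orbits meeting some $N(\mcal{T}_i)$ I would argue by contradiction: if $\delta_n\to 0$ and $\gamma_n$ are closed $R_{\delta_n}$-orbits of action $<L$, none of which is an iterate of the two new orbits, then the periods are bounded above by $L$ and below away from $0$ (a flow-box argument), so, since $R_{\delta_n}\to R_\lambda$ in $C^\infty$, a subsequence converges to a closed $R_\lambda$-orbit of action $\le L$ (no breaking occurs for Reeb flows on a fixed closed manifold); taking $L$ outside the $\lambda$-action spectrum, that limit is a $z$-circle on some $\mcal{T}_i$, so $\gamma_n\subset N(\mcal{T}_i)$ for large $n$. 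This reduces everything to a local statement: for $\delta$ small, the only closed $R_\delta$-orbits contained in $N(\mcal{T}_i)$ with $z$-winding number bounded in terms of $L$ are the two over the critical points of $f$ and their iterates. I would prove this by a Lyapunov-Schmidt reduction of the $k$-fold Poincaré return map $\Psi_\delta^k$ near its $\delta=0$ fixed-point circle $\{y=0\}$: the leading $\delta$-term in the $y$-component of $\Psi_\delta^k$ is a nonzero constant times $f'(x)$, forcing $x$ near $\mathrm{Crit}(f)$ and $y = O(\delta)$, where (as $f$ is Morse) there is exactly one fixed point near each critical point, namely the $k$-fold iterate of $\gamma_{x_0}$; the complementary region $\{\eta_0\le |y|\le\ep\}$, on which $\Psi_0^k$ has no fixed point by the hypothesis, is handled by uniform convergence $\Psi_\delta^k\to\Psi_0^k$. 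Finally the periods of the new orbits and their iterates tend to those of $\mcal{T}_i$ and exhaust the action window $<L$.

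The main obstacle is this last step, namely reconciling the soft compactness argument — needed to confine a hypothetical extra orbit to a single neighborhood $N(\mcal{T}_i)$ and to exclude orbits that drift between neighborhoods or in and out of the unperturbed region — with the more explicit Lyapunov-Schmidt analysis showing that the degenerate circle of fixed points of $\Psi_\delta^k$ breaks into exactly two nondegenerate fixed points matching $\mathrm{Crit}(f)$, uniformly over the finitely many relevant winding numbers. The hyperbolic/elliptic dichotomy, by contrast, is a routine linear-ODE trace computation; the only care needed there is to keep track of the Morse-Bott shear $\bar a$ and of the trivialization and sign conventions pinning the Conley-Zehnder indices down to $0$ and $\pm 1$.
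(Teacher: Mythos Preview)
The paper does not give its own proof of this proposition: it simply writes ``For proof see \cite{BourPhd}.'' Your proposal is therefore not competing with an argument in the paper but supplying one where the paper defers to Bourgeois' thesis.

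Your approach is the standard one and is essentially what Bourgeois does: compute $R_\delta$ in the normal form of Proposition~\ref{prop_locform}, read off that on $\{y=0\}$ only the circles over $\mathrm{Crit}(f)$ survive, linearize the return map to get the trace expansion $\mathrm{tr}(P_\delta)=2-c\,\bar a\,f''(x_0)\,\delta+O(\delta^2)$ and hence the hyperbolic/elliptic split with small rotation angle, and then exclude extra orbits of action $<L$ by a compactness-plus-Lyapunov--Schmidt argument on the return map near the degenerate fixed circle. The outline is correct and the identification of the ``main obstacle'' (confining a hypothetical extra orbit to a single $N(\mcal{T}_i)$ and handling the finitely many relevant winding numbers uniformly) is accurate; this is exactly the step that requires some care in Bourgeois' argument as well. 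One small point: you should make explicit that the Morse--Bott hypothesis guarantees $\bar a\neq 0$ (you state this but it is worth emphasizing, since without it the trace computation is inconclusive), and that the sign of $\bar a$ is what distinguishes positive from negative Morse--Bott tori in the paper's later terminology.
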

For proof see \cite{BourPhd}.

\begin{remark}
Later when we define various terms in the ECH index, they will depend on the choice of trivializations of the contact structure on the Reeb orbits. We will always choose the trivialization specified by Proposition \ref{prop_locform}. For convenience of notation we will call this trivialization $\tau$ and write for example $c_\tau$ or $Q_\tau$ for the definition of relative Chern class or intersection form with respect to this trivialization.

We also observe that after iterating the Reeb orbit in the Morse-Bott tori, their Robbin-Salamon index stays the same (\cite{Gutt2014}). So up to action $L$, in the nondegenerate picture, we will only see Reeb orbits of Conley-Zehnder index $-1,0,1$.
\end{remark}

\begin{definition}
We say a Morse Bott torus is positive if the elliptic Reeb orbit has Conley-Zehnder index 1 after perturbation; otherwise we say it is negative Morse Bott torus. This condition is intrinsic to the Morse-Bott torus itself, and is independent of trivializations or our choice of perturbations.
\end{definition}

We recall our goal is to define the ECH chain complex up to filtration $L$, and then take $L\rightarrow \infty$ to recover the entire ECH chain complex. Hence, let us consider for small $\delta>0$ the symplectization
\[
(M^4, \omega_\delta) := (\bb{R} \times Y^3,d(e^s\lambda_\delta))
\]
We equip $(M,\omega_\delta)$ with a $\lambda_\dt$ compatible almost complex structure $J_\delta$, and $(M,\omega):= (\bb{R} \times Y^3,d(e^s\lambda_\delta))$ with $\lambda$-compatible almost complex structure $J$. Both $J$ and $J_\delta$ should be chosen generically, with genericity condition specified in Definition \ref{def:transversality conditions} and Theorem \ref{generic path J}. In particular $J_\dt$ should be a small perturbation of $J$, i.e. the $C^\infty$ norm difference between $J_\dt$ and $J$ should be bounded above by $C\dt$.
For fixed $L$ and small enough and generic choice of $\delta$, the ECH of $(Y^3,\lambda_\delta)$ is defined for generators of action less than $L$ via counts of embedded J-holomorphic curves of ECH index 1. To motivate our construction, we next take $\delta \rightarrow 0$ to see what kinds of objects these $J$ holomorphic curves degenerate into. By a theorem of that first appeared in Bourgeois' thesis \cite{BourPhd} and also stated in \cite{SFT} (for a proof see the Appendix of \cite{Yaocas}), they degenerate into $J$-holomorphic cascades. (For a more careful definition of cascades see the appendix of \cite{Yaocas} that takes into account of stability of domain and marked points, but the definition here suffices for our purposes).

\begin{definition} [
\cite{BourPhd}, See also definition 2.7 in \cite{Yaocas}]
Let $\Sigma$ be a punctured (nodal) Riemann surface, potentially with multiple connected components.
A cascade of height 1, which we will denote by $\cas{u}$, in $(\bb{R}\times Y^3,d(e^s\lambda)$ consists of the following data :
\begin{itemize}
    \item A labeling of the connected components of $\Sigma ^*=\Sigma \setminus \{ \text{nodes} \}$ by integers in
    $\{1, . . . , l\}$, called sublevels, such that two components sharing a node have sublevels differing by at most 1. We denote by $\Sigma_i$ the union of connected components of sublevel $i$, which might itself be a nodal Riemann surface.
    \item $T_i \in [0,\infty)$ for $ i = 1, . . . , l - 1$.
    \item  $J$-holomorphic maps $u^i: (\Sigma_i, j) \rightarrow (\bb{R}\times Y^3, J)$ with $E(u_i) < \infty$ for $ i = 1, . . . , l$, such that:
    \begin{itemize}
        \item Each node shared by $\Sigma_i$ and $\Sigma_{i+1}$, is a negative puncture for $u^i$ and is a positive puncture for $u^{i+1}$. Suppose this negative puncture of $u^i$ is asymptotic to some Reeb orbit $\gamma_i \in \mcal{T}$, where $\mcal{T}$ is a Morse-Bott torus, and this positive puncture of $u^{i+1}$ is asymptotic to some Reeb orbit $\gamma_{i+1} \in \mcal{T}$, then we have that $\phi^{T_i}_f(\gamma_{i+1}) = \gamma_{i}$. Here $\phi^{T_i}_f$ is the upwards gradient flow of $f$ for time $T_i$ lifted to the Morse-Bott torus $\mcal{T}$. It is defined by solving the ODE
        \[
        \frac{d}{ds} \phi_f(s) = f'(\phi_f(s)).
        \]
        \item $u^i$ extends continuously across nodes within $\Sigma_i$.
        \item No level consists purely of trivial cylinders. However we will allow levels that consist of branched covers of trivial cylinders.
    \end{itemize}
\end{itemize}
\end{definition}
\begin{convention}
We fix our conventions as in \cite{Yaocas}.
\begin{itemize}
    \item We say the punctures of a $J$-holomorphic curve that approach Reeb orbits as $s\rightarrow \infty$ are positive punctures, and the punctures that approach Reeb orbits as $s\rightarrow -\infty$ are negative punctures. We will fix cylindrical neighborhoods around each puncture of our $J$-holomorphic curves, so we will use ``positive/negative ends'' and ``positive/negative punctures'' interchangeably.  By our conventions, we think of $u^1$ as being a level above $u^2$ and so on.
    \item We refer to the Morse-Bott tori $\mcal{T}_j$ that appear between adjacent levels of the cascade $\{u^i,u^{i+1}\}$ as above, where negative punctures of $u^i$ are asymptotic to Reeb orbits that agree with positive punctures from $u^{i+1}$ up to a gradient flow, \textit{intermediate cascade levels}.
    \item We say that the positive asymptotics of $\cas{u}$ are the Reeb orbits we reach by applying $\phi_f^\infty$ to the Reeb orbits hit by the positive punctures of $u^1$. Similarly, the negative asymptotics of $\cas{u}$ are the Reeb orbits we reach by applying $\phi_f^{-\infty}$ to the Reeb orbits hit by the negative punctures of $u^l$. They are always Reeb orbits that correspond to critical points of $f$. We note if a positive puncture (resp. negative puncture) of $u^1$ (resp. $u^l$) is asymptotic to a Reeb orbit corresponding to a critical point of $f$, then applying $\phi^{+\infty}_f$ (resp. $\phi_f^{-\infty}$) to this Reeb orbit does nothing.
\end{itemize}
\end{convention}

\begin{definition}[\cite{BourPhd}, Chapter 4, See also definition 2.9 in \cite{Yaocas} ] \label{def height k cascade}
A cascade of height $k$ consists of $k$ height 1 cascades, $\cas{u}_k =\{u^{1\text{\Lightning}},...,u^{k\text{\Lightning}}\}$ with matching asymptotics concatenated together. 

By matching asymptotics we mean the following. Consider adjacent height one cascades, $u^{i\text{\Lightning}}$ and $u^{i+1\text{\Lightning}}$. Suppose a positive end of the top level of $u^{i+1\text{\Lightning}}$ is asymptotic to the Reeb orbit $\gamma$ (not necessarily simply covered). Then if we apply the upwards gradient flow of $f$ for infinite time we arrive at a Reeb orbit reached by a negative end of the bottom level of $u^{i\text{\Lightning}}$. We allow the case where $\gamma$ is at a critical point of $f$, and the flow for infinite time is stationary at $\gamma$. We also allow the case where $\gamma$ is at the minimum of $f$, and the negative end of the bottom level of $u^{i\text{\Lightning}}$ is reached by following an entire (upwards) gradient trajectory connecting from the minimum of $f$ to its maximum. If all ends between adjacent height one cascades are matched up this way, then we say they have matching asymptotics.

We will use the notation $\cas{u}_k$ to denote a cascade of height $k$. We will mostly be concerned with cascades of height 1 in this article, so for those we will drop the subscript $k$ and write $\cas{u} = \{u^1,...,u^l\}$.
\end{definition}
\begin{remark}
As mentioned in \cite{Yaocas}, we can also think of a cascade of height $k$ as a cascade of height 1 where $k-1$ of the intermediate flow times are infinite.
\end{remark}

We now state a SFT style compactness theorem relating non-degenerate $J_\dt$ holomorphic curves to cascades. However, the precise statement is rather technical and requires us to take up Deligne-Mumford compactifications of the moduli space of Riemann surfaces. The full version is stated in \cite{SFT} (see also the Appendix of \cite{Yaocas}, where we also sketch a proof). For our purposes it will be sufficient to state the theorem informally as below.

\begin{theorem}
(See \cite{SFT})
Let $u_{\dt_n}$ be a sequence of $J_{\dt_n}$-holomorphic curves with uniform upper bound on genus and energy, then a subsequence of $u_{\dt_n}$ converges to a cascade of $J$- holomorphic curves (which can be apriori of arbitrary height).
\end{theorem}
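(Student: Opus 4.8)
The plan is to derive this as an instance of the SFT compactness theorem of \cite{SFT}, applied to the converging sequence of data $(\lambda_{\delta_n}, J_{\delta_n})$, and then to read off the cascade structure of the resulting limiting holomorphic building by analyzing the curves near the Morse-Bott tori using the explicit form of the perturbation $\lambda_\delta = e^{\delta g f}\lambda$. First I would check that the compactness machinery applies: the data $(\lambda_{\delta_n}, J_{\delta_n})$ converge in $C^\infty$ to $(\lambda, J)$, where $\lambda$ is only weakly nondegenerate because it is Morse-Bott; the genus and Hofer energy of the $u_{\delta_n}$ are bounded by hypothesis, and an energy bound in turn bounds the number of punctures (each puncture contributes at least the minimal orbit action to the energy). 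Thus the hypotheses of the Morse-Bott / converging-data version of the compactness theorem in \cite{SFT} (see also \cite{BourPhd} and the Appendix of \cite{Yaocas}) are satisfied, and after passing to a subsequence the $u_{\delta_n}$ converge, in the Deligne--Mumford sense, to a holomorphic building.

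The substance of the theorem is identifying this building with a cascade of $J$-holomorphic curves. The key point is that the perturbation is supported in the disjoint neighborhoods of the Morse-Bott tori fixed in Section \ref{degenerations}: outside these $\lambda_{\delta_n} = \lambda$ and $J_{\delta_n} = J$ identically, so there the limit is an honest multi-level $J$-holomorphic building in $\bb{R}\times Y$, whose pieces are the candidate levels $u^i$, each $J$-holomorphic since $J_{\delta_n}\to J$ in $C^\infty$. The only phenomenon not present in the usual nondegenerate picture occurs in the neck regions of $u_{\delta_n}$ that map into a neighborhood of some Morse-Bott torus $\mcal{T}_i$ and carry asymptotically vanishing energy; these are exactly the regions that produce the intermediate flow parameters $T_i$. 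Using the normal form $\lambda = h\lambda_0$ of Proposition \ref{prop_locform} and computing the Reeb flow of $\lambda_{\delta_n}=e^{\delta_n g f}\lambda$, one sees that near $\mcal{T}_i$ the only closed Reeb orbits of action $< L$ are the two nondegenerate orbits at the critical points of $f$, and that elsewhere the perturbed Reeb flow is, to leading order in $\delta_n$, a slow drift whose effect on the $S^1$-family of orbits is governed by the gradient of $f$.

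The technical heart is then an adiabatic (long-cylinder) estimate near the Morse-Bott tori: a $J_{\delta_n}$-holomorphic cylinder of sufficiently small energy whose image lies in a neighborhood of $\mcal{T}_i$ is, after reparametrization, $C^1$-close --- with error of size $O(\delta_n)$ together with corrections exponentially small in the cylinder length --- to a trivial cylinder over a Morse-Bott orbit $\gamma$ whose position on the $S^1$-family evolves by the $\delta_n$-rescaled upward gradient flow of $f$. Rescaling the cylindrical parameter $s\mapsto \delta_n s$ converts such a neck into a genuine upward gradient trajectory segment of $f$ of length $T_i\in[0,\infty]$; an infinite rescaled length forces the building to split into additional levels, which is why the limiting cascade may a priori have arbitrary height. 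Carrying out this analysis inductively through the levels of the building --- at each stage excluding loss of energy into the necks or into branched covers of trivial cylinders, and checking at each neck that the negative end of the upper curve and the positive end of the lower curve are asymptotic to orbits on one and the same Morse-Bott torus with equal multiplicity and with $S^1$-positions joined by the time-$T_i$ gradient flow of $f$ --- produces precisely the data of a cascade of $J$-holomorphic curves in the sense of Definition \ref{def height k cascade}. I expect this adiabatic estimate to be the main obstacle: at the length scale of a neck the limiting Reeb dynamics is not even Morse-Bott-nondegenerate but a slow drift, so ordinary SFT compactness gives no control there, and one must exploit the precise structure of $e^{\delta g f}\lambda$ and of the almost complex structures $J_{\delta_n}$ for the gradient flow of $f$ to appear. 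These estimates are carried out in \cite{BourPhd} and in the Appendix of \cite{Yaocas}, to which we defer the details.
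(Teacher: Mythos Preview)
Your sketch is correct and is essentially the argument of the cited references. Note, however, that the paper itself does \emph{not} prove this theorem: it is stated with the attribution ``(See \cite{SFT})'' and the surrounding text explicitly says the full version is in \cite{SFT} with a proof sketch in the Appendix of \cite{Yaocas}. So there is no ``paper's own proof'' to compare against; what you have written is a faithful outline of the Bourgeois-type argument carried out in \cite{BourPhd} and \cite{Yaocas}, which is exactly what the paper defers to.
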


Since ECH is really a theory of holomorphic currents, we find it also useful to define a \textit{cascade of holomorphic currents}, which is what we shall primarily work with.

\begin{definition}
A height 1 holomorphic cascade of currents $\cas{\mathbf{u}}= \{u^1,..,u^n\}$ consists of the following data:
\begin{itemize}
    \item Each $u^i$ consists of holomorphic currents of the form $(C^i_j,d_j^i)$. Each $C^i_j$ is a somewhere injective holomorphic curve with $E(C^i_j)<\infty$. The positive integer $d^i_j$ is then the multiplicity.
    \item Numbers $T_i \in [0,\infty), i=1,..,n-1$
    \item Let $\gamma_i$ be a simply covered Reeb orbit that is approached by the negative end of some component of $u^i$, say the components $C^i_{j_1},...,C^i_{j_k}$ (such curves have associated multiplicity $d^i_{j_1},...,d^i_{j_k}$). Each $C^i_{j_*}$ approaches $\gamma_i$ with a covering multiplicity $n_{j_*}$, which is how many times $\gamma_i$ is covered by $C^i_{j_*}$ as currents. Then the total multiplicity of $\gamma_i$ as covered by $u^i$ is given by $\sum_{*=1,..k} d^i_{j_*}n_{j_*}$. Then consider $\phi_{f}^{T_i}(\gamma_{i+1}):= \gamma_i$. Then $u^{i+1}$ is asymptotic to $\gamma^{i+1}$ in its positive end with total multiplicity $\sum_{*=1,..k} d^i_{j_*}n_{j_*}$ also.
    \item No level consists of purely of trivial cylinders (even if they have higher multiplicities).
\end{itemize}

\end{definition}
We define the positive asymptotics of $
\cas{\mathbf{u}}:=\{u^1,..,u^n\}$ as before, except we only care about Reeb orbits up to multiplicity. Then we can similarly define a cascade of currents of height $k$ by stacking together cascades of currents of height $1$.

We will refer to ordinary cascade a ``cascade of curves'' when we wish to distinguish it from a cascade of currents. Then given a cascade of curves, we can pass it to a cascade of currents by using the following procedure:
\begin{procedure}\label{curve_to_current}

\begin{itemize} 
    \item Replace every multiple covered non-trivial curve with a current of the form $(C,m)$ where $C$ is  a  somewhere  injective  curve,  and  we  translate  all $m$ copies  along $\bb{R}$ to  make  the  entire collection somewhere injective.
    \item If we see a multiply covered trivial cylinder we replace it with $(C,m)$  where $m$ is the multiplicity and $C$ is a trivial cylinder.
    \item If we see a nodal curve in one of the levels, we separate the node and apply the above process to each of the separated components of the nodal curve.
    \item We remove all levels that only have currents made out of trivial cylinders. Suppose $u^i$ is a level only consisting of trivial cylinders to be removed, and suppose the $s\rightarrow +\infty$ end is a intermediate cascade level with flow time $T_{i-1}$, and the $s\rightarrow -\infty$ end of $u^i$ has associated flow time $T_i$, after the removal of $u^i$ level, the newly adjacent levels $u^{i-1}$ and $u^{i+1}$ have flow time between them equal to $T_i+T_{i-1}$.
\end{itemize}
\end{procedure}
In passing from cascades of curves to currents we have lost some information, but we shall see currents are the natural settings to talk about ECH index.

We later wish to make sense of the Fredholm index of a cascade of currents. To this end we make the definition of \emph{reduced cascade of currents}.

\begin{definition}\label{def:reduced cascade}
Given a cascade of currents $\cas{\mathbf{u}}$, for components within it of the form $(C,m)$ where $m>1$ and $C$ is a nontrivial holomorphic curve, we then replace $(C,m)$ with just $(C,1)$. After we perform this operation we obtain another cascade of currents, which we label $\cas{\tilde{\mathbf{u}}}$, which we call the reduced cascade of currents.
\end{definition}

\section{Index calculations and transversality}
The heart of the calculation that underlies ECH is this: the ECH index bounds from the above the Fredholm index, and if there are curves of ECH index one with bad behaviour (singularities, multiply covers), this would imply the existence of somewhere injective curves of Fredholm index less than 1, which cannot happen for generic $J$. In this section we take up the issue of establishing Fredholm index for $J$ holomorphic cascades, and explain the transversality issue we encounter.

Given a reduced cascades of currents, $\cas{\mathbf{\tilde{u}}}= \{\tilde{u}^1,...,\tilde{u}^n\}$, we would like to assign to it a Fredholm index. Ideally this Fredholm index measures geometrically the dimension of the moduli space this particular cascade lives in. We note that by passing to the reduced cascade the multiplicities associated to ends of adjacent levels, $\tilde{u}^i$ and $\tilde{u}^{i+1}$ do not necessarily match up, but by imposing there is a single flow time parameter $T_i$ between adjacent levels still means we can think of $\cas{\mathbf{u}}$ as living in a fiber product with virtual dimension. 

To this end we first recall some conventions when it comes to $J$-holomorphic curves with ends on Morse-Bott critical submanifolds (in this case, tori). Consider $\tilde{u}^i$, for simplicity suppose its domain $\dot{\Sigma}_i$ is a punctured Riemann surface that is connected. Let $p_j^\pm$ label the positive/negative punctures, and the map $\tilde{u}^i$ is asymptotic to Reeb orbits (of some multiplicity) on Morse-Bott tori at each of its punctures. We wish to associate to $\tilde{u}^i$  a moduli space of curves that contain $\tilde{u}^i$ as an element and contains curves that are ``close'' to $\tilde{u}^i$. To this end we recall some conventions.

To each puncture $p_j^\pm$ of $\tilde{u}^i$, we can designated it as ``fixed'' or ``free'', and each choice of these designations leads to a different moduli space. The designation  ``free'' means we consider $J$-holomorphic maps from $\dot{\Sigma}_i$ so that $p_j^\pm$ can land on any Reeb orbit with the same multiplicity on the same Morse-Bott torus at the corresponding end of $\tilde{u}^i$. For a puncture to be considered ``fixed'', we consider moduli space of $J$-holomorphic curves from $\dot{\Sigma}_i$ so that $p_j^\pm$ lands on a fixed Reeb orbits on a Morse-Bott torus with fixed multiplicity (the same Reeb orbit as $\tilde{u}^i$). Given a designation of ``fixed'' or ``free'' on punctures of $\tilde{u}^i$, we can then consider the moduli space of $J$ holomorphic curves from $\dot{\Sigma}_i$ into $\bb{R}\times Y$ with the same asymptotic constraints as $\tilde{u}^i$ and living in the same relative homology class. We shall denote this moduli space as $\mcal{M}_{\mathbf{c}}(\tilde{u}^i)$, using $\mathbf{c}$ to denote our choice of fixed/free ends. This moduli space has virtual dimension given by:
\begin{equation}
    Ind(\tilde{u}^i) : = -\chi(\tilde{u}^i) +2c_1(\tilde{u}^i) + \sum _{p_j^+} \mu(\gamma^{q_{p_j^+}} )- \sum _{p_j^-} \mu(\gamma^{q_{p_j^-}}) + \frac{1}{2}\# \text{free ends} - \frac{1}{2}\# \text{fixed ends}
\end{equation}
where $\chi$ is the Euler characteristic, $c_1$ the relative first Chern class, $\mu(-)$ is the Robbin Salamon index for path of symplectic matrices with degeneracies defined in \cite{Gutt2014}. We use the symbol $\gamma$ to denote the Reeb orbit the end $p_j^\pm$ is asymptotic to, with multiplicity $q_{p_j^\pm}$.

Given a reduced cascade of currents, $\cas{\mathbf{\tilde{u}}}$, let $\alpha$ denote the designation of ``free''/``fixed'' ends of $\tilde{u}^1$ at the $s\rightarrow +\infty$ end, and let $\beta$ denote the ``fixed''/``free'' designation of $\tilde{u}^n$ at the $s\rightarrow -\infty$ end. Later we will see we can replace $\alpha$ and $\beta$ with Morse-Bott ECH generators. In order to define the Fredholm index we need to assign free/fixed ends to the rest of the ends.

\begin{convention}\label{convention:free/fixed}
If a non trivial curve $u^i$ has an end landing on a critical point of $f$, then we consider that end to be fixed. If a trivial cylinder has one end on critical point of $f$, the other end must also land on the same critical point.
We allow trivial cylinders with both ends free. If the trivial cylinder is at a critical point of $f$, we take the convention we can only designate one of its ends as fixed.
\end{convention}

\begin{definition}\label{index}
Let $\cas{\tilde{\mathbf{u}}}=\{u^1,..,u^{n-1}\}$ denote a reduced cascade of currents of height 1. Let $ind(u^i)$ denote the Fredholm index of each of $u^i$. Note this makes sense since we have assigned free/fixed ends to all ends of $u^i$ by our conventions above. 

Suppose there are $R_2,..., R_{n-1} \in \bb{Z}$ distinct Reeb orbits approached by free ends as $s\rightarrow-\infty$ at each intermediate cascade level. Let us denote $k_i$ and $k_i'$ the number of free ends in each intermediate cascade level. e.g. elements in $u^1$ has $k_2$ free ends as $s\rightarrow -\infty$, and $u^2$ has $k_2'$ free ends as $s\rightarrow +\infty$. Both counts of $k_i$ and $k_i'$, as well as $R_i$ ignores ``free'' ends of fixed trivial cylinders, as such ``free'' ends are artificial to our convention.  
Now we define the cascade dimension
\begin{align*}
   Ind(\cas{\tilde{\mathbf{u}}})
   :=& Ind(u^1)+..+Ind(u^{n-1}) \\
    &- [k_2'...+k_{n-1}']-[k_2+...+k_{n-1}]+ [R_ 2+..+R_{n-1}]+(n-2)- (n-1)-L
\end{align*}
where $L$ is the number of intermediate cascade levels without free ends plus the number of intermediate cascade levels whose flow time is zero. Again in the count of $L$ we ignore ``free'' ends coming from fixed trivial cylinders.
\end{definition}
Observe for (reduced) cascades of height $1$, we always have $k_i\geq R_i$ and $k_i'\geq R_i$.

We next explain how to define/compute the dimension of height $k$ cascades. 
Let $\cas{\tilde{\mathbf{u}}}=\{u^1,..,u^{n-1}\}$ denote a reduced cascade of currents of height $N$. We recall the difference between height one and height $N$ cascade is that between cascade levels $u^i$ and $u^{i+1}$ we allow flow times $T_i = \infty$. We assign the free/fixed ends to $u^i$ depending on whether they land on critical points of $f$ as before. We can split a height $N$ cascade into $N$ height 1 cascades by partitioning the levels where the flow times are infinite. In particular we write $\cas{\tilde{\mathbf{u}}} = \left\{ \cas{\tilde{\mathbf{v^1}}}, ...,\cas{\tilde{\mathbf{v^N}}} \right \}$. Then the index of $\cas{\tilde{\mathbf{u}}}$ is given by the sum of the indices of $\cas{\tilde{\mathbf{v^i}}}$. 

Here we come to the key transversality assumption of this paper. We first make sense of the notion of transversality.

\begin{definition}\label{def:good j}
Let $\lambda$ be a Morse-Bott contact form, whose Reeb orbits come in $S^1$ families.
We say a $\lambda$ compatible almost complex structure $J$ is \textbf{good} if all reduced cascades of height one are tranversely cut out, which is defined below.
\end{definition}
\begin{remark}
We note the transversality conditions needed to count cascades given below are quite natural. However, since cascades have many parts the notation is bit complicated.
\end{remark}
\begin{definition} \label{def:transversality conditions}

Let $\cas{\tilde{\mathbf{u}}}=\{u^1,..,u^{n-1}\}$ denote a reduced cascade of currents of height 1.

We say $\cas{\tilde{\mathbf{u}}}=\{u^1,..,u^{n-1}\}$ is \textbf{transversely cut out} if the conditions below are met. 
\begin{itemize}
    \item Each moduli space $\mcal{M}_{c}(u^i)$ is transversely cut out with dimension given by the Fredholm index formula. Here the subscript $c$ implicitly denotes the assignments of fixed and free ends we assigned to each end of $u^i$ according to Convention \ref{convention:free/fixed}. Note fixed trivial cylinders are assigned index zero.
\end{itemize}
Suppose there are $R_2,..., R_{n-1} \in \bb{Z}$ distinct Reeb orbits reached by free ends at each intermediate cascade level. We label them by $\gamma(i,j)$ where $j=1,...,R_i$, and $i$ indexes which level we are referring to. For each $\gamma(i,j)$, we choose a negative puncture of $u^{i-1}$ that is asymptotic to $\gamma(i,j)$. We call this puncture $p^-(i-1,j)$. The other negative ends of $u^{i-1}$ that are asymptotic to $\gamma(i,j)$ are labelled $p^-(i-1,j,c, l)$, where $l=1,2.., n(\gamma(i,j),-)$. Next consider $\phi^{-T_{i-1}}(\gamma(i,j)))$. They are approached by positive punctures of $u^i$. For each $\phi^{-T_{i-1}}(\gamma(i,j)))$, we pick out a special free puncture $p^+(i,j)$. The remaining free positive ends of $u^i$ that are asymptotic to $\phi^{-T_{i-1}}(\gamma(i,j)))$ are labelled $p^+(i,j,c,l)$ for $l=1,...,n(\gamma(i,j),+)$. 

We next consider the evaluation maps. Given the collection of flow times $T_1,...,T_{n-1}$. Let $\mathfrak{I} \subset \{1,..,n-1\}$ denote the subset for which $T_i>0$, we consider the evaluation map
\begin{align}
        EV^-: \mcal{M}(u^1) \times  \mcal{M}(u^2)\times  ...\times \mcal{M}(u^{n-2}) \rightarrow (S^1)^{R_2} \times (S^1)^{R_3} \times ... \times (S^1)^{R_{n-1}}
    \end{align}
 given by
    \begin{align}
        (u'^{1},...,u'^{n-2}) \rightarrow (ev_1^-(u'^1), ev_2^-(u'^2),...,ev_{n-2}^-(u'^{n-2}))
    \end{align}

Here the evaluation is at the $p^-(i-1,j)$ puncture of $u^{i-1}$. We also consider the 
map
    \begin{align}
        EV^+: \mcal{M}(u^2) \times \mcal{M}(u^3) ...\times \mcal{M}(u^{n-1})  \rightarrow (S^1)^{R_2} \times (S^1)^{R_3} \times ... \times (S^1)^{R_{n-1}} 
    \end{align}
    given by:
    \begin{align}
        (u'^{2},...,u'^{n-1}) \rightarrow (ev_2^+(u'^2),...,ev_{n-1}^+(u'^{n-1}))
    \end{align}
    where the evaluation is at $p^+(i,j)$ of $u^i$. 
    We consider the flow map
    \[
    \Phi_f: (S^1)^{R_2} \times \bb{R}^* \times..\times (S^1)^{R_{n-1}} \times \bb{R}^* \rightarrow (S^1)^{R_2} \times (S^1)^{R_3} \times ... \times (S^1)^{R_{n-1}}.
    \]
    The notation $\bb{R}^*$ means the following: if $i \in \mathfrak{I}$ then we include a factor of $\bb{R}$ in the above product, otherwise we omit the factor. For $x_i\in S^1$ (i.e. a copy of $S^1$ among the product $(S^1)^{R_i}$), if $i \in \mathfrak{I}$ then the image of $x_i$ under $\Phi_f$ is given by $\phi_f^{T_i'}(x_i)$. If the index $i$ is not in $\mathfrak{I}$, then the image under $\Phi_f$ is $x_i$.
    We use the notation $\Phi_f\circ EV^+$ to denote the composition of the two maps, with domain  $\mcal{M}(u^2) \times \bb{R}^* \times \mcal{M}(u^2) ...\times \mcal{M}(u^{n-1})\times \bb{R}^*$ and image $(S^1)^{R_2} \times (S^1)^{R_3} \times ... \times (S^1)^{R_{n-1}}$.
    
    Let $\mcal{K}_-$ denote the subset of $\mcal{M}(u^1) \times  \mcal{M}(u^2)\times  ...\times \mcal{M}(u^{n-2})$ so that the ends $p^-(i,j)$ and $p^-(i,j,c,l)$ approach the same Reeb orbit. Let $\mcal{K}_+$ denote the subset of $\mcal{M}(u^2) \times \mcal{M}(u^3) ...\times \mcal{M}(u^{n-1})$ where $p^+(i,j)$ and $p^+(i,j,c,l)$ are asymptotic to the same Reeb orbit. Then
    \begin{itemize}
        \item Near $\cas{\mathbf{\tilde{u}}}$, both $\mcal{K}_\pm$ are transversly cut out submanifolds.
    \end{itemize}
    Then we can restrict $EV^\pm$ to $\mcal{K}_\pm$, in particular the map $\Phi_f \circ EV^+$ admits a natural restriction to $\mcal{K}_-\times \bb{R}^{|\mathfrak{I}|}$, our final condition is:
    \begin{itemize}
        \item $\Phi_f \circ EV^+$ and $EV^-$, when restricted to $\mcal{K}_+\times (\bb{R})^{|\mathfrak{I}|}$ and $\mcal{K}_-$ respectively are transverse at $\cas{\tilde{\mathbf{u}}}=\{u^1,..,u^{n-1}\}$
    \end{itemize}
\end{definition}
\begin{assumption}\label{assumption}
We assume we can choose $J$ to be good so that all reduced cascades of current we encounter satisfy the transversality condition above.
\end{assumption}
In particular, this implies all reduced cascades of currents live in a moduli space whose dimension is given by the index formula, and if such index is less than zero, then such cascades cannot exist.

We note that in general the transversality assumption is not automatic. In a reduced cascades of currents, all our curves are somewhere injective, but this is not enough. The issue lies in the fact that the fiber product that defines cascade can fail to have enough transversality. This is because all different levels of the cascade have the same $J$, and this $J$ cannot be perturbed independently in each level. When the cascade is complicated enough, the same curve can appear multiple times in different levels, and this causes difficulty with the evaluation map. 
Consequently when there is not enough transversality for the naive definition of the universal moduli space of reduced cascades to be a Banach manifold, one usually needs some additional arguments.

However in simple enough cases we can still achieve the above transversality condition. This is the content of Theorem \ref{thm:list of transversality conditions}, which is proved in the Appendix.

\section{ECH Index of Cascades}\label{Section:ECH index}
In this section we develop the analogue of ECH index one condition for cascades of currents. We shall see this will impose severe limits on currents that can appear in a cascade, provided transversality can be achieved.

To start the definition, we first consider one-level cascades, i.e. holomorphic curves from Morse-Bott tori to Morse-Bott tori. We want to define an index $I$ so that for somewhere injective curves:
\[
I(C) \geq dim\mcal{M}(C)+2\delta(C)
\]
where $\mcal{M}(C)$ denotes the moduli space of holomorphic curves $C$ belongs in. Note the definition of $dim\mcal{M}$ is ambiguous, because we need to specify which ends are ``fixed" and which are ``free". Our definition of $I$ will depend on the type of end conditions imposed on our curve. The key to our construction will be the relative adjunction formula.

\subsection{Relative adjunction formula in the Morse-Bott setting}
Here we clarify what we mean by the intersection form $Q$. We first provide a provisional definition that is very much similar to regular ECH, then we show this definition descends to a more natural definition adapted to the Morse-Bott setting.

Let $\alpha,\beta$ be orbit sets. Observe here this means that they pick out discrete Reeb orbits (potentially with multiplicity) among the $S^1$ family of Reeb orbits. Then we can define the relative intersection formula as: 
\begin{definition}\label{intersection_form_prelim_definition}
We fix trivializations of Morse-Bott tori as we have specified, and denote it by $\tau$. Given $\alpha,\beta$ orbit sets, given $Z,Z'\in H_2(\alpha,\beta,Y)$ we choose $\tau$ representatives $S$ $S'$ as before, then $Q_\tau(Z,Z')$ is defined as before as the algebraic count of intersections between $S$ and $S'$.
\end{definition}
Because $\tau$ here provides a global trivialization of all Reeb orbits in a given Morse-Bott torus, the intersection $Q$ doesn't depend on \textit{which} specific Reeb orbit $\alpha$ or $\beta$ picks out in a given Morse-Bott torus. We state the phenomenon in terms of a proposition:
\begin{proposition}
Given orbit sets $\alpha,\beta$ and relative homology classes $Z,Z'\in H_2(\alpha,\beta)$. For definiteness let $\gamma$ be a Reeb orbit in the $s\rightarrow +\infty$ end of $\alpha$, let $\gamma'$ be any translation of $\gamma$ in its Morse-Bott torus, then using $\gamma'$ to replace $\gamma$ defines another orbit set $\alpha'$. There exists corresponding relative homology classes $\hat{Z},\hat{Z}' \in H_2(\alpha',\beta,Y)$ obtained by attaching a cylinder that connects between $\gamma$ to $\gamma'$ to ends of $S$ and $S'$ that are asymptotic to $\gamma$, then
\[
Q_\tau(Z,Z')=Q_\tau(\hat{Z},\hat{Z}')
\]
\end{proposition}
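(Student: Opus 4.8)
The plan is to compute both sides directly from Definition \ref{intersection_form_prelim_definition}, using carefully chosen $\tau$-representatives and exploiting that the trivialization $\tau$ is \emph{global} over each Morse-Bott torus. Fix $\tau$-representatives $S$ and $S'$ of $Z$ and $Z'$, so that $Q_\tau(Z,Z')$ is the algebraic count of interior intersections of $S$ and $S'$. Let $\mcal{T}$ be the Morse-Bott torus containing $\gamma$ and $\gamma'$. In the coordinates $S^1\times S^1\times(-\ep,\ep)$ of Proposition \ref{prop_locform}, where $\mcal{T}=\{y=0\}$ and the Reeb orbits in $\mcal{T}$ are the circles $\{x=\text{const}\}$, pick a path in the $x$-circle from the $x$-value of $\gamma$ to that of $\gamma'$; sweeping it along the $z$-circle produces an embedded cylinder $C_0\subset\mcal{T}\subset Y$ from $\gamma$ to $\gamma'$ (if $\gamma'$ is far from $\gamma$ we realize the translation as a composition of small translations and iterate, so we may assume $C_0$ meets no other orbit of $\alpha$). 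Attaching a small interior push-off of $\{1\}\times C_0$ to the positive end of $S$ along $\{1\}\times\gamma$ yields a $2$-chain $\hat S$ with $\p\hat S=\{1\}\times\alpha'-\{-1\}\times\beta$, hence a representative of a class $\hat Z\in H_2(\alpha',\beta,Y)$; doing the same to $S'$ with a second push-off gives $\hat S'$ representing $\hat Z'$. These are the classes named in the statement.

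The heart of the argument is that $\hat S$ and $\hat S'$ can be arranged to be genuine $\tau$-representatives creating no new interior intersections. Near $\gamma$ the sheets of $S$ and of $S'$ approach along finitely many distinct rays in the normal disk to $\gamma$; since only finitely many rays must be avoided, we may choose these rays — together with the rays along which the push-offs of $C_0$ leave $\gamma$ and (symmetrically) meet $\gamma'$ — to be pairwise distinct, and we may push the two copies of $C_0$ off $\mcal{T}$ in the $y$-direction along different slopes, so that $\hat S\cap\hat S'$ acquires no intersection points from the appended cylinders. The one structural point is that these cylinders satisfy the non-rotation requirement in the definition of $\tau$-representative near $\gamma'$: this holds precisely because the trivialization $\tau$ fixed in the discussion following Proposition \ref{prop_locform} is global over $\mcal{T}$ — the coordinates $(z,x,y)$ trivialize the normal bundle of every Reeb orbit in $\mcal{T}$ simultaneously and $z$-independently — so $C_0\subset\mcal{T}$, being constant in the transverse ($x$-)direction, is constant with respect to $\tau$ and contributes zero winding at $\gamma'$. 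A further $C^0$-small adjustment arranges the remaining conditions of \cite{Hutchings2002} (projections to $Y$ near the ends embedded, ends covered with multiplicity one, interior intersections transverse), none of which alters $\hat Z,\hat Z'$ or the interior intersection count.

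Granting this, the proof concludes: away from a neighborhood of $\{1\}\times\mcal{T}$ we have $\hat S=S$ and $\hat S'=S'$, while inside that neighborhood the only possible new intersections are the ones we arranged to avoid, so $Q_\tau(\hat Z,\hat Z')=\#(\hat S\cap\hat S')_{\mathrm{int}}=\#(S\cap S')_{\mathrm{int}}=Q_\tau(Z,Z')$. The same argument at the $s\to-\infty$ end handles translations of orbits of $\beta$, and iterating handles simultaneous translations and higher multiplicities (attaching one push-off cylinder per end of the $\tau$-representative asymptotic to $\gamma$).

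I expect the main obstacle to be bookkeeping rather than conceptual: organizing the rays and $y$-direction push-offs of the $S$-sheets, the $S'$-sheets, and the two cylinder families so that all $\tau$-representative conditions hold at once and no intersection is forced; handling the case where $\gamma$ has multiplicity $>1$ in $\alpha$ (so that "attach a cylinder to the ends asymptotic to $\gamma$" means one push-off per such end); and noting that $\hat Z$ depends on the homotopy class of the chosen path in $\mcal{T}$ — different choices differ by $[\mcal{T}]\in H_2(Y)$, which is why the statement only asserts \emph{existence} of suitable $\hat Z,\hat Z'$, realized by taking the \emph{same} path for $S$ and $S'$. The conceptual content — a cylinder lying inside $\mcal{T}$ is invisible to $Q_\tau$ because $\tau$ is globally defined on $\mcal{T}$ — is exactly what the proposition is meant to record.
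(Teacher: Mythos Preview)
Your proposal is correct and follows essentially the same approach as the paper's proof: choose $\tau$-representatives $S,S'$, attach a cylinder in the Morse-Bott torus from $\gamma$ to $\gamma'$ to each, and observe that the global trivialization $\tau$ over the torus makes the resulting surfaces $\tau$-representatives again with no new intersections. The paper's proof is a two-sentence sketch of exactly this, so your write-up is a faithful (and considerably more detailed) expansion of it.
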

\begin{proof}
Choose $\tau$ representatives for $Z,Z'$ which we write as $S$, $S'$, then attach a cylinder connecting between $\gamma$ to $\gamma'$ to $S$ and $S'$. In our trivialization the resulting surfaces are still $\tau$ representatives, and this process does not introduce additional intersections.
\end{proof}

The above proposition suggests $Q_\tau$ in the Morse-Bott case descends to a intersection number whose input is not $H_2(\alpha,\beta,Y)$ but a more general relative homology group adapted to the Morse-Bott setting.

\begin{definition}
We define the relative homology classes $\mathcal{H}_2(\alpha,\beta,Y)$. Here $\alpha,\beta$ are collections of Morse-Bott tori, and multiplicities. For instance we can write $\alpha :=\{(\mcal{T}_i,m_i)| m_i \in \bb{Z}_{\geq 0}\}$ where $\mcal{T}_i$ are Morse-Bott tori, and $m_i$ are multiplicities. A element $Z \in \mathcal{H}_2(\alpha,\beta,Y)$ is a 2-chain in $Y$ so that 
\[
\partial Z = \alpha -\beta.
\]
The above equality means the boundary (which includes orientation) of $Z$ consists of Reeb orbits on Morse-Bott tori $\{\mcal{T}_i\}$, and each $\mathcal{T}_i \in \alpha$ has a total of $m_i$ Reeb orbits (counted with multiplicity) to which the ends of $Z$ are asymptotic. Likewise for $\beta$.
We define a equivalence relation on $\mathcal{H}_2(\alpha,\beta,Y)$, which we write as $Z\sim Z'$ as follows: $Z$ and $Z'$ are equivalent if there is a 3-chain $W$ whose boundary takes the following form:
\[
\partial W = Z-Z' + \{I\times S^1\}
\]
where the collection $\{I\times S^1\}$ consists of 2 chains on Morse-Bott tori that appear in either $\alpha$ or $\beta$. We think of these 2-chains as an Reeb orbit (which we think of $S^1$) times an interval, $I$.
\end{definition}

The idea is we consider 2-chains but allow their ends to slide along the Reeb orbits in the Morse-Bott family. The next proposition proves the relative intersection $Q$ remains well defined.

\begin{proposition}
$Q_\tau$ as defined above descends into a intersection form:
\[
Q_\tau:\mathcal{H}_2(\alpha,\beta,Y) \times \mathcal{H}_2(\alpha,\beta,Y)\rightarrow \bb{Z}.
\]
\end{proposition}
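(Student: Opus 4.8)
The plan is to show that $Q_\tau$ is invariant under the equivalence relation $\sim$ on $\mathcal{H}_2(\alpha,\beta,Y)$, so that the value on an equivalence class is independent of the chosen representative; the pairing itself is then defined by picking representatives and counting intersections, exactly as in Definition \ref{intersection_form_prelim_definition}. The two things to check are: (1) $Q_\tau$ is well defined on the level of $H_2(\alpha,\beta,Y)$ for fixed orbit sets (this is already classical, cf.\ \cite{Hutchings2002}, and follows from the fact that the algebraic intersection number is unchanged by adding boundaries of $3$-chains, since interior intersections can be pushed off and the $\tau$-representative conditions pin down the behavior near the ends); and (2) the count is unchanged when we slide an end of one (or both) of the representatives along a Reeb orbit within its Morse-Bott torus. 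Point (2) is precisely the content already established in the preceding proposition: attaching a cylinder in the Morse-Bott torus connecting $\gamma$ to a translate $\gamma'$ produces again a $\tau$-representative and introduces no new interior intersections, because in the fixed global trivialization $\tau$ the cylinder does not rotate and can be taken to lie on a distinct ray from all other ends.

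\textbf{Key steps, in order.} First I would recall that a class $Z\in\mathcal{H}_2(\alpha,\beta,Y)$ determines, after choosing specific Reeb orbit representatives on each Morse-Bott torus in $\alpha$ and $\beta$, an ordinary class in $H_2(\alpha',\beta',Y)$ for the resulting honest orbit sets $\alpha',\beta'$; conversely any such choice gives a surface with boundary on chosen orbits. Second, I would invoke the classical well-definedness of $Q_\tau$ on $H_2(\alpha',\beta',Y)$: given two $\tau$-representatives $S,S'$ of the same class, their difference bounds a $3$-chain, interior intersections are cut out transversely and the algebraic count is a homological invariant once the near-boundary behavior is fixed by the $\tau$-representative conditions. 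Third — the genuinely new input — I would verify compatibility with the sliding relation: if $Z\sim Z'$ via a $3$-chain $W$ with $\partial W = Z - Z' + \{I\times S^1\}$, then choosing $\tau$-representatives, the strip pieces $\{I\times S^1\}$ lie on Morse-Bott tori and, in the trivialization $\tau$, contribute no interior intersections (this is exactly the mechanism of the cylinder-attachment proposition, applied to both factors and to all affected ends simultaneously). Hence $Q_\tau(Z,Z') = Q_\tau(\hat Z,\hat Z')$ for the slid classes, and combined with step two this shows $Q_\tau$ factors through $\mathcal{H}_2(\alpha,\beta,Y)\times\mathcal{H}_2(\alpha,\beta,Y)$. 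Finally, bilinearity and $\mathbb{Z}$-valuedness are inherited from the ordinary relative intersection pairing.

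\textbf{Main obstacle.} The routine part is the classical invariance in step two; the part requiring care is bookkeeping in step three when \emph{both} inputs have ends sliding on the \emph{same} Morse-Bott torus, possibly at several ends with several multiplicities. One must arrange the strip pieces and the slid ends of $S$ and $S'$ to lie on mutually distinct rays near each orbit (using the freedom in the $\tau$-representative conditions, in particular that different ends approach along distinct rays and do not rotate), so that sliding introduces no spurious crossings and destroys none of the existing ones. I expect no conceptual difficulty here — it is the same local picture as in the cylinder-attachment proposition, iterated — but it is the step where one genuinely uses that $\tau$ is a \emph{global} trivialization of the contact structure over the entire torus, which is what makes ``which orbit is picked out'' irrelevant. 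A secondary point worth a sentence is checking that the equivalence relation $\sim$ is well posed (the $3$-chains form a subgroup under the relevant operations), so that $\mathcal{H}_2(\alpha,\beta,Y)$ is genuinely an affine space and the pairing descends; this is immediate from the definitions.
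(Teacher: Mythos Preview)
Your proposal is correct and follows essentially the same approach as the paper: reduce to the classical invariance of $Q_\tau$ on ordinary $H_2(\alpha',\beta',Y)$ by using the cylinder-attachment proposition to handle the sliding of ends along Morse-Bott tori. The paper packages this slightly more economically by fixing once and for all a distinguished Reeb orbit $\gamma_i$ on each torus (avoiding all orbits appearing in $Z,Z',Z''$) and attaching cylinders to bring every end to these $\gamma_i$; then $Z\sim Z''$ in $\mathcal{H}_2$ becomes an ordinary homology in a single $H_2(\alpha',\beta',Y)$, which sidesteps the simultaneous-sliding bookkeeping you flag as the main obstacle.
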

\begin{proof}
For clarity we use $\hat{Q}_\tau$ to denote the intersection form defined in Definition \ref{intersection_form_prelim_definition}. Suppose $Z,Z' \in \mathcal{H}_2(\alpha,\beta,Y)$, and suppose $Z'' \sim Z$. We  pick a distinguished Reeb orbit $\gamma_i$ for each Morse-Bott torus that appears in $\alpha,\beta$, and chosen so that $\gamma_i$ does not appear as a Reeb orbit in $Z,Z'$ and $Z''$. We connect Reeb orbits in $Z$, $Z'$ and $Z''$ to $\{\gamma_i\}$ counted with multiplicities using cyliners along each Morse-Bott tori to obtain $\hat{Z},\hat{Z}',\hat{Z}''$. We then define 
\[
Q_\tau(Z,Z') : = \hat{Q}_\tau(\hat{Z},\hat{Z''}).
\]
Observe in the above $\hat{Q}_\tau$ is an intersection form defined on $H_2(\alpha',\beta',Y)$ where $\alpha'$ and $\beta'$ are collections of Reeb orbits of the form $\{(\gamma_i,n_i)\}$. It suffices to prove $Q_\tau(Z'',Z') = Q_\tau(Z,Z'')$. To do this note the fact $Z \sim Z''$ in $\mathcal{H}_2(\alpha,\beta,Y)$ extends to an equivalence of $\hat{Z} \sim \hat{Z''}$ in $H_2(\alpha',\beta',Y)$, hence $\hat{Q}_\tau(\hat{Z}'',\hat{Z}') = \hat{Q}_\tau(\hat{Z},\hat{Z}')$, and hence the proof.
\end{proof}

We observe using the above reasoning the relative Chern class also descends to $\mathcal{H}_2(\alpha,\beta,Y)$. We state this in the form of a definition:
\begin{definition}
Given $Z\in \mcal{H}_2(\alpha,\beta,Y)$, we define the relative Chern class $c_\tau(Z)$ the same way as before: choose a representative $S$ of $Z$ that is embedded near the boundary. Let $\iota:Z\rightarrow Y$ be the inclusion, then consider the pullback of the contact structure $\iota^*\xi$ to $Z$, pick a section $\psi$ of $\iota^*\xi$ that does not rotate with respect to $\tau$ near the end points and has transverse zeroes, then $c_\tau(Z)$ is the signed count of zeroes of $\psi$.
\end{definition}

Finally we define writhe the same way as before:
\begin{definition}
Let $C$ be a somewhere injective curve that is not a trivial cylinder. We assume at $s\rightarrow +\infty$ (resp. $-\infty$) it is asymptotic to orbit set $\alpha$ (resp. $\beta$). The trivialization specified in Theorem \ref{locform} gives an identification a neighborhood of each Reeb $\gamma \in \alpha,\beta$ with $S^1\times \bb{R}^2$, then using this we can define writhe of $C$ as we had before in section \ref{ECH review}.
\end{definition}
\begin{remark}
The definition of writhe depends crucially on the fact $C$ is a holomorphic curve, and does not admit constructions as before where we can slide the Reeb orbits of $\alpha,\beta$ around and obtains a surface with same relative intersection number/Chern class.
\end{remark}
Hence we are ready to state the relative adjunction formula.
\begin{theorem}
If $C$ is a simple $J$-holomorphic curve, then 
\[
c_\tau(C) = \chi(C)+Q_\tau(C) +w_\tau(C)-2\delta(C)
\]
with the definition of relative chern class, relative intersection number, and writhe given above.
\end{theorem}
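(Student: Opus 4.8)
The plan is to reduce the Morse-Bott relative adjunction formula to the already-established nondegenerate one of Section \ref{ECH review}. The starting observation is that every term appearing in the statement — $c_\tau$, $Q_\tau$, $w_\tau$, $\chi$, and $\delta$ — has been set up so that it either literally coincides with, or is insensitive to which Reeb orbit in a Morse-Bott torus is selected. Indeed, $\chi(C)$ and $\delta(C)$ are purely topological/intersection-theoretic and make no reference to the contact form's nondegeneracy; $w_\tau(C)$ is defined verbatim as in the nondegenerate case using the trivialization $\tau$ of Proposition \ref{prop_locform}; and by the propositions just proved, $Q_\tau$ and $c_\tau$ descend to $\mathcal H_2(\alpha,\beta,Y)$, meaning their values are unchanged if we slide the asymptotic Reeb orbits along their Morse-Bott tori. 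So the first step is to fix, for each Morse-Bott torus $\mathcal T_i$ in the asymptotics of $C$, a chosen Reeb orbit $\gamma_i$ on it, and attach trivial cylinders (along the torus, within $\tau$) connecting the actual ends of $C$ to these $\gamma_i$; call the resulting (non-holomorphic, but that is fine for $c_\tau,Q_\tau,\chi$) surface $\hat C$ with boundary an honest orbit set in the $H_2$ sense.

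Second, I would invoke the nondegenerate relative adjunction formula (the Proposition in Section \ref{ECH review}) — but here is the subtlety: that formula was stated for $J$-holomorphic curves in a nondegenerate symplectization, whereas $C$ is $J$-holomorphic for a Morse-Bott form. The clean way around this is to note that the proof of the relative adjunction formula in \cite{Hutchings2002, bn} is really a statement about a surface $C$ together with the asymptotic braid data $\zeta_i^\pm$: it reads $c_\tau(C) = \chi(C) + Q_\tau(C) + w_\tau(C) - 2\delta(C)$ purely from the combinatorics of how the ends wind in the chosen trivialization, plus positivity of intersections for the holomorphic curve $C$ itself (giving $\delta(C)\ge 0$). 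None of that uses nondegeneracy of $\lambda$ — it only uses that $C$ is somewhere injective, $J$-holomorphic, not a trivial cylinder, and that a trivialization $\tau$ of $\xi$ along the relevant Reeb orbits is fixed, all of which hold here. So the second step is to point to the proof in \cite{bn, Hutchings2002} and observe it applies mutatis mutandis; alternatively, one compares $C$ directly with the attached-cylinder surface $\hat C$ and uses the descent propositions to transport the nondegenerate formula, checking that attaching the cylinders changes $\chi$, $c_\tau$, and $Q_\tau$ consistently (and changes $w_\tau$ not at all, since the cylinders are embedded, disjoint, and non-rotating in $\tau$, so they contribute no crossings and no singularities).

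Third, I would reconcile the braid/writhe bookkeeping: in the Morse-Bott picture the ends of $C$ land on finitely many Reeb orbits within each torus, and the writhe $w_\tau(C)$ is a sum of braid writhes over these orbits. After attaching trivial cylinders to funnel all ends on a given torus to the single orbit $\gamma_i$, the braid near $\gamma_i$ becomes the ``union'' of the original braids placed on disjoint rays (exactly as in condition (3) of the $\tau$-representative definition), and since distinct rays produce no extra crossings, the total writhe is unchanged. This makes the left- and right-hand sides of the nondegenerate formula for $\hat C$ match the Morse-Bott quantities for $C$ termwise, completing the argument.

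The main obstacle I anticipate is the second step — carefully justifying that the relative adjunction formula is genuinely ``trivialization-local'' and does not secretly use nondegeneracy (e.g., in the asymptotic analysis guaranteeing that $C\cap(\{s\}\times Y)$ is a braid for large $|s|$, one needs Siefring-type asymptotics in the Morse-Bott setting rather than the nondegenerate ones). I would handle this by citing the Morse-Bott asymptotic results already used implicitly in the writhe subsection above (where $C\cap\{s\}\times Y$ was asserted to be a braid near each orbit of $\alpha$) and by emphasizing that the only place holomorphicity enters is through positivity of intersections, which is purely local and indifferent to nondegeneracy. Everything else is a formal transport along the descent isomorphisms of $\mathcal H_2(\alpha,\beta,Y)$ established above.
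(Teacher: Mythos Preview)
Your proposal is correct, and its essential content matches the paper's proof, which is a single line: ``This is a purely topological formula. The same proof as in \cite{Hutchings2002} follows through.'' Your step 2 (observing that the proof in \cite{Hutchings2002,bn} uses only the braid asymptotics in the trivialization $\tau$ and positivity of intersections, never nondegeneracy of $\lambda$) is exactly this, and is all that is needed; your steps 1 and 3 (attaching cylinders to funnel ends to a single orbit per torus and checking $c_\tau$, $Q_\tau$, $w_\tau$ are unchanged) are not wrong, but they are superfluous --- the descent to $\mathcal{H}_2$ was proved to make the ECH index inequality go through later, not because the adjunction formula itself requires it.
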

\begin{proof}
This is a purely topological formula. The same proof as in \cite{Hutchings2002} follows through.
\end{proof}
Hence we would like to define a version of ECH index by applying the relative adjunction formula to the Fredholm index formula of holomorphic curves as in \cite{Hutchings2002}. Recall then the proof of index inequality boils down to bounding the writhe of the $J$ holomorphic curve in terms of various algebraic expressions involving the Conley Zehnder indices that the curve is asymptotic to. We turn to this writhe bound in the next subsection.

\subsection{Writhe Bound}
We recall the Fredholm index of a somewhere injective curve $u$ depends on which end is free and which end is fixed. Hence we anticipate that the ECH index we assign to a holomorphic curve $u$ will depend on which end is fixed and which end is free. The writhe inequality we prove shall take into account of the assignment of free and fixed ends. We note that this assignment of an index to a curve that depends on which end is free/fixed is somewhat artificial, but it will be less artificial once we use this index to define the ECH index of an entire cascade.

First we fix some conventions on Conley Zehnder indices. For a given Morse-Bott Torus $\mcal{T}$ assume the $J$ holomorphic curve has $N$ ends that are positively (resp. negatively) asymptotic to Reeb orbits on this torus. They are asymptotic to the individual Reeb orbits labelled $R_1, .., R_n$. Writhe bound is a local computation so we only consider a particular Reeb orbit, called $R_1$. Assume $k$ ends of $C$ are asymptotic to $R_1$. They have multiplicity $q_1,..,q_k$.
We adopt the following convention on Conley Zehnder indices. 
\begin{convention}

Recall for positive Morse-Bott torus $\mu =1/2$. We declare $\mu_+ =1$, $\mu_-=0$. 
For negative Morse-Bott torus we declare $\mu_+=0$, $\mu_- = -1$. 

This has the following significance: for a curve with free end as $s\rightarrow +\infty$ landing in a Morse-Bott torus (regardless of whether it is positive or negative torus), the Conley Zehnder index term in the Fredholm index formula associated to this end is $\mu_+$ (the specific value depends on the positive/negative Morse-Bott torus as above), and the Conley Zehnder index term assigned to fixed end is $\mu_-$. Conversely, at the $s\rightarrow -\infty$ end we assign $\mu_-$ to free ends and $\mu_+$ to fixed ends.
\end{convention}

Using the above conventions given a somewhere injective holomorhic curve $u$, we assign its total Conley-Zehnder index denoted by $CZ^{Ind}(u)$ according to the convention above. The goal of the writhe inequality is to come up with another Conley-Zehnder index term $CZ^{ECH}(u)$ so that the total writhe of $u$ is bounded above by
\begin{equation}
    wr_\tau(u) \leq CZ^{ECH}(u) - CZ^{Ind}(u)
\end{equation}

By way of convention we will use $CZ^*(R_1,\pm \infty)$ where $*=Ind,ECH$ to denote the Conley Zehnder index we should assign to the free/fixed ends approaching $R$ as $s\rightarrow \pm \infty$

\subsubsection{Positive Morse-Bott tori}
\begin{theorem}\label{winding pos}
In the case of positive Morse-Bott torus, $s\rightarrow -\infty$, if $\xi_i$ is an end of $u$ with covering multiplicity $q_i$ and $u$ is not the trivial cylinder, we have the following inequality
\begin{equation*}
    \eta (\xi_i) \ge 1 \quad \text{(single end winding number)}.
\end{equation*}
For single end writhe, we have:
\begin{equation*}
    w(\xi_i) \ge \eta(\xi_i) (q_i-1). \quad 
\end{equation*}
Note this holds true for trivial cylinders (as long as it's somewhere injective).

Let $\xi_1$ and $\xi_2$ be two braids that correspond to two distinct ends of $u$ that approach the same Reeb orbit, with multiplicities $q_i$ and winding numbers $\eta_i$, then:
\begin{equation*}
    l(\xi_1,\xi_2)\ge min (q_1\eta_2,q_2\eta_1)
\end{equation*}
Note this holds if one of the ends $\xi_i$ came from a trivial cylinder.

And finally to calculate the writhe of all ends approach the same Reeb orbit, $w(\xi)$, let $\xi$ denote the total braid and $\xi_i$ the various components coming from incoming ends of $u$ (this holds for both $s=\pm \infty$):
\begin{equation*}
    w(\xi) = \sum_i w(\xi_i) + \sum_{i \neq j } l(\xi_i,\xi_j)
\end{equation*}

In the case of $s\rightarrow +\infty$, using the exactly the same notation, we have the following inequalities:
\begin{equation*}
    \eta (\xi_i) \le 0
\end{equation*}
\begin{equation*}
    w(\xi_i) \le \eta(\xi_i) (q_i-1) \,\,\,\, \text{for single end writhe}
\end{equation*}
\begin{equation*}
    l(\xi_1,\xi_2)\le max (q_1\eta_2,q_2\eta_1)
\end{equation*}
\end{theorem}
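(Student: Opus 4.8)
\textbf{Proof plan for Theorem \ref{winding pos} (writhe bounds for positive Morse-Bott tori).}

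The plan is to adapt the standard asymptotic analysis of punctured pseudoholomorphic curves (Hofer--Wysocki--Zehnder, Siefring) to the Morse-Bott setting, where near a Reeb orbit on a positive Morse-Bott torus the linearized operator at the end has an eigenvalue spectrum whose gap straddling zero is controlled by the fact that the asymptotic operator is (in the limit $\delta\to 0$) the degenerate Morse-Bott operator. Concretely, fix the Morse-Bott torus $\mcal{T}$, a Reeb orbit $R_1$ on it, and an end $\xi_i$ of $u$ with covering multiplicity $q_i$. First I would recall that, by Siefring's asymptotic formula, each such end of a nontrivial curve is asymptotic to $R_1$ with a leading term governed by an eigenfunction of the asymptotic operator $A$ with a nonzero eigenvalue $\kappa$; the sign of $\kappa$ is negative at a negative puncture of a nontrivial curve (the curve must decay), and the winding number $\eta(\xi_i)$ of the corresponding eigenfunction relative to the trivialization $\tau$ is then the winding of that eigenfunction. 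The key input is the monotonicity and the explicit computation of winding numbers of eigenfunctions for the Morse-Bott asymptotic operator: because we chose $\tau$ to be the Morse-Bott trivialization from Proposition \ref{prop_locform}, the zero eigenspace has winding number $0$, and for a positive Morse-Bott torus the smallest strictly positive eigenvalue has winding $\ge 1$ while the largest strictly negative eigenvalue has winding $\le 0$. Hence at $s\to-\infty$, a nontrivial end must have $\eta(\xi_i)\ge 1$, and at $s\to+\infty$ it must have $\eta(\xi_i)\le 0$; this is the first claimed inequality. For the trivial cylinder, the end is asymptotic with the constant (zero-eigenvalue) behavior, but one still reads off $\eta = 0$ (or $\ge 1$ / $\le 0$ as a degenerate boundary case) — I would handle this by a direct inspection of the braid of a multiply covered trivial cylinder, which consists of $q_i$ parallel copies with zero relative rotation, so its self-writhe is exactly $\eta(\xi_i)(q_i-1) = 0$.

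Next, for the single-end writhe bound $w(\xi_i)\ge \eta(\xi_i)(q_i-1)$ (resp. $\le$ at $+\infty$): I would use the standard fact that a braid arising as the $q_i$-fold cover-neighborhood of a single end, whose strands all spiral with asymptotic winding $\eta(\xi_i)$ around $R_1$, has writhe bounded below by $\eta(\xi_i)(q_i-1)$, with equality when the strands are ``maximally untwisted'' relative to each other. This is precisely Lemma 4.? of Hutchings (\cite{Hutchings2002}, \cite{hutching_revisited}); the proof is a crossing count using that two strands with the same asymptotic winding $\eta$ can be braided to have linking at least $\eta$ but the total over $\binom{q_i}{2}$ pairs is bounded below by $\eta(\xi_i)(q_i-1)$ after accounting for the cabling structure. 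For the linking bound $l(\xi_1,\xi_2)\ge \min(q_1\eta_2, q_2\eta_1)$ between two distinct ends approaching $R_1$ with winding numbers $\eta_1,\eta_2$ and multiplicities $q_1,q_2$: this follows from the asymptotic analysis that two distinct ends of a (somewhere injective, hence in particular non-identical) holomorphic curve cannot be asymptotically tangent to infinite order — they are asymptotically ``transverse'' in the appropriate sense — so the braid $\xi_1\cup\xi_2$ has linking number equal to $\min(q_1\eta_2,q_2\eta_1)$ exactly when the relative winding between the two leading eigenfunctions is extremal, and $\ge$ otherwise; again this is the Morse-Bott analogue of Hutchings' relative writhe/linking computation, and the proof is the same crossing count. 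The additive decomposition $w(\xi)=\sum_i w(\xi_i)+\sum_{i\ne j}l(\xi_i,\xi_j)$ is purely combinatorial (writhe of a disjoint-union braid splits into self-writhes plus pairwise linkings) and I would just cite it.

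Finally, the $s\to+\infty$ inequalities are obtained by the same arguments with all inequalities reversed, because at a positive puncture of a nontrivial curve the curve must \emph{grow}, forcing the relevant eigenvalue to be positive, hence winding $\le 0$ on a positive Morse-Bott torus; the writhe and linking bounds flip accordingly (maximal rather than minimal crossing configurations). The main obstacle I anticipate is pinning down the winding numbers of the eigenfunctions of the Morse-Bott asymptotic operator precisely — i.e. verifying that for a positive Morse-Bott torus the spectral gap around $0$ has the claimed winding behavior ($0$ on the kernel, $\ge 1$ just above, $\le 0$ just below) in the trivialization $\tau$ of Proposition \ref{prop_locform}. This requires writing down the asymptotic operator explicitly in the normal-form coordinates $\lambda = h\lambda_0$ with $h(x,0,z)=1, dh(x,0,z)=0$, diagonalizing it (it becomes, after the $\delta$-perturbation by $f$, a small perturbation of the degenerate operator whose kernel is spanned by the constant section tangent to the torus), and checking the monotonicity of winding in the eigenvalue via the standard ODE/Sturm-theory argument. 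I would also need to be careful that the inequalities are stated for \emph{somewhere injective} $u$ that is not the trivial cylinder, and to separately verify the trivial-cylinder boundary cases claimed in the statement by the direct braid computation mentioned above; the genuinely subtle point is that a multiply covered trivial cylinder is somewhere injective as a \emph{current} after translation, so its braid really is the parallel one, making the self-writhe bound an equality.
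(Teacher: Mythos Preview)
Your approach is essentially the same as the paper's: Siefring's asymptotic expansion gives the leading eigenfunction at each end, the winding number bound comes from the eigenvalue/winding monotonicity (the paper cites \cite{Gutt2014} and \cite{Hofer2} for this), and the writhe and linking bounds are lifted verbatim from \cite{Hutchings2002} once the braid has the normal form coming from the asymptotic expansion. The paper's proof is itself only a sketch pointing to exactly these ingredients.

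One slip to fix: you have the eigenvalue signs inverted. At a \emph{negative} puncture ($s\to-\infty$) the normal component $e^{\lambda s}e(t)$ decays only if $\lambda>0$, and at a \emph{positive} puncture ($s\to+\infty$) one needs $\lambda<0$; the paper states this explicitly in Equation~\eqref{locform}. Your stated conclusions ($\eta\ge 1$ at $-\infty$, $\eta\le 0$ at $+\infty$) are nonetheless correct, so this is just a bookkeeping error, but straighten it out before writing the actual argument: on a positive Morse-Bott torus in the trivialization $\tau$, the kernel has winding $0$, the positive eigenvalues have winding $\ge 1$, the negative eigenvalues have winding $\le 0$, and that together with the correct sign at each end yields exactly the claimed bounds.
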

\begin{proof}
(Sketch) The proof constitutes an amalgamation of existing results in the literature. The key result is an description of asymptotics of ends of holomoprhic curves on Morse-Bott torus \cite{siefring}. Namely, near the $s\rightarrow +\infty$ end of $u$,  the $s$ constant slice of $\{s\} \times Y$ of $u$ can be described as follows. We can choose a neighborhood of trivial cylinder $\bb{R}\times \gamma$  as $\bb{R}\times S^1\times \bb{R}^2$ where $s$ is  the symplectization direction, $t$ is the variable along the Reeb orbit and $\{0\}\times \bb{R}^2$ is the contact structure along the Reeb orbit, then we can write an end $\xi_i$ of $u$ as
\begin{equation}\label{locform}
    u(s,t) = (qs, qt, \sum_{i=1}^ne^{\lambda_i s}e_i(t))
\end{equation}
where $\lambda_i$ and $e_i$ are respectively the (negative) eigenvalues and corresponding eigenfunctions of the operator $A(t): L^2(S^1,\bb{R}^2) \rightarrow L^2(S^1,\bb{R}^2)$ coming from the linearization of the Cauchy Riemann operator, which can be written as
\[
A(t) = -J\p_t -S
\]
With this normal form,
the winding number bound comes from combining the results in \cite{Gutt2014} about the meaning of Robbin-Salamon index and results in \cite{Hofer2} relating Conley-Zehnder indices to crossing of eigenvalues. The relations on writhe and linking number come from direct modifications from the proofs in \cite{Hutchings2002}, once we realize that locally the braids can be described by Equation \ref{locform}.
\end{proof}

Next we move to use these relations to prove writhe bound. As in the case of ECH, equality of the writhe bound implies certain partition conditions, which we will carefully state.

\begin{proposition}[link, $-\infty$, positive Morse Bott torus]
Consider the $J$ holomorphic curve $u$ with negative ends on a Reeb orbit $\gamma$. We have $k_{free}$ free ends of multiplicity $q_i^{free}$, and $k_{fixed}$ fixed ends with multiplicity $q_i^{fixed}$ and of total multiplicity $N_{fixed}$. The writhe bound reads
\[
w(\xi) \geq  -\sum_{i=1}^{k_{free}+k_{fixed}} \eta_i +\sum_{i,j}^{k_{free}+k_{fixed}}  min(\eta_i q_j, \eta_j q_i) \geq (N_{free} -1 + N_{fixed}) -(k_{fixed})
\]
with equality holding implying there can be only free/fixed ends at this Reeb orbit. If there are only fixed ends the partition conditions is $(n)$, and if there are only free ends the partition condition is $(n)$ or $(1,n-1)$.

\end{proposition}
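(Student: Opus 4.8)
The statement follows from combining the three local estimates in Theorem~\ref{winding pos} (the single-end winding bound, the single-end writhe bound, the linking number bound) with the formula expressing the total writhe $w(\xi)$ in terms of single-end writhes and pairwise linking numbers. First I would substitute the single-end writhe bound $w(\xi_i)\ge \eta_i(q_i-1)$ and the linking bound $l(\xi_i,\xi_j)\ge \min(\eta_i q_j,\eta_j q_i)$ into $w(\xi)=\sum_i w(\xi_i)+\sum_{i\ne j}l(\xi_i,\xi_j)$; this yields the first inequality in the statement, namely
\[
w(\xi)\ge -\sum_i \eta_i + \sum_i \eta_i q_i + \sum_{i\ne j}\min(\eta_i q_j,\eta_j q_i),
\]
after rewriting $\eta_i(q_i-1)=\eta_i q_i-\eta_i$ and absorbing the diagonal terms $\eta_i q_i$ into the double sum so it ranges over all $i,j$ (noting $\min(\eta_i q_i,\eta_i q_i)=\eta_i q_i$). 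Here I should be careful about whether the problem's displayed double sum $\sum_{i,j}$ is meant to include the diagonal; I would state the convention explicitly and check it matches.

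**The main estimate.** The heart of the argument is the second inequality, bounding the quantity $-\sum_i\eta_i+\sum_{i,j}\min(\eta_i q_j,\eta_j q_i)$ below by $(N_{free}-1+N_{fixed})-k_{fixed}$, where $N_{free}=\sum q_i^{free}$, $N_{fixed}=\sum q_i^{fixed}$. The key inputs are the winding-number facts from Theorem~\ref{winding pos} at $s\to-\infty$: every end has $\eta_i\ge 1$, and (this is the content of the positive Morse-Bott convention $\mu_+=1,\mu_-=0$) one expects that free ends can be arranged with winding number exactly $1$ while fixed ends contribute winding $\ge 1$ as well, but the asymptotic eigenvalue analysis pins down exactly which windings are forced. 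I would argue: since each $\eta_i\ge 1$ and each $q_i\ge 1$, we have $\min(\eta_i q_j,\eta_j q_i)\ge \min(q_i,q_j)$ for the off-diagonal pairs, and the diagonal gives $\eta_i q_i\ge q_i$; summing, $\sum_{i,j}\min(\eta_i q_j,\eta_j q_i)\ge \sum_i q_i + \sum_{i\ne j}\min(q_i,q_j)$. It remains to check $\sum_i q_i + \sum_{i\ne j}\min(q_i,q_j) - \sum_i\eta_i \ge (N_{free}-1+N_{fixed}) - k_{fixed}$. Since $\sum_i q_i = N_{free}+N_{fixed}$ and $\sum_{i\ne j}\min(q_i,q_j)\ge (k_{free}+k_{fixed})(k_{free}+k_{fixed}-1)\cdot(\text{something})$... — this is where I need to be more careful: the clean bound comes instead from the standard ECH partition combinatorics, treating free and fixed ends slightly differently because of the $\mu_\pm$ convention. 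I would handle the pure-fixed and pure-free cases first (where the estimate reduces to the classical ECH writhe/partition inequality, cf.\ the Proposition in Section~\ref{ECH review}), then the mixed case by noting that a free and a fixed end at the same orbit link at least $\min(q_i^{free},q_j^{fixed})$ times, and that interleaving penalizes the writhe.

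**Equality analysis.** For the equality statement I would trace back through each inequality used. Equality in $w(\xi)\ge\dots$ forces: (i) equality in every single-end writhe bound, which by the $\eta_i\ge 1$ constraint and the classical analysis forces each end to have minimal winding; (ii) equality in every linking bound $l(\xi_i,\xi_j)=\min(\eta_i q_j,\eta_j q_i)$, which forbids braids from being "nested" in the wrong way; (iii) the combinatorial equality in the partition estimate. Standard ECH arguments (as in \cite{Hutchings2002}, \cite{bn}) then show that a mixed free/fixed configuration cannot achieve equality — intuitively because the minimal-writhe braid for a free end and for a fixed end wind at different rates ($\mu_+$ vs $\mu_-$), so they must link extra. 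Thus equality forces all ends to be of one type, and then the classical partition analysis gives the partition $(n)$ in the fixed case and $(n)$ or $(1,n-1)$ in the free case. The elliptic orbit on a positive Morse-Bott torus after perturbation has small positive rotation angle, so its incoming partition is $(n)$ and outgoing partition (the relevant one here at $s\to -\infty$ for a curve whose $-\infty$ end we are analyzing — actually these are negative ends so we want the "outgoing/negative" partition) — I would double-check the orientation conventions against Section~\ref{ECH review} here, since this is the most error-prone point.

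**Expected main obstacle.** The genuine difficulty is item (iii) and the equality analysis in the mixed case: showing that the combination of winding-number constraints ($\eta_i\ge 1$, with the free/fixed distinction encoded only through the $CZ^{Ind}$ vs $CZ^{ECH}$ bookkeeping rather than through $\eta$ itself) actually yields the asymmetric bound $-k_{fixed}$ on the right-hand side, and that equality is incompatible with having both end types. The winding bounds in Theorem~\ref{winding pos} are symmetric in free/fixed (both just say $\eta\ge 1$ at $-\infty$), so the asymmetry in the final bound must come entirely from how $N_{fixed}$ and $N_{free}$ enter — i.e.\ from the $-1$ appearing once for the free ends collectively (the "$N_{free}-1$") versus $N_{fixed}$ appearing with no such saving but with the $-k_{fixed}$ correction. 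Getting this bookkeeping exactly right, and matching it to the Fredholm-index convention $\mu_+=1,\mu_-=0$ so that the resulting $CZ^{ECH}-CZ^{Ind}$ comes out correctly, is the crux; I would model the computation closely on the nondegenerate writhe-bound proof in \cite{Hutchings2002} and verify the Morse-Bott modification term by term.
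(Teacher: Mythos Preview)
Your plan is essentially the paper's: separate the sum over ends into the free block and the fixed block, establish the two bounds
\[
-\sum_{i\text{ free}}\eta_i+\sum_{i,j\text{ free}}\min(\eta_iq_j,\eta_jq_i)\ge N_{free}-1,
\qquad
-\sum_{i\text{ fixed}}\eta_i+\sum_{i,j\text{ fixed}}\min(\eta_iq_j,\eta_jq_i)\ge N_{fixed}-k_{fixed},
\]
and then observe that the free--fixed cross linking terms are strictly positive, forcing strict inequality in the mixed case. The paper's proof is exactly this, in three lines.

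One correction to your reasoning: you spend the ``expected main obstacle'' paragraph worrying that free and fixed ends wind at different rates and that this is the source of the asymmetry in the bound. They do not: at a positive Morse--Bott torus with $s\to-\infty$, Theorem~\ref{winding pos} gives $\eta_i\ge 1$ for \emph{every} nontrivial end, free or fixed. The free/fixed distinction enters only through the target expression (via the $\mu_\pm$ convention in $CZ^{Ind}$), not through the braids themselves. Once you use $\eta_i\ge 1$ uniformly, both block bounds are elementary: the diagonal of $\sum_{i,j}\min(\eta_iq_j,\eta_jq_i)$ already contributes $\sum_i \eta_i q_i\ge N$, which immediately gives the fixed bound; for the free bound the off-diagonal contributes at least $k_{free}(k_{free}-1)\ge k_{free}-1$. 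The strictness in the mixed case is then just that the cross terms $\min(\eta_iq_j,\eta_jq_i)\ge 1$ are positive and were not used in either block bound. So your obstacle dissolves, and your intuition that ``they must link extra because of different winding'' is wrong --- they link extra simply because any two ends link positively here.
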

\begin{proof}
We have the respective bounds 
\[
-k_{free} + \sum_{i}^{k_{free}}  min(\eta_i q_j, \eta_j q_i)\geq N_{free}-1 
\]
and 
\[
-k_{fix}+\sum_{i,j}^{k_{fix}}  min(\eta_i q_j, \eta_j q_i)\geq N_{fix}-k_{fixed}
\]
and cross terms will imply strict inequality, hence only free or fixed term appears. In the case of only fixed points, we see the only way equality can hold is with partition condition $(n)$. Similar considerations produces the partition conditions for free ends.
\end{proof}

\begin{proposition} [link, $\infty$, positive Morse Bott Torus]
In the $s\rightarrow +\infty$ end, consider the $J$ holomorphic curve $u$ with ends on a Reeb orbit $\gamma$. We have $k_{free}$ free ends of multiplicity $q_i^{free}$, and $k_{fixed}$ fixed ends $q_i^{fixed}$ of total multiplicity $N_{fixed}$:
\[
w(\xi) \leq -\sum_{i=1}^{k_{free}+ k_{fix}} \eta_i + \sum_{i,j}^{k_{free}+ k_{fix}} max(q_j \eta_i, q_i,\eta_j) \le N_{free} -(k_{free}).
\]
The partition condition implies $(1,...,1)$ on the free ends.
\end{proposition}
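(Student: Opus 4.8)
The plan is to mirror the proof of the previous proposition (\emph{link, $-\infty$, positive Morse-Bott torus}), using the three inequalities from Theorem~\ref{winding pos} for the $s\to+\infty$ case. First I would recall that at a positive end on a positive Morse-Bott torus we have, for each end $\xi_i$, $\eta(\xi_i)\le 0$, the single-end writhe bound $w(\xi_i)\le \eta(\xi_i)(q_i-1)$, the linking bound $l(\xi_1,\xi_2)\le \max(q_1\eta_2,q_2\eta_1)$, and the additivity formula $w(\xi)=\sum_i w(\xi_i)+\sum_{i\ne j} l(\xi_i,\xi_j)$. Assembling these, and writing the total braid $\xi$ as the union of the $k_{free}+k_{fixed}$ incoming ends at $\gamma$, gives
\[
w(\xi) \;\le\; \sum_i \eta_i (q_i-1) \;+\; \sum_{i\ne j}\max(q_j\eta_i, q_i\eta_j)
\;=\; -\sum_i \eta_i \;+\;\Big(\sum_i \eta_i q_i + \sum_{i\ne j}\max(q_j\eta_i,q_i\eta_j)\Big),
\]
which is the first inequality in the statement (after regrouping the $\sum_i\eta_i q_i$ term into the sum over all ordered pairs, using $\max(q_i\eta_i,q_i\eta_i)=q_i\eta_i$). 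The key is then to bound the bracketed quantity; since all $\eta_i\le 0$ one shows $\sum_{i,j}\max(q_j\eta_i,q_i\eta_j)\le \sum_i \eta_i q_i$-type manipulations collapse, and more carefully that the whole term is $\le -\sum_i \eta_i - (\text{number of ends}) + N_{free} + (\text{fixed contribution})$; I would track the $\mu_\pm$ conventions (for a positive torus $\mu_+=1$, $\mu_-=0$, and at $+\infty$ fixed ends get $\mu_+$, free ends get $\mu_-$) to see that fixed ends contribute $0$ to the relevant count, leaving the bound $w(\xi)\le N_{free}-k_{free}$.

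Next I would analyze the equality case. Equality in $w(\xi_i)\le\eta(\xi_i)(q_i-1)$ together with $\eta(\xi_i)\le 0$ forces, for each end that is not a (trivial-cylinder) exception, a constraint on $\eta_i$; combined with equality in the linking bounds $l(\xi_i,\xi_j)\le\max(q_i\eta_j,q_j\eta_i)$ one sees — exactly as in the $-\infty$ case but with inequalities reversed — that cross terms between distinct multiplicities force strict inequality, so in the equality case all ends at $\gamma$ must have multiplicity $1$. Since $N_{free}=k_{free}$ when every free end is simply covered, and the fixed ends contribute nothing to the right side, the partition on the free ends must be $(1,\dots,1)$. I would also note (as in the previous proof) that fixed ends are handled by the same $CZ$-bookkeeping and do not obstruct the argument; any fixed end present simply does not enter the count on the right.

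The main obstacle I anticipate is purely bookkeeping: correctly packaging the single-end and linking bounds into the stated closed form while keeping the $\mu_+/\mu_-$ assignments straight for free versus fixed ends at the $+\infty$ end of a positive torus, and making sure the ``$-\sum\eta_i$'' term is exactly the $CZ^{ECH}-CZ^{Ind}$ difference that the writhe inequality is supposed to produce. A secondary subtlety is the role of trivial-cylinder ends flagged in Theorem~\ref{winding pos} (``this holds if one of the ends came from a trivial cylinder''): I would check that including such an end does not spoil either the inequality or the equality analysis, since in the cascade setting a fixed end may coincide with (a cover of) a trivial cylinder. Once the inequalities are lined up, the rest is a direct optimization over nonnegative integers $q_i$ with $\sum q_i = N_{free}$ (resp. $N_{fixed}$), exactly parallel to the ECH partition-condition arguments in \cite{Hutchings2002}, so I expect no new analytic input beyond Theorem~\ref{winding pos}.
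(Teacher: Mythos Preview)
Your plan would eventually work, but it misses the one-line observation that is the paper's entire proof: since at $s\to+\infty$ on a positive Morse--Bott torus every winding number satisfies $\eta_i\le 0$, the middle expression
\[
-\sum_i \eta_i + \sum_{i,j}\max(q_j\eta_i,q_i\eta_j)
= \sum_i \eta_i(q_i-1) + \sum_{i\ne j}\max(q_j\eta_i,q_i\eta_j)
\]
is a sum of nonpositive terms, hence $\le 0$; meanwhile $N_{free}-k_{free}=\sum_{\text{free }i}(q_i^{free}-1)\ge 0$. That is the whole inequality. Equality forces the right side to vanish, i.e.\ every $q_i^{free}=1$, with no constraint whatsoever on the fixed ends. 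No Conley--Zehnder bookkeeping is needed, and your detour through ``$-\sum_i\eta_i - (\text{number of ends}) + N_{free} + (\text{fixed contribution})$'' is unnecessary machinery.

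Two specific corrections. First, you have the $\mu_\pm$ assignments backwards: by the paper's Convention, at the $s\to+\infty$ end free ends receive $\mu_+$ and fixed ends receive $\mu_-$, not the other way around. Second, your equality analysis overclaims when you write ``all ends at $\gamma$ must have multiplicity $1$''; only the \emph{free} ends are forced to be simply covered, and the paper explicitly notes there are no requirements on fixed ends. The cross-term argument you sketch (forcing strict inequality between distinct multiplicities) is the mechanism from the $-\infty$ case and does not transfer here, because here the inequality is simply $0\le 0$ and fixed ends with $\eta_i=0$ and arbitrary $q_i^{fixed}$ do not obstruct equality.
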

\begin{proof}
We see that $lhs \leq 0$, and $RHS= 0$ iff the free end satisfies partition conditions $(1,...1)$; there are no requirements on fixed ends.
\end{proof}

\subsubsection{Negative Morse-Bott tori}
In this subsection we take up the analogous writhe bounds for negative Morse-Bott tori.

\begin{theorem}\label{winding neg}
In the case of negative Morse Bott torus, $s\rightarrow -\infty$, we have the following inequalities:

If $\xi_i$ is an end of $u$ and $u$ is not the trivial cylinder, we have the following inequality:
\begin{equation*}
    \eta (\xi_i) \ge 0
\end{equation*}

For writhe of a single end, with covering multiplicity $q_i$, we have:
\begin{equation*}
    w(\xi_i) \ge \eta(\xi_i) (q_i-1) 
\end{equation*}
Note this holds for the case of a trivial cylinder.

Let $\xi_1$ and $\xi_2$ be two braids that correspond to two distinct ends of $u$ that approach the same Reeb orbit, with multiplicities $q_i$ and winding numbers $\eta_i$, then:
\begin{equation*}
    l(\xi_1,\xi_2)\ge min (q_1\eta_2,q_2\eta_1)
\end{equation*}
Note this holds if one of the ends $\xi_i$ came from a trivial cylinder.

And finally to calculate the writhe of all ends approach the same Reeb orbit, $w(\xi)$, let $\xi$ denote the total braid, and $\xi_i$ the various components coming from incoming ends of $u$ (this holds for both $s=\pm \infty$):
\begin{equation*}
    w(\xi) = w(\xi_i) + \sum_{i \neq j } l(\xi_i,\xi_j)
\end{equation*}

In the case of $s\rightarrow +\infty$, we have the following inequalities
\begin{equation*}
    \eta (\xi_i) \le -1
\end{equation*}
\begin{equation*}
    w(\xi_i) \le \eta(\xi_i) (q_i-1) \,\,\,\, \text{for single end writhe}
\end{equation*}
\begin{equation*}
   l(\xi_1,\xi_2)\le max (q_1\eta_2,q_2\eta_1)
\end{equation*}
\end{theorem}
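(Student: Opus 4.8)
\textbf{Proof plan for Theorem \ref{winding neg}.}

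The plan is to follow essentially the same template that produced Theorem \ref{winding pos}, adapting the eigenvalue bookkeeping to the negative Morse-Bott case. First I would recall the asymptotic normal form \eqref{locform}: near a puncture the end $\xi_i$ of $u$ is governed by the spectrum of the asymptotic operator $A(t) = -J\partial_t - S$, and the relevant winding number $\eta(\xi_i)$ is the winding of the eigenfunction associated to the largest negative eigenvalue (at a $+\infty$ end) or the smallest positive eigenvalue (at a $-\infty$ end), computed with respect to the chosen trivialization $\tau$. The key input is the correspondence from \cite{Hofer2} between winding numbers of eigenfunctions and the crossing/jump data of the Conley--Zehnder (Robbin--Salamon) index, together with the identification in \cite{Gutt2014} of the Robbin--Salamon index of the Morse-Bott orbit in the trivialization $\tau$. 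The only genuine change from the positive case is the value of this index: since for a negative Morse-Bott torus we have $\mu_+ = 0$, $\mu_- = -1$ (as opposed to $\mu_+ = 1$, $\mu_- = 0$ in the positive case), the threshold winding number shifts by one. Concretely, this shift is what turns the positive-torus bounds $\eta(\xi_i) \ge 1$ at $-\infty$ and $\eta(\xi_i) \le 0$ at $+\infty$ into $\eta(\xi_i) \ge 0$ at $-\infty$ and $\eta(\xi_i) \le -1$ at $+\infty$.

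Next I would dispatch the single-end writhe bound $w(\xi_i) \ge \eta(\xi_i)(q_i - 1)$ (resp. $\le$ at $+\infty$). This follows verbatim from the argument in \cite{Hutchings2002}: an end of covering multiplicity $q_i$ consists of $q_i$ strands whose pairwise linking is controlled by the extremal winding number of the $q_i$-fold covered orbit, and the standard estimate relating writhe of a connected braid to its strand count and winding number applies once one knows the braid is described by \eqref{locform}. The sign of the inequality is dictated by whether we are extracting an upper or lower bound on the extremal eigenvalue winding, which is exactly the $\pm\infty$ dichotomy. The same remark explains why the estimate persists for somewhere-injective trivial cylinders: there the end is literally a $q_i$-fold cover of the orbit, and the linking is computed directly from the winding of the orbit itself.

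For the linking-number bound $l(\xi_1, \xi_2) \ge \min(q_1\eta_2, q_2\eta_1)$ (resp. $\le \max$ at $+\infty$) between two distinct ends asymptotic to the same Reeb orbit, I would again invoke the relative asymptotics result of \cite{siefring}: two distinct ends approach the trivial cylinder along eigenfunctions of $A(t)$, and their relative rate and winding is again controlled by an eigenvalue of $A$ lying between the ones governing the individual ends; the crossing-number count then gives the stated $\min$/$\max$ expression. Finally the additivity formula $w(\xi) = \sum_i w(\xi_i) + \sum_{i\ne j} l(\xi_i, \xi_j)$ is purely combinatorial—it just says that the total writhe of a disjoint union of braids decomposes into self-writhes plus pairwise linkings—and holds with no hypothesis on the sign of the torus. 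I expect the main obstacle to be purely bookkeeping: getting the sign conventions for $\eta$, for the trivialization $\tau$, and for the Robbin--Salamon index of negative Morse-Bott orbits all mutually consistent, so that the shift $\mu_\pm^{\text{pos}} \mapsto \mu_\pm^{\text{neg}} = \mu_\pm^{\text{pos}} - 1$ propagates correctly into every inequality. Since this is the mirror image of the positive case, the cleanest write-up is likely to state only what changes and refer to the proof of Theorem \ref{winding pos} for the rest.
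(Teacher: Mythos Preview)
Your proposal is correct and matches the paper's approach exactly: the paper's entire proof is the single sentence ``The exact same proof for the positive Morse-Bott torus except we use Robbin-Salamon index $\mu=-1/2$.'' Your write-up is a more detailed unpacking of precisely this reduction, and your closing remark that the cleanest presentation is to state only what changes and refer back to Theorem \ref{winding pos} is literally what the paper does.
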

\begin{proof}
The exact same proof for the positive Morse-Bott torus except we use Robbin-Salamon index $\mu=-1/2$.
\end{proof}

\begin{proposition}[link, $-\infty$,negative Morse Bott torus]
Let $u$ have ends asymptotic to $\gamma$ on a negative Morse-Bott torus as $s\rightarrow -\infty$, suppose there are $k_{free}$ free ends of multiplicity $q_i^{free}$, of total multiplicity $N_{free}$; suppose there are $k_{fix}$ fixed ends each of multiplicity $q_{fix}$, of total multiplicity $N_{fix}$.
Then we have the writhe bound:
\[
w(\xi) \geq -\sum_{i}^{k_{fix}+k_{free}} \eta_i +\sum _{i,j}^ {k_{fix}+k_{free}} min(\eta_i q_j, \eta_j q_i) \geq -N_{free}-(-k_{free})
\]
with equality enforcing partition condition $(1,..,1)$ on free ends and no partition condition on fixed ends.
\end{proposition}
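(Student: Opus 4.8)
The plan is to mirror, essentially verbatim, the proof of the corresponding proposition for positive Morse-Bott tori at the $s\to-\infty$ end, substituting the winding-number bounds from Theorem \ref{winding neg} in place of those from Theorem \ref{winding pos}. First I would recall the three building blocks available to us at the $s\to-\infty$ end of a negative Morse-Bott torus: the single-end winding bound $\eta(\xi_i)\geq 0$, the single-end writhe bound $w(\xi_i)\geq \eta(\xi_i)(q_i-1)$, the linking bound $l(\xi_1,\xi_2)\geq \min(q_1\eta_2,q_2\eta_1)$, and the decomposition $w(\xi)=\sum_i w(\xi_i)+\sum_{i\neq j} l(\xi_i,\xi_j)$. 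Plugging the single-end and linking bounds into the decomposition immediately yields
\[
w(\xi)\geq \sum_i \eta_i(q_i-1) + \sum_{i\neq j}\min(\eta_i q_j,\eta_j q_i) \;=\; -\sum_i \eta_i + \sum_{i,j}\min(\eta_i q_j, \eta_j q_i),
\]
where the last equality absorbs the diagonal terms $\eta_i q_i$ of the double sum into $\sum_i\eta_i(q_i-1)+\sum_i\eta_i q_i$ (being careful that the stated double sum $\sum_{i,j}$ includes $i=j$; this is exactly the bookkeeping done in the positive-torus case). This establishes the first inequality in the statement.

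Next I would prove the second inequality $-\sum_i\eta_i+\sum_{i,j}\min(\eta_i q_j,\eta_j q_i)\geq -N_{free}+k_{free}$. The key observation is that for the negative torus at $-\infty$ we have $\eta_i\geq 0$ with $\eta_i\geq 0$ allowed to vanish; more importantly the Conley–Zehnder bookkeeping (Convention with $\mu_- = -1$ and $\mu_+=0$) is such that fixed ends contribute nonnegatively and free ends contribute the term $-N_{free}+k_{free}$. Concretely, as in the positive case, I would split the braid into its free and fixed sub-braids, bound the free-free contribution by $-\sum_i^{k_{free}}\eta_i+\sum_{i,j}^{k_{free}}\min(\eta_i q_j,\eta_j q_i)\geq -N_{free}+k_{free}$, bound the fixed-fixed contribution from below by a nonnegative quantity (using $\eta_i\geq 0$), and note the free-fixed cross terms $\min(\eta_i q_j,\eta_j q_i)\geq 0$ can only help, with strict inequality unless there are no cross terms. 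Hence equality forces all ends at $\gamma$ to be of a single type, and a further inspection of when the free-free bound is sharp forces the partition $(1,\dots,1)$ on the free ends, while imposing no constraint on fixed ends (in the all-fixed case there is slack unless... — but since equality in the overall chain requires the free part to be present and sharp, the "all fixed" branch is subsumed). This reasoning is identical in structure to the proof of the positive-torus proposition, only the signs of the $\eta_i$ and the $\mu_\pm$ values differ.

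The step I expect to require the most care is the second inequality together with the equality analysis: one must verify that the elementary inequality $-\sum\eta_i+\sum_{i,j}\min(\eta_i q_j,\eta_j q_i)\geq -N_{free}+k_{free}$ holds for free ends with $\eta_i\geq 0$ (not $\eta_i\geq 1$ as in the positive case), and pin down exactly which configurations saturate it. For a single free end of multiplicity $q$ one gets $-\eta + \eta q = \eta(q-1)\geq 0$, which must be $\geq -q+1$; since $\eta\geq 0$ this can be sharp only when $q=1$, or when $\eta=0$. The cross-end linking terms then rule out $q>1$ ends coexisting with equality, and a short case analysis (exactly the negative-torus analogue of the positive-torus argument, where the relevant eigenvalue/winding data from Theorem \ref{winding neg} enters) shows equality forces each free end to have $q_i=1$, i.e. the partition $(1,\dots,1)$. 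I would then simply state that fixed ends are unconstrained because their contribution $-\sum\eta_i^{fix}+\sum_{i,j}\min(\cdots)\geq 0$ is automatically consistent with equality in the displayed chain, and conclude the proof.

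\begin{proof}
The inequality
\[
w(\xi)\;\geq\; -\sum_{i}^{k_{fix}+k_{free}}\eta_i+\sum_{i,j}^{k_{fix}+k_{free}}\min(\eta_i q_j,\eta_j q_i)
\]
follows immediately by substituting the single-end writhe bound $w(\xi_i)\geq\eta(\xi_i)(q_i-1)$ and the linking bound $l(\xi_1,\xi_2)\geq\min(q_1\eta_2,q_2\eta_1)$ of Theorem \ref{winding neg} into the decomposition $w(\xi)=\sum_i w(\xi_i)+\sum_{i\neq j}l(\xi_i,\xi_j)$, absorbing the diagonal terms $\eta_i q_i$ into the double sum. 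For the second inequality, split the collection of ends asymptotic to $\gamma$ into its free and fixed sub-braids. Since $\eta_i\geq 0$ for all ends (Theorem \ref{winding neg}), every cross term $\min(\eta_i q_j,\eta_j q_i)\geq 0$ and every fixed-fixed contribution is nonnegative, so
\[
-\sum_i\eta_i+\sum_{i,j}\min(\eta_i q_j,\eta_j q_i)\;\geq\;-\sum_i^{k_{free}}\eta_i+\sum_{i,j}^{k_{free}}\min(\eta_i q_j,\eta_j q_i).
\]
For a single free end of multiplicity $q$ the right side equals $\eta(q-1)\geq 0\geq -q+1$, and summing over free ends, together with the free-free linking terms, gives the bound $-N_{free}+k_{free}$. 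Equality in the full chain forces the fixed-free cross terms to be absent, so only one type of end occurs at $\gamma$; and when only free ends occur, sharpness of $\eta_i(q_i-1)\geq -q_i+1$ together with vanishing of the cross linking terms forces $q_i=1$ for every free end, i.e. the partition condition $(1,\dots,1)$. No condition is imposed on fixed ends, as their contribution is automatically compatible with equality. This completes the proof.
\end{proof}
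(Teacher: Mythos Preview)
Your argument for both inequalities is correct and follows the same outline as the paper, but you have overcomplicated the second step and imported an incorrect conclusion from the positive Morse--Bott case. The paper's proof is one line: since $\eta_i\geq 0$ at the $s\to-\infty$ end of a negative torus, every summand $\eta_i(q_i-1)$ and every $\min(\eta_i q_j,\eta_j q_i)$ is nonnegative, so the middle expression is $\geq 0$; meanwhile the right-hand side equals $k_{free}-N_{free}\leq 0$. Equality therefore forces $k_{free}=N_{free}$, i.e.\ every free end has multiplicity $1$, which is exactly the partition condition $(1,\dots,1)$. No splitting into free and fixed sub-braids is needed.

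Your claim that ``equality in the full chain forces the fixed-free cross terms to be absent, so only one type of end occurs at $\gamma$'' is wrong here. That conclusion holds for the positive Morse--Bott torus at $-\infty$ because there $\eta_i\geq 1$, so any mixed cross term $\min(\eta_i q_j,\eta_j q_i)\geq 1$ is strictly positive and destroys equality. In the negative case $\eta_i\geq 0$ only, so cross terms can vanish even when both free and fixed ends are present (e.g.\ if the relevant winding numbers are zero). The proposition as stated does not assert that only one type occurs, and your argument does not establish it. The actual partition conclusion, $(1,\dots,1)$ on free ends with no condition on fixed ends, still follows from your chain (via $(\eta_i+1)(q_i-1)=0$ on the free side), so the error is in the extra claim rather than in the final result.
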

\begin{proof}
$\eta \geq0$ so $lhs\geq 0$, $rhs =k_{free} - N_{free}$ so inequality holds, and equality if free ends has partition conditions $(1,..,1)$, no restrictions on fixed ends.
\end{proof}

\begin{proposition}[link, $+\infty$,negative Morse-Bott torus]
Let $u$ have ends asymptotic to $\gamma$ on a negative Morse-Bott torus as $s\rightarrow +\infty$, suppose there are $k_{free}$ free ends of multiplicity $q_i^{free}$, of total multiplicity $N_{free}$; and suppose there are $k_{fix}$ fixed ends each of multiplicity $q_{fix}$, of total multiplicity $N_{fix}$.
\[
w(\xi) \leq -\sum_{i}^{k_{fix}+k_{free}} \eta_i +\sum _{i,j}^ {k_{fix}+k_{free}} max(\eta_i q_j, \eta_j q_i)  \leq -N_{fix}-N_{free}+1 +k_{fix}
\]
with equality enforcing only free or fixed ends. In the case of only fixed ends the partition condition is $(n)$, and in the case of only free ends the partition condition is either $(n)$ or $(n-1,1)$.
\end{proposition}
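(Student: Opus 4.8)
The plan is to run the argument formally dual to the one already carried out for the $s\to-\infty$ writhe bound on a \emph{positive} Morse--Bott torus: everything goes through with all inequalities reversed once one replaces the winding/writhe/linking estimates of Theorem~\ref{winding pos} by those of Theorem~\ref{winding neg}. Indeed, on a positive torus at $-\infty$ one has $\eta(\xi_i)\ge 1$, $w(\xi_i)\ge\eta_i(q_i-1)$, $l\ge\min$, while on a negative torus at $+\infty$ Theorem~\ref{winding neg} gives $\eta(\xi_i)\le -1$, $w(\xi_i)\le\eta_i(q_i-1)$, $l\le\max$, so each step of the earlier proof has an exact mirror image here.

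For the first inequality I would simply combine the pieces supplied by Theorem~\ref{winding neg}. Writing $\xi=\bigcup_i\xi_i$ over the ends of $u$ asymptotic to $\gamma$, the writhe of a union of coaxial braids is the sum of the single-end writhes plus the pairwise linking numbers, so
\[
w(\xi)\ \le\ \sum_i \eta_i(q_i-1)\ +\ \sum_{i\ne j}\max(\eta_i q_j,\eta_j q_i).
\]
Rewriting $\sum_i\eta_i(q_i-1)=-\sum_i\eta_i+\sum_i\eta_i q_i$ and observing that $\sum_i\eta_i q_i$ is exactly the diagonal $i=j$ part of the double sum (since $\max(\eta_i q_i,\eta_i q_i)=\eta_i q_i$) yields the first claimed bound with the convention that $\sum_{i,j}$ includes the diagonal terms.

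For the second inequality I would set $\eta_i=-a_i$ with $a_i\ge 1$ (justified by $\eta_i\le-1$), so that the middle expression becomes $-\sum_i a_i(q_i-1)-\sum_{i\ne j}\min(a_i q_j,a_j q_i)$, and the desired bound is equivalent to the purely combinatorial estimate
\[
\sum_i a_i(q_i-1)\ +\ \sum_{i\ne j}\min(a_i q_j,a_j q_i)\ \ge\ N_{free}+N_{fix}-1-k_{fix}.
\]
Here I would split the ends into free and fixed, bound the free--free part below by $N_{free}-1$ and the fixed--fixed part below by $N_{fix}-k_{fix}$ (these are the two ``respective bounds'', exactly as in the positive-torus proof: the asymmetry in the two counts reflects that equality among free ends tolerates a partition with several parts, whereas among fixed ends it forces a single part), and note that every free--fixed cross term $\min(a_i q_j,a_j q_i)\ge 1$ is strictly positive, so equality forces all ends to be of one type. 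A finite check of the estimate over partitions of $n$ (all $a_i$ forced to $1$ at equality) then shows that among free ends only $(n)$ and $(n-1,1)$ are consistent with equality, and among fixed ends only $(n)$.

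The main obstacle is none of the above individually but the bookkeeping: tracking which Conley--Zehnder convention ($\mu_+$ versus $\mu_-$) attaches to free and to fixed ends at $+\infty$ on a \emph{negative} torus, carrying the $+1$ and the $k_{fix}$ correction through consistently (in particular noting that all-fixed configurations sit strictly below the stated bound, so the ``partition $(n)$'' conclusion there is a conservative one), and verifying that the extremal-partition case analysis really terminates at exactly $(n)$ and $(n-1,1)$. Everything else is a transcription of the positive Morse--Bott torus computation at $s\to-\infty$ with signs reversed.
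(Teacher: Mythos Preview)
Your proposal is correct and follows essentially the same route as the paper's proof: split the sum into its free--free and fixed--fixed blocks, establish the two sub-bounds $\le 1-N_{free}$ and $\le k_{fix}-N_{fix}$ separately (the paper writes these directly in terms of $\eta_i$ rather than via your substitution $\eta_i=-a_i$, but the content is identical), observe that free--fixed cross terms are strictly negative so equality forces a single type, and then read off the partition conditions. Your remark that the all-fixed case actually sits strictly below the stated bound is accurate and is a point the paper does not make explicit.
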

\begin{proof}
We can split the sum into:
\[
-\sum_i^{k_{free}} \eta_i + \sum _{i,j}^ {k_{free}} min(\eta_i q_j, \eta_j q_i)\leq 1-N_{free}
\]
and 
\[
-\sum_i^{k_{fixed}} \eta_i + \sum _{i,j}^ {k_{fixed}} min(\eta_i q_j, \eta_j q_i)\leq k_{fix}-N_{fix}.
\]
Each of the above inequalities hold individually, and when there are both free and fixed ends, there are cross terms that make the inequality strict. As before, we can deduce the partition conditions directly from imposing the equality condition.
\end{proof}

\subsection{Morse-Bott tori as ECH generators} \label{MBT as ECH}
Recall that for ECH of nondegenerate contact forms, the generators of the chain complex are orbit sets satisfying the condition that if an orbit is hyperbolic then it can only have multiplicity $1$. There are analogues of this in Morse Bott tori. In Morse-Bott ECH, we think of the generators of the chain complex as collections of Morse-Bott tori with additional data, written schematically as:
\[
\alpha = \{ (\mcal{T}_j,\pm, m_j)\}
\]
and the differential as counting ECH index one height one $J$ holomorphic cascades connecting between chain complex generators as above (which we will also call orbit sets). In the above definition $m_j$ is the total multiplicity, which we think of as total multiplicity of Reeb orbits on $\mcal{T}_j$ hit by the $J$ holomorphic curves that have ends on this Morse-Bott torus on the 
top (resp. bottom) level of a (height 1) cascade. $\pm$ is additional information, which specifies how many ends of the $J$ holomorphic curve landing on $\mcal{T}_j$ are free/fixed. We see that this also depends on whether $\alpha$ appears as the top or bottom level of a $J$ holomorphic cascade, and in context of our correspondence theorem free/fixed ends correspond to elliptic/hyperbolic orbits in the non-degenerate case. We state this explicitly in the next definition in which we also describe the expected correspondence between Morse-Bott ECH generators and nondegenerate ECH generators after the perturbation.

\begin{definition} \label{mbgenerator}
We consider the case of positive Morse Bott tori. In the nondegenerate case we let $\gamma_-$ denote the hyperbolic Reeb orbit that arises from perturbation with Conley Zehnder index 0, and $\gamma_+$ the elliptic orbit that arose out of the perturbation with Conley Zehnder index 1. Then the description of our Morse-Bott generator, say $(\mcal{T},\pm,m)$ (this is just one Morse-Bott torus, in general $\alpha$ will consist of a collection of such tori, we focus on an example for the sake of brevity) and its correspondence with ECH generators in the perturbed non-degenerate case is given by:

\begin{enumerate}
            \item positive side $s\rightarrow \infty$,
            \begin{enumerate}
                \item The Morse-Bott generator $(\mcal{T},+,m)$ is defined to require all ends on $\mcal{T}$ are free, with total multiplicity on the torus being $m$. In the perturbed nondegenerate case, this corresponds to ECH orbit set  $(\gamma_+, m)$. We observe the nondeg partition ($\theta$ positive) condition is $(1,..,1)$, and the Morse-Bott partition condition from the writhe bound is $(1,..1)$. 
                
                By the Conley-Zehnder index convention the ECH conley Zehnder index assigned to $(\mcal{T},+,m)$ is given by: $CZ^{ECH}((\mcal{T},+\infty,+,m)) =m$
                \item The Morse-Bott generator $(\mcal{T},-,m)$ there is one end on $\mcal{T}$ that is fixed with multiplicity 1, on the critical point of $f$ that corresponds to the hyperbolic orbit. The rest of the ends are free, and the total multiplicity of orbits on $\mcal{T}$ is $m$. This corresponds to the orbit set $\{(\gamma_-,1),(\gamma_+,m-1)\}$ in the nondegenerate case. Note the partition conditions between nondegenerate case and Morse-Bott case agree.
                
                We also have $CZ^{ECH}((\mcal{T},+\infty,-,m)) =m-1$.
            \end{enumerate}
        
            \item In the case of negative ends,  $s\rightarrow -\infty$,
            \begin{enumerate}
                \item The Morse-Bott generator $(\mcal{T},+,m)$ is defined to require all ends are fixed and asymptotic to the critical point of $f$ corresponding to the elliptic orbits, and the total multiplicity is $m$. In the nondegenerate case this correspond to the orbit set $(\gamma_+,m)$. We observe Morse-Bott and nondegenerate partition conditions agree, both being $(m)$.
                By our conventions, $CZ^{ECH}(\mcal{T},+,m)=m$
                \item The Morse-Bott generator $(\mcal{T},-,m)$ requires there is a multiplicity 1 free end landing on $\mcal{T}$, the remaining ends are fixed and are also required to land on the critical point corresponding to elliptic Reeb orbit. This corresponds to the orbit set $\{(\gamma_+,m-1),(\gamma_-,1)\}$ in the nondegenerate case, and we have analogous partition conditions for both Morse-Bott and nondegenerate case. $CZ^{ECH}(\mcal{T},-,m) = m-1$
            \end{enumerate}
            
        \end{enumerate}
        We observe $(\mcal{T},\pm,m)$ imposes different free/fixed end conditions, depending whether it appears as $s\rightarrow \pm \infty$ ends, however we should think of it as being the same generator in the chain complex, as is evidenced by the fact that it is identified to the same nondegenerate orbit set regardless of whether it appears at $+\infty$ or $-\infty$ end.
\end{definition}
We also briefly summarize the analogous result for negative Morse-Bott torus.
\begin{definition}\label{def:mb_generator_neg}
    In the case of negative Morse Bott tori, we use $\gamma_- $ to denote the  elliptic Reeb orbit after perturbation of Conley Zehnder index -1, and let $\gamma_+$ denote the  hyperbolic orbit after perturbation of Conley Zehnder index 0. Let $(\mcal{T},\pm,m)$ denote a Morse-Bott generator.
        \begin{enumerate}
            \item  At the positive end as $s\rightarrow \infty$,
                \begin{enumerate}
                    \item $(\mcal{T},-,m)$ requires all ends fixed at the critical point of $f$ corresponding to $\gamma_-$, corresponds to $(\gamma_-,m)$ in nondegenerate case, both degenerate and nondegenerate case has partition conditions $(m)$. $CZ^{ECH}((\mcal{T},-,m))=-m$
                    \item $(\mcal{T},+,m)$ requires one end free with multiplicity 1, the rest have multiplicity $m-1$ fixed at the critical point of $f$ corresponding to $\gamma_-$. The generator corresponds to $ \{(\gamma_+,1),(\gamma_-,m-1)\}$. $CZ^{ECH}((\mcal{T},+,m))=-m+1$. Partition conditions match.
                \end{enumerate} 
            \item Negative end, as $s\rightarrow -\infty$,
                \begin{enumerate}
                    \item $(\mcal{T},-,m)$ has all ends free, of total multiplicity $m$. This corresponds to $(\gamma_-,m)$ in the nondegenerate case. Partition conditions match. $CZ^{ECH}((\mcal{T},-,m))=-m$.
                    \item  $(\mcal{T},+,m)$ has one fixed end corresponding to the critical point of $f$ at $\gamma_+$ of multiplicity one; the rest are free and of multiplicity $m-1$. This corresponds to the orbit set $\{(\gamma_+,1),(\gamma_-,m-1)$. The partition conditions correspond, and $CZ^{ECH}((\mcal{T},+,m))=-m+1$.
                \end{enumerate}
        \end{enumerate}

\end{definition}

We would also like a more general notion of ECH Conley Zehnder index for when there are more free/fixed ends than allowed by ECH generator conditions are above. To keep track of the more refined intersection theory information, we need to make our definition depend slightly on the behaviour of the $J$-holomorphic curve as its ends approach Reeb orbits on Morse-Bott tori. We consider a nontrivial somewhere injective holomorphic curve $u:\Sigma \rightarrow \bb{R} \times Y^3$. We isolate this into the following definition.

\begin{definition}
Let $u:\Sigma \rightarrow \bb{R} \times Y^3$ be a nontrivial somewhere injective holomorphic curve.
Let $\gamma$ be a simple Reeb orbit on a positive Morse-Bott torus. 
\begin{enumerate}
    \item At the $s\rightarrow \infty$ end, suppose $k_{free}$ ends approach $\gamma$ with total multiplicity $N_{free}$, and $k_{fixed}$ ends approach $\gamma$ with total multiplcity $N_{fixed}$, then $CZ^{ECH}(\gamma) := N_{free}$.
    \item At the $s\rightarrow - \infty$ end, suppose $k_{free}$ ends approach $\gamma$ with total multiplicity $N_{free}$, and $k_{fixed}$ ends approach $\gamma$ with total multiplcity $N_{fixed}$, then $CZ^{ECH}(\gamma) := N_{free}+N_{fixed} -1$.
\end{enumerate}
Similarly if $\gamma$ is a simply covered Reeb orbit on a negative Morse-Bott torus.
\begin{enumerate}
    \item At the $s\rightarrow \infty$ end, suppose $k_{free}$ ends approach $\gamma$ with total multiplicity $N_{free}$, and $k_{fixed}$ ends approach $\gamma$ with total multiplcity $N_{fixed}$, then $CZ^{ECH}(\gamma) := -N_{fix}-N_{free}+1$.
    \item At the $s\rightarrow - \infty$ end, suppose $k_{free}$ ends approach $\gamma$ with total multiplicity $N_{free}$, and $k_{fixed}$ ends approach $\gamma$ with total multiplcity $N_{fixed}$, then $CZ^{ECH}(\gamma) := -N_{free}$.
\end{enumerate}
\end{definition}
Note the above definition agrees with that of the ECH Morse-Bott generator.
Then let $u$ be a somewhere injective $J$ holomorphic curve with no trivial cylinder components, and we have chosen which ends of $u$ are fixed/free. Then we define its ECH index using the above notion of ECH Conley-Zehnder index:
\begin{definition}
We define the ECH index of $u$ as:
\begin{equation}
    I(u):= c_\tau(u) + Q_\tau(u) + CZ^{ECH}(u)
\end{equation}
\end{definition}
Note the above definition not only depends on the relative homology class of $u$, it also depends on how the ends of $u$ are distributed among the Reeb orbits (for information of free/fixed beyond that of the Morse-Bott ECH generators)- in particular we have to keep the information of not only how many free/fix ends land on a Morse-Bott torus, we also need to retain the information which ends are asymptotic to which Reeb orbit. 

By using the writhe bound we recover directly
\begin{proposition}
Let $u$ be a $J$-holomorphic map satisfying the conditions above,
\begin{equation}
Ind(u) \leq I(u) -2\dt(u).
\end{equation}
with equality enforcing partition conditions described in the writhe bound section.
\end{proposition}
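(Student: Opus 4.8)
The plan is to run the classical ECH index-inequality argument in exactly the form used in the nondegenerate case (the index inequality recalled in the ECH review), now powered by the Morse--Bott relative adjunction formula and the Morse--Bott writhe bound proved just above. So this proposition is genuinely a corollary of the work already done; the content is assembling the pieces consistently.

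First I would record the Fredholm index of $u$ with its chosen assignment of free and fixed ends, namely
\[
Ind(u) = -\chi(u) + 2c_\tau(u) + CZ^{Ind}(u),
\]
where, following the Fredholm index formula for curves with ends on Morse--Bott tori recalled above and the Conley--Zehnder conventions of the writhe-bound section, $CZ^{Ind}(u)$ is the sum over all punctures of the Robbin--Salamon index of the asymptotic (iterated) Reeb orbit together with the $\tfrac12\#\text{free}-\tfrac12\#\text{fixed}$ correction; on a positive (resp. negative) Morse--Bott torus this amounts to attaching $\mu_+$ or $\mu_-$ to each end according to the sign of the end and whether it is free or fixed. Next I would use the Morse--Bott relative adjunction formula $c_\tau(u) = \chi(u) + Q_\tau(u) + w_\tau(u) - 2\delta(u)$ to eliminate $\chi(u)$, turning the index into
\[
Ind(u) = c_\tau(u) + Q_\tau(u) + w_\tau(u) + CZ^{Ind}(u) - 2\delta(u).
\]
I would then feed in the writhe bound $w_\tau(u) \le CZ^{ECH}(u) - CZ^{Ind}(u)$: since $u$ has no trivial-cylinder components, every asymptotic braid is a genuine end of $u$, so this global bound is exactly the sum, over all Morse--Bott tori and all simple Reeb orbits on them, of the four ``link'' estimates (for $\pm\infty$ ends on positive and on negative tori), whose right-hand sides add up to $CZ^{ECH}(u) - CZ^{Ind}(u)$. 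Substituting yields
\[
Ind(u) \le c_\tau(u) + Q_\tau(u) + CZ^{ECH}(u) - 2\delta(u) = I(u) - 2\delta(u),
\]
which is the claimed inequality.

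For the equality statement, I would observe that the term $2\delta(u)\ge 0$ appears identically on both sides and plays no role, so the only estimate invoked is the writhe bound, which is a sum of the per-orbit link estimates. Equality in $Ind(u) \le I(u) - 2\delta(u)$ therefore forces equality in each of those; inspecting their proofs shows this is possible only when no simple Reeb orbit carries both free and fixed ends of $u$, and the partition of the ends at each such orbit is one of those listed in the link propositions (e.g. $(n)$ or $(1,n-1)$ for free ends at $-\infty$ on a positive torus, and so on) — precisely the partition conditions asserted. The main obstacle I expect is bookkeeping rather than conceptual: one must check that the per-Reeb-orbit quantities $CZ^{ECH}(\gamma)$, $CZ^{Ind}(\gamma)$ defined just above really do assemble into the global $CZ^{ECH}(u)$ entering $I(u)$ and the global $CZ^{Ind}(u)$ entering $Ind(u)$ — in particular that the $\tfrac12\#\text{free}-\tfrac12\#\text{fixed}$ term of the Fredholm formula is absorbed correctly into the $\mu_\pm$ convention — and that the sign with which negative ends enter $w_\tau(u) = \sum_i w_\tau(\zeta_i^+) - \sum_j w_\tau(\zeta_j^-)$ is compatible with the directions of the inequalities in Theorems \ref{winding pos} and \ref{winding neg}. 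Once these sign conventions are pinned down, the inequality itself is a one-line substitution.
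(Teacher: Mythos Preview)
Your proposal is correct and is precisely the argument the paper has in mind: the paper's own proof is the single line ``By using the writhe bound we recover directly,'' and what you have written is exactly the standard unpacking of that line via the relative adjunction formula followed by the per-orbit writhe estimates. Your remarks on the equality case and on the bookkeeping of the $\mu_\pm$/free--fixed conventions are apt and consistent with how the paper set things up.
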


We next include the case of trivial cylinders in our definition of ECH Conley-Zehnder index.

\begin{definition}
Let $\gamma$ be a simply covered Reeb orbit on a positive Morse-Bott torus. Let $u:\Sigma \rightarrow \bb{R} \times Y$ be a $J$-holomorphic curve with potentially disconnected domain. When we say trivial cylinders below, we allow trivial cylinders with higher multiplicities.
\begin{enumerate}
    \item At the $s\rightarrow \infty$ end, suppose $k_{free}$ ends approach $\gamma$ with total multiplicity $N_{free}$, and $k_{fixed}$ ends approach $\gamma$ with total multiplcity $N_{fixed}$, then $CZ^{ECH}(\gamma) := N_{free}$. Here we allow holomorphic curves to be trivial cylinders.
    \item At the $s\rightarrow - \infty$ end, suppose $k_{free}$ ends approach $\gamma$ with total multiplicity $N_{free}$, and $k_{fixed}$ ends approach $\gamma$ with total multiplcity $N_{fixed}$. If at least one of the approaching ends is not that of a trivial cylinder, then $CZ^{ECH}(\gamma) := N_{free}+N_{fixed} -1$. If all the approaching ends are trivial cylinders, then $CZ^{ECH} : = N_{fixed}$.
\end{enumerate}

Next let $\gamma$ be a simply covered Reeb orbit on a negative Morse-Bott torus.

\begin{enumerate}
    \item At the $s\rightarrow \infty$ end, suppose $k_{free}$ ends approach $\gamma$ with total multiplicity $N_{free}$, and $k_{fixed}$ ends approach $\gamma$ with total multiplcity $N_{fixed}$, If at least one of the approaching ends is not that of a trivial cylinder, then $CZ^{ECH}(\gamma) := 1-N_{free}-N_{fix}$. If there are only trivial cylinders, then $CZ^{ECH}= -N_{fixed}$.
    \item At the $s\rightarrow - \infty$ end, suppose $k_{free}$ ends approach $\gamma$ with total multiplicity $N_{free}$, and $k_{fixed}$ ends approach $\gamma$ with total multiplcity $N_{fixed}$. Then we set $CZ^{ECH}(\gamma) := -N_{free}$. This includes the case of trivial cylinders.
\end{enumerate}
\end{definition}

\begin{proposition}
Let $C$ be a $J$ holomorphic current which can contain trivial cylinders. Each end in $C$ is implicitly assigned ``free'' or ``fixed'', and recall the convention that we can at most designate one end of a trivial cylinder as fixed. With $CZ^{ECH}$ as defined above, we have the inequality:
\[
Ind(C) \leq I(C) -2\dt (C)
\]

\end{proposition}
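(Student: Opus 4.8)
The plan is to bootstrap from the writhe bound for a single somewhere injective nontrivial curve (the Proposition just above the definition of $I(u)$), adding two ingredients: additivity of the Fredholm index over the components of a current, and a relative adjunction / writhe estimate applied to the whole current at once.

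First I would decompose $C$ into its trivial-cylinder part $C_0$ (carrying its multiplicities, and with the standing convention that each iterated trivial cylinder has at most one fixed end) and the distinct somewhere injective nontrivial curves $C_a$ appearing in $C$, with multiplicities $m_a$ and with the free/fixed labelling inherited from $C$. Since there is a single symplectization level and hence no fibre-product constraint, $Ind(C)$, the virtual dimension of the space of currents in this configuration, is $\sum_a Ind(C_a)$, the trivial cylinders contributing $0$; so it suffices to show $\sum_a Ind(C_a) \le I(C) - 2\dt(C)$. The already-known single-curve bound gives $Ind(C_a) \le I(C_a) - 2\dt(C_a)$, so the real content is the inequality $\sum_a I(C_a) - 2\sum_a \dt(C_a) \le I(C) - 2\dt(C)$, i.e. that the ECH index of the union exceeds the sum of the ECH indices of its simple pieces by at least twice the number of "cross" singularities counted in $\dt(C)$ but not in $\sum_a\dt(C_a)$.

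Next I would establish that comparison by running the relative adjunction argument globally. Expanding $I(C) = c_\tau(C) + Q_\tau(C) + CZ^{ECH}(C)$ using that $c_\tau$ is additive in the relative homology class, $Q_\tau$ is a symmetric bilinear form (so $Q_\tau(C) = \sum_a m_a^2 Q_\tau(C_a) + \sum_{a\ne b} m_a m_b Q_\tau(C_a, C_b)$, plus pairings with the trivial cylinders), and $CZ^{ECH}(C)$ is computed from the total free/fixed multiplicities at each Reeb orbit, one reduces to three facts: (i) for $J$-holomorphic curves, $Q_\tau$ of distinct components and all interior intersections are $\ge 0$ (positivity of intersections), which supplies the extra $Q_\tau$ cross-terms and the extra $\dt$; (ii) the writhe and linking inequalities of Theorems \ref{winding pos} and \ref{winding neg} bound not only self-writhes but also mutual linking numbers of braids coming from distinct ends --- whether those ends belong to one curve, to two different $C_a$, or to trivial cylinders --- so the writhe of the global braid at each Reeb orbit is controlled by the corresponding $CZ$-sums exactly as in the single-curve case; and (iii) for a trivial cylinder the modified definition of $CZ^{ECH}$ (taking $N_{fixed}$ rather than $N_{free}+N_{fixed}-1$ when every incoming end at a Reeb orbit is that of a trivial cylinder), together with the "at most one fixed end" convention, is engineered precisely so that iterated trivial cylinders contribute $I = 0 = Ind$ and never spoil the bound. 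Assembling (i)--(iii) yields $I(C) \ge \sum_a I(C_a) + 2(\dt(C) - \sum_a\dt(C_a))$, which combined with the single-curve bound finishes the proof.

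The hard part will be (ii) together with the multiple-cover bookkeeping. One cannot simply apply the single-curve inequality component by component, because $CZ^{ECH}$ and the writhe bound are intrinsically global --- the braid at a given Reeb orbit is the superposition of the braids of every component with an end there --- so the estimate has to be re-derived for the union. Simultaneously one must track that $Q_\tau$ grows quadratically under covering while $c_\tau$ and the "naive" part of $CZ^{ECH}$ grow linearly, and check that the $(m_a-1)$-type corrections to $CZ^{ECH}$ coming from the merged "$-1$"s in its definition dominate any deficit from the quadratic $Q_\tau(C_a)$ term (itself bounded below via the relative adjunction formula applied to $C_a$). I expect the trivial-cylinder bookkeeping in (iii) to be conceptually routine but the fiddliest to write out carefully.
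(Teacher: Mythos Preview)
Your plan can be made to work, but it is a more circuitous route than the paper's, and your step (ii) already contains essentially the paper's entire argument.

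The paper proceeds directly: it applies the translation-in-$s$ trick to replace each nontrivial $(C_a,m_a)$ by $m_a$ disjoint somewhere-injective copies, so that the current becomes a genuine somewhere-injective (multi-component) curve $C'$. It then applies the relative adjunction formula to $C'$ as a whole, and the inequality reduces to verifying the writhe bound at every Reeb orbit. The only new content compared with the nontrivial-curve case is to check, Reeb orbit by Reeb orbit and sign by sign, that inserting trivial-cylinder strands into the braid does not break the bound under the modified $CZ^{ECH}$; this is a short case analysis (positive/negative torus, $s\to\pm\infty$, some versus no nontrivial ends present). No subadditivity step is needed.

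By contrast, your outline first invokes the single-curve inequality for each $C_a$ and then tries to prove $\sum_a I(C_a) - 2\sum_a\delta(C_a) \le I(C) - 2\delta(C)$. But to control the cross-linking terms in that comparison you have to prove exactly the global writhe/linking bound for the union --- which is your (ii), and which is the paper's proof. So the subadditivity framing adds bookkeeping without avoiding the core computation. Two further cautions: first, your formula $Ind(C)=\sum_a Ind(C_a)$ is the dimension of the moduli of underlying simple curves, whereas after the translation trick the Fredholm index of the resulting somewhere-injective curve is $\sum_a m_a\,Ind(C_a)$; the paper's inequality is the latter, and your multi-cover bookkeeping in the last paragraph would have to absorb the difference. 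Second, your (iii) overstates slightly: trivial cylinders need not individually have $I=0$ under the modified $CZ^{ECH}$ --- what matters, and what the paper checks, is that their contribution to the global writhe bound is nonnegative at each orbit.
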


\begin{proof}
Let $C$ be a $J$-holomorphic current of the form $\{(C_i,m_i\}$ where $C_i$ are pairwise distinct. If $C_i$ is nontrivial, and $m_i >1$, then as in \cite{Hutchings2002}, we can consider $m_i$ copies of $C_i$ translated by $m_i$ distinct factors in the symplectization direction. Then we can represent $(C_i,m_i)$ as $m_i$ distinct somewhere injective $J$-holomorphic curves. We do this for all nontrivial components of $C$. Each resulting end of $C_i$ receives an assignment of ``free/fixed'', hence both sides of the inequality above are defined. (One can make all the copies of $C_i$ coming from $(C_i,m_i)$ have the same free/fixed assignments at their corresponding ends, but this won't be necessary.)

As before this boils down to writhe bounds at $s=+\infty$ and $s=-\infty$.
We first consider $\gamma$ a Reeb orbit on a positive Morse-Bott torus.
We first consider the $s=+\infty$ case. Here for trivial cylinders $q_i=1$ and the linking number is zero, so the same proof as before produces the writhe bound.

In the case $s\rightarrow -\infty$, let $N_{trivial}$ denote the multiplicity of trivial ends. Let $N_{trivial}$ denote the total multiplicity of trivial ends, fixed or free. First assume there is at least one nontrivial end. The apriori bound on writhe is:
\[
w(\xi) \geq - \# \textup{nontrivial ends} + \sum_{i,j \textup{nontrivial ends}} min(q_i,q_j)  + N_{trivial} \cdot (\# \textup{nontrivial ends} ).
\]

With our new definition of $CZ^{ECH}$, we need to establish the writhe bound that
\[
- \# \textup{nontrivial ends} + \sum_{i,j \textup{nontrivial ends}} min(q_i,q_j)  + N_{trivial} \cdot (\# \textup{nontrivial ends} ) \geq N_{free} + N_{fixed} -1 -(k_{fixed})
\]
We use the superscript $^T$ and $^{NT}$ to distinguish whether the multiplicity is coming from trivial ends or nontrivial ends.
But the writhe bound already established implies 
\[
- \# \textup{nontrivial ends} + \sum_{i,j \textup{nontrivial ends}} min(q_i,q_j) \geq  N^{NT}_{free} + N^{NT}_{fixed} -1 - k_{fixed}^{NT}
\]
Then it suffices to establish that
\[
N_{trivial} \cdot (\# \textup{nontrivial ends} ) \geq N_{free}^T +N_{fixed}^T-k_{fixed}^T
\]
which always holds, hence the writhe bound continues to hold. 

When there are only trivial cylinders, the writhe is automatically zero, likewise the writhe bound is trivially satisfied.

We next consider the case $\gamma$ a Reeb orbit on a negative Morse-Bott torus. We first consider the $s\rightarrow -\infty$ case. Since the winding number $\eta$ in this case is bounded below by zero, the writhe bound continues to hold even in the presence of trivial cylinders.

In the case of $s\rightarrow +\infty$, the computation is very much similar to the $-\infty$ end of a positive Morse-Bott torus. Assuming there is at least one nontrivial end
\[
w(\xi) \leq +\#\textup{nontrivial ends} +\sum_{i,j \textup{nontrivial ends}} max(\eta_iq_j,\eta_jq_i) -N_{trivial}\cdot \# \textup{nontrivial ends} \leq -N_{fix}-N_{free} +1+k_{fix}.
\]
With the previous writhe bound we have already proven
\[
\#\textup{nontrivial ends} +\sum_{i,j \textup{nontrivial ends}} max(\eta_iq_j,\eta_jq_i) \leq -N_{fix}^{NT} -N_{free}^{NT}+1+k_{fix}^{NT}
\]
hence suffices to prove 
\[
-N_{trivial}\cdot \# \textup{nontrivial ends} \leq -N_{fix}^{T} -N_{free}^{T}+k_{fix}^{T}
\]
but this follows directly from our assumptions.

In the case there are only trivial ends the total writhe is zero, and the writhe bound is achieved.
\end{proof}

We next establish the subadditivity property of the ECH index.
\begin{proposition}
Let $\mcal{C}_1 = \{(C_a,m_a)\}$ and $\mcal{C}_2 = \{ (C_b,m_b)\}$ denote two $J$-holomorphic currents, and $C_a$ is never the same as $C_b$ unless they are both trivial cylinders (they can be $\bb{R}$ translates of each other). Then their ECH indices satisfy
\begin{equation}
    I(\mcal{C}_1 \cup \mcal{C}_2) \geq I(\mcal{C}_1) + I(\mcal{C}_2) + 2\mcal{C}_1 \cap \mcal{C}_2. 
\end{equation}
In the above $\mcal{C}_1 \cap \mcal{C}_2$ counts the intersection with multiplicity of $C_a$ with $C_b$. Note by intersection positivity each multiplicity is positive. Further by construction the intersection between trivial cylinders is zero.
\end{proposition}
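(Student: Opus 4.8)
\emph{Proof plan.} The plan is to reduce the claim to the winding and linking bounds of Theorems~\ref{winding pos} and~\ref{winding neg}, using the Morse--Bott relative adjunction formula as the bridge, exactly in the spirit of the preceding writhe-bound propositions. First, as in the proof of the previous proposition, split every nontrivial component $(C_i,m_i)$ of $\mcal{C}_1$ and of $\mcal{C}_2$ with $m_i>1$ into $m_i$ copies of the underlying somewhere injective curve, translated along $\bb{R}$ so that the whole configuration is somewhere injective; trivial cylinders are kept with their multiplicities (this matters because of the convention that at most one end of a trivial cylinder may be designated fixed). Every end now carries a free/fixed label, inherited consistently by $\mcal{C}_1$, $\mcal{C}_2$ and $\mcal{C}_1\cup\mcal{C}_2$, so all of $I$, $c_\tau$, $Q_\tau$, $\chi$, $\delta$, $w_\tau$, $CZ^{ECH}$ are defined for the three objects. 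Now apply $c_\tau=\chi+Q_\tau+w_\tau-2\delta$ to each and combine with: $c_\tau$ and $\chi$ are additive over components; $Q_\tau$ is bilinear on $\mcal{H}_2(\alpha,\beta,Y)$, so $Q_\tau(\mcal{C}_1\cup\mcal{C}_2)=Q_\tau(\mcal{C}_1)+Q_\tau(\mcal{C}_2)+2Q_\tau(\mcal{C}_1,\mcal{C}_2)$; and, by intersection positivity, each point of $\mcal{C}_1\cap\mcal{C}_2$ becomes a singularity of the union, so $\delta(\mcal{C}_1\cup\mcal{C}_2)=\delta(\mcal{C}_1)+\delta(\mcal{C}_2)+\mcal{C}_1\cap\mcal{C}_2$. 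Solving for $Q_\tau(\mcal{C}_1,\mcal{C}_2)$ and inserting into $I=c_\tau+Q_\tau+CZ^{ECH}$ gives
\[
I(\mcal{C}_1\cup\mcal{C}_2)-I(\mcal{C}_1)-I(\mcal{C}_2)=2\,\mcal{C}_1\cap\mcal{C}_2-\big(w_\tau(\mcal{C}_1\cup\mcal{C}_2)-w_\tau(\mcal{C}_1)-w_\tau(\mcal{C}_2)\big)+\Delta CZ^{ECH},
\]
where $\Delta CZ^{ECH}:=CZ^{ECH}(\mcal{C}_1\cup\mcal{C}_2)-CZ^{ECH}(\mcal{C}_1)-CZ^{ECH}(\mcal{C}_2)$. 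Hence the inequality to prove is exactly $\Delta CZ^{ECH}\ge w_\tau(\mcal{C}_1\cup\mcal{C}_2)-w_\tau(\mcal{C}_1)-w_\tau(\mcal{C}_2)$.

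Both sides localize at the Reeb orbits approached by both $\mcal{C}_1$ and $\mcal{C}_2$. Using the identity $w(\xi)=\sum_i w(\xi_i)+\sum_{i\neq j}l(\xi_i,\xi_j)$ from Theorems~\ref{winding pos}, \ref{winding neg}, the writhe of the combined braid over such a $\gamma$ exceeds the sum of the two separate writhes by $2\ell_\tau(\gamma):=2\sum_{i,j}l(\xi_i^{(1)},\xi_j^{(2)})$, the total linking of the $\mcal{C}_1$-braid with the $\mcal{C}_2$-braid at $\gamma$; with the convention $w_\tau=\sum_{+}w-\sum_{-}w$ this contributes $+2\ell_\tau(\gamma)$ at positive ends and $-2\ell_\tau(\gamma)$ at negative ends. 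Reading off the definition of $CZ^{ECH}$, $\Delta CZ^{ECH}$ likewise splits into a sum over such $\gamma$: when both currents have a nontrivial end at $\gamma$ the local contribution is $+1$ at a negative end of a positive Morse--Bott torus, $-1$ at a positive end of a negative Morse--Bott torus, and $0$ in the other two cases. It therefore suffices to verify, orbit by orbit, that $\Delta CZ^{ECH}(\gamma)$ dominates the corresponding $\pm 2\ell_\tau(\gamma)$. This is a short case check against the winding bounds: on a positive torus the winding numbers are $\le 0$ at $+\infty$ and $\ge 1$ at $-\infty$, so the linking bounds of Theorem~\ref{winding pos} give $\ell_\tau(\gamma)\le 0$ (matching $\Delta CZ^{ECH}(\gamma)=0$) resp. $\ell_\tau(\gamma)\ge 1$ (so $-2\ell_\tau(\gamma)\le -2\le 1=\Delta CZ^{ECH}(\gamma)$); dually, on a negative torus Theorem~\ref{winding neg} gives $\ell_\tau(\gamma)\le -1$ at $+\infty$ (so $2\ell_\tau(\gamma)\le -2\le -1=\Delta CZ^{ECH}(\gamma)$) and $\ell_\tau(\gamma)\ge 0$ at $-\infty$. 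Summing over $\gamma$, and recalling that trivial cylinders contribute nothing to $\mcal{C}_1\cap\mcal{C}_2$, yields the desired inequality.

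The main obstacle is the trivial-cylinder bookkeeping in the last step: $CZ^{ECH}$ has a modified value at a Reeb orbit all of whose incident ends are trivial cylinders, so one must separately treat orbits where the $\mcal{C}_1$-ends are all trivial while the $\mcal{C}_2$-ends are genuine (and the symmetric situation), checking that the $N^{T}$ versus $N^{NT}$ comparison of multiplicities still makes $\Delta CZ^{ECH}(\gamma)+(\pm 2\ell_\tau(\gamma))\ge 0$. This is the same delicate point already confronted in the proof of the preceding proposition, and that argument carries over verbatim here. A secondary, routine point is that the free/fixed labels must agree between $\mcal{C}_1$, $\mcal{C}_2$ and their union; but since $c_\tau$ and $Q_\tau$ see only the relative homology class in $\mcal{H}_2$, while $w_\tau$ and $CZ^{ECH}$ are computed orbitwise from the labels, the labels are simply inherited and no genuine choice is involved. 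The bilinearity of $Q_\tau$ and the singularity count $\delta(\mcal{C}_1\cup\mcal{C}_2)=\delta(\mcal{C}_1)+\delta(\mcal{C}_2)+\mcal{C}_1\cap\mcal{C}_2$ are exactly as in the nondegenerate case of \cite{Hutchings2002}.
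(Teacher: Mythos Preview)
Your approach is essentially the same as the paper's. The paper invokes the identity
\[
I(\mcal{C}_1\cup\mcal{C}_2)-I(\mcal{C}_1)-I(\mcal{C}_2)-2\,\mcal{C}_1\cap\mcal{C}_2
= \Delta CZ^{ECH} - 2\sum_{a,b} l_\tau(C_a,C_b)
\]
directly by citing \cite{Hutchings2002,hutching_revisited}, whereas you derive it from the relative adjunction formula; these are equivalent. Both proofs then reduce to the identical orbit-by-orbit comparison of $\Delta CZ^{ECH}$ against the cross-linking, split into the four cases (positive/negative Morse--Bott torus, $s\to\pm\infty$), with the same subcases for trivial cylinders.

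One small sign slip to fix: at a negative end on a positive Morse--Bott torus with both currents nontrivial, the contribution to $\Delta CZ^{ECH}$ (with the standard convention that negative-end $CZ$ terms are subtracted) is $-1$, not $+1$. The inequality you need there is $-1 \ge -2\ell_\tau(\gamma)$, which holds since $\ell_\tau(\gamma)\ge 1$; your written check ``$-2\ell_\tau(\gamma)\le -2\le 1$'' compares against the unsigned local value and is not quite the right inequality, though the conclusion is of course unaffected. The paper records this case as ``the Conley--Zehnder difference term is just $-1$, and the linking number term $2l_\tau(C_a,C_b)\ge 2$''.
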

\begin{proof}
We again apply the translation in the symplectization trick to represent nontrival currents $(C_a,m_a)$ (resp. $(C_b,m_b)$) by $m_a$ (rep. $m_b$) distinct somewhere injective curves. After relabelling we can also denote them by $C_a$ (resp. $C_b$). 
We apply the adjunction inequality as in \cite{Hutchings2002,hutching_revisited} to obtain
\begin{equation}
    I(\mcal{C}_1\cup \mcal{C}_2) -I(\mcal{C}_1) -I(\mcal{C}_2) -2 \# \mcal{C}_1\cdot \mcal{C}_2 = CZ^{ECH}(\mcal{C}_1 \cup \mcal{C}_2) - CZ^{ECH}(\mcal{C}_1)-CZ^{ECH}(\mcal{C}_2) - 2\sum_{a, b} l_\tau(C_a,C_b)
\end{equation}
Then this reduces to a local computation relating linking number and our choice of Conley-Zehnder indices. We take this up case by case. First consider $\gamma$ a Reeb orbit on a positive Morse-Bott torus, consider the $s\rightarrow \infty$ end. In this case we have $CZ^{ECH}(\mcal{C}_1 \cup \mcal{C}_2) - CZ^{ECH}(\mcal{C}_1)-CZ^{ECH}(\mcal{C}_2) =0$ and $l_\tau(C_a,C_b) \leq 0$. Hence all the contributions from this end is $\geq 0$.

We next consider $\gamma$ on a positive Morse-Bott torus at $s\rightarrow -\infty$ ends. Because how we assigned Conley-Zehnder indices depends on whether all the ends are trivial, we split into cases. In the case where all ends of $\mcal{C}_1$ and $\mcal{C}_2$ asymptotic to $\gamma$ as $s\rightarrow -\infty$ are trivial, we have again $CZ^{ECH}(\mcal{C}_1 \cup \mcal{C}_2) - CZ^{ECH}(\mcal{C}_1)-CZ^{ECH}(\mcal{C}_2)=0$ and the linking number vanishes. If one of them has non-trivial ends approaching $\gamma$ (WLOG take this to be $\mcal{C}_1$ and take $\mcal{C}_2$ consists purely of trivial ends), then we have the Conley Zehnder contribution being
\[
N_{free}^1+N_{fixed}^1-1 +N_{fixed}^2 - (N_{free}^1+N_{free}^2+N_{fixed}^1+N_{fixed}^2-1) = -N_{free}^2
\]
where we write $N_{free}^1$ to denote the free ends coming from $\mcal{C}_1$ etc. The linking number contribution is bounded below by $2(N_{fixed}^2 + N_{free}^2)$, hence the overall contribution is non-negative.
The case where both $\mcal{C}_1$ and $\mcal{C}_2$ contains nontrivial ends at $\gamma$ as $s \rightarrow -\infty$, then the Conley-Zehnder difference term is just $-1$, and the linking number term $2l_\tau (C_a,C_b)\geq 2$, hence once again the overall contribution is non-negative.

We next consider the case $\gamma$ is a Reeb orbit in a negative Morse-Bott torus. This will be largely analogous to the positive Morse-Bott torus case. For $s\rightarrow -\infty$, we have the Coneley-Zehnder indices contribute zero, and $l_\tau(C_a,C_b) \geq 0$ as $s \rightarrow \infty$, hence the overall contribution is non-negative. We next consider $\gamma$ as $s\rightarrow +\infty$. Again we break into cases because of trivial cylinders. In the case where all ends approaching $\gamma$ from $\mcal{C}_1$ and $\mcal{C}_2$ are trivial cylinders, the Conley-Zehnder index contribution as well as the linking number is zero. Then in the case $\mcal{C}_1$ has nontrivial ends but $\mcal{C}_2$ has all ends trivial, then the Conley-Zehnder index contribution is given by $-N_{free}^2$, and the linking number $\sum2l_\tau(C_a,C_b) \leq  -2(N_{free}^2+N_{fixed}^2)$, hence the overall contribution is nonnegative. Similarly in the case where both $\mcal{C}_1$ and $\mcal{C}_2$ have nontrivial ends, the difference of Conley-Zehnder index contribution is $-1$, whereas the linking number $2l_\tau(C_a,C_b) \leq -2$, hence the overall contribution is positive.
Hence combining all of the above local inequalities we obtain the overall ECH index inequality.
\end{proof}

\subsection{Multiple level cascades and ECH index}
In this subsection we describe ECH index one cascades. We recall ECH index one cascades should come from degenerations of ECH index one curves, and in particular should respect partition conditions on the end points. In particular we should always keep in mind that ECH index one cascades should flow from a generator Morse-Bott ECH $\alpha_1$ to another $\alpha_n$, which includes the information of multiplicities of free/fixed ends that land on Morse-Bott tori.

Given any cascade $\cas{u}$ as given in our previous definition, we first turn it into a ``cascade of currents": $\cas{\mathbf{u}}=\{u^1,..,u^{n-1}\}$.
Then we can proceed to define the ECH index of $\cas{\mathbf{u}}$.
The following is half definition half theorem, as in if this cascade is transverse and rigid and we glued it into a $J$ holomorphic curve the ECH index of its homology class is given by the following calculation. Conversely, if $u$ came from a cascade of curves that came from a degeneration of $I=1$ holomorphic curve in the $\lambda_\dt$ setting, then our definition of $I$ for the cascade of current will also be one.

\begin{definition}
Let $\cas{\mathbf{u}} = \{u^1,...,u^{n-1}\}$ be a height 1 cascade of currents. Let its positive asymptotics be denoted by $\alpha_1$ and negative asymptotics be denoted by $\alpha_n$, both Morse-Bott ECH generators.
We can then define the ECH index for the cascade of currents as:
\begin{equation}
    I(\cas{\mathbf{u}}) = c_1(\cas{\mathbf{u}}) + Q_\tau( \cas{\mathbf{u}}) + CZ^{ECH}(\cas{\mathbf{u}}).
\end{equation}
The $CZ^{ECH}$ index term for cascade is just the ECH index terms of $\alpha_1$ and $\alpha_n$, which corresponds to the nondegenerate ECH Conley Zehnder index once we have identified free/fixed ends with elliptic/hyperbolic orbits. 
The cascade Chern class and relative intersection terms are just the sum of the Chern class of each of the levels, i.e.
\[
c_1(\cas{\mathbf{u}}):= c_1(u^1) + ...+c_1(u^{n-1})
\]
and
\[
Q_\tau(\cas{\mathbf{u}}): = Q_\tau (u^1) + ...+ Q_\tau(u^{n-1})
\]
\end{definition}
We would like to compare the ECH index of cascade to the Fredholm index of the reduced version, because then with enough transversality we would be able to rule out certain configurations of cascade of ECH index one by index reasons. To this end, we decompose the ECH index of a cascade into ECH index of its constituents, as follows:

\begin{proposition}
We assume all ends of $u^2,..,u^{n-2}$ are free, and all ends of $u^1$ and $u^{n-1}$ are considered free except those mandated by $\alpha_1$ and $\alpha_n$, and we recall our conventions on trivial cylinders with only one fixed end. Then let $R_{pos,i+1}'$ denote the number of distinct Reeb orbits on positive Morse-Bott tori approached by nontrivial ends of $u^i$ as $s\rightarrow -\infty$, and let $V_{pos,i+1}'$ denote the total multiplicity of Reeb orbits on positive Morse-Bott tori approached by $u^{i}$ at the $s\rightarrow -\infty$, so that at these Reeb orbits there are only trivial ends as $s\rightarrow -\infty$. Similarly we let $R_{neg,i}'$ denote the number of distinct Reeb orbits on negative Morse-Bott tori approached by nontrivial ends of $u^i$ as $s\rightarrow +\infty$, and let $V_{neg,i}'$ denote the total multiplicity of Reeb orbits on negative Morse-Bott tori approached by $u^{i}$ at the $s\rightarrow +\infty$, so that at these Reeb orbits there are only trivial ends as $s\rightarrow +\infty$.
Then we have 
\begin{align*}
    I(\cas{\mathbf{u}})
    =& I(u^1) ...+ I(u^{n-1})\\
    &-R_{pos,2}'-...-R_{pos,n-1}'- V_{pos,2}'-...-V_{pos,n-1}'-R_{neg,2}' -..-R_{neg,n-1}' -V_{neg,2}'-..-V_{neg,n-1}'
\end{align*}

\end{proposition}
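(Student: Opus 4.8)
The plan is to reduce the identity to a local computation of Conley--Zehnder contributions at the intermediate cascade levels. The first observation is that, by the definition of the ECH index of a cascade of currents, $c_1(\cas{\mathbf{u}})=\sum_i c_1(u^i)$ and $Q_\tau(\cas{\mathbf{u}})=\sum_i Q_\tau(u^i)$, so that
\[
I(\cas{\mathbf{u}})-\sum_{i=1}^{n-1}I(u^i)=CZ^{ECH}(\cas{\mathbf{u}})-\sum_{i=1}^{n-1}CZ^{ECH}(u^i),
\]
and everything reduces to evaluating the right-hand side.

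Next I would write each $CZ^{ECH}(u^i)=CZ^{ECH}_+(u^i)-CZ^{ECH}_-(u^i)$, separating the contributions of the $s\to+\infty$ and $s\to-\infty$ ends, and regroup the sum by cascade level. The positive ends of $u^1$ and the negative ends of $u^{n-1}$ — viewed as currents, trivial-cylinder components included — carry precisely the data of the Morse--Bott ECH generators $\alpha_1$ and $\alpha_n$, by the ``free except where mandated by $\alpha_1,\alpha_n$'' convention in the hypothesis. Comparing the per-orbit definition of $CZ^{ECH}(\gamma)$ and its trivial-cylinder refinement with the generator values tabulated in Definitions \ref{mbgenerator} and \ref{def:mb_generator_neg}, these contributions sum to $CZ^{ECH}(\alpha_1)$ and $CZ^{ECH}(\alpha_n)$, i.e. to $CZ^{ECH}(\cas{\mathbf{u}})$. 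Every other end of every $u^i$ lies at an intermediate cascade level, where the definition of a cascade of currents forces the negative ends of $u^{\ell-1}$ and the positive ends of $u^\ell$ to cover the same Morse--Bott tori, with equal total multiplicity along each matched pair of Reeb orbits $\gamma$ (covered by $u^{\ell-1}$) and $\tilde\gamma=\phi_f^{-T_{\ell-1}}(\gamma)$ (covered by $u^\ell$). Regrouping then gives
\[
CZ^{ECH}(\cas{\mathbf{u}})-\sum_{i}CZ^{ECH}(u^i)=-\sum_{\ell=2}^{n-1}\Big(CZ^{ECH}_+(u^\ell)-CZ^{ECH}_-(u^{\ell-1})\Big),
\]
each summand of which involves only the ends lying at the $\ell$-th intermediate level.

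The heart of the proof is the evaluation of each such summand orbit by orbit. Fix an intermediate torus $\mcal{T}$ at level $\ell$ and a Reeb orbit $\gamma$ on it covered by $u^{\ell-1}$, with matched orbit $\tilde\gamma$ and common total multiplicity $N$; recall all ends of $u^{\ell-1}$ and $u^\ell$ at this level are free. If $\mcal{T}$ is positive, then $CZ^{ECH}(\tilde\gamma)=N$ for the positive ends of $u^\ell$ in all cases, while for the negative ends of $u^{\ell-1}$ at $\gamma$ one has $CZ^{ECH}(\gamma)=N-1$ if some non-trivial component of $u^{\ell-1}$ is asymptotic to $\gamma$ and $CZ^{ECH}(\gamma)=0$ if only trivial cylinders are; the net contribution to $CZ^{ECH}_+(u^\ell)-CZ^{ECH}_-(u^{\ell-1})$ from $\gamma$ is therefore $1$ in the first subcase and $N$ in the second. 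If $\mcal{T}$ is negative, symmetrically $CZ^{ECH}(\gamma)=-N$ for the negative ends of $u^{\ell-1}$ in all cases, while $CZ^{ECH}(\tilde\gamma)=1-N$ or $0$ according as some non-trivial component of $u^\ell$ is or is not asymptotic to $\tilde\gamma$; the net contribution from $\gamma$ is again $1$ or $N$. Summing over all Reeb orbits at level $\ell$: the $1$'s from positive tori count the distinct Reeb orbits hit by non-trivial negative ends of $u^{\ell-1}$, which is $R'_{pos,\ell}$; the $N$'s from positive tori add up to $V'_{pos,\ell}$; the $1$'s from negative tori count $R'_{neg,\ell}$; and the $N$'s from negative tori add up to $V'_{neg,\ell}$. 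Substituting into the previous display and summing over $\ell$ gives exactly the claimed formula.

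I expect the only real difficulty to be the bookkeeping, in two places. First, one must keep track of the asymmetry forced by the Robbin--Salamon conventions $\mu=\pm\tfrac{1}{2}$: on a positive torus it is the level $u^{\ell-1}$ lying above the intermediate torus that supplies the $R'_{pos}$ and $V'_{pos}$ data, whereas on a negative torus it is the level $u^\ell$ below it that supplies $R'_{neg}$ and $V'_{neg}$. Second, one must verify that the ``free except where mandated'' convention, together with the trivial-cylinder clauses in the definition of $CZ^{ECH}$, genuinely makes the outer-level contributions $CZ^{ECH}_+(u^1)$ and $CZ^{ECH}_-(u^{n-1})$ equal to the generator values $CZ^{ECH}(\alpha_1)$ and $CZ^{ECH}(\alpha_n)$ on the nose rather than up to an error. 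The degenerate possibilities ($T_{\ell-1}=0$, chains of trivial cylinders running through several intermediate levels, and the one-fixed-end convention for trivial cylinders over critical points) should be checked but introduce no new phenomena.
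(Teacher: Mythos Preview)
Your proposal is correct and is exactly the paper's approach: the paper's entire proof is the single sentence ``Follows directly from definition of ECH Conley Zehnder index,'' and you have carefully unpacked precisely that computation, correctly tracking the asymmetry between positive and negative Morse--Bott tori and the trivial-cylinder clauses in the per-orbit $CZ^{ECH}$ definition. Your flagged bookkeeping concerns (the outer-level matching and the trivial-cylinder conventions) are the right things to verify, but they are handled by the hypotheses as you anticipate.
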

\begin{proof}
Follows directly from definition of ECH Conley Zehnder index.
\end{proof}

\begin{remark}
Note the assignment of free/fixed end points for calculation of ECH index purposes is different from when we defined free/fixed punctures in the calculation of the Fredholm index. 
\end{remark}

\begin{remark}
We remark the above formula makes sense in the case our cascade consists purely of a chain of cylinder at a critical point. If it started at the minimum of $f$, the trick is to notice by our convention all trivial cylinders below it are considered free.
\end{remark}

In order to compare $I(\cas{\mathbf{u}})$ and $Ind(\cas{\tilde{\mathbf{u}}})$, we first define 
\begin{align*}
     I(\cas{\tilde{\mathbf{u}}}) &:= I(\tilde{u^1}) ...+ I(\tilde{u^{n-1}})\\ &-R_{pos,2}'-...-R_{pos,n-1}'- V_{pos,2}'-...-V_{pos,n-1}'-R_{neg,2}' -..-R_{neg,n-1}' -V_{neg,2}'-..-V_{neg,n-1}'
\end{align*}
by removing all multiple covers of nontrivial curves. Note we have
\begin{equation}
    I(\cas{\tilde{\mathbf{u}}}) \leq I(\cas{\mathbf{u}})
\end{equation}
with equality holding only if $\cas{\mathbf{u}}$ is already reduced.
Next we compare $Ind(\cas{\tilde{\mathbf{u}}})$ and $ I(\cas{\tilde{\mathbf{u}}})$. 
\begin{proposition}\label{prop:indexinequality}
$Ind(\cas{\tilde{\mathbf{u}}}) \leq I(\cas{\tilde{\mathbf{u}}})-2\dt(\cas{\tilde{\mathbf{u}}})-1$
\end{proposition}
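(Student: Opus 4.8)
The plan is to reduce Proposition \ref{prop:indexinequality} to the single-curve ECH index inequality for the reduced levels $\tilde u^i$ together with a bookkeeping argument comparing the correction terms appearing in the two index formulas: the $R'$ and $V'$ terms in the definition of $I(\cas{\tilde{\mathbf u}})$ on one side, and the quantities $k_i, k_i', R_i, L$ appearing in Definition \ref{index} of $Ind(\cas{\tilde{\mathbf u}})$ on the other. First I would invoke the single-level inequality $Ind(\tilde u^i) \le I(\tilde u^i) - 2\dt(\tilde u^i)$ (the proposition following the definition of $I(u)$, or its trivial-cylinder-aware refinement) for each of the $n-1$ levels of the reduced cascade, and sum. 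Since $\dt(\cas{\tilde{\mathbf u}}) = \sum_i \dt(\tilde u^i)$ by definition (singularities are counted levelwise), summing gives
\[
\sum_i Ind(\tilde u^i) \le \sum_i I(\tilde u^i) - 2\dt(\cas{\tilde{\mathbf u}}).
\]
Now subtract the intermediate-level correction terms. In $I(\cas{\tilde{\mathbf u}})$ we subtract $R'_{pos,i+1} + R'_{neg,i+1} + V'_{pos,i+1} + V'_{neg,i+1}$ at each intermediate level; in $Ind(\cas{\tilde{\mathbf u}})$ we subtract $k_i' + k_i - R_i$ plus the global constant $(n-1)-(n-2)+L = 1 + L$. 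So the inequality I must establish, after cancelling the $\sum I(\tilde u^i)$ and $2\dt$ terms, is
\[
-[k_2'+\cdots] - [k_2+\cdots] + [R_2+\cdots] - 1 - L \;\le\; -\sum_{i}(R'_{pos,i+1}+R'_{neg,i+1}+V'_{pos,i+1}+V'_{neg,i+1}) - 1,
\]
i.e. I need a levelwise comparison of the two sets of combinatorial data, with the "$-1$" already accounted for and the remaining discrepancy absorbed by $L$.

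The heart of the argument is therefore the levelwise claim: for each intermediate cascade level, the number of free ends on the Fredholm side ($k_i$ on one level, $k_i'$ on the adjacent one) minus the number of distinct Reeb orbits $R_i$ reached by free ends, dominates the number of distinct Reeb orbits reached by nontrivial ends ($R'_{pos} + R'_{neg}$) plus the trivial-end multiplicity terms ($V'_{pos} + V'_{neg}$), up to a contribution bounded by the corresponding summand of $L$. The key inequalities already recorded in the excerpt are $k_i \ge R_i$ and $k_i' \ge R_i$ for height-one reduced cascades; combined with the observation that each distinct Reeb orbit touched by a free nontrivial end contributes at least one free end on each adjacent level, and that a Morse-Bott torus reached purely by trivial cylinders with flow time zero or no free end contributes to $L$. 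I would run this case by case according to whether the Reeb orbit sits on a positive or negative Morse-Bott torus, at the $s\to+\infty$ versus $s\to-\infty$ side, and whether the ends touching it are trivial or not — paralleling the case analysis in the proof of the subadditivity proposition above.

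The main obstacle, as I see it, is correctly handling the trivial-cylinder and zero-flow-time bookkeeping: the quantities $V'_{pos}, V'_{neg}$ count multiplicities of Reeb orbits seen only by trivial ends, and these interact with the convention (Convention \ref{convention:free/fixed}) that a fixed trivial cylinder has an "artificial" free end which is ignored in counting $k_i, k_i', R_i, L$. I expect the delicate point is showing that whenever a would-be free end does not contribute to the $R'$ count — because it belongs to a trivial cylinder, or because the relevant flow time degenerated to zero so the level should be viewed as glued to its neighbor — precisely that configuration adds a matching $+1$ to $L$, so that the global constant $L$ exactly absorbs the slack. Establishing this exhaustively, together with the $-1$ coming from the fact that a genuine cascade (as opposed to a single curve) loses one dimension from the matching of top-to-bottom asymptotics via a gradient flow of finite length, should close the inequality; equality analysis (which the later sections need) will then track which case-by-case inequalities were tight.
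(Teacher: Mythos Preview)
Your overall plan --- sum the single-level ECH index inequalities and then compare the combinatorial correction terms --- is the same architecture the paper uses. However, there is a genuine gap in the execution, and it is not the trivial-cylinder bookkeeping you flag; it is one step earlier.

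The issue is that $Ind(\tilde u^i)$ and $I(\tilde u^i)$ are computed with \emph{different} free/fixed conventions. In Definition \ref{index}, the Fredholm index of a level treats every end landing on a critical point of $f$ as fixed (Convention \ref{convention:free/fixed}), whereas in the decomposition of $I(\cas{\tilde{\mathbf u}})$ into $\sum I(\tilde u^i)$ all intermediate ends are taken to be free. So when you invoke ``the proposition following the definition of $I(u)$'' you are applying $Ind \le I - 2\delta$ across a convention mismatch, and the plain inequality is not what you get. What the paper does instead is decompose each level as $\tilde u^i = C_i \cup T_{free,i}\cup T_{fixed,i}$ and prove the \emph{strengthened} inequality
\[
Ind(\tilde u^i) + l_{fixed,i} \;\le\; I(\tilde u^i) - 2\delta(\tilde u^i) - |T_{fixed,i}|,
\]
where $l_{fixed,i}$ counts ends of the nontrivial part $C_i$ that are fixed in the Fredholm convention but free in the ECH convention, and $|T_{fixed,i}|$ counts trivial cylinders at critical points (with special variants at the top and bottom levels involving $|T'_{fixed,1}|$, $|T'_{fixed,n-1}|$). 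These extra terms are precisely the slack produced by switching conventions, and they are \emph{essential} for the combinatorial step.

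Concretely, your levelwise claim that $k_i + k_i' - R_i$ (plus the appropriate share of $L$) dominates $R'_{pos}+R'_{neg}+V'_{pos}+V'_{neg}$ is false without these extra terms. A Reeb orbit at a critical point of $f$ reached by a nontrivial negative end of $\tilde u^{i-1}$ contributes $+1$ to $R'_{pos,i}$ (or $R'_{neg,i}$), but that end is \emph{fixed} in the Fredholm convention and hence contributes nothing to $k_i$; and since other ends at that level may well be free, this level need not contribute to $L$ either. The paper closes this gap using the $l_{fixed,pos,-\infty,i}$ term (and symmetrically $l_{fixed,neg,+\infty,i}$). Similarly, the $V'$ terms are controlled through the $|T_{fixed,i}|$ contributions. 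The paper also has to handle a nonlocal subtlety at the extreme levels: a fixed trivial cylinder mandated by $\alpha_1$ can create a deficit at level $i=1$ that is only compensated by an $l_{fixed,pos,+\infty}$ or $|T'_{fixed,n-1}|$ contribution appearing in a \emph{lower} level, so the inequalities are not purely levelwise. Your proposal does not anticipate this borrowing mechanism. Once you have the strengthened per-level inequality and track these terms, the rest of your outline goes through.
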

\begin{proof}
We make a term-wise comparison, e.g. we compare
\begin{equation}
    Ind(\tilde{u}^i)-k_{i+1}' -k_{i+1} +R_{i+1}
\end{equation}
and 
\begin{equation}
    I(\tilde{u}^i)-2\dt(\tilde{u}^i) - R_{pos,i+1}'-V_{pos,i+1}' - R_{neg,i+1}' - V_{neg,i+1}'.
\end{equation}
Note there are two different conventions by which we assigned ``free'' and ``fixed'' ends to ends of curves appearing in the cascade, we will refer to them respectively as the Fredholm convention and the ECH convention. 

We further refine our notation to $k_{pos,i+1}, k_{neg,i+1}, k_{pos,i+1}',k_{neg,i+1}'$ to denote the number of ends among the $k_{i}$ and $k_{i+1}$ ends that land on positive/negative Morse-Bott tori, i.e. we have $k_{i} = k_{pos,i} + k_{neg,i}$.

We first restrict to $1<i<n-1$
To compare these two terms, we first decompose $\tilde{u^i} = C_i\cup T_{free,i} \cup T_{fixed,i}$, where $C_i$ is a collection of nontrivial somewhere injective curves, $T_{free,i}$ is a collection of free trivial cylinders according to Fredholm convention, and $T_{fixed,i}$ is a collection of fixed cylinder according to the Fredholm index convention. 
Assume $C_i$ has $l_{free,i}$ free ends, and $l_{fixed,i}$ ends according to Fredholm convention, then we have:
\[
Ind(C_i\cup T_{free,i} \cup T_{fixed,i}) +l_{fixed,i}\leq I (C_i\cup T_{free,i}\cup T_{fixed,i})-2\dt (C_i\cup T_{free,i}\cup T_{fixed,i}) -|T_{fixed,i}|
\]
We may at later points further refine the notation to $l_{fixed,pos/neg,\pm, i}$ to indicate fixed ends at positive/negative Morse-Bott tori, at positive/negative ends.
Note $T_{fixed,i}$ is regarded as free cylinders when we measure its ECH index. $|T_{fixed,i}|$ denotes the total number of fixed trivial cylinders that appear in this level. 

We will also later refine our notation to distinguish $T_{fixed/free,pos/neg,i}$ for trivial cylinders on positive/negative Morse-Bott tori.

We next consider the case for $i=1$. We can decompose as before $\tilde{u}^1=C_1\cup T_{free,1} \cup T_{fixed,1} \cup T_{fixed,1}'$. We explain the notation. $C_1$ is a collection of nontrivial somewhere injective holomorphic curves. The information of Morse-Bott generator $\alpha_1$ tells us which of $C_1$ should already be considered as fixed as $s\rightarrow \infty$. There are additionally $l_{fixed}$ ends of $C$ that we count as fixed when we compute its Fredholm index because they land on critical points of $f$. $T_{free,1}$ is a collection of free cylinders. $T_{fix,1}$ is a collection of fixed trivial cylinders that come from requirements of $\alpha_1$.
Each positive Morse Bott torus can only have one of these, and they must all be multiplicity 1. $T_{fixed,1}'$ is a collection of trivial cylinders that don't come from requirements of $\alpha_1$ but also happen to land on a critical point of $f$.
The index inequality we have gives:
\[
Ind(C_1\cup T_{fixed,1} \cup T_{free,1}\cup T_{fixed,1}') +l_{fixed,1} \leq I(C_1 \cup T_{fixed,1} \cup T_{free,1}\cup T_{fixed,1}') -2\dt(\tilde{u}^1)- |T_{fixed,1}'|
\]
where for the purpose of computing ECH index we have counted elements of $T_{fixed,1}'$ as free cylinders.

Similarly for the $i=n-1$ level. As before we can decompose $\tilde{u}^{n-1}=C_{n-1}\cup T_{free,n-1} \cup T_{fixed,n-1} \cup T_{fixed,n-1}'$ with the same convention as before. Here we only need to prove:
\[
Ind(C_{n-1}\cup T_{free,n-1} \cup T_{fixed,n-1}) +l_{fixed,n-1}\leq I(C_{n-1}\cup T_{free,n-1} \cup T_{fixed,n-1}) - 2\dt(\tilde{u}^{n-1})-|T'_{fixed,n-1}|
\]
which holds by the one-level ECH index inequality.
When we take the difference between $I(\tilde{\cas{u}})$ and $Ind(\tilde{\cas{u}})$, we can break down their difference into the following form:
\begin{align*}
I(\cas{\tilde{\mathbf{u}}}) =& I(\tilde{u^1}) ...+ I(\tilde{u}^{n-1})\\ &-R_{pos,2}'-...-R_{pos,n-1}'- V_{pos,2}'-...-V_{pos,n-1}'-R_{neg,2}' -..-R_{neg,n-1}' -V_{neg,2}'-..-V_{neg,n-1}'
\end{align*}
and the index term can be re written as
\[
Ind= \sum_i ind(\tilde{u}^i) - \sum_{i=2,...,n-1} (k_{pos,i} + k_{pos,i}'-R_{pos,i}) - \sum_{i=2,...,n-1} (k_{neg,i} + k_{neg,i}'-R_{neg,i})-1-L
\]
If we take their difference, and take advantage of the inequalities we proved in the previous paragraphs, we get:
\begin{align*}
I -Ind  =& \sum I(\tilde{u}^i) - ind(\tilde{u}^i) + \sum_{i=2,...,n-1}((k_{pos,i} + k_{pos,i}'-R_{pos,i} - R_{pos,i}' -V_{pos,i}') \\
&+ \sum_{i=2,...,n-1}(k_{neg,i} + k_{neg,i}'-R_{neg,i} - R_{neg,i}' -V_{neg,i}') +L+1\\
\geq& 2\delta (\cas{\tilde{\mathbf{u}}}) +  \sum_{i=2,..,n-2}( l_{fixed,i} + |T_{fixed,i}| )+  l_{fixed,1} + l_{fixed,n-1} + |T'_{fixed,1}| + |T'_{fixed,n-1}| \\
&+\sum_{i=2,...,n-1}((k_{pos,i} + k_{pos,i}'-R_{pos,i} - R_{pos,i}' -V_{pos,i}') \\
&+ \sum_{i=2,...,n-1}(k_{neg,i} + k_{neg,i}'-R_{neg,i} - R_{neg,i}' -V_{neg,i}') +L+1
\end{align*}
It suffices to prove the above expression is bounded below by one. It suffices to prove
\begin{align*}
    &\sum_{i=2,...,n-1}R_{pos,i} + R_{pos,i}' +V_{pos,i}' + \sum_{i=2,...,n-1}R_{neg,i} + R_{neg,i}' +V_{pos,i}'\\
    &\leq \sum_{i=2,..,n-2}( l_{fixed,i} + |T_{fixed,i}| )+  l_{fixed,1} + l_{fixed,n-1} + |T'_{fixed,1}| + |T'_{fixed,n-1}|  \\
    &+ \sum_{i=2,...,n-1}(k_{pos,i} + k_{pos,i}') + \sum_{i=2,...,n-1}(k_{neg,i} + k_{neg,i}') +L
\end{align*}
We break down the above inequality into several components. We first observe for $i=2,..,n-2$ we have
\[
R_{pos,i+1}' + V_{pos, i+1}' \leq l_{fixed,pos,-\infty,i}+l_{fixed,pos,+\infty,i}+|T_{fixed,i}|+k_{pos, i+1} + k'_{pos,i+1} - R_{pos,i+1}
\]

We first observe the multiplicities counted by $R_{pos,i+1}'$ and $V_{pos,i+1}'$ are disjoint - if a Reeb orbit appear in considerations of $R_{pos, i+1}'$ then it is not considered for $V_{pos,i+1}'$ and vice versa. Multiplicities counted by $V_{pos,i+1}'$ are contained in $k_{pos,-\infty,i+1}$ and $|T_{fixed,i+1}|$, and the Reeb orbits counted by $R_{pos,i+1}'$ are contained in the ends counted by $l_{fixed,pos,-\infty,i+1}$ and $k_{pos,-\infty,i+1}$. We observe for this range of $i$, we only needed to use the fixed ends of $C_{i}$ in $l_{fixed,i}$ as $s\rightarrow -\infty$ to achieve this inequality, and the prescence of $l_{fixed,i,pos,+\infty}$ will make this inequality strict by that factor. Finally we observe $k'_{pos,i+1} - R_{pos,i+1}\geq 0$. This concludes this inequality.

We next consider the case for $i=1$ for positive Morse-Bott tori, i.e. we consider the inequality
\[
R'_{pos,2} + V'_{pos,2} + R_{pos,2} \leq l_{fixed,pos,-\infty 1} + l_{fixed,pos,+\infty 1} + |T'_{fixed,pos,1}| + k_{pos,2} + k_{pos,2}'
\]
This inequality does not hold in general.
We first observe $k_{pos,2}' -R_{2,pos} \geq 0$, and the Reeb orbits counted by $R_{pos,2}'$ are included in $k_{pos,2}$ and $l_{fixed,pos,-\infty,1}$. The issue for $V_{pos,2}'$ is slightly more subtle, because each positive Morse-Bott torus can contain one fixed trivial cylinder that is not included in $|T_{fixed,pos,1}'|$, hence a Reeb orbit counted by $V_{pos,2}'$ that does not necessarily appear on the right hand side. If we follow this trivial cylinder downwards, if we encounter an end of a non-trivial $J$-holomorphic curve that approaches this Reeb orbit at $s\rightarrow \infty$, then it will contribute to $l_{fixed,pos,+\infty,i}$ terms in one of the lower levels. And this $l_{fixed,pos,+\infty,i}$ term was not used in our previous computations, so after we add up all the terms in the inequality, the overall inequality will still hold.

If we go downwards and do not see a nontrivial end, then there must be a trivial cylinder at the bottom level of the cascade making a contribution to $T_{fixed,,pos,n-1}'$ located at this specific Reeb orbit on this positive Morse-Bott torus. This cylinder counted by $T_{fixed,pos,n-1}'$ is not used anywhere else in any of our other inequalities, so makes up for the deficit coming from the $i=1$ inequality.

Finally we consider the terms on the last level concerning the positive Morse-Bott tori contributing to our inequality. This is just 
\[
|T_{fixed,pos,n-1}'| \geq 0
\]
which holds trivially. $|T_{fixed,pos,n-1}'|$ being nonzero does not necessarily mean our inequality is strict, as some of these may be borrowed to make the inequality hold on the $i=1$ level as per above.

We now repeat the analogous series of inequalities concerning negative Morse-Bott tori. We first prove the inequalities
\[
R_{neg,i}+R'_{neg,i}+V'_{neg,i} \leq k_{neg,i}+k'_{neg,i}+l_{fixed,neg,+\infty,i} + |T_{fixed,neg,i}|
\]
for $i$ in range $2,...,n-2$. We have as before that $R_{neg,i} \leq k_{neg,i}$. Similarly the count of orbits in $R_{neg,i}'$ is included $k'_{neg,i}$ and $l_{fixed,neg,+\infty,i}$, and the count of $V_{neg,i}'$ is included among $T_{fixed,neg,i}$ and $k'_{neg,i}$. This concludes the proof of this inequality.

Next we focus on the $i=n-1$ case. We consider the inequality
\[
R'_{neg,n-1} + V'_{neg,n-1} + R_{neg,n-1} \leq l_{fixed,neg,+\infty,n-1} + |T'_{fixed,neg,n-1}| + k_{neg,n-1} + k_{neg,n-1}'
\]
This does not always hold, as before we first observe $k_{pos,n-1} - R_{neg,n-1} \geq 0$, and $R'_{neg,n-1}$ is included in $l_{fixed,neg,+\infty, n-1}$ and $k_{neg,n-1}$. However each negative Morse-Bott torus can contain one fixed trivial cylinder not included in $T'_{fixed,neg,n-1}$. If we follow this trivial cylinder upwards, if we encounter an end of a non-trivial $J$-holomorphic curve that approaches this Reeb orbit at $s\rightarrow -\infty$, then it will contribute to $l_{fixed,neg,-\infty,i}$ terms in one of the upper levels. And this $l_{fixed,,neg,-\infty,i}$ term was not used in our previous computations, so after we add up the terms in the inequality, the overall inequality will still hold.

If we go upwards and do not see a nontrivial end, then there must be a trivial cylinder contributing to $T'_{fixed,neg,1}$ appearing at the very same Reeb oribt. This cylinder's contribution is not used up by any of our previous inequalities, so makes up for the deficit in the above inequality.

The $i=1$ level terms for negative Morse-Bott tori is simply 
$|T_{fixed,neg,1}'| \geq 0$ which holds trivially. This inequality being strict does not necessarily imply the overall inequality is strict, by the mechanism discussed above.

Adding up the above inequalities we get the inequality in the proposition.
\end{proof}
We now state some consequences of the ECH index one condition, assuming transversality can be satisfied.
\begin{corollary} \label{conditions on currents}
Assuming $J$ can be chosen to be good, and we have a height one cascade $\cas{u}$. Then we pass to cascade of currents $\cas{\mathbf{u}}$, the ECH index being one imposes the following conditions:
\begin{enumerate}
    \item $\cas{\mathbf{u}}$ is reduced.
    \item All flow times are strictly positive.
    \item All curves are embedded. Curves on the same level are disjoint.
    \item Each level only has one nontrivial curve, the rest are trivial cylinders.
    \item With the above choice of fixed/free ends, all curves obey partition conditions of free ends for ends that do not land on critical points. They obey the partition conditions for fixed ends for those that land on critical points of $f$.
    \item For any nontrivial curve $C$ appearing in the cascade of currents $\mathbf{\cas{u}}$:
    \begin{itemize}
        \item If $C$ appears in either $u^1$ or $u^{n-1}$, then its ends can appear on critical points of $f$ only as mandated by $\alpha_1$ or $\alpha_2$. All other ends must avoid critical points of $f$.
        \item If $C$ appears in a level between $u^1$ and $u^{n-1}$, its ends can only end on a critical point of $f$ if this end is then connected by a fixed chain of trivial cylinders to fixed points mandated by $\alpha_1$ or $\alpha_n$. All other ends avoid critical points of $f$, and hence are free.
        \item Further, if we see a chain of fixed trivial cylinders connecting a positive or negative end of $C$ to a critical point of $f$, suppose the fixed Reeb orbit is called $\gamma$. Then no nontrivial end may land on $\gamma$ on any of the levels of the components of the chain of trivial cylinders in either $s\rightarrow +\infty$ or $s\rightarrow -\infty$. On the level where $C$ is asymptotic to $\gamma$ as $s\rightarrow \infty$ or $s\rightarrow -\infty$, the end of $C$ is the only end that is asymptotic to $\gamma$ as $s\rightarrow +\infty$ and $s\rightarrow -\infty$ respectively.
    \end{itemize}

    \item In particular, if $C$ is a nontrivial curve in the cascade, and an end of $C$ is asymptotic to $\gamma$, a Reeb orbit in the $s\rightarrow +\infty$ (resp. $-\infty$) end, then no other curve (or other ends of $C$) in the same level may be asymptotic to $\gamma$ as $s\rightarrow +\infty$ (resp. $-\infty)$. 
    \item If an end of a nontrivial curve $C$ is asymptotic to $\gamma$ with multiplicity $>1$, as $s\rightarrow \infty$, and if we follow $\gamma $ upwards, e.g. we consider $C'$ in the level above which is asymptotic to $\gamma$ as $s\rightarrow -\infty$. If all curves above $C$ that are asymptotic to $\gamma$ are trivial cylinders, then we cannot draw any conclusions aside from partition conditions of $C$. However, if after some chain of gradient flow lines a nontrivial curve  $C''$ above $C$ is asymptotic to $\phi_T^f(\gamma)$ as $s\rightarrow -\infty$ and is connected to the positive end of $C$ at $\gamma$ via a gradient flow, then by partition conditions both $C$ and $C''$ can only be asymptotic to $\gamma$ with multiplicity 1.
\end{enumerate}
\end{corollary}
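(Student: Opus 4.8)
The plan is to squeeze every conclusion out of the index inequalities already established, under the extremal hypothesis $I(\cas{\mathbf{u}})=1$. Passing to the reduced cascade of currents $\cas{\tilde{\mathbf{u}}}$ and combining the bound $I(\cas{\tilde{\mathbf{u}}})\le I(\cas{\mathbf{u}})$ (with equality only if $\cas{\mathbf{u}}$ is already reduced) with Proposition~\ref{prop:indexinequality} gives
\[
0\ \le\ Ind(\cas{\tilde{\mathbf{u}}})\ \le\ I(\cas{\tilde{\mathbf{u}}})-2\dt(\cas{\tilde{\mathbf{u}}})-1\ \le\ I(\cas{\mathbf{u}})-2\dt(\cas{\tilde{\mathbf{u}}})-1\ =\ -\,2\dt(\cas{\tilde{\mathbf{u}}}),
\]
where the leftmost inequality is exactly the goodness hypothesis on $J$ (Definition~\ref{def:good j}, Assumption~\ref{assumption}): a reduced cascade of currents is transversely cut out, hence sits, being nonempty, in a moduli space of nonnegative virtual dimension. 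Since $\dt(\cas{\tilde{\mathbf{u}}})\ge 0$, every inequality in this line is an equality; in particular $\dt(\cas{\tilde{\mathbf{u}}})=0$, $Ind(\cas{\tilde{\mathbf{u}}})=0$, and $I(\cas{\tilde{\mathbf{u}}})=I(\cas{\mathbf{u}})=1$. The last equality already forces $\cas{\mathbf{u}}$ to be reduced; and $\dt(\cas{\tilde{\mathbf{u}}})=0$, combined with vanishing of the pairwise intersection terms in the subadditivity inequality for $I$ proved above, forces — through the relative adjunction formula and positivity of intersections — every nontrivial curve to be embedded and distinct curves on a common level to be disjoint.

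The remaining items come from unpacking the equality in Proposition~\ref{prop:indexinequality} by rereading its proof. There $I(\cas{\tilde{\mathbf{u}}})-Ind(\cas{\tilde{\mathbf{u}}})$ was bounded below by $2\dt(\cas{\tilde{\mathbf{u}}})+L+1$ plus a sum of manifestly nonnegative quantities: the per-level defects $l_{fixed}$ of the nontrivial components, the counts $|T_{fixed}|$ and $|T'_{fixed}|$ of fixed trivial cylinders, and the partition slacks $k-R\ge 0$ and $k'-R\ge 0$ at the intermediate levels (for both positive and negative Morse--Bott tori). Forcing the total to equal $1$ first forces $L=0$, which is precisely the assertion that all intermediate flow times are strictly positive and that every intermediate level carries a free end; it then forces the per-level ECH index estimate to be an equality on each level, hence — via its underlying writhe bound (Theorems~\ref{winding pos} and~\ref{winding neg} and the four ``link'' propositions following them) — the partition conditions of the corollary; and it forces each remaining slack either to vanish or, in the cases covered by the ``borrowing'' mechanism of that proof — which transfers a surplus $|T'_{fixed}|$ at one boundary level, or an $l_{fixed,+\infty}$ contribution at an intermediate level, to cover a deficit at the opposite boundary level — to be exactly matched by the transferred term. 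Converting the vanishing (up to borrowing) of the $k-R$ slacks into geometry yields that each level carries exactly one nontrivial curve with the rest trivial cylinders, and that no two ends, whether of one curve or of distinct curves on a level, are asymptotic to the same Reeb orbit on the same side; converting the controlled vanishing of the $l_{fixed}$ and $T'_{fixed}$ terms yields that a nontrivial curve may meet a critical point of $f$ only as mandated by $\alpha_1$ or $\alpha_n$, or at the top (resp.\ bottom) of a chain of fixed trivial cylinders running to such a mandated fixed point, with no other nontrivial end landing on the orbits of that chain.

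The last item of the corollary is then read off by propagating the partition conditions across the gradient-flow matching of adjacent levels: if a nontrivial $C$ has a positive end of multiplicity $m$ at $\gamma$ and, after a chain of trivial cylinders above, a nontrivial $C''$ has a negative end of multiplicity $m$ at $\phi^{T}_f(\gamma)$, then each of these is (by the uniqueness just obtained) the unique end at its orbit on its level, so the fixed/free partition condition forced on $C''$'s negative end — of type $(1,\dots,1)$ on the relevant Morse--Bott torus — gives $m=1$, and hence the multiplicity of $C$'s end is $1$ as well.

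The main obstacle is the accounting in the second paragraph. The per-level estimates entering Proposition~\ref{prop:indexinequality} do \emph{not} all hold term by term: the boundary levels $u^1$ and $u^{n-1}$ may run a genuine deficit that is compensated only by a surplus at the opposite boundary level or at an intermediate level, so ``every slack vanishes'' is false as literally stated, and one must show that global equality pins each local configuration to the extremal one compatible with those transfers before the geometric conclusions can be extracted. Isolating the single-nontrivial-curve-per-level statement is the subtlest point: a second nontrivial curve on some level must, by the now-tight counts $k=R=k'$ at the intermediate levels, force either a shared Reeb orbit (already excluded) or an unaccounted distinct orbit that strictly raises $I(\cas{\tilde{\mathbf{u}}})-Ind(\cas{\tilde{\mathbf{u}}})$, and this has to be verified level by level along the structure of that proof. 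As in the footnote, one restricts throughout to \emph{good} cascades to keep all of these statements uniformly clean.
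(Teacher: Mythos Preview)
Your approach is essentially the paper's: force equality throughout the chain
\[
0\le Ind(\cas{\tilde{\mathbf{u}}})\le I(\cas{\tilde{\mathbf{u}}})-2\dt(\cas{\tilde{\mathbf{u}}})-1\le I(\cas{\mathbf{u}})-1=0,
\]
then read each item off the corresponding slack in the proof of Proposition~\ref{prop:indexinequality}. Items (a), (b), (c), (e), (f), (g), (h) are handled just as in the paper, and your care about the ``borrowing'' transfers between boundary and intermediate levels is exactly what the detailed case analysis for (f) in the paper's proof is doing.

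The one place you diverge is item (d). You try to extract it from the tightness $k=R=k'$ at intermediate levels, claiming a second nontrivial curve on a level would either share a Reeb orbit with the first (excluded by (g)) or land on a new orbit that strictly raises $I(\cas{\tilde{\mathbf{u}}})-Ind(\cas{\tilde{\mathbf{u}}})$. That second alternative is not justified: two disjoint nontrivial curves on a level whose ends go to pairwise distinct orbits are perfectly compatible with $k=R=k'$ and with the per-level writhe equalities, so the slack bookkeeping alone does not rule them out. The paper's argument for (d) is different and much shorter: you already know $Ind(\cas{\tilde{\mathbf{u}}})=0$, so by goodness of $J$ the cascade is rigid (after the overall $\bb{R}$-translation). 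But a level with two nontrivial components admits independent $\bb{R}$-translations of those components, giving a genuine one-parameter family of cascades with the same asymptotics and flow times; this contradicts rigidity. You had $Ind=0$ in hand from your first paragraph; use it directly for (d) rather than routing through the slack counts.
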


\begin{proof}
All statements in the above proposition comes from taking all the inequalities in the previous proposition to be equalities. $(a)$ comes $I(\mathbf{\cas{u}}) = I(\mathbf{\cas{\tilde{u}}})$. $(b)$ comes from $L=0$. $(c)$ comes from $\dt(\mathbf{\cas{u}})=0$.  $(d)$ comes from $Ind=0$, otherwise the cascade lives in a moduli space of dimension greater than zero.  $(e)$ comes from the fact that violations of partition conditions for nontrivial curves would make the inequalities comparing Fredholm index to ECH index strict. 

Next consider $(f)$, for the nontrivial curves appearing in $u^1$ or $u^{n-1}$. We first consider the case of $u^1$. We observe all contributions to $l_{fixed,+\infty,1}$ from the $s\rightarrow +\infty$ must be zero for equality in \ref{prop:indexinequality} to hold. Similarly we observe that for $u^{n-1}$ all contributions to $l_{fixed,-\infty,n-1}$ from the $s\rightarrow -\infty$ must be zero for equality to hold. 

If $C$ is a nontrivial curve between $u^1$ and $u^{n-1}$, we have to separate this into cases. We first assume it has a negative end landing on a critical point of $f$ on a positive Morse-Bott torus. Then this end makes a contribution to $l_{fixed,pos,-\infty}$, and was used in our computation of inequality. Call this Reeb orbit $\gamma$, and consider levels below $C$ that have nontrivial ends asymptotic to $\gamma$ as $s\rightarrow +\infty$. Say this occurs on level $i$. If there are such curves, and if $\gamma$ does not appear as a fixed end assigned by $\alpha_1$ and connected to a trivial cylinder in $u^1$, then it is a appearance of $l_{fixed,pos,+\infty,i}$ that was not used in our proof of inequality in \ref{prop:indexinequality}, hence the inequality is strict. 

The case where $\gamma$ appears in $\alpha_1$ as a fixed end of a trivial cylinder is handled as follows. In the case there is a contribution to $T_{fixed,pos,n-1}'$ on the $u^{n-1}$ level from a trivial cylinder at $\gamma$, then we can use the additional $l_{fixed,pos,+\infty,i}$ at $\gamma$ to make the inequality strict. In the case $T_{fixed,n-1}'$ does not have a trivial cylinder at $\gamma$, then for multiplicity reasons the total multiplicity of nontrivial ends asymptotic to $\gamma$ as $s\rightarrow +\infty$ in the entire cascade must be greater than equal to two. If they come from two different ends (potentially at different levels), then their contribution to $l_{fixed,pos,+\infty,*}$ (of various levels) is at least two, which makes the inequality in proposition \ref{prop:indexinequality} strict. If we only see a single nontrivial end approach $\gamma$ as $s\rightarrow +\infty$ below $u^1$ level, then this end must have multiplicity $\geq 2$, and this violation of writhe inequality also ensures the index inequality is strict.

If no nontrivial curves below $C$ that are positively asymptotic to $\gamma$ exist, then with the negative puncture of $C$ landing at $\gamma$, the negative puncture is connected to the last level $u^{n-1}$ at $\gamma$ via a chain of fixed trivial cylinders. If $\gamma$ is a minimum of $f$, then this is a contribution to $|T_{fixed,pos,n-1}'|$ that was not considered in the proof of inequality. This will make the overall inequality strict if $\gamma$ did not appear as a fixed end connected to a trivial cylinder in $u^1$. If $\gamma$ did appear (as a fixed end mandated by $\alpha_1$), then again for multiplicity reasons there is either an additional $l_{fixed,pos,+\infty i}$ contribution from $s\rightarrow +\infty$ ending on $\gamma$ on one of the middle levels, or $|T_{fixed,n-1}'|$ at $\gamma$ has multiplicity greater than or equal to two. Either case makes the index inequality strict.

However if $\gamma$ is at a maximum of $f$, the inequality is not violated if this is a chain of trivial cylinders connecting to a fixed end mandated by $\alpha_{n}$. If $\alpha_{n}$ assigns free ends to this chain of cylinders, then we have extra contributions to $T_{fixed,pos,n-1}'$ which make the index inequality strict (in this case $\alpha_1$ cannot assign $\gamma$ as a fixed end). Finally if this is indeed a chain of fixed trivial cylinders connecting to a fixed orbit mandated by $\alpha_{n}$, then on the level where $C$ appears no other nontrivial end may be asymptotic to $\gamma$ as $s\rightarrow -\infty$, this is because if this is true, then we consider the inequality for $C$'s level
\[
R_{pos,i+1}' + V_{pos, i+1}' \leq l_{fixed,pos,-\infty,i}+|T_{fixed,pos,i}|+k_{pos, i+1} + k'_{pos,i+1} - R_{pos,i+1}
\]
Both nontrivial ends at $\gamma$ are counted once by $R_{pos,i+1}'$, but twice by $l_{fixed,pos,-\infty, i}$, which makes this inequality strict. This automatically imposes the partition condition $(n)$ on this particular negative end of $C$. 
Further, down this chain of fixed trivial cylinders, all the way to $\alpha_{n}$, no further lower levels may have non-trivial curves whose ends are asymptotic to $\gamma$ as $\rightarrow -\infty$. This is clear for the lowest level $u^{n-1}$. We already argued $l_{fixed,pos,-\infty,n-1}=0$, then all fixed ends landing on $\gamma$ must be fixed ends assigned by $\alpha_{n}$, then the partition conditions imposed by ECH index implies we cannot have both trivial and nontrivial ends at $\gamma$. On levels above the lowest level and below the level of $C$, this follows from the inequality
\[
R_{pos,i+1}' + V_{pos, i+1}' \leq l_{fixed,pos,-\infty,i}+|T_{fixed,pos,i}|+k_{pos, i+1} + k'_{pos,i+1} - R_{pos,i+1}.
\]
If we have both a trivial cylinder and an nontrivial end asymptotic to $\gamma$ in the negative end, they make an overall contribution of $1$ to the left hand side, but make a overall contribution of 2 to the right hand side by increasing $l_{fixed,pos,-\infty,i}$ and $|T_{fixed,pos,i}|$, hence making this inequality strict.

We next consider $C$ has a positive end ending on a critical point of $f$. Call this Reeb orbit of $\gamma$. If $\gamma$ is not a fixed Reeb orbit mandated by $\alpha_{1}$, then this already makes a contribution to $l_{fixed,pos,+\infty,i}$ we did not use in the index inequality, which makes the overall inequality strict. If $\gamma$ indeed appears in $\alpha_1$ and is in fact connected to a trivial cylinder, then either this end of $C$ is connected upwards to $\gamma$ via a sequence of trivial cylinders, or there are more nontrivial ends above $C$ that ends on $\gamma$ as $s\rightarrow +\infty$, but this makes the index inequality strict due to multiplicity reasons ($\alpha_1$ can only require a fixed end of multiplicity 1 at $\gamma$). Hence it must be the case $C$ is connected to $\gamma$ on the top level via sequence of fixed trivial cylinders, and no level above $C$ have nontrivial ends approaching $\gamma$ as $s\rightarrow +\infty$. If a curve above $C$ has a negative end approaching $\gamma$, we are back to the previous case and this also makes the index inequality strict.

The case of negative Morse-Bott tori is similar to positive Morse-Bott tori but with the signs reversed, so we will not repeat it. We remark the proof of Negative Morse-Bott tori is independent of the proof of positive Morse-Bott tori because when we compute $|T_{fixed,i}'|$ the trivial cylinders at negative and positive Morse-Bott tori are independent of each other.

To prove $(g)$ and $(h)$. We already took care of the case a non-trivial curve that is asymptotic to a Reeb orbit corresponding to a critical point of $f$. We next consider the case of free ends. Let our curve be $C$ in some level of the cascade and consider its $+\infty$ free ends asymptotic to positive Morse-Bott tori. We have $k_{pos,i+1}' = R_{pos,i+1}$, this implies each free Reeb orbit as $s\rightarrow +\infty$ is approached by a unique positive end of $C$. The ECH index also imposes partition conditions of $(1,..,1)$, hence this end is simply covered. Recalling $\cas{\mathbf{u}}$ is reduced, any $s\rightarrow -\infty$ free end of curves above $C$ arrived at by following the gradient flow is also simply covered. This proves $(g)$ and $(h)$ for positive Morse-Bott tori. The result for negative Morse-Bott tori holds by considering the negative free ends of $C$.
\end{proof}

We would also like a way to prove that provided our transversality conditions hold (i.e. $J$ is good), $J_\dt$-holomorphic curves of ECH index one degenerate into cascades of height one, as opposed to cascades of greater height. To do this we need a slight strengthening of the above index inequality where we allow fixed trivial cylinders with higher multiplicities.

\begin{proposition}
Let $\alpha_1$ and $\alpha_n$ be ECH Morse-Bott generators, except we relax the condition on multiplicities of fixed/free ends - they are allowed to be arbitrary. Let $\cas{u}$ be a cascade of height one connecting from $\alpha_1$ to $\alpha_n$. Then we have the inequality
\[
Ind(\cas{\mathbf{\tilde{u}}}) \leq I(\cas{\mathbf{u}}) - 2\dt (\cas{\mathbf{u}})-1
\]
\end{proposition}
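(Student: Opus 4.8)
The plan is to reprise the proof of Proposition~\ref{prop:indexinequality} almost word for word, keeping track of the one new feature permitted here: the chains of fixed trivial cylinders mandated by $\alpha_1$ and $\alpha_n$ may involve components of covering multiplicity $>1$, and a single Morse-Bott torus appearing in $\alpha_1$ or $\alpha_n$ may support several such components. First I would reduce, exactly as before, to the term-wise comparison, decomposing each reduced level $\tilde u^i = C_i\cup T_{free,i}\cup T_{fixed,i}\cup T_{fixed,i}'$ into the union of nontrivial somewhere injective curves, the free trivial cylinders, the fixed trivial cylinders coming from $\alpha_1$ or $\alpha_n$ (relevant only for $i=1,n-1$), and the remaining fixed trivial cylinders landing on critical points of $f$. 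The one-level ECH index inequality for currents containing trivial cylinders, proved above, was derived from the link and writhe bounds on positive and negative Morse-Bott tori, and those were stated for an arbitrary number of fixed ends of arbitrary covering multiplicity; hence the per-level estimates $Ind(C_i\cup\cdots)+l_{fixed,i}\le I(\,\cdot\,)-2\dt(\,\cdot\,)-|T_{fixed,i}'|$ carry over unchanged, with $|T_{fixed,i}'|$ counting components rather than multiplicities.

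Summing these estimates and subtracting reduces the claim, as in Proposition~\ref{prop:indexinequality}, to a single combinatorial inequality bounding $\sum_i\bigl(R_{pos,i}+R_{pos,i}'+V_{pos,i}'+R_{neg,i}+R_{neg,i}'+V_{neg,i}'\bigr)$ above by $\sum_i\bigl(l_{fixed,i}+|T_{fixed,i}|\bigr)+\sum_i\bigl(k_{pos,i}+k_{pos,i}'+k_{neg,i}+k_{neg,i}'\bigr)+L$. The interior-level estimates ($2\le i\le n-2$) never used the multiplicity restrictions on $\alpha_1,\alpha_n$, so I would quote them verbatim. For $i=1$ and $i=n-1$ I would rerun the deficit analysis: each chain of fixed trivial cylinders starting at a torus of $\alpha_1$ (resp.\ ending at one of $\alpha_n$) is traced downwards (resp.\ upwards); it either meets a nontrivial end asymptotic to the same Reeb orbit $\gamma$ on an intermediate level, producing an $l_{fixed,pos,+\infty}$ (resp.\ $l_{fixed,pos,-\infty}$, or negative-torus) contribution not consumed by the interior estimates, or it runs all the way to $\alpha_n$ (resp.\ $\alpha_1$), producing an unused component of $T_{fixed,pos,n-1}'$ (resp.\ $T_{fixed,pos,1}'$). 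Distinct chains on a given torus lie over disjoint orbits, so the compensations cannot collide, and, exactly as in Proposition~\ref{prop:indexinequality}, they leave the extra $+1$ intact because in the equality case $l_{fixed,pos,-\infty,1}=l_{fixed,pos,+\infty,n-1}=0$ and the negative analogues vanish. The negative Morse-Bott torus case is the sign-reversed mirror and, since $T_{fixed}'$ on positive and negative tori are counted separately, it is independent of the positive case.

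The hard part will be the multiplicity bookkeeping in this last step when a torus of $\alpha_1$ or $\alpha_n$ supports several high-multiplicity fixed trivial cylinders: the deficit there is the total covering multiplicity recorded by $V'$ minus the \emph{number} of cylinders, whereas the compensating $l_{fixed,\pm\infty}$ and $T_{fixed}'$ terms are also counts of ends or components, so a naive match can fall short when, say, a chain of total multiplicity $m$ meets a single nontrivial end of covering multiplicity $m$. I would resolve this by feeding the strictness of the writhe bounds back in: such configurations violate the partition conditions at $\gamma$, making the relevant one-level per-level inequality strict, and this slack has to be tracked in tandem with the chain accounting rather than separately. Getting that synchronization right — orbit by orbit on each Morse-Bott torus, and simultaneously at the top, interior, and bottom levels — is where the real work of the argument lies.
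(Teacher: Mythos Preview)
Your plan tracks the paper's proof closely through the interior-level combinatorics, but there is a genuine gap in the final step, precisely at the place you flag as ``the hard part.'' You propose to work entirely with the reduced cascade $\cas{\tilde{\mathbf{u}}}$ and to recover the multiplicity deficit at a high-multiplicity fixed orbit $\gamma$ from (i) the chain-tracing contributions to $l_{fixed,\pm\infty}$ and $T_{fixed}'$, and (ii) the strictness of the writhe bound when a nontrivial end at $\gamma$ has covering multiplicity $>1$. This fails when the nontrivial currents meeting the chain are themselves multiply covered in the \emph{unreduced} cascade. Concretely, suppose $\alpha_1$ mandates a fixed trivial cylinder at $\gamma$ of multiplicity $N$, and the only nontrivial piece meeting $\gamma$ is a current $(C,N)$ where the simple curve $C$ has a single positive end at $\gamma$ of multiplicity $1$. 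After reduction the end at $\gamma$ has multiplicity $1$, so there is no partition violation and no writhe slack; the chain contributes one unit to $l_{fixed,pos,+\infty}$, and $|T'_{fixed,n-1,\gamma}|=0$. Your excess is $1$, but the deficit recorded by $V'_{pos,2}$ is $N$.

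The paper closes this gap by a different mechanism: it does \emph{not} stay with $I(\cas{\tilde{\mathbf{u}}})$ but instead exploits the drop $I(\cas{\mathbf{u}})-I(\cas{\tilde{\mathbf{u}}})$. Working orbit by orbit at $\gamma$, it observes that each underlying simple curve $C_{i,j}$ with positive ends at $\gamma$ has $I(C_{i,j})\geq n_{i,j}+1$ (existence gives $Ind\geq 1$, and treating the $\gamma$-ends as free in the ECH convention adds $n_{i,j}$), so replacing $(C_{i,j},m_{i,j})$ by $(C_{i,j},1)$ drops the ECH index by at least $(m_{i,j}-1)(n_{i,j}+1)$. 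Adding this to your two sources of excess and using the honest multiplicity conservation $N-\sum_{i,j}m_{i,j}n_{i,j}\leq|T'_{fixed,n-1,\gamma}|$ (which holds only in the unreduced cascade) gives total excess $\sum_{i,j}m_{i,j}n_{i,j}+\sum_{i,j}(m_{i,j}-1)+|T'_{fixed,n-1,\gamma}|\geq N$. In the example above this is $N\cdot 1+(N-1)+0\geq N$. So the missing idea is that the right-hand side of the target inequality is $I(\cas{\mathbf{u}})$, not $I(\cas{\tilde{\mathbf{u}}})$, and the gap between them is precisely what absorbs the multiplicity deficit that writhe alone cannot.
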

\begin{proof}
We repeat the proof of index inequality in Proposition \ref{prop:indexinequality} and observe the inequalities concerning the intermediate level curves continue to hold. The issue is in allowing fixed trivial cylinders of high multiplicities allowed by $\alpha_1$ and $\alpha_n$ at the top and bottom levels. We first focus on what happens near positive Morse-Bott tori. 
For simplicity we fix $\gamma$ a Reeb orbit corresponding to the hyperbolic orbit in a positive Morse-Bott torus and consider what happens to ends of holomorphic curves with fixed ends at $\gamma$. As we have seen above the problematic term comes from the inequality
\[
R_{pos,2}' + V_{pos,2}' \leq k_{pos,2} +k_{pos,2}' -R_{pos,2} + l_{fixed,pos,-\infty,1} + l_{fixed,pos,+\infty,1} +|T_{fixed,1}'|,
\]
where $V_{pos,2}'$ can contain fixed trivial cylinders mandated by $\alpha_1$ that appear in $V_{2,pos}'$ but does not appear in $|T_{fixed,1}'|$. For simplicity we consider $T_{\gamma,fixed}$ appearing at $\gamma$ of multiplicity $N$. In order for this to make a contribution to $V_{2,pos}'$ instead of $R_{2,pos}'$, we assume that $u^1$ has no nontrivial end that are asymptotic to $\gamma$ as $s\rightarrow -\infty$. We recall we would like to prove an inequality of the form
\[
I(\cas{u})-1 \geq Ind(\cas{\mathbf{\tilde{u}}}) +2\dt(\cas{u})
\]
Consider for $i=2,...,n-1$, the nontrivial currents $(C_{i,j},m_{i,j}) \subset u^i$, where we think of $m_{i,j}$ as the multiplicity of $C_{i,j}$ (since we are working in the nonreduced case). We assume each $C_{i,j}$ has $l_{i,j}$ ends asymptotic to $\gamma$ as $s\rightarrow \infty$, and suppose $C_{i,j}$ has total multiplicity $n_{i,j}$ asymptotic to $\gamma$ as $s\rightarrow \infty$. Finally let $T_{fixed,n-1,\gamma}$ denote the number of trivial cylinders at the last level $u^{n-1}$ at $\gamma$. We have the inequality
\[
N-\sum_{i,j} m_{i,j} n_{i,j} \leq |T'_{fixed,n-1,\gamma}|.
\]
Let's consider $I(C_{i,j})$, by virtue of it being nontrivial and the writhe inequality, $\sum_j I(C_{i,j}) \geq \sum_{j}(n_{i,j}+1) $. This is coming from the fact in order for the $C_{i,j}$ to exist its Fredholm index must be greater or equal to one, and at the ends of $\gamma$ the ECH index is treated as free ends whereas the Fredholm index is treated as fixed ends. So in passing from $u^i$ to $\tilde{u}^i$ we decreased the ECH index by at least $\sum_{i,j}(m_{i,j}-1)(n_{i,j} +1) $.

We next compare the ECH index of reduced cascade with its Fredholm index, in particular we consider the inequalities 
\[
I(\tilde{u^i}) - Ind (\tilde{u^i})+ R_{pos,i+1}' + V_{pos, i+1}' -[ l_{fixed,i}+|T_{fixed,i}|+k_{pos, i+1} + k'_{pos,i+1} - R_{pos,i+1}] \geq 0
\]
for $i =2,..,n-2$.
We have that by virtue of the writhe inequality occurring at $\gamma$ across these levels, the $\gamma$ orbit's contribution is that the left hand side is at least $\sum_{j} n_{i,j}-l_{i,j}$ bigger than the right hand side.

Finally, on the $u^{n-1}$ level, we originally had the inequality
\[
|T'_{fixed,n-1}| \geq 0
\]
In the above inequality we have included the $|T'_{fixed,n-1,\gamma}|$ term coming from the last level in our cascade contributed by $\gamma$, and the writhe bound for this level also implies this there is also an excess of the index inequality of size $\sum_{j} n_{n-1,j}-l_{n-1,j}$. 

Hence we can think of proving the index inequality as follows: there is a deficit of $N$ at the top level contributed purely by $\gamma$, and by making the inequalities of the lower levels strict, we can make up for it. In passing from nonreduced to reduced curve, the ``excess'' of ECH index is bounded below by $\sum_{i,j}(m_{i,j}-1)(n_{i,j} +1) $. The excess of comparing ECH index of reduced curves $C_{i,j}$ to their Fredholm index coming from writhe inequality is given by $\sum_{i,j} n_{i,j}-l_{i,j}$, and the excess in the index inequality of various levels due to contributions to $l_{fixed,pos,+\infty,i}$ coming from $\gamma$ is precisely $\sum_{i,j} l_{i,j}$.
And on the last level the excess is given simply by
$|T'_{fixed,n-1,\gamma}|$
Hence the excess due to $\gamma$ is bounded below by
\[
\sum_{i,j}(m_{i,j}-1)(n_{i,j} +1) + \sum_{i,j} n_{i,j}-l_{i,j} + \sum_{i,j} l_{i,j} + |T'_{fixed,n-1,\gamma}|
\]
Using the fact $N-\sum_{i,j} m_{i,j} n_{i,j} \leq |T'_{fixed,n-1,\gamma}|$, we see the excess outweighs the deficit at the top level, so fixed trivial cylinders at $\gamma$ will keep the overall index inequality intact. We can apply the same reasoning for every $\gamma$ at positive Morse-Bott tori.

We next consider negative Morse-Bott tori. We assume $\gamma$ is Reeb orbit on a negative Morse-Bott torus, and $\alpha_{n-1}$ assigns a fixed end of multiplicity $N$ to $\gamma$. We consider the overall inequality and show it still holds after we factor in the contributions from other terms. Let $|T_{fixed,1,\gamma}'|$ denote the number of free trivial cylinders located at $\gamma$ at the $u^1$ level. For $i =1,..,n-2$ we consider $(C_{i,j},m_{i,j})\subset u^i$ nontrivial curves that asymptote to $\gamma$ as $s\rightarrow -\infty$. We let $l_{i,j}$ denote the number of such ends at each level and $n_{i,j}$ denote the multiplicity. Then the same proof as before will show the inequality continues to hold.
\end{proof}

In fact we have equality of ECH index to Fredholm index also enforces that the cascade is simple. 

We now take care of the case of height $k$ cascades. 
\begin{proposition}\label{prop:height1}
Consider a sequence of $J_{\dt_n}$-holomorphic ECH index one curves $u_n$ of bounded energy from $\alpha_1$ to $\alpha_{n}$ (as nondegenerate ECH generators) converging to a cascade $\cas{u}$ from $\alpha_1$ and $\alpha_n$ viewed as Morse-Bott ECH generators, then $\cas{u}$ has height one.
\end{proposition}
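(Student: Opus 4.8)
The plan is to deduce this from the index inequality for reduced height-one cascades (Proposition \ref{prop:indexinequality}, together with its strengthening allowing relaxed end multiplicities, proved just above) and the standing hypothesis that $J$ is good. Having passed, via the SFT-type compactness theorem of Section \ref{degenerations}, to the limiting $J$-holomorphic cascade $\cas{u}$ — which has some finite height $k$ — I would first replace it by the associated cascade of currents $\cas{\mathbf{u}}$ using Procedure \ref{curve_to_current}. The relative homology class and the positive and negative asymptotic Morse-Bott ECH generators $\alpha_1,\alpha_n$ are unchanged under the degeneration $\dt_n\to 0$, and since the ECH index is assembled only from $c_\tau$, $Q_\tau$ and the $CZ^{ECH}$ terms of $\alpha_1,\alpha_n$, the ``half-definition, half-theorem'' statement at the start of Section \ref{Section:ECH index} gives $I(\cas{\mathbf{u}})=1$.

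Next I would split $\cas{\mathbf{u}}$ along its interfaces of infinite flow time: by Definition \ref{def height k cascade} we may write $\cas{\mathbf{u}}=\{\cas{\mathbf{v^1}},\dots,\cas{\mathbf{v^k}}\}$, a concatenation of $k$ height-one cascades of currents with matching asymptotics $\alpha_1=\beta_0,\beta_1,\dots,\beta_k=\alpha_n$, where each intermediate $\beta_i$ is a Morse-Bott ECH generator except that its free and fixed ends may carry arbitrary multiplicities. The key bookkeeping step is additivity of the ECH index,
\[
I(\cas{\mathbf{u}})=\sum_{i=1}^{k}I(\cas{\mathbf{v^i}}).
\]
The $c_\tau$ and $Q_\tau$ contributions are sums over all levels, and the $CZ^{ECH}$ contributions telescope: at each interface the matched ends lie at critical points of $f$ (or run along a full gradient trajectory from the minimum of $f$ to its maximum), hence are fixed on both sides, and the conventions of Definitions \ref{mbgenerator}--\ref{def:mb_generator_neg} are arranged precisely so that a fixed end at a critical point of $f$ contributes the same $CZ^{ECH}$ whether it is read as a $+\infty$-end of $\cas{\mathbf{v^i}}$ or as a $-\infty$-end of $\cas{\mathbf{v^{i+1}}}$ (with opposite signs).

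With additivity in place the conclusion is immediate. Each $\cas{\mathbf{v^i}}$ is a height-one cascade between (relaxed) Morse-Bott generators, so the strengthened index inequality applies and gives $Ind(\cas{\tilde{\mathbf{v^i}}})\le I(\cas{\mathbf{v^i}})-2\dt(\cas{\mathbf{v^i}})-1\le I(\cas{\mathbf{v^i}})-1$. Since $J$ is good, $\cas{\tilde{\mathbf{v^i}}}$ is transversely cut out (Definition \ref{def:good j}), so by Assumption \ref{assumption} it lies in a moduli space whose dimension is exactly $Ind(\cas{\tilde{\mathbf{v^i}}})$; this moduli space is nonempty, whence $Ind(\cas{\tilde{\mathbf{v^i}}})\ge 0$ and therefore $I(\cas{\mathbf{v^i}})\ge 1$. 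Summing and using $I(\cas{\mathbf{u}})=1$ yields $1=\sum_{i=1}^{k}I(\cas{\mathbf{v^i}})\ge k$, forcing $k=1$.

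The step I expect to be the real obstacle is the additivity $I(\cas{\mathbf{u}})=\sum_i I(\cas{\mathbf{v^i}})$: one must check that the intermediate asymptotic data $\beta_i$, which in general is only a \emph{relaxed} Morse-Bott generator and may carry free or fixed ends not allowed for a genuine generator, contributes to $CZ^{ECH}$ consistently from the two sides of the interface, keeping careful track of the sign conventions and of the trivial-cylinder conventions at critical points of $f$ (for instance the rule that at most one end of a trivial cylinder at a critical point may be designated fixed). A lesser point to verify is that the Morse-Bott compactness output is genuinely a single cascade rather than several with further breaking, and that the cascade-dimension formula of Definition \ref{index} does not absorb an extra $\bb{R}$-translation; if it did, the bound $Ind(\cas{\tilde{\mathbf{v^i}}})\ge 0$ could only improve, so the argument would still go through.
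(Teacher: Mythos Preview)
Your approach is the same as the paper's: split the limiting cascade of currents along its infinite-flow-time interfaces into height-one pieces $\cas{\mathbf{v^i}}$, apply the strengthened index inequality to each piece, and sum. The one place where your argument does not quite match the paper is exactly the step you flagged: the additivity $I(\cas{\mathbf{u}})=\sum_i I(\cas{\mathbf{v^i}})$ does not hold as an equality in general, and your justification for it is incorrect. At an infinite-flow interface the positive ends of the top level of $\cas{\mathbf{v^{i+1}}}$ land on arbitrary Reeb orbits $\gamma$ (not generically at critical points of $f$); only after flowing up for infinite time do they reach the maximum of $f$, which is where the negative ends of the bottom level of $\cas{\mathbf{v^i}}$ sit. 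So the two sides of the interface carry \emph{different} generalized generators, and the $CZ^{ECH}$ terms do not cancel. The paper handles this by assigning to the top of $\cas{\mathbf{v^i}}$ the generator $\alpha_i$ (ends at the minimum of $f$ declared fixed, all others free) and to the bottom of $\cas{\mathbf{v^{i-1}}}$ the generator $\alpha_i'$ (ends at the maximum of $f$ declared fixed, all others free), and then computes an explicit nonnegative defect $I(\alpha_i',\alpha_i)\ge 0$ measuring the total multiplicity of full min-to-max gradient trajectories at that interface, so that
\[
I(\cas{\mathbf{u}})=\sum_i I(\cas{\mathbf{v^i}})+\sum_i I(\alpha_i',\alpha_i).
\]
Since each defect is nonnegative, you still get $1=I(\cas{\mathbf{u}})\ge\sum_i I(\cas{\mathbf{v^i}})\ge k$ from your bound $I(\cas{\mathbf{v^i}})\ge 1$, and the conclusion $k=1$ survives. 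So your proof is correct once the equality is weakened to this inequality; the paper records the defect terms explicitly and uses the slightly weaker bound $I(\cas{\mathbf{v^i}})\ge 0$ (with equality only for pieces consisting entirely of fixed trivial cylinders), drawing the same conclusion.
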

\begin{proof}
Suppose $\cas{u}$ is a height $k$ cascade, then it can be written as $k$ height $1$ cascades, which we write as $\cas{v_1},...,\cas{v_k}$. We recall that between cascades $\cas{v_i}$ and $\cas{v_{i+1}}$ their end asymptotics are connected by either infinite or semi-infinte gradient flows. We pass each to a cascade of currents, and to each cascade $\cas{\mathbf{v}_i}$ we assign to it two generalized ECH generators at its topmost and bottom-most level, which we write as $\alpha_i$ and $\alpha_{i+1}'$. For $\alpha_i$ we assign all the ends approaching the minimum of $f$ as fixed, and all others are free. For $\alpha_{i+1}'$ we consider all ends approaching the maximum of $f$ are fixed, and the rest are free. The exception to this rule is $\alpha_1$ and $\alpha_{k+1}'$ which we assign Morse-Bott ECH generators corresponding to the degenerating $J_\dt$-holomorphic curve. With this we can assign an ECH index to each cascade $I(\cas{v}_i)$. We can also assign a relative ECH index between the general ECH generators $\alpha_{i}$ and $\alpha_{i}'$, which we write as $I(\alpha_i',\alpha_i)$. This number is always $\geq 0$, and we illustrate it as follows. Let $\mcal{T}$ be a Morse-Bott torus, and suppose coming from $\alpha_i$ there is multiplicity $n_1$ at the minimum of $f$ and $n_2$ away from minimum of $f$. From $\alpha_{i}'$ there is $n_1'$ multiplicity at the maximum of $f$, and $n_2'$ away from the maximum of $f$. Then we have the inequalities
\[
n_1' \geq n_2
\]
and
\[
n_2'\leq n_1.
\]
Then we say contribution to $I(\alpha_i',\alpha_i)$ from this Morse-Bott torus is $ (n_1-n_2')=n_1'-n_2 \geq 0$. Then we add up this term for each Morse-Bott torus that appears in $\alpha_i$. Geometrically this is the total mulitplicity of complete gradient trajectories flowing between $\cas{v_i}$ and $\cas{v_{i-1}}$ and has potentially nonzero contributions to the ECH index. Then the fact that the cascade came from a ECH index one curve implies
\[
I(\cas{v}_1) + I(\alpha_{2}',\alpha_{2}) +...+ I(\cas{v}_k) =1
\]
And by previous proposition each $I(v_i)\geq 0$,with equality only if it consisted entirely of fixed trivial cylinders. Hence there is a unique $\cas{v_i}$ with ECH index 1, the rest have ECH index zero, and all $I(\alpha_i',\alpha_i)=0$. This means there can only be fixed trivial cylinders above and below $\cas{v_i}$ and cannot be infinite gradient flows. This is equivalent to saying the cascade of currents is height one.
\end{proof}

The above gives a description of what ECH index one cascades look like from the perspective of currents, we now reverse the process, and use the above to understand all cascades of curves of ECH index one. We need to add back in the information that was lost from passing from curves to currents. We only care about the cascades of curves that resulted from degeneration of a nondegenerate connected ECH index one curve. Call this curve $C_\dt$. We observe the Fredholm index of $C_\dt$, which we denote by $\text{Fred Ind}(C_\dt)$, is equal to one. We  assume as $\dt \rightarrow 0$, $C_\dt$ degenerates into a 
cascade of curves $\cas{u}$, and denote $\cas{\mathbf{u}}$ the resulting cascade of holomorphic currents. From the above we know $\cas{\mathbf{u}}$ is a cascade of currents of height one, however $\cas{u}$ could apriori be of arbitrary height, and the levels that are removed from $\cas{u}$to form $\cas{\mathbf{u}}$ must all be branched covers of trivial cylinders occurring at critical points of $f$.

The first case we need to consider is if $\cas{\mathbf{u}}$ is empty, then this implies that $\cas{u}$ consists purely of branched covers of trivial cylinders. To be precise $\cas{u}$ may contain many levels that consists of branched covers of trivial cylinders, and levels that begin and end on critical point of $f$, however it may also contain levels where the trivial cylinders (branched covered or not) are away from critical points of $f$. Here we allow levels where there is only a single unbranched cylinder away from critical points of $f$. We assume $C_\dt$ is connected. If at level $i$ a trivial cylinder is at the critical point of $f$ corresponding to elliptic Reeb orbit (hyperbolic for negative Morse-Bott torus) then all levels above $i$ the trivial cylinders that connected to the original cylinder will be at the same Reeb orbit. Similarly if at level $i$ a trivial cylinder is at the hyperbolic orbit (resp elliptic orbit for negative Morse-Bott torus) then all the trivial cylinders below this level connecting to this original (potentially branched cover of) cylinder will also be at the same Reeb orbits.

If all the levels of $\cas{u}$ are at the same Reeb orbit which is also a critical point, then $u$ came from a branched cover of trivial cylinder in the nondegenerate case. If this is not the case, then remove the top most and bottom most levels until none of the trivial cylinders in $\cas{u}$ begin/end on critical point of $f$.
Then as currents we don't care where the branched points are, so we can think of $u'$ as a cascade of currents with only 1 level. Then the ECH index of $\cas{\mathbf{u}}$ is equal to one, which implies $\cas{\mathbf{u}}$ consists of a free trivial cylinder with multiplicity one. Hence the same must be true of $\cas{u}$ and there are no top/bottom branch covers.

We now turn our attention to the case where $\cas{\mathbf{u}}$ is nonempty. 
We shall use the fact the Fredholm index of $C_\dt$ is one to rule out configurations of height $> 1$. We observe the trivial cylinders on levels above/below $\cas{\mathbf{u}}$ admit the following description:
\begin{proposition}
\begin{enumerate}
    \item Let $\mcal{T}$ denote a positive Morse Bott torus contained in the top level of $\cas{\mathbf{u}}$. For curves on the top level of $\cas{\mathbf{u}}$, as $s\rightarrow +\infty$ all free ends have multiplicity one, and avoid critical point of $f$. The fixed end can only have multiplicity one. Hence all branched covers of trivial cylinders above this level can only happen at the critical point of $f$ corresponding to the elliptic orbit. Moreover, because $C_\dt$ obeys partition conditions, the top most level in $\cas{u}$ of the stack of branched trivial cylinders has partition conditions $(1,..,1)$.
    \item Let $\mcal{T}$ denote a negative Morse Bott torus contained in the top level of $\cas{\mathbf{u}}$, as $s\rightarrow +\infty$. The positive free end of the top level of $\cas{\mathbf{u}}$ has multiplicity 1, so there cannot be branched cover of trivial cylinder at the critical point of $f$ corresponding to the hyperbolic orbit. The fixed end at the critical point of $f$ corresponding to the elliptic orbit can have a stack of branched cover of trivial cylinders on top of it on height levels above $u'$, and again by partition conditions on $C_\dt$ the top most level is hit by partition condition $(n)$.
    \item Let $\mcal{T}$ denote a positive Morse Bott torus contained in the bottom level of $\cas{\mathbf{u}}$. The free end has multiplicity one, so there cannot be branched covers of trivial cylinders at the critical point of $f$ corresponding to the hyperbolic orbit. The fixed end at critical point of $f$ corresponding to elliptic end can have a stack of branched cover of trivial cylinders below it on height levels below $u'$, and again by partition conditions on $C_\dt$ the top most level is hit by partition condition $(n)$.
    \item Let $\mcal{T}$ denote a negative Morse Bott torus contained in the bottom level of $\cas{\mathbf{u}}$. As $s\rightarrow -\infty$ all free ends have multiplicity one, and avoid the critical points of $f$. The fixed end can only have multiplicity one. Hence all branched covers of trivial cylinders above this level can only happen at the critical point of $f$ corresponding to the elliptic orbit. Moreover, because $C_\dt$ obeys partition conditions, the bottom most level (in terms of height) of the stack of branched trivial cylinders has partition conditions $(1,..,1)$.
\end{enumerate}
\end{proposition}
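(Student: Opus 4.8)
The plan is to read off all four statements from Corollary~\ref{conditions on currents}, applied to the extremal level of $\cas{\mathbf{u}}$, together with the description of Morse-Bott ECH generators in Definitions~\ref{mbgenerator} and~\ref{def:mb_generator_neg} and the ECH partition conditions (see \cite{bn}) satisfied by the connected nondegenerate curve $C_\dt$ whose degeneration produced $\cas{u}$; recall that by Proposition~\ref{prop:height1} the cascade has height one, so the levels of $\cas{u}$ deleted when forming $\cas{\mathbf{u}}$ are, by the discussion preceding the statement, branched covers of trivial cylinders over critical orbits of $f$. The four cases are formally symmetric under exchanging $s\to+\infty$ with $s\to-\infty$ (hence $u^1,\alpha_1$ with $u^{n-1},\alpha_n$, and ``above'' with ``below'') and the roles of positive and negative Morse-Bott tori, so I would prove case (1) in detail and indicate the sign changes.

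For case (1), first fix the local picture on the positive torus $\mcal{T}$. By Corollary~\ref{conditions on currents}(d) the top level $u^1$ of $\cas{\mathbf{u}}$ contains a unique nontrivial curve $C$, all other components being trivial cylinders, and by part (c) these components are embedded and mutually disjoint. By Corollary~\ref{conditions on currents}(f), on $\mcal{T}$ the ends of $C$ at $s\to+\infty$ can meet a critical orbit of $f$ only as forced by $\alpha_1$; by Definition~\ref{mbgenerator}(1) the only such forced end occurs when $\alpha_1$ restricts to $(\mcal{T},-,m)$ and is then a single multiplicity-one end over the hyperbolic orbit. Every remaining positive end of $C$ on $\mcal{T}$ is free, avoids the critical orbits, and --- the writhe bound of Section~\ref{Section:ECH index} being forced to equality since $I(\cas{\mathbf{u}})=1$ (Corollary~\ref{conditions on currents}(e)) --- carries partition $(1,\dots,1)$, hence multiplicity one; and any trivial cylinder of $u^1$ over a critical orbit is the multiplicity-one cylinder over the hyperbolic orbit realizing that forced end (when $C$ does not itself carry it, by Corollary~\ref{conditions on currents}(g)). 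Thus over the hyperbolic orbit $u^1$ accumulates total multiplicity at most $1$, whereas the elliptic orbit may carry total multiplicity up to $m$.

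Next I would run the multiplicity obstruction. A deleted level of $\cas{u}$ above $u^1$ is a branched cover of the trivial cylinder over a critical orbit of $\mcal{T}$, so it has degree $\ge 2$ and total multiplicity $\ge 2$ over that orbit. Total multiplicity over a fixed critical orbit is unchanged both by the upward gradient flows $\phi_f^{T}$ of the cascade (which fix critical points and preserve multiplicities, by the definition of a cascade of currents) and by passing through a branched cover of its trivial cylinder; so a branched-cover stack over the hyperbolic orbit would force $\alpha_1$ to carry multiplicity $\ge 2$ there, contradicting the previous paragraph. Hence every branched cover above $u^1$ lies over the elliptic orbit --- the first assertion of (1). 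For the partition statement, the topmost level of $\cas{u}$ carries exactly the positive asymptotic data of $C_\dt$ under the degeneration; since the elliptic orbit of a positive torus has rotation angle in $(0,\ep)$ its outgoing ECH partition is $(1,\dots,1)$, so the topmost level of any such stack has all positive ends of multiplicity one.

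Cases (2)--(4) follow the same two steps. In each the multiplicity over the hyperbolic orbit prescribed by the relevant generator ($\alpha_1$ for (2), $\alpha_n$ for (3) and (4)) is again at most one: inspect Definition~\ref{def:mb_generator_neg}(1) for (2), Definition~\ref{mbgenerator}(2) for (3), and Definition~\ref{def:mb_generator_neg}(2) for (4), where the hyperbolic orbit carries either no end or a single multiplicity-one end, so the branched covers can only sit over the elliptic orbit; the extreme partition is then the ECH partition of $C_\dt$ at the elliptic orbit --- the outgoing partition $(n)$ at a negative torus for (2), the incoming partition $(n)$ at a positive torus for (3), and the incoming partition $(1,\dots,1)$ at a negative torus (rotation angle in $(-\ep,0)$) for (4). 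The step that needs care, and the one I expect to be the only genuine obstacle, is the bookkeeping in the third paragraph: one must ensure every end of the extremal level lying over a critical orbit is accounted for, either as the unique multiplicity-one end allotted by the generator or as a configuration excluded by Corollary~\ref{conditions on currents}(f),(g), so that no hidden multiplicity over the hyperbolic orbit can support a branched cover. Everything else is a transcription of Corollary~\ref{conditions on currents} and the partition conventions.
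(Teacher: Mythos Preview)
Your proposal is correct and follows essentially the same approach as the paper, which treats this proposition as a direct observation from Corollary~\ref{conditions on currents}, the Morse-Bott generator conventions (Definitions~\ref{mbgenerator},~\ref{def:mb_generator_neg}), and the ECH partition conditions on $C_\dt$ --- without writing out a separate proof. Your fleshing out of the multiplicity bookkeeping (that the hyperbolic orbit carries total multiplicity at most one from the generator, hence cannot support a branched cover, while the elliptic orbit can) and the reduction of cases (2)--(4) by symmetry is exactly the implicit argument.
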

In light of the above, we can compute the topological Fredholm index of $C_\dt$ via the following procedure:

First consider the height level corresponding to $\cas{\mathbf{u}}$, we know all trivial cylinders connecting between nontrivial curves are simply covered, so all the possible branched covers that appear on this height level are chains of trivial branched covers of cylinders that connect to the top and bottom levels of $\cas{\mathbf{u}}$. We then create two additional height levels, one above $\cas{\mathbf{u}}$, denoted by $\overline{\cas{\mathbf{u}}}$ and one below $\cas{\mathbf{u}}$, denoted by $\underline{\cas{\mathbf{u}}}$, and push all branch points of trivial cylinders that appear in $\cas{\mathbf{u}}$ onto these 2 levels $\overline{\cas{\mathbf{u}}}$,$\underline{\cas{\mathbf{u}}}$, so that all trivial cylinders that appear in $\cas{\mathbf{u}}$ have no branch point (though they may be multiply covered), and hence are transversely cut out. We recall we assign $Ind(\cas{\mathbf{u}})$ as the dimension of moduli space of $\cas{\mathbf{u}}$ lives in, viewed as a cascade of currents

Then the Fredholm index of $C_\dt$ is computed as:
\begin{align*}
    &Ind(C_\dt) =\\
    & Ind(\cas{\mathbf{u}})+1 -\chi (\overline{\cas{\mathbf{u}}}) -\chi (\underline{\cas{\mathbf{u}}})
\end{align*}
Note by the ECH index assumption $Ind(\cas{\mathbf{u}})=0$, so it will enforce no branched cover of trivial cylinders appear. Hence we have the proved the following proposition:
\begin{proposition} \label{nice cascades}
Suppose $J$ is chosen to be good, if $C_\dt$ is a sequence of connected nontrivial ECH index one curves of bounded energy that converges to a cascade of curves, $\cas{u}$, then either
\begin{itemize}
    \item $\cas{u}$ is a free cylinder of multiplicity one
    \item $\cas{u}$ is the same as a height one cascade of currents of ECH index one, described above, and all trivial cylinders that appear in levels of $\cas{u}$ either unbranched chains of fixed trivial cylinders, or trivial cylinders over a Reeb orbit of multiplicity one.

\end{itemize}
In the latter case, $\cas{u}$ does not contain a sequence of fixed trivial cylinders that do not connect to any nontrivial $J$ holomorphic curve. See Convention \ref{nontrivial}.
\end{proposition}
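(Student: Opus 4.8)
The plan is to assemble the pieces established in the preceding subsections. First I would apply the SFT compactness theorem to extract from $\{C_\dt\}$ a subsequence converging to a cascade of curves $\cas{u}$ of a priori arbitrary height, and pass to the associated cascade of currents $\cas{\mathbf{u}}$ via Procedure \ref{curve_to_current}. Since each $C_\dt$ is somewhere injective and (for generic $J_\dt$) transversely cut out of Fredholm index one, its ECH index equals one, and this persists in the limit, so $\cas{\mathbf{u}}$ is a cascade of currents of ECH index one; by Proposition \ref{prop:height1} it has height one. If $\cas{\mathbf{u}}$ is empty, then $\cas{u}$ is built entirely out of levels consisting of (branched covers of) trivial cylinders, together with possible single unbranched cylinders away from critical points of $f$; stripping the top- and bottom-most levels that sit over critical points of $f$ and using that $C_\dt$ is connected, what remains is a single-level cascade of currents of ECH index one, which must be a free trivial cylinder of multiplicity one. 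Hence $\cas{u}$ is a free cylinder of multiplicity one, the first alternative.

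Now assume $\cas{\mathbf{u}}$ is nonempty. Because $J$ is good, Proposition \ref{prop:indexinequality} applies, and the ECH index one hypothesis forces every inequality in its proof to be an equality; Corollary \ref{conditions on currents} then yields that $\cas{\mathbf{u}}$ is reduced, all flow times are strictly positive, each level contains exactly one nontrivial embedded curve with all other components trivial cylinders, and all the listed partition and fixed/free end conditions hold. In particular $Ind(\cas{\mathbf{u}})=0$. This already identifies $\cas{u}$, viewed as a cascade of currents, with this height one ECH index one cascade; what remains is to locate the branch points of trivial cylinders in $\cas{u}$ and the levels of $\cas{u}$ that were removed in forming $\cas{\mathbf{u}}$.

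For this I would invoke the four-case proposition immediately preceding the statement: any branched cover of a trivial cylinder lying above or below $\cas{\mathbf{u}}$ can occur only at the critical point of $f$ corresponding to the elliptic orbit, with partition condition $(1,\dots,1)$ or $(n)$ dictated by the partition conditions that $C_\dt$ itself obeys. Push all such branch points onto two auxiliary levels $\overline{\cas{\mathbf{u}}}$, $\underline{\cas{\mathbf{u}}}$ placed just above and below $\cas{\mathbf{u}}$, so that every trivial cylinder actually inside $\cas{\mathbf{u}}$ is unbranched (hence transversely cut out). The topological Fredholm index of the curve $C_\dt$ then reads
\[
Ind(C_\dt) = Ind(\cas{\mathbf{u}}) + 1 - \chi(\overline{\cas{\mathbf{u}}}) - \chi(\underline{\cas{\mathbf{u}}}).
\]
Since $Ind(C_\dt)=1$ and $Ind(\cas{\mathbf{u}})=0$, we obtain $\chi(\overline{\cas{\mathbf{u}}}) + \chi(\underline{\cas{\mathbf{u}}}) = 0$; but each genuine branch point strictly lowers the Euler characteristic while an unbranched multiple cover contributes zero, so there are no branch points, and the only trivial cylinders remaining in $\cas{u}$ are unbranched chains of fixed trivial cylinders (connecting to critical points mandated by $\alpha_1$ or $\alpha_n$, as in Corollary \ref{conditions on currents}) and unbranched trivial cylinders over Reeb orbits of multiplicity one. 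This is the second alternative. Finally, in this case $\cas{u}$ cannot contain a chain of fixed trivial cylinders disconnected from every nontrivial component: such a chain would be a free-floating piece of the limit carrying no nontrivial part, which is impossible for a connected nontrivial $C_\dt$ unless it is the whole limit, i.e. the first alternative; otherwise it is discarded by Convention \ref{nontrivial}.

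The main obstacle is the bookkeeping in the last paragraph: verifying precisely how the branch points of the trivial cylinders enter $\chi(\overline{\cas{\mathbf{u}}})$ and $\chi(\underline{\cas{\mathbf{u}}})$ and hence $Ind(C_\dt)$, and checking that the partition-condition constraints from the four-case proposition, combined with $Ind(\cas{\mathbf{u}})=0$, leave no slack for any branching. Everything else is a matter of quoting the results already proved.
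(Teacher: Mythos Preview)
Your proposal is correct and follows essentially the same route as the paper: pass to the cascade of currents, use Proposition \ref{prop:height1} to get height one, handle the empty case by stripping levels over critical points to reduce to a single free multiplicity-one cylinder, and in the nonempty case use Corollary \ref{conditions on currents} to get $Ind(\cas{\mathbf{u}})=0$, push all branch points of trivial cylinders into auxiliary levels $\overline{\cas{\mathbf{u}}}$ and $\underline{\cas{\mathbf{u}}}$, and read off from $Ind(C_\dt)=Ind(\cas{\mathbf{u}})+1-\chi(\overline{\cas{\mathbf{u}}})-\chi(\underline{\cas{\mathbf{u}}})$ that no branching can occur. The bookkeeping concern you flag at the end is exactly the content of the paragraphs preceding the proposition in the paper, so there is nothing further to add.
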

We call cascades of curves of ECH index one of the form stated in the above theorem \emph{good cascades of ECH index 1}.

Then this is more or less a complete characterization of ECH index one cascades we should count in the Morse-Bott case provided we can achieve enough transversality.
Assuming transversality conditions, we quote a theorem from \cite{Yaocas} to show ECH index one cascades can be glued uniquely (up to translation) to ECH index one curves.
\begin{theorem}[Theorem 3.5 in \cite{Yaocas}] \label{gluing theorem}Assuming transversality conditions \ref{assumption}, any given ECH index one cascades can be glued uniquely to ECH index one $J_\dt$-holomorphic curves for sufficiently small values of $\dt >0$ up to translation in the symplectization direction. 
\end{theorem}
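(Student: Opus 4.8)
The plan is to reduce everything to the gluing theorem already proved in \cite{Yaocas} (Theorem 3.5 there, quoted above as the ``correspondence theorem''), which glues any \emph{transverse and rigid} height-one $J$-holomorphic cascade to a rigid $J_\dt$-holomorphic curve. So the content of the present statement is really the verification that, under Assumption \ref{assumption} (i.e.\ $J$ is good), an ECH index one cascade \emph{is} transverse and rigid in the sense required by \cite{Yaocas}. First I would invoke Proposition \ref{prop:height1} to reduce to the case where the cascade of currents has height one: any ECH index one cascade arising in the Morse-Bott chain complex, when passed to a cascade of currents, is height one. Next, I would invoke Corollary \ref{conditions on currents} and Proposition \ref{nice cascades}: an ECH index one (good) cascade is reduced, all its flow times are strictly positive, every level has exactly one nontrivial embedded curve together with (unbranched chains of fixed, or simply covered) trivial cylinders, the nontrivial curves are pairwise disjoint on each level, and all partition conditions are respected. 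In particular the cascade equals its own reduced cascade, so Assumption \ref{assumption} directly gives that it is transversely cut out in the sense of Definition \ref{def:transversality conditions}.

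The second step is to check ``rigid'', i.e.\ that the moduli space of cascades in the given configuration is zero-dimensional. By the index computations of Section \ref{Section:ECH index} — specifically the chain $Ind(\cas{\tilde{\mathbf{u}}}) \leq I(\cas{\tilde{\mathbf{u}}}) - 2\dt(\cas{\tilde{\mathbf{u}}}) - 1$ of Proposition \ref{prop:indexinequality}, together with $I(\cas{\tilde{\mathbf{u}}}) \leq I(\cas{\mathbf{u}})$ and the hypothesis $I(\cas{\mathbf{u}}) = 1$ — we get $Ind(\cas{\tilde{\mathbf{u}}}) \leq 0$. Since by Assumption \ref{assumption} the reduced cascade lives in a moduli space of dimension exactly $Ind(\cas{\tilde{\mathbf{u}}})$, and since a nonempty transversely cut out moduli space has nonnegative dimension, we conclude $Ind(\cas{\tilde{\mathbf{u}}}) = 0$ and the moduli space is a point (after quotienting by the $\bb{R}$-translations built into the cascade formalism). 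This is exactly the ``transverse and rigid'' hypothesis of \cite{Yaocas}, Definition 3.4. One must also match conventions: the notion of transversality in Definition \ref{def:transversality conditions} (transversality of each $\mcal{M}_c(u^i)$, of the subsets $\mcal{K}_\pm$, and of the fiber product $\Phi_f \circ EV^+ \pitchfork EV^-$) is precisely what \cite{Yaocas} needs in order to run its implicit-function-theorem gluing; I would spell out this dictionary carefully, paying attention to the fact that the trivial cylinder components and the fixed/free designations from Convention \ref{convention:free/fixed} are handled identically in both papers.

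With transversality and rigidity in hand, the gluing theorem of \cite{Yaocas} produces, for each sufficiently small $\dt > 0$, a rigid $J_\dt$-holomorphic curve $u_\dt$, unique in the sense that any sequence $u'_{\dt_n}$ of $J_{\dt_n}$-holomorphic curves converging to $\cas{u}$ eventually agrees with $u_{\dt_n}$ up to $\bb{R}$-translation. It remains only to observe that the glued curve $u_\dt$ has ECH index one: this follows from the ``half definition half theorem'' in Section \ref{Section:ECH index} asserting that the ECH index of the cascade equals the ECH index of the relative homology class of any curve glued from it, combined with the additivity of the relative homology class under gluing (the glued curve lies in the class $\sum_i [u^i] + (\text{gradient cylinders})$, whose $c_\tau$, $Q_\tau$, and $CZ^{ECH}$ add up by the definitions in that section), and with the fact that ECH index one plus index zero trivial cylinder components of the ambient current glue to an ECH index one current. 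I would also remark that, conversely, any ECH index one $J_\dt$-holomorphic curve degenerates (by the SFT compactness theorem quoted in Section \ref{degenerations}, plus Proposition \ref{prop:height1} and Proposition \ref{nice cascades}) into a good ECH index one cascade of height one, so the correspondence is bijective.

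The main obstacle is the bookkeeping in the conventions-matching step: the definition of ``transverse and rigid cascade'' in \cite{Yaocas} and the definition of ``transversely cut out reduced cascade of currents'' here are phrased somewhat differently (one in terms of curves with marked points and stable domains, the other in terms of currents with fixed/free designations), and one must verify they coincide on the class of good ECH index one cascades — in particular that the trivial-cylinder chains and the branched-cover subtleties discussed around Proposition \ref{nice cascades} do not obstruct the gluing. Once the dictionary is established, the index inequality of Proposition \ref{prop:indexinequality} does all the real work, forcing rigidity for free.
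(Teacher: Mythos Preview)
Your proposal is correct and takes essentially the same approach as the paper: the paper's entire justification is the single sentence ``The key is to note ECH index one and transversality implies all of the cascades above are transverse and rigid, as in Definition 3.4 of \cite{Yaocas} and hence can be glued,'' and you have simply expanded this into a careful verification using Corollary \ref{conditions on currents}, Proposition \ref{nice cascades}, and the index inequality of Proposition \ref{prop:indexinequality}. One minor point: invoking Proposition \ref{prop:height1} at the outset is slightly misplaced, since that proposition concerns degenerations of $J_\dt$-curves rather than cascades given a priori---but in context the ECH index one cascades being glued are already the \emph{good} height-one cascades of Proposition \ref{nice cascades}, so no reduction step is actually needed.
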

The key is to note ECH index one and transversality implies all of the cascades above are transverse and rigid, as in Definition 3.4 of \cite{Yaocas} and hence can be glued.
The final ingredient we need is to show that assuming $J$ is good, the set of good ECH index one cascades is finite. To do this we need the notion of $J_0$ index for cascades.

\section{Finiteness} \label{Finite}
In order to prove the differential in Morse-Bott ECH is well defined we need to prove the for given generators $\alpha,\beta$ the set of good ECH index one cascades from $\alpha$ to $\beta$ is finite. For $J$-chosen to be good, we already know this set is a zero dimensional space, hence it suffices to prove that it is compact. To this end we develop the analogue of $J_0$ index in the Morse-Bott world. We start with 1-level cascades then build upwards to $n$ level cascades. In this section we assume $J$ is good throughout.

\subsection{Level 1 cascades}
Consider an level 1 cascade of ECH index 1 from generator $\alpha$ to $\beta$. In anticipation of multiple level ECH index 1 cascades, here we relax some (but not all) of the conditions on $\alpha,\beta$ to remove conditions that require certain free/fixed ends (depending on whether we are on a positive/negative Morse-Bott torus) to only have multiplicity 1. This corresponds to relaxing the condition in the nondegenerate case to only allow hyperbolic orbits of multiplicity one (see Theorem \ref{mbgenerator}). We recall the consequences of generic $J$:
\begin{enumerate}
    \item For positive Morse-Bott tori, as $s\rightarrow \infty$, all free ends are disjoint and are asymptotic to Reeb orbits in the torus with multiplicity 1. Let $n^{pos,free}_+$ denote the number of such orbits.
    \item For positive Morse-Bott tori, the fixed ends at $s\rightarrow \infty$ are disjoint from the free ends. They are hit with partition condition $(1)$. Suppose there are $N^{pos,fix}_+$ such ends.
    \item  For positive Morse-Bott tori, as $s\rightarrow -\infty$ all free ends are disjoint and cover the Reeb orbits in the torus with multiplicity 1. Let $n^{pos,free}_-$ denote the number of such orbits.
    \item For positive Morse-Bott tori, as $s\rightarrow -\infty$, all fixed ends have partition conditions $(n)$. Suppose there are $N^{pos,fix}_-$ such ends, each with multiplicity $n^{pos,fix}_{-,j}$
    \item For negative Morse-Bott tori, as $s\rightarrow \infty$, all free ends are disjoint and cover the Reeb orbits in the torus with multiplicity 1. Let $n^{neg,free}_+$ denote the number of such orbits.
    \item For negative Morse-Bott tori, the fixed ends at $s\rightarrow \infty$ are disjoint from the free ends. They are hit with partition conditions $(n)$. Suppose there are $N^{neg,fix}_+$ such ends with multiplicity $n^{neg,fix}_{+,j}$
    \item  For negative Morse-Bott tori, as $s\rightarrow -\infty$ all free ends are disjoint and cover the Reeb orbits in the torus with multiplicity 1. Let $N^{neg,free}_-$ denote the number of such orbits.
    \item  For negative Morse-Bott tori, as $s\rightarrow -\infty$ there is only 1 fixed end for each Morse-Bott tori, and has partition conditions $(1)$. Let there be $N^{neg,fix}_-$ such ends total
\end{enumerate}
\begin{definition}
For a level 1 good ECH index 1 cascade $C$ connecting generator $\alpha$ to $\beta$, we define:
\begin{equation}
    J_0(C,\alpha,\beta) := -c_\tau(C) + Q_\tau(C,C) - [\sum_j (n^{pos,fix}_{-,j} -1)]- [\sum_j (n_{+,j}^{neg,fix}-1)]
\end{equation}
\end{definition}
We observe that $J_0(C,\alpha,\beta)$ can be computed from the knowledge of $\alpha,\beta$ and the relative homology class of $C$ alone. We also remark that the $J_0$ index can be similarly be defined for nontrivial curves of higher ECH index, as long as they satisfy the long list of partition conditions we listed above, and the same genus bounds below holds. We shall have need for this fact for the proof of finiteness below.

Then we have the following genus bound:
\begin{proposition}
Let $g$ denote the genus of a holomorphic curve $C$. Then we have the upper bound
\begin{equation}
    -\chi(C)  \leq J_0(C,\alpha,\beta).
\end{equation}
\end{proposition}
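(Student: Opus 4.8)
The plan is to mimic the proof of the analogous genus bound for ECH index one currents in the nondegenerate setting (the proposition bounding $-\chi(C) + \sum_i (n_i^+ - 1) + \sum_j (n_j^- - 1) \leq J_0(C)$ from Section \ref{ECH review}), adapting it to the Morse-Bott writhe bounds we have established. The key algebraic input will be the relative adjunction formula in the Morse-Bott setting, $c_\tau(C) = \chi(C) + Q_\tau(C) + w_\tau(C) - 2\delta(C)$, together with the writhe inequalities from Theorems \ref{winding pos}, \ref{winding neg} and the link propositions, now applied with the partition conditions listed in items (a)--(h) preceding the definition of $J_0(C,\alpha,\beta)$.

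First I would rewrite $J_0(C,\alpha,\beta) = -c_\tau(C) + Q_\tau(C) - \sum_j(n^{pos,fix}_{-,j}-1) - \sum_j(n^{neg,fix}_{+,j}-1)$ using the adjunction formula to substitute $-c_\tau(C) = -\chi(C) - Q_\tau(C) - w_\tau(C) + 2\delta(C)$. This gives
\[
J_0(C,\alpha,\beta) = -\chi(C) - w_\tau(C) + 2\delta(C) - \sum_j(n^{pos,fix}_{-,j}-1) - \sum_j(n^{neg,fix}_{+,j}-1).
\]
Since $\delta(C) \geq 0$, it then suffices to prove the writhe bound
\[
-w_\tau(C) \geq \sum_j(n^{pos,fix}_{-,j}-1) + \sum_j(n^{neg,fix}_{+,j}-1),
\]
i.e. that $w_\tau(C)$ is bounded above by $-\sum_j(n^{pos,fix}_{-,j}-1) - \sum_j(n^{neg,fix}_{+,j}-1)$. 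I would then decompose $w_\tau(C) = \sum_i w_\tau(\zeta_i^+) - \sum_j w_\tau(\zeta_j^-)$ into contributions from each Reeb orbit at each end, and bound each contribution separately using the appropriate link/winding proposition, keeping careful track of which partition condition applies (free ends on positive tori at $+\infty$ and $-\infty$ are simply covered with partition $(1,\dots,1)$; fixed ends on positive tori at $-\infty$ have partition $(n)$; and symmetrically for negative tori). For the ``good'' simply-covered free ends the winding/writhe contributions vanish or have the right sign, and the only genuine contributions come from the fixed ends with partition $(n)$, where the single-end writhe bound $w(\xi_i) \geq \eta(\xi_i)(q_i - 1)$ (for $-\infty$) combined with $\eta \geq 1$ on a positive torus produces exactly the term $-(n^{pos,fix}_{-,j}-1)$ after the sign flip, and similarly for $+\infty$ on negative tori.

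The main obstacle I expect is the bookkeeping: ensuring every end of $C$ is accounted for with the correct sign convention ($+\infty$ versus $-\infty$, positive versus negative Morse-Bott torus), and verifying that the contributions from free ends and from the ``other'' fixed ends (those with partition $(1)$, namely $N^{pos,fix}_+$ and $N^{neg,fix}_-$) are genuinely non-positive so they can only help the inequality. One must also handle the $\delta(C) \geq 0$ term cleanly and confirm that no cross-terms (linking numbers between distinct ends at the same Reeb orbit) spoil the estimate — but since the free ends are disjoint and simply covered, and the fixed ends at a given orbit are unique up to the $(n)$-partition, the linking-number terms either vanish or contribute with the favorable sign, exactly as in the nondegenerate proof. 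Once each orbit-by-orbit contribution is checked, summing them yields the claimed bound $-\chi(C) \leq J_0(C,\alpha,\beta)$.
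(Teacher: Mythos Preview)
Your proposal is correct and follows essentially the same route as the paper: substitute the relative adjunction formula into $J_0$, reduce to the inequality $-w_\tau(C) \geq \sum_j(n^{pos,fix}_{-,j}-1) + \sum_j(n^{neg,fix}_{+,j}-1)$, and then verify this orbit-by-orbit using the partition conditions (a)--(h) together with the winding/writhe bounds of Theorems~\ref{winding pos} and~\ref{winding neg}. The paper's proof is exactly this case-by-case check, and your identification of which ends give trivial contributions versus which produce the $(n-1)$ terms matches it.
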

\begin{proof}
We recall the adjunction formula in our case says
\[
c_\tau(C)= \chi(C) +Q_\tau(C) +w_\tau(C)-2\dt(C)
\]
plugging this into $J_0$ yields
\[
J_0(C,\alpha,\beta) = -\chi(C) -w_\tau(C) -[\sum (n^{pos,fix}_- )-1]- [\sum n_+^{neg,fix}-1] +2\dt(C)
\]
hence it suffices to prove \[
-w_\tau -[\sum (n^{pos,fix}_- )-1]- [\sum n_+^{neg,fix}-1] \geq 0.
\]
We break this into cases. If $C$ is a trivial cylinder, then this is trivial. If $C$ has a nontrivial component along with fixed trivial cylinders, we only consider the nontrivial component, also denoted by $C$. All of the computations below follow from the computations of the writhe bound:
\begin{itemize}
    \item At a positive Morse-Bott torus
    \begin{itemize} 
        \item $s\rightarrow \infty$, free end. $-w_\tau \geq 0$ because the multiplicity is one.
        \item $s\rightarrow \infty$, fixed end $-w_\tau \geq 0$ because multiplicity is one.
        \item $s\rightarrow -\infty$, free end. $w_\tau \geq 0 $ by multiplicity.
        \item $s\rightarrow -\infty$, for given fixed end $j$, the writhe at this end satisfies $w_\tau \geq n_-^{pos,fix}-1$.
    \end{itemize}
    \item At a negative Morse-Bott torus 
    \begin{itemize}
         \item $s\rightarrow \infty$, free end. $-w_\tau \geq 0 $ due to multiplicity constraints.
        \item $s\rightarrow \infty$, for a single fixed end $j$, the writhe satisfies $-w_\tau \geq n_+^{neg,fix}-1$.
        \item $s\rightarrow -\infty$, free end. $w_\tau \geq 0 $ due to multiplicity constraints.
        \item $s\rightarrow -\infty$, fixed end. $w_\tau \geq 0$ by multiplicity.
    \end{itemize}
\end{itemize}
combining all of the above we conclude our inequality.

\end{proof}
\subsection{Multiple level cascades}
We now explain how to generalize the definition of $J_0(C,\alpha,\beta)$ to good ECH index one cascades of arbitrary number of levels. Consider a $n$ level cascade $\cas{u}= \{u^1,..,u^n\}$ of ECH index one with input $\alpha$ and output $\beta$. Recall we have so called fixed chains of trivial cylinders, i.e. chain of trivial cylinders that all begin/end on a fixed end orbit of either $\alpha$ or $\beta$ until this chain of trivial cylinders meet an nontrivial holomorphic curve in one of the intermediate levels (which has an fixed end at said Reeb orbit). We remove all of these kinds of trivial cylinders, then the number $J_0$ is defined for each of the intermediate cascade levels, which we denote by $J_0(u^i)$, then we define the $J_0$ of the entire cascade as
\begin{definition}
\begin{equation}
J_0(\cas{u}) := \sum J_0(u^i)
\end{equation}
\end{definition}
We observe this definition also only dependents on the relative homology class and $\alpha,\beta$.
Recall the Euler characterisitc of the cascade $\chi (\cas{u})$ is the Euler characterstic of the surface obtained if we glued a cylinder between each matching end of $u^i$ and $u^{i+1}$, clearly then the Euler characteristic of the cascade  is the sum of the Euler characteristic of each of its components. Applying the proposition for level one cascades we get
\begin{proposition}
\[
-\chi (\cas{u})\leq J_0(\cas{u}).
\]
\end{proposition}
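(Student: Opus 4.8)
The plan is to reduce the statement for a general $n$-level cascade $\cas{u}$ to the already-established level-one genus bound $-\chi(C) \leq J_0(C,\alpha,\beta)$ applied to each intermediate level. The key observation is that, by definition, $J_0(\cas{u}) = \sum_i J_0(u^i)$ where the sum runs over intermediate cascade levels (after removing the fixed chains of trivial cylinders that do not interact with any nontrivial curve), and likewise $\chi(\cas{u})$ is additive: the Euler characteristic of the cascade is the Euler characteristic of the surface obtained by gluing cylinders between matching ends of adjacent levels, and gluing a cylinder along a circle does not change the total Euler characteristic (a cylinder has $\chi = 0$ and gluing along circles is $\chi$-neutral). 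Hence $-\chi(\cas{u}) = \sum_i (-\chi(u^i))$, and it would suffice to prove $-\chi(u^i) \leq J_0(u^i)$ for each intermediate level individually.

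First I would make precise what ``$J_0$ is defined for each intermediate cascade level'' means: each intermediate level $u^i$ is itself a one-level object (a union of somewhere injective curves, possibly with trivial cylinders attached), with asymptotic data at its top and bottom ends that, after the removal of the fixed trivial chains, falls under the partition-condition bookkeeping established in the level-one subsection. The crucial point, already flagged in the remark after the level-one definition, is that the level-one genus bound and its proof only used the list of partition conditions (items 1 through 8) together with the writhe bounds from Theorems \ref{winding pos} and \ref{winding neg}; it did not use that the curve had ECH index exactly one. Since $\cas{u}$ is a good ECH index one cascade, Corollary \ref{conditions on currents} and Proposition \ref{nice cascades} guarantee that every nontrivial curve appearing in any level obeys exactly the free/fixed partition conditions listed, and all trivial cylinders are either unbranched fixed chains or simply-covered. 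Therefore the level-one proposition applies verbatim to each $u^i$, giving $-\chi(u^i) \leq J_0(u^i)$.

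Then I would sum over $i$: $-\chi(\cas{u}) = \sum_i(-\chi(u^i)) \leq \sum_i J_0(u^i) = J_0(\cas{u})$, which is the claim. The one bookkeeping subtlety to handle carefully is the effect of removing the fixed chains of trivial cylinders: I need to check that these removed cylinders contribute $0$ to both $\chi(\cas{u})$ (a chain of trivial cylinders glued end-to-end into the surface contributes $\chi = 0$) and to the defining sum for $J_0$ (they are excluded from the definition by fiat). Since a fixed chain of trivial cylinders is a union of cylinders glued along circles, its contribution to the Euler characteristic of the glued-up cascade surface is zero, so discarding it changes neither side of the inequality. With that consistency check in place the additivity argument closes.

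The main obstacle I anticipate is not any deep estimate but rather the careful accounting of the asymptotic data at the intermediate levels: one must verify that after deleting the fixed trivial chains, the remaining configuration at each level $u^i$ genuinely satisfies all eight partition-condition hypotheses of the level-one proposition (in particular that the ``fixed ends'' appearing at intermediate levels have the partition type $(n)$ or $(1)$ as required, which is exactly what Corollary \ref{conditions on currents}, part (f) and the surrounding discussion provide). Once that verification is granted, the proof is a one-line summation; essentially all the work has already been done in the level-one case and in the structural results on good ECH index one cascades.
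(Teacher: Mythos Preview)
Your proposal is correct and follows exactly the paper's approach: both $-\chi$ and $J_0$ are additive over levels (the former because gluing cylinders along circles preserves Euler characteristic, the latter by definition), so one simply sums the already-established level-one bound $-\chi(u^i)\le J_0(u^i)$. The paper's own proof is a single sentence invoking precisely this additivity and the level-one proposition; your additional care about the removed fixed trivial chains and the verification that each $u^i$ satisfies the partition hypotheses is valid and more thorough than what the paper writes, but not a different argument.
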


\subsection{Finiteness}
We finally prove
\begin{theorem}\label{thm:finiteness}
Given generators $\alpha, \beta$, the moduli space of good ECH index 1 cascades from $\alpha$ to $\beta$ is compact.
\end{theorem}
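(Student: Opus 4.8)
Looking at this statement, I need to prove that the moduli space of good ECH index 1 cascades from $\alpha$ to $\beta$ is compact. Let me think about how this would go given the tools established in the paper.

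=== PROOF PROPOSAL ===

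The plan is to combine the $J_0$ genus bound just proven with Gromov–SFT compactness, showing that a sequence of good ECH index one cascades from $\alpha$ to $\beta$ has bounded topological complexity and bounded energy, hence subconverges, and then to argue that the limit is again a good ECH index one cascade from $\alpha$ to $\beta$ (i.e. no ``escape to the boundary'' of the moduli space of cascades).

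First I would establish the uniform bounds. Since we fixed an action level $L$, every Reeb orbit appearing in $\alpha$, $\beta$, or in any intermediate cascade level has action $\le L$, so the energy $E(\cas{u})$ of any cascade in question is bounded by a constant depending only on $\alpha,\beta$ (it is essentially $\mcal{A}(\alpha)$). For the genus bound: the relative homology class $Z\in\mcal{H}_2(\alpha,\beta,Y)$ of a cascade of ECH index one is constrained — $I(\cas{\mathbf u})=c_\tau(Z)+Q_\tau(Z)+CZ^{ECH}=1$ pins down $c_\tau(Z)+Q_\tau(Z)$ in terms of $\alpha,\beta$ — but a priori $Z$ can still vary over a torsor of $H_2(Y)$. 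However, the pairing $Q_\tau$ and the Chern class $c_\tau$ vary in controlled (in fact affine-linear) ways over $H_2(Y)$, and the ECH index one condition together with the requirement that the reduced cascade have nonnegative Fredholm index (Proposition \ref{prop:indexinequality} gives $Ind(\cas{\tilde{\mathbf u}})\le I(\cas{\tilde{\mathbf u}})-2\delta-1\le 0$, and transversality forces $Ind\ge 0$) forces $Ind(\cas{\tilde{\mathbf u}})=0$; combined with the description in Proposition \ref{nice cascades} (each level has exactly one nontrivial curve, $\delta=0$, reduced, all the partition/end conditions), this bounds $-\chi$ of each nontrivial component via the $J_0$ bound $-\chi(\cas{u})\le J_0(\cas{u})$, and $J_0(\cas{u})$ depends only on the relative homology class and $\alpha,\beta$. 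A standard argument (as in the nondegenerate ECH finiteness proposition quoted in Section \ref{ECH review}) shows only finitely many relative classes $Z$ can support a holomorphic cascade with the fixed positive/negative asymptotics and bounded energy, so $-\chi$, and hence the genus and number of components, is uniformly bounded.

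Next I would apply the SFT-type compactness theorem for cascades (the compactness statement from \cite{SFT}, \cite{BourPhd}, with proof sketched in \cite{Yaocas}; cf. the informal statement after Definition \ref{def height k cascade}) to a sequence of good ECH index one cascades $\cas{u}_k$ from $\alpha$ to $\beta$: having fixed genus and energy bounds, a subsequence converges to a limit cascade (a priori of some height $k'$). By additivity of the ECH index along the limiting configuration, and since we are in the Morse-Bott-to-Morse-Bott setting (no $\delta\to 0$ is occurring here — this is honest Gromov compactness within the $J$-holomorphic world), the ECH index of the limit is again one. Now I invoke the analysis already carried out: by Proposition \ref{prop:height1} / the height-$k$ discussion in Section \ref{Section:ECH index}, an ECH index one cascade of currents has height one, and by Corollary \ref{conditions on currents} and Proposition \ref{nice cascades} the limit is again a good ECH index one cascade of currents, reduced, with all the requisite partition/end/embeddedness properties, i.e. it lies in the same moduli space. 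One must also check that no nontrivial component of the limit is a branched cover of a trivial cylinder escaping to the boundary of the cascade moduli space: this is exactly ruled out by the computation preceding Proposition \ref{nice cascades}, since $Ind(\cas{\mathbf u})=0$ forbids any such branched covers from appearing. Hence the limit is an element of the moduli space, proving sequential compactness.

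The main obstacle, and the step that requires the most care, is ensuring that the limit cascade lands in the correct moduli space rather than degenerating further — in particular that (i) no new multiply-covered nontrivial components appear in the limit (addressed by the ECH index inequality for the reduced cascade, Proposition \ref{prop:indexinequality}, which is available precisely because $J$ is good), (ii) the height does not jump (addressed by Proposition \ref{prop:height1}), and (iii) flow-time parameters $T_i$ do not run off to $\infty$ or collapse to $0$ in a way that breaks the configuration (again controlled by the ECH index one constraint, which forces all $T_i>0$ and forbids infinite flow times in a height-one cascade). Because we already know the moduli space is zero-dimensional and transversely cut out under the goodness hypothesis on $J$, compactness is the only remaining ingredient, and the bookkeeping above reduces it entirely to the index identities and the SFT compactness theorem already in hand.
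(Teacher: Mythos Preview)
Your overall strategy is sound, but there is a genuine gap at the compactness step. The ``SFT-type compactness theorem for cascades'' you invoke (citing the informal statement after Definition~\ref{def height k cascade}) is a theorem about \emph{$J_{\dt_n}$-holomorphic curves} degenerating into a cascade as $\dt_n\to 0$; it is not a compactness theorem for sequences of $J$-holomorphic cascades. There is no off-the-shelf statement in the paper (or in \cite{SFT,BourPhd,Yaocas}) that takes a sequence of cascades and produces a limiting cascade. Relatedly, your appeal to Proposition~\ref{prop:height1} to control the height of the limit is circular: that proposition is again about degenerations of $J_{\dt_n}$-curves, not about limits of cascades.

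The paper's proof closes this gap by working \emph{level by level}. First, the Morse-Bott assumption (finitely many Morse-Bott tori below action $L$, each nontrivial level consuming a definite amount of action) gives a uniform bound on the number of levels, so one may pass to a subsequence with a fixed number $N$ of levels. Then, for each fixed level index $n$, there are only finitely many possible level-wise asymptotics (Morse-Bott tori plus total multiplicities), so one passes to a further subsequence with these fixed; Gromov compactness for \emph{currents} (Proposition~3.3 of \cite{Taubescompactness}, quoted in Section~\ref{ECH review}) then pins down the relative homology class of $u^m_n$. Only at this point does the $J_0$ bound give a genus bound \emph{for each level separately}, after which one applies ordinary SFT compactness to each $u^m_n$ individually. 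Finally, the paper rules out the bad limits (a level breaking into a building, i.e.\ some $T_i=0$; free ends colliding or migrating to critical points of $f$; convergence to a multiple cover) by directly invoking the goodness of $J$ together with the ECH index inequality, rather than via Proposition~\ref{prop:height1}. Your global $J_0$ bound on $-\chi(\cas{u})$ and your treatment of the limit's ECH index are fine, but they do not by themselves produce the level-by-level genus control needed to run SFT compactness, and they do not substitute for a compactness theorem for cascades.
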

\begin{proof}
Let $\{\cas{u}_m\}$ be a sequence of good ECH index one cascades from $\alpha$ to $\beta$. Each $\cas{u}_m$ is a cascade of the form $\{u_{m}^{n}\}_n$. We show $\{\cas{u}_m\}$ has a convergent subsequence. From the Morse-Bott assumption there is an upper bound to how many cascade levels there are, so we pass to a subsequence where they all have $N$ levels. For each $n=1,..,N$, we apply the compactness for holomorphic current from \cite{bn} to each of $u^{m}_{n}$. To see this, note for fixed $n$, the energy constraint of $\{\cas{u}_m\}$ and Morse-Bott condition implies there are only finitely many possible choices for the positive and negative asymptotics of $u^{m}_n$, so we pick a subsequence (also denoted by $u^{m}_n$) where the positive and negative asymptotics of $u^{m}_n$ is independent of $m$. Here, by positive and negative asymptotics of $u^{m}_n$ we simply mean the Morse-Bott tori $\mcal{T}$ that $u^{m}_n$ are asymptotic to at its positive/negative ends, and the total multiplicity of Reeb orbits at each such Morse-Bott tori.

Then using the Gromov compactness for currents (see \cite{bn}) applied to $\{u^{m}_n\}$ we conclude we can refine a further subsequence of $\{u^{m}_n\}$ (for all $n=1,..,N$) with the same relative homology class (our notion of relative homology class here is in $\mathcal{H}_2 (-,-,Y)$)). Now for each $u^{m}_n$ simply the knowledge of its asymptotics (which we can read off directly: by virtue of being part of ECH index one cascade all the ends that avoid the critical points of $f$ are free, and those at critical points of $f$ is fixed) and its relative homology class provides an upper bound on its $J_0$ index. This upper bound on $J_0$ then provides a bound the genus of each $u_n^m, n=1,...,N$. 

With the genus bound we can apply SFT compactness: for fixed $n$, we observe $u^{m}_{n}$ cannot break into a building, for that would yield (if we view $\cas{u}_m$ as cascade of currents) an ECH index 1 cascade of currents with $T_i=0$, which does not exist by genericity conditions. Similarly ruled out by genericity conditions are overlapping free ends and free ends migrating to fixed ends. The $u^n_m$ also cannot converge to a multiple cover of nontrivial curve, for that would yield an ECH cascade of current of index 1 with multiple covers of nontrivial curve, which is ruled out by genericity. Hence we conclude that $\{\cas{u}_m\}$ has a subsequence that converges to a ECH index 1 cascade, and hence we have compactness.
\end{proof}

\section{Computing ECH in the Morse-Bott setting using cascades}\label{section:computing using cascades}

We now define the Morse-Bott ECH chain complex (over $\bb{Z}_2$). We write the chain complex as \[
C_*^{MB}(\lambda,J) := \bigoplus_{\Theta_i} \bb{Z}_2\la \Theta_i \ra.
\]
Here $\Theta_i = \{(\mcal{T}_j,\pm,m_j)\}$ denotes a collections of Morse-Bott ECH generators. Suppose we can choose our $J$ to be good, the differential, which we write as $\partial_{MB}$ is defined as
\begin{equation}
\la\p_{MB} \Theta_1, \Theta_2 \ra
:=   \left\lbrace  
  \begin{tabular}{@{}l@{}}
 $\bb{Z}_2$\, \textup{count of J-holomorphic cascades}\, $\mcal{C}$\, \textup{of ECH index} \,$I=1$,\\
\textup{so that as}  $s\rightarrow +\infty, \, \mcal{C}$ \,\text{approaches}\, $\Theta_1$  \textup{and as} $s\rightarrow -\infty$, \\
 $\mcal{C}$\, \text{approaches} $\Theta_2$.
   \end{tabular}
  \right\rbrace
 \end{equation}
We clarify that in the above definition the cascade $\mcal{C}$ must be decomposable into $\mcal{C}_0 \sqcup \mcal{C}_1$, where $\mcal{C}_0$ is a (potentially empty) collection of fixed trivial cylinders with multiplicity, and $\mcal{C}_1$ is a good ECH index one cascade. We note if $(T,n)$ is an element of $\mcal{C}_0$, if it is positively asymptotic to Morse-Bott ECH generator $(\mcal{T},n,\pm)$, it is also negatively asymptotic to the Morse-Bott ECH generator $(\mcal{T},n,\pm)$ (thus far we only considered nontrivial cascades when we talked about their asymptotics).

We note by Theorem \ref{thm:finiteness} the operator $\p_{MB}$ is well defined.
\begin{theorem}\label{theorem:cobordism in general}
Assuming $J$ is good, the chain complex  $(C_*^{MB},\partial_{MB})$ computes $ECH(Y,\xi)$.
\end{theorem}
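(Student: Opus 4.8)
The plan is to prove the theorem by exhibiting, for each fixed action level $L$ and for the perturbation parameter $\delta=\delta(L)>0$ taken sufficiently small, a \emph{chain-level isomorphism} between the Morse-Bott complex $(C_*^{MB},\partial_{MB})$ (truncated at action $L$) and the nondegenerate ECH chain complex $(C_*(\lambda_\delta),\partial)$ of the perturbed contact form. Since the latter computes the filtered ECH $ECH^L(Y,\xi)$, which by the standard invariance properties of ECH does not depend on the auxiliary nondegenerate perturbation, letting $L\to\infty$ (via the action-filtration direct limit, as in the convention of Section~\ref{degenerations}) then gives $H_*(C_*^{MB},\partial_{MB})\cong ECH(Y,\xi)$. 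Crucially, once we have a chain isomorphism the identity $\partial_{MB}^2=0$ comes for free from $\partial^2=0$ in nondegenerate ECH, so there is no need for a separate analysis of ECH index two cascades and their degenerations.

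On generators, the explicit dictionary of Definitions~\ref{mbgenerator} and~\ref{def:mb_generator_neg} assigns to each Morse-Bott ECH generator $\Theta=\{(\mathcal{T}_j,\pm,m_j)\}$ a nondegenerate ECH generator $\Phi(\Theta)$ (e.g. $(\mathcal{T},+,m)\mapsto(\gamma_+,m)$ and $(\mathcal{T},-,m)\mapsto\{(\gamma_-,1),(\gamma_+,m-1)\}$ on a positive Morse-Bott torus, and the mirror assignments on negative tori); one checks directly that this is a bijection onto the set of ECH generators of action $<L$ and, using the $CZ^{ECH}$ conventions, that it intertwines the relative $\mathbb{Z}$-grading, so $\Phi$ is an isomorphism of graded $\mathbb{Z}_2$-vector spaces. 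To match the differentials, fix generators $\Theta_1,\Theta_2$. By the finiteness result (Theorem~\ref{thm:finiteness}) there are only finitely many good ECH index one cascades from $\Theta_1$ to $\Theta_2$, and finitely many pairs of generators below action $L$, so we may choose $\delta$ uniformly small. Because $J$ is good (Assumption~\ref{assumption}), each such cascade is transverse and rigid, so by the gluing theorem of \cite{Yaocas} (Theorem~\ref{gluing theorem}) it glues, for all sufficiently small $\delta$, to an ECH index one $J_\delta$-holomorphic current from $\Phi(\Theta_1)$ to $\Phi(\Theta_2)$, unique up to $\mathbb{R}$-translation, the gluing preserving the relative homology class and hence the ECH index. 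Conversely, the SFT-type compactness theorem of Section~\ref{degenerations}, together with Propositions~\ref{prop:height1} and~\ref{nice cascades}, shows that any sequence of ECH index one $J_{\delta_n}$-currents of bounded energy with $\delta_n\to0$ subconverges to a good ECH index one (height one) cascade; combined with the uniqueness clause of the gluing theorem and the finiteness of both sides, for $\delta$ small the gluing map is a \emph{bijection} between good ECH index one cascades $\Theta_1\to\Theta_2$ and ECH index one $J_\delta$-currents from $\Phi(\Theta_1)$ to $\Phi(\Theta_2)$ modulo $\mathbb{R}$. The trivial-cylinder part $\mathcal{C}_0$ of a cascade and the corresponding trivial cylinders with multiplicity over $\gamma_\pm$ match up tautologically (Convention~\ref{nontrivial}). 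Over $\mathbb{Z}_2$ this yields $\langle\partial_{MB}\Theta_1,\Theta_2\rangle=\langle\partial\,\Phi(\Theta_1),\Phi(\Theta_2)\rangle$, i.e. $\Phi$ is a chain map, and being bijective on generators it is a chain isomorphism.

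It then remains to assemble the global statement. The chain isomorphism gives $H_*(C_*^{MB,\le L})\cong ECH^L(Y,\lambda_\delta)\cong ECH^L(Y,\xi)$. For $L<L'$ one may take $\delta$ small enough to serve both levels, and since cascades (resp. $J_\delta$-curves) decrease action, the counts defining $\partial_{MB}$ and $\partial$ restrict compatibly to the subcomplexes generated by orbit sets of action $\le L$; hence the isomorphisms $\Phi$ commute with the inclusions of these subcomplexes. Passing to the direct limit over $L$ gives $H_*(C_*^{MB},\partial_{MB})\cong\varinjlim_L ECH^L(Y,\xi)=ECH(Y,\xi)$, which is the assertion (a precise statement being Theorem~\ref{theorem:cobordism in general}).

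The genuinely hard analytic input is imported: gluing a transverse rigid cascade to an honest $J_\delta$-holomorphic curve, with the accompanying uniqueness statement, is Theorem~\ref{gluing theorem} of \cite{Yaocas}, and the fact that the ECH index one condition forces cascades into rigid, embedded ``good'' form is Proposition~\ref{nice cascades}. What still requires genuine care here, and is the main obstacle, is establishing that the gluing correspondence is \emph{surjective} onto all ECH index one $J_\delta$-curves for small $\delta$ and is a bijection once $\delta$ is chosen uniformly small over the finitely many cascades and generators below action $L$; this is the step where Theorem~\ref{thm:finiteness}, the SFT compactness, and the uniqueness half of the gluing theorem must be combined, and one must also verify that no ECH index one $J_\delta$-curve can ``escape'' (this uses Proposition~\ref{prop:height1} to rule out limits of height $>1$). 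A secondary point demanding attention is the bookkeeping between the two assignments of ``free/fixed'' ends (the Fredholm convention versus the ECH convention) when checking that $\Phi$ is grading-preserving and that ECH index one cascades glue to ECH index one curves.
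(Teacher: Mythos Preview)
Your proposal is correct and takes essentially the same approach as the paper. The only cosmetic difference is that the paper packages the surjectivity step as showing that the parametrized moduli space $\{(u,J_\delta):\delta\in(0,\delta_0]\}/\mathbb{R}$ furnishes a \emph{trivial cobordism} between the count of good ECH index one cascades at $\delta=0$ and the count of ECH index one $J_{\delta'}$-curves at some $\delta'$ in the residual ECH-adapted set, whereas you argue directly that gluing is a bijection for small $\delta$; the underlying contradiction argument (a putative ``escaping'' $J_{\delta_n}$-curve subconverges to a good cascade and hence, by the uniqueness clause of the gluing theorem, was glued after all) is identical in both.
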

Before we prove this theorem we choose a generic family of almost complex structures $J_\dt$.

Recall that the traditional definition of ECH requires choosing a generic $J$ from a residual subset of almost complex structures. For fixed $\dt>0$, we say $J_\dt$ is ECH adapted if it is an almost complex structure with which the ECH chain complex is well defined.

\begin{definition}
Consider $\dt \in (0,\dt_0]$, we say a path of almost complex structures $J_\dt$, each compatible with $\lambda_\dt$ for any $\dt \in (0,\dt_0]$, is generic if for any collection of Reeb orbits $\alpha,\beta$, the moduli space 
\begin{equation}
    \mcal{M}(\alpha,\beta,\dt):=\{ (u,\dt) | \db_{J_\dt} u =0, u \, \textup{somewhere injective}, \lim_{s\rightarrow +\infty} u \, \textup{converges to} \,\,\alpha, \lim_{s\rightarrow -\infty} u\, \textup{converges to} \,\, \beta\}
\end{equation}
is cut out transversely.
\end{definition}

\begin{theorem}\label{generic path J}
There is a small enough $\dt_0>0$ so that there is a generic path of almost complex structures $J_\dt$, $\dt \in (0,\dt_0]$ so that:
\begin{itemize}
    \item $J_{\dt_0}$ is ECH adapted.
    \item $\lim_{\dt \rightarrow 0} J_\dt = J$, where $J$ is a generic almost complex structure we have chosen above to count ECH index one cascades.
    \item $|J-J_\dt| \leq C\dt$ in $C^k$ norm, $k>100$, and $J_\dt$ take the prescribed form near small fixed neighborhood of Morse-Bott torus described in Section \ref{degenerations}.
    \item For a residual subset $S \subset (0,\dt_0]$, for all $\dt \in S$, $J_\dt$ is ECH adapted.
\end{itemize}
\end{theorem}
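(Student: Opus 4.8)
The plan is to run the standard parametric transversality argument for a path of almost complex structures, the only new wrinkles being that every $J_\dt$ in the path must be compatible with $\lambda_\dt$ and must coincide with the fixed model of Section~\ref{degenerations} near the Morse--Bott tori, and that the ``endpoint'' of the path is the degenerate contact form rather than an honest nondegenerate one.

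First I would fix $\dt_0' > 0$ small enough that the normal form of Proposition~\ref{prop_locform} and the perturbation $\lambda \mapsto \lambda_\dt$ are valid up to action $L$ for all $\dt \in (0,\dt_0']$, and then choose once and for all a smooth ``canonical'' path $\dt \mapsto J^{\mathrm{can}}_\dt$ on $[0,\dt_0']$ with $J^{\mathrm{can}}_0 = J$, each $J^{\mathrm{can}}_\dt$ compatible with $\lambda_\dt$, each agreeing with the prescribed model in the fixed Morse--Bott neighborhoods, and $|J^{\mathrm{can}}_\dt - J|_{C^k} \le C\dt$; such a path exists because the space of $\lambda_\dt$-compatible almost complex structures with prescribed behavior near the tori is a nonempty contractible Banach manifold depending smoothly on $\dt$, and $\lambda_\dt \to \lambda$ in $C^\infty$. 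Next I would let the perturbed path be $J_\dt := \exp_{J^{\mathrm{can}}_\dt}(\dt\, Y_\dt)$, where $Y_\dt$ ranges over a separable Banach space $\mcal{Y}$ (with a Floer-type norm) of paths of endomorphisms that are tangent to the space of $\lambda_\dt$-compatible structures and are supported in the complement of a slightly smaller neighborhood of the Morse--Bott tori. The extra factor $\dt$ makes $|J_\dt - J|_{C^k} \le C'\dt$ automatic and forces $J_\dt \to J$, while for each fixed $\dt > 0$ the map $Y_\dt \mapsto J_\dt$ is still a submersion onto a neighborhood of $J^{\mathrm{can}}_\dt$ in the allowed class.

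Then, for each pair $\alpha,\beta$ of $\lambda_\dt$-nondegenerate ECH generators of action $< L$ and each relative homology class, I would form the universal moduli space
\[
\mcal{M}^* := \{(u,\dt,Y) : \db_{J_\dt}u = 0,\ u\ \text{somewhere injective and not a trivial cylinder},\ \dt \in (0,\dt_0'],\ Y \in \mcal{Y}\},
\]
and show it is a Banach manifold by the usual Sard--Smale setup: the linearization of $\db_{J_\dt}u$ together with the variation in $Y$ is surjective because any such $u$ has an injective point lying in the open region where $Y$ is unconstrained. Granting this, the projection $\mcal{M}^* \to \mcal{Y}$ is Fredholm, so Sard--Smale gives a residual set of $Y \in \mcal{Y}$ for which each $\mcal{M}(\alpha,\beta,\dt)$ over $(0,\dt_0']$ is cut out transversely with the expected dimension; intersecting these residual sets over the countably many choices of $(\alpha,\beta)$ and relative homology class yields one residual $Y$, and the corresponding path $J_\dt$ is generic in the sense of the definition preceding the theorem. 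Finally, projecting the (now smooth) parametrized moduli spaces to the $\dt$-coordinate and applying Sard's theorem produces a residual, full-measure set $S \subset (0,\dt_0']$ of regular values; for $\dt \in S$ every somewhere injective $J_\dt$-holomorphic curve is transversely cut out with moduli space of the expected dimension, so the ECH differential and $\partial^2 = 0$ hold and $J_\dt$ is ECH adapted. Choosing any $\dt_0 \in S$ and restricting the path to $(0,\dt_0]$ gives all four bullets: $J_{\dt_0}$ is ECH adapted since $\dt_0 \in S$; $\lim_{\dt\to 0} J_\dt = J$ and $|J-J_\dt|_{C^k}\le C'\dt$ by construction; and $S \cap (0,\dt_0]$ is still residual in $(0,\dt_0]$.

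The main obstacle is the claim that a somewhere injective curve $u$ that is not a trivial cylinder has an injective point outside the fixed Morse--Bott neighborhoods, where we are free to perturb $J_\dt$. Here one uses that all Reeb orbits of $\lambda_\dt$ of action $< L$ lie inside those neighborhoods, together with the explicit form of $\lambda_\dt$ and $J_\dt$ there (Proposition~\ref{prop_locform}): a nontrivial somewhere injective curve cannot have its image contained in a single such neighborhood (by a maximum-principle/unique-continuation argument it would otherwise have to be a branched cover of a trivial cylinder, hence not somewhere injective unless it is itself a trivial cylinder), so it enters the open perturbation region and therefore has an injective point there. The remaining pieces --- that perturbing inside the class of $\lambda_\dt$-compatible almost complex structures already suffices for transversality, and the surjectivity computation at an injective point --- are the standard ECH/SFT transversality arguments (Dragnev, Bourgeois, Hutchings), and I would only need to verify that restricting the support of $Y$ away from the tori does not obstruct them, which follows from the injective-point claim.
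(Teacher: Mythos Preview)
Your proposal is correct and is precisely the ``standard application of Sard--Smale'' that the paper invokes; the paper's own proof consists of that single sentence and nothing more. Your elaboration---building a canonical path, perturbing by $\dt\,Y_\dt$ to preserve the $C^k$ bound, forming the parametrized universal moduli space, and then applying Sard--Smale twice (once in $Y$ to make the parametrized space a manifold, once in $\dt$ to extract the residual set $S$)---is exactly what that sentence unpacks to, and your care about locating an injective point outside the prescribed Morse--Bott neighborhoods is a detail the paper does not spell out but which the standard argument indeed requires.
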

\begin{proof}
This is standard application of Sard-Smale theorem.
\end{proof}

\begin{proof}[Proof of theorem \ref{theorem:cobordism in general}] We observe for fixed $L>0$, there are only finitely many ECH index 1 cascades of energy $<L$. We fix $\dt_0$ small enough so that for all $\dt \in (0,\dt_0]$ the cascades can be glued (uniquely in our sense specified) to ECH index 1 curves.

We assume $\dt_0$ is such that $J_{\dt_0}$ is ECH adapted. We recall we have chosen a generic family $J_\dt, \dt\in[0,\dt_0]$ so that the space: 
\[
\{ (u,J_{\dt}) \,| \, \dt \in (0,\dt_0]\, \, u \, J_\dt \, \text{holomorphic, somewhere injective ECH index 1}\}
\]
modulo translation is a 1-manifold (not necessarily compact). A SFT compactness theorem (\cite{Yaocas,BourPhd,SFT}) tells us the $\dt =0$ ends of this manifold are precisely the good ECH index one cascades. 

We recall there is a residual set $A \subset (0,\dt_0]$ so that for all $\dt \in A$, $J_\dt$ is ECH adapted and the ECH homology can be computed by counting ECH index one $J_\dt$ holomorphic curves for $\dt \in A$.

We make the following observation:
if $u_\dt$ and $v_\dt$ are $J_\dt$-holomorphic curves of ECH index one that converge to the same cascade as $\dt \rightarrow 0$, by the gluing theorem, for small enough $\dt$ $u_\dt$ and $v_\dt$ are in fact the same curve up to $\bb{R}$ translation.

Then we claim we can find small enough $\dt' \in A$ so that the corbordism from $\dt =0 $ to $\dt'$ built by $\{ (u,J_{\dt}) \,| \, \dt \in (0,\dt']\, \, u \, J_\dt \, \text{holomoprhic, somewhere injective ECH index 1}\}$ is the trivial cobordism. Suppose not, then for arbitrarily small $\dt$ we can find $u_\dt$ a ECH index one somewhere injective curve that does not come from gluing, take $\dt \rightarrow 0$ and after taking a subsequence, $u_\dt$ degenerates into a good ECH index one cascade, but by our observation must have come from a curve obtained by gluing together an ECH index one cascade, contradiction.
\end{proof}

\section{ECH index one curves of genus zero} \label{section:ECH index one curves of genus zero}
We showed in the previous section that when there is enough transversality for cascades, the cascades of ECH index one take a particularly nice form. However this is not always achievable, except in special circumstances. In this section and the next we outline some special circumstances in which transversality can always be achieved. Here we consider the case where all ECH index one curves in the perturbed picture must have genus zero. This is the case for $T^3$ and some toric domains.

We shall use a slightly different description of cascades that do not allow for the presence of trivial cylinders. We will call this description ``tree-like'' cascades and will be described below. The reason we can use this description is that if the curve has genus zero, we can do the gluing without requiring that between each adjacent cascade levels there is a \emph{single} flow time parameter; instead we can assign a different flow time between each pair of adjacent nontrivial curves.

We use the following convention to represent our holomorphic curves. We use a vertex to represent a $J$ holomorphic curve of genus zero, and use directed edges to denote the positive and negative punctures of the curve. Edges directed away from the $J$-holomorphic curve correspond to positive punctures, and edges directed towards the vertex correspond to negative punctures.
The figure below illustrates how we go from $J$-holomorphic curve to vertex with directed edges.
\begin{figure}[!ht]
\centering
\includegraphics[width=.4\linewidth]{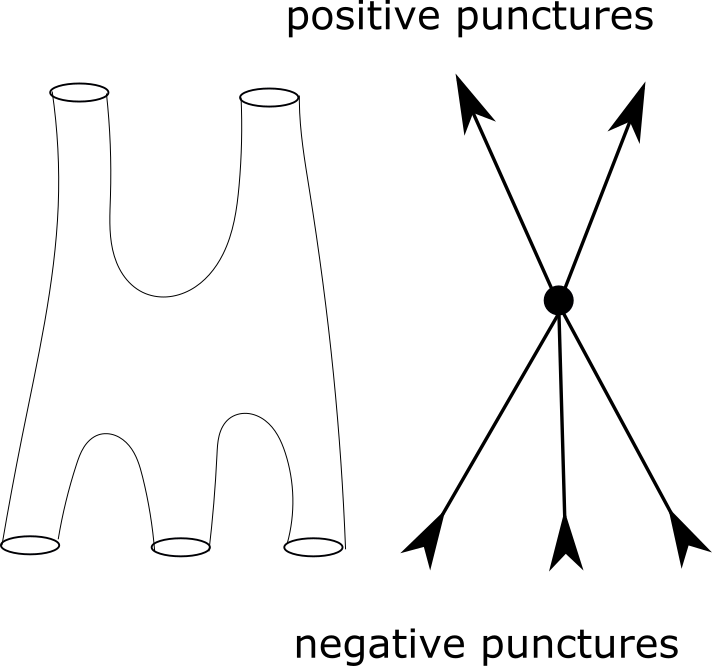}
\caption{Passing from genus zero curve to vertex with edges}

\end{figure}

Then a \emph{height one cascade with tree-like compactifications} from Morse-Bott ECH generator consists of the following data:
\begin{enumerate}
    \item A collection of vertices $\{v_1,..,v_n\}$ each equipped with the data of inward and outward pointing edges. Each vertex has at least one outgoing edge. Each edge is also equipped with the information of which Reeb orbit it lands on.
    \item Given two vertices $v_i$ and $v_j$, if we can find a Morse-Bott torus $\mcal{T}$ so that a positive puncture of $v_i$ lands on $\gamma$, and if we follow the gradient flow for time $T_{i,j}\in [0,\infty)$ along $\gamma$ we arrive at a negative puncture of $v_j$ landing on the corresponding orbit, then we say it is possible to connect $v_i$ and $v_j$ via the given pair of edges. The data of a height one cascade in this compactification consists of choices of connections between the vertices of $\{v_1,..,v_n\}$, so that after we connect the edges, we obtain a connected tree. See figure below for an example. We call these connections internal connections.
    \item The positive punctures of $\{v_1,..,v_n\}$ that are not assigned internal connections are assigned free/fixed as per required by ECH generator $\alpha_1$, and likewise for negative punctures and $\alpha_n$.
    \begin{figure}[!ht]
    \centering
    \includegraphics[width=.4\linewidth]{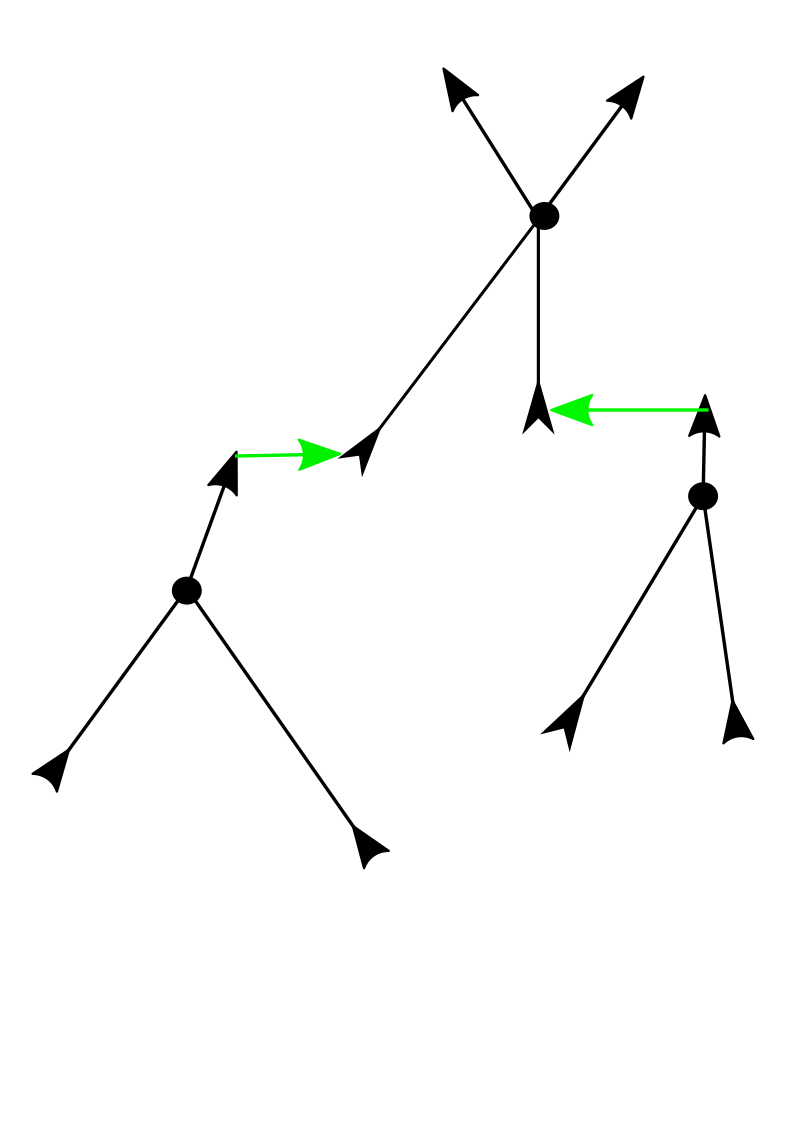}
    \caption{Cascade with tree like compactification. The green arrow denote finite gradient flow lines.}

    \end{figure}
\end{enumerate}

For genus zero $J_\dt$-holomorphic curves degenerating into a cascade with our previous compactification, we can easily pass to a tree like compactification by removing all the trivial cylinders.

Given a cascade of height one with tree like compactification, which we write as $\cas{u}=\{v_1,..,v_n\}$. We can compute its ECH index as follows: we treat all edges participating in internal connections as free, then the ECH index is simply given by
\[
I(\cas{u}) = I(v_1)+....+I(v_n)-n+1.
\]
In order to talk about Fredholm index we also need to pass to the reduced cascade $\cas{\tilde{u}}$ consisting of curves $\{\tilde{v_1},..,\tilde{v_n}\}$. If in our tree like compactification all free ends assigned by $\alpha_1$ and $\alpha_n$ as well as all internal connections avoided critical points of $f$, then the reduced cascade lives in a transversely cut out moduli space of dimension
\[
\sum_i Ind(\tilde{v_i})-1
\]
since being tree like removes the condition of needing to have the same flow time between adjacent cascade levels.

Hence to achieve the necessary transversality conditions to count ECH index one cascades, we choose a generic $J$ so that
\begin{enumerate}
    \item For any punctured sphere that is the domain of a $J$-holomorphic curve, we endow it with an assignment of incoming and outgoing punctures, and for each end we assign a free/fixed end; and if an end is assigned fixed it must land on a Reeb orbit corresponding to a critical point of $f$ under the $J$-holomorphic map; and if an end is free it must avoid critical points of $f$. Then all moduli spaces of somewhere injective $J$ holomorphic curves with the above information are transversely cut out with dimension given by the index formula.
    \item For any two curves $v_1$ and $v_2$ satisfying the above condition and both rigid, if their free ends land on the same Morse-Bott torus from opposite sides (one as a positive puncture the other as a negative puncture), then they do not land on the same Reeb orbit in the Morse-Bott family (we only care about where they land on the Morse-Bott torus and ignore information of multiplicity, i.e. even if they cover the same Reeb orbit of different multiplicity on their free ends, this is prohibited).
\end{enumerate}

The above conditions are easily achieved by choosing a generic $J$ by classical transversality methods.
We next consider cascades of height one. We observe we have the inequality (if we treat all internal connections as free for both ECH index and Fredholm index)
\[
I(\cas{u}) -n \geq \sum Ind(\tilde{v_i}) -1 \geq 0
\]
since each $\tilde{v}_i$, by virtue of it existing and transversality conditions, must have Fredholm index $\geq 0$. ECH index one implies $Ind(\tilde{v_i}) =1$, hence all these curves are rigid, and embedded. By the above genericity of $J$ all flow times are nonzero, and the cascade itself is already reduced. All free ends and ends coming from internal connections avoid critical points of $f$. Also observe that by partition conditions derived previous sections that between internal connections, the participating edges can only over Reeb orbits with multiplicity one.

Then suppose a sequence of genus zero ECH index one $J_\dt$ holomorphic curves from $\alpha_1$ to $\alpha_n$ degenerates into a cascade with tree like compactification for arbitrary height. This just means we allow internal connections adjoint to each other with semi-infinite or infinite gradient trajectories. Then for each internal connection whose flow time is infinite, we separate them into two different cascades. Then we get a collection of height one cascades each of which is tree like. We write them as $\cas{u}_1,...,\cas{u}_k$. Then we can assign generalized ECH generators to ends of $\cas{u}_i$ as before, and the ECH index one condition imposes
\[
I(\cas{u}_1) + I(\cas{u_2}) +\dots +I(\cas{u_k}) + \textup{relative difference between ECH generators} =1
\]
By relative difference between ECH generators we mean the same construction as proposition \ref{prop:height1}.
We have for all height one cascades that
\[
I(\cas{u_i})-1 \geq Ind(\cas{\tilde{u_i}}) \geq 0
\]
Hence there is either a unique cascade $\cas{u_i}$ of index zero, or the entire cascade is just one gradient flow line. By considerations of topological Fredholm index we also rule out additional branched cover of trivial cylinders at the top/bottom level of the cascade with tree- like compactifications. Hence using the above description we have the following proposition.
\begin{proposition}
In the nondegenerate case, ECH index one curves of genus zero degenerate into ECH index one tree like cascades that are reduced and transversely cut out.
\end{proposition}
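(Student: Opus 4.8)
The final proposition is essentially a summary-level statement whose proof assembles the pieces the section has already set up, so my proof proposal explains which pieces go where rather than redoing any computation.

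\textbf{Plan of proof.} The statement to establish is that a sequence of genus zero ECH index one $J_\dt$-holomorphic curves, as $\dt\to 0$, converges to a tree-like cascade of ECH index one which is reduced and transversely cut out. First I would invoke the SFT compactness theorem quoted earlier (the Bourgeois--Eliashberg--Hofer--Wysocki--Zehnder type result stated as a theorem in Section \ref{degenerations}, see also \cite{Yaocas}) to extract a subsequential limit which is a cascade of curves of some a priori unbounded height; since all curves in the sequence have genus zero and bounded energy, and only finitely many Morse--Bott tori appear below the action level $L$, every level of the limit is a genus zero (nodal) curve. Passing to the tree-like compactification by discarding all trivial cylinders, as explained just before the proposition, produces a cascade with tree-like compactifications $\cas{u} = \{v_1,\dots,v_n\}$ whose vertices are genus zero somewhere injective curves (after passing to the reduced cascade $\cas{\tilde u}$).

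\textbf{Key steps in order.} (1) Use the genericity of $J$ from the two displayed bullet conditions in this section: every moduli space of somewhere injective genus zero curves with prescribed free/fixed end data is transversely cut out of the expected dimension, and two rigid such curves whose free ends hit a common Morse--Bott torus from opposite sides do not land on the same Reeb orbit in the family. This already gives that $\cas{\tilde u}$ is transversely cut out and lives in a moduli space of dimension $\sum_i \mathrm{Ind}(\tilde v_i) - 1$, since the tree-like formulation removes the single-flow-time-per-level constraint. (2) Apply the height-one index inequality in the tree-like setting, namely $I(\cas{u}) - n \ge \sum_i \mathrm{Ind}(\tilde v_i) - 1 \ge 0$, together with $I(\cas{u}) = 1$, to conclude $\mathrm{Ind}(\tilde v_i) = 1$ for every $i$: each vertex is rigid and hence embedded, and the nonnegativity forced to be equality means $\cas{u}$ is already reduced (no multiply covered nontrivial vertices) and all flow times $T_{i,j}$ are strictly positive (by the second genericity bullet, which rules out $T=0$ and coincident free ends). (3) Rule out that the limit has height greater than one: split at internal connections with infinite flow time into height one tree-like pieces $\cas{u}_1,\dots,\cas{u}_k$, assign generalized ECH generators at the intermediate levels as in Proposition \ref{prop:height1}, and use $\sum_i I(\cas{u}_i) + (\text{relative difference between ECH generators}) = 1$ with $I(\cas{u}_i) - 1 \ge \mathrm{Ind}(\cas{\tilde u_i}) \ge 0$ to force a unique index-one piece with all others trivial, so no infinite flow times occur; this is the direct analogue of the argument in Proposition \ref{prop:height1}. (4) Finally, rule out branched covers of trivial cylinders at the top and bottom levels via the topological Fredholm index computation of $C_\dt$ (as in the procedure preceding Proposition \ref{nice cascades}): since $\mathrm{Ind}(C_\dt) = 1$ and each added branched cover of a trivial cylinder would raise $-\chi$, no such branch points can appear. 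Assembling (1)--(4) gives the proposition.

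\textbf{Main obstacle.} The routine parts are the genericity assertions and the index bookkeeping; the step that carries the real content is (2)--(3), namely ensuring that the partition conditions derived in the writhe-bound subsections actually force internal connections to be covered with multiplicity one, so that the genus zero hypothesis is genuinely used to get away with per-edge (rather than per-level) flow times. The delicate point is checking that the genus zero assumption, combined with ECH index one, propagates through all levels to exclude multiply covered edges at internal connections and to exclude branched covers of trivial cylinders appearing in intermediate (not just top/bottom) levels; this is where one must be careful that the tree-like compactification does not hide a configuration that the current-level analysis of Corollary \ref{conditions on currents} would have excluded. I expect this to require re-running the local writhe computations of the preceding subsections in the tree-like language, but since the genus zero hypothesis makes $-\chi$ additive and bounded, and the index inequality is term-wise as in Proposition \ref{prop:indexinequality}, no genuinely new estimate is needed — only a careful translation.
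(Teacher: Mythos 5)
Your proposal follows the paper's own argument essentially verbatim: the paper's proof of this proposition is exactly the preceding discussion in that section — the two genericity bullets giving transversality of the reduced tree-like cascade, the inequality $I(\cas{u}) - n \geq \sum_i Ind(\tilde{v}_i) - 1 \geq 0$ forcing rigidity, embeddedness, reducedness, nonzero flow times and multiplicity-one internal connections via the partition conditions, the splitting at infinite flow times as in Proposition \ref{prop:height1} to force height one, and the topological Fredholm index count to exclude branched covers of trivial cylinders. Your steps (1)--(4) reproduce this in the same order, so the proposal is correct and takes essentially the same route as the paper.
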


We call the type of cascades of the above proposition ``good ECH index one tree like cascades'', because we eliminated branched covers of trivial cylinders via topological Fredholm index.

As in the previous section we choose $J_\dt$ to be a generic family of almost complex structures satisfying the same conditions as Theorem \ref{generic path J}.

We then quote a gluing theorem from \cite{Yaocas}.
\begin{theorem}
Let $\cas{u}$ be a good ECH index one cascade of genus zero as per above, then for small enough $\dt>0$ there exists a unique (up to translation) $J_\dt$-holomorphic curve in an $\epsilon$ neighborhood of this cascade.
\end{theorem}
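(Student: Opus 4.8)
The statement is the specialization to the genus-zero, tree-like setting of the gluing theorem of \cite{Yaocas} (cf. Theorem \ref{gluing theorem}), so the plan is twofold: first verify that a good ECH index one tree-like cascade is ``transverse and rigid'' in the sense required by that theorem, and then sketch how the gluing construction proceeds here, where the tree-like description in fact simplifies matters because there is no single-flow-time constraint to impose and no trivial cylinders to carry along. Write $\cas{u}=\{v_1,\dots,v_n\}$, with internal connections along Morse-Bott tori $\mcal{T}$ and finite flow times $T_{ij}>0$. By the analysis preceding the theorem each $v_i$ is somewhere injective, embedded, and transversely cut out of Fredholm index one (hence rigid modulo $\bb{R}$-translation), every internal connection and every free end prescribed by $\alpha_1,\alpha_n$ avoids the critical points of $f$, and every edge participating in an internal connection covers its Reeb orbit with multiplicity one. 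The first genericity condition on $J$ gives that each linearized operator $D_{v_i}$, with the internal-connection ends treated as free, is surjective with cokernel spanned by the $\bb{R}$-translation; the second genericity condition — that two rigid curves meeting a common Morse-Bott torus from opposite free sides land on distinct Reeb orbits — ensures the relevant evaluation maps at the internal connections are mutually transverse. Together these say the fibre product defining the tree-like cascade is cut out transversely and, modulo the overall $\bb{R}$-translation, rigid, which is exactly the transverse-and-rigid hypothesis of Definition 3.4 in \cite{Yaocas}.

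For the pre-gluing, for each internal connection between a positive end of $v_i$ and a negative end of $v_j$ along $\gamma\subset\mcal{T}$, I would splice the two curves using the Morse-Bott normal form of Proposition \ref{prop_locform} for a neighbourhood of $\mcal{T}$ together with the exponential decay of the ends of $v_i,v_j$ onto the trivial cylinder over $\gamma$, inserting a neck whose length is governed by $T_{ij}$ and the $O(\delta)$ eigenvalue of the asymptotic operator, and translating along the gradient flow of $f$ by time $T_{ij}$. Since the gluing is over free, simply covered ends avoiding the critical points of $f$, no branched covers of trivial cylinders intervene — this is precisely what ``good'' provides — and the outcome is an approximate $J_\delta$-holomorphic genus-zero curve $u_\delta^{\mathrm{pre}}$. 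On the body of each $v_i$ this curve agrees with a genuine $J$-holomorphic curve, so $\| \db_{J_\delta} u_\delta^{\mathrm{pre}} \| = O(\delta)$ there because $| J - J_\delta | \le C\delta$ in $C^k$; in the neck regions the error is exponentially small in the neck length.

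One then runs the quantitative implicit function theorem on weighted Sobolev spaces. The first step supplies a right inverse for $D_{u_\delta^{\mathrm{pre}}}$ bounded uniformly in $\delta$ modulo translations, obtained by the standard linear gluing of the surjective operators $D_{v_i}$ along the transverse evaluation maps at the internal connections; embeddedness of the curves gives the quadratic estimate for the nonlinear term; and the contraction mapping principle then produces, for all sufficiently small $\delta>0$, a $J_\delta$-holomorphic curve $u_\delta$ within $\epsilon$ of $\cas{u}$, of the same Fredholm index one. For uniqueness, any $J_\delta$-holomorphic curve $u'_\delta$ lying in the $\epsilon$-neighbourhood of $\cas{u}$ for small $\delta$ is, by SFT compactness (\cite{SFT}) and the isolatedness of the cascade, $C^0$-close to $u_\delta^{\mathrm{pre}}$, hence lies in the region where the contraction mapping solution is unique, so $u'_\delta$ agrees with $u_\delta$ up to $\bb{R}$-translation.

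The main obstacle — and the reason this is done carefully in \cite{Yaocas} rather than reproduced here — is the $\delta$-dependence of the neck analysis. The elliptic orbits produced by the perturbation $\lambda\to\lambda_\delta$ have rotation angle $|\theta|<C\delta\to0$, so the smallest eigenvalues of the asymptotic operators degenerate and the necks become long and thin; one must choose the exponential weights to depend on $\delta$ so that the pre-gluing error still dominates the operator norm of the right inverse, and one must control that right inverse uniformly as $\delta\to0$. Matching the gradient flow of $f$ on the $S^1$ families of Reeb orbits with the holomorphic asymptotics inside these long necks is the heart of the estimate, and is precisely where the delicate work of \cite{Yaocas} is needed.
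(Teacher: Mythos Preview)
Your proposal is correct and takes essentially the same route as the paper: both reduce the statement to the gluing theorem of \cite{Yaocas} after checking that a good ECH index one tree-like cascade is transverse and rigid, and both observe that the genus-zero/tree-like structure simplifies the argument. The paper's own proof is a single sentence, pinpointing the simplification slightly differently: since the curve is genus zero, the pregluing no longer requires restricting the asymptotic vectors to the diagonal $\hat{\Delta}$ of Proposition 8.28 in \cite{Yaocas}; your formulation---independent flow times $T_{ij}$ along each internal connection and no trivial cylinders---is the other side of the same coin, and your extra detail on the pregluing and implicit function theorem is a reasonable expansion of what the paper leaves to the citation.
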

\begin{proof}
The main difference is that because the whole curve is genus zero, we no longer need to make sure the pregluing is well defined by restricting our choice of asymptotic vectors to $\hat{\Delta}$, as in proposition 8.28 in \cite{Yaocas}.
\end{proof}

We define a chain complex as before. We 
We write the chain complex as \[
C_*^{MB,tree}(\lambda,J) := \bigoplus_{\Theta_i} \bb{Z}_2\la \Theta_i \ra.
\]
We use the superscript ``tree'' to denote the fact we are counting tree like cascades. As before $\Theta_i = \{(\mcal{T}_j,\pm,m_j)\}$ denotes a collections of Morse-Bott ECH generators.
After we choose a generic $J$, all good tree like cascades are transversely cut out. Then we define the differential $\p_{MB}^{Tree}$ to be
\begin{equation}
\la\p^{tree}_{MB} \Theta_1, \Theta_2 \ra
:=   \left\lbrace  
  \begin{tabular}{@{}l@{}}
 $\bb{Z}_2$\, \textup{count of tree like J-holomorphic cascades}\, $\mcal{C}$\, \textup{of ECH index} \,$I=1$,\\
\textup{so that as}  $s\rightarrow +\infty, \, \mcal{C}$ \,\text{approaches}\, $\Theta_1$  \textup{and as} $s\rightarrow -\infty$, \\
 $\mcal{C}$\, \text{approaches} $\Theta_2$.
   \end{tabular}
  \right\rbrace
 \end{equation}
As before, we clarify in the cascade $\mcal{C}$ must be decomposable into $\mcal{C}_0 \sqcup \mcal{C}_1$, where $\mcal{C}_0$ is a (potentially empty) collection of fixed trivial cylinders with multiplicity, and $\mcal{C}_1$ is a good ECH index one tree like cascade.
\begin{theorem}\label{theorem:cobordism for genus 0}
Suppose $J$ is chosen to be generic so that all ECH index one good tree like cascades are transversely cut out, and we can choose a generic family of perturbations to $J$, which we write as $J_\dt$ that meets the conditions of Theorem \ref{generic path J}. We further for small enough $\dt>0$, all $J_\dt$-holomorphic curves of ECH index one are genus zero. Then the chain complex $(C_*^{MB,tree}, \p_{MB}^{Tree})$ computes $ECH(Y,\xi)$.
\end{theorem}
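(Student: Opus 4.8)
The plan is to run the same continuation/cobordism argument used in the proof of Theorem \ref{theorem:cobordism in general}, with ``cascade'' replaced by ``tree-like cascade'' throughout, exploiting the genus-zero hypothesis to guarantee that the degeneration of ECH index one $J_\dt$-curves produces exactly the good tree-like cascades counted by $\p_{MB}^{Tree}$. First I would fix the action level $L>0$ and record the analogue of the finiteness result (Theorem \ref{thm:finiteness}): for generators $\Theta_1,\Theta_2$ the moduli space of good ECH index one tree-like cascades from $\Theta_1$ to $\Theta_2$ is finite. Transversality (which we have arranged by choosing $J$ generic) makes this space zero-dimensional, and compactness follows as in Theorem \ref{thm:finiteness}, except that now the genus bound is vacuous since every curve has genus zero; alternatively finiteness is immediate once the correspondence with $J_\dt$-curves below is established, since for fixed $L$ there are only finitely many ECH index one $J_\dt$-curves of energy $<L$. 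This makes $\p_{MB}^{Tree}$ well defined on $C_*^{MB,tree}$.

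Next, using Theorem \ref{generic path J}, I would choose $\dt_0>0$ small and a generic path $J_\dt$, $\dt\in(0,\dt_0]$, with $J_{\dt_0}$ ECH adapted, $J_\dt\to J$ as $\dt\to 0$, $|J-J_\dt|\le C\dt$, and a residual subset $A\subset(0,\dt_0]$ on which $J_\dt$ is ECH adapted and (for $\dt$ small enough) the ECH chain complex computed by counting ECH index one $J_\dt$-holomorphic curves is isomorphic to $ECH(Y,\xi)$. Shrinking $\dt_0$ if necessary, the tree-like gluing theorem quoted from \cite{Yaocas} provides, for every good ECH index one tree-like cascade and every $\dt\in(0,\dt_0]$, a unique (up to $\bb{R}$-translation) ECH index one $J_\dt$-holomorphic curve in an $\ep$-neighborhood of the cascade. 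I would then consider the parametrized moduli space
\[
\mcal{M} := \{(u,J_\dt)\ |\ \dt\in(0,\dt_0],\ u\ \text{somewhere injective},\ \db_{J_\dt}u=0,\ I(u)=1\}\big/\bb{R},
\]
which by genericity of the path is a (noncompact) $1$-manifold. Its $\dt=0$ ends are controlled by the SFT-type compactness theorem (see \cite{SFT,BourPhd,Yaocas}) together with the genus-zero hypothesis: any sequence of ECH index one $J_{\dt_n}$-curves with $\dt_n\to 0$ has a subsequence converging to a cascade, and since all such curves have genus zero, the Proposition preceding this theorem identifies the limit with a good ECH index one tree-like cascade (after passing to the tree-like compactification, using the genericity conditions on $J$ to force positive flow times and transversality of the fiber product, and using the topological Fredholm index computation to rule out branched covers of trivial cylinders at the top and bottom levels). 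Conversely every such cascade arises as such a limit, by the gluing theorem.

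Finally I would argue as in the proof of Theorem \ref{theorem:cobordism in general}: the uniqueness clause of the gluing theorem shows that two ECH index one $J_\dt$-curves converging to the same tree-like cascade must coincide up to $\bb{R}$-translation once $\dt$ is small, so one can find $\dt'\in A$ arbitrarily small for which the part of $\mcal{M}$ lying over $[0,\dt']$ is a trivial cobordism; were it not, arbitrarily small $\dt$ would carry an ECH index one $J_\dt$-curve not obtained by gluing, whose limiting cascade (as $\dt\to 0$) would nonetheless be a good tree-like cascade and hence glueable, a contradiction. Therefore the count of good ECH index one tree-like cascades from $\Theta_1$ to $\Theta_2$ equals the count of ECH index one $J_{\dt'}$-holomorphic curves from the corresponding nondegenerate generators, yielding a chain isomorphism $C_*^{MB,tree}(\lambda,J)\cong C_*(\lambda_{\dt'})$ intertwining $\p_{MB}^{Tree}$ with the $J_{\dt'}$-differential; in particular $(\p_{MB}^{Tree})^2=0$ and $H_*(C_*^{MB,tree},\p_{MB}^{Tree})\cong ECH(Y,\lambda_{\dt'})\cong ECH(Y,\xi)$.

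I expect the main obstacle to be the compactness/degeneration step of the second paragraph: verifying that in the genus-zero setting \emph{every} ECH index one $J_\dt$-curve degenerates, uniformly as $\dt\to 0$, into a \emph{good} tree-like cascade (no spurious branched covers of trivial cylinders, all internal flow times strictly positive, the relevant evaluation maps transverse) and that no energy escapes, so that $\mcal{M}$ really does have the claimed structure as a $1$-manifold whose $\dt=0$ ends are precisely the objects counted by $\p_{MB}^{Tree}$. This is exactly where the genus-zero hypothesis — via the Proposition preceding the theorem — and the genericity of both $J$ and the path $J_\dt$ are doing the essential work, replacing the more delicate ``good $J$'' transversality assumption needed in the general case of Theorem \ref{theorem:cobordism in general}.
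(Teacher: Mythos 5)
Your proposal is correct and follows essentially the same route as the paper, which simply invokes the proof of Theorem \ref{theorem:cobordism in general} verbatim (parametrized moduli space over $\dt\in(0,\dt_0]$, SFT degeneration to good tree-like cascades via the genus-zero hypothesis, uniqueness of gluing, and the trivial-cobordism argument over a small $\dt'$ in the residual set). You have merely spelled out the details that the paper leaves implicit, including the finiteness of the tree-like cascade counts.
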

\begin{proof}[Proof of Theorem \ref{theorem:cobordism for genus 0}] The same proof as in Theorem \ref{theorem:cobordism in general} works.
\end{proof}

\section{Applications to concave toric domains} \label{section concave}

As an application of our methods we show that for concave toric domains, ECH can be computed via enumeration of ECH index one cascades. By what we proved above, it suffices to show all ECH index one holomorphic curves after the Morse-Bott perturbation have genus zero.

We recall the definition of a concave toric domain. Consider $\mathbb{C}^2$ equipped with the standard symplectic product symplectic form. Consider the diagonal $S^1$ action on $\bb{C}^2$, and the associated moment map $\mu: \bb{C}^2 \rightarrow \bb{R}^2$ given by
\[
\mu(z_1,z_2) =(\pi|z_1|^2, \pi |z_2|^2).
\]

Let $\Omega \subset \mathbb{R}^2$ be a domain in the first quadrant of $\bb{R}^2$, we define the domain $X_\Omega$ to be
\[
X_\Omega: = \{(z_1,z_2) | \mu (z_1,z_2) \in \Omega\}.
\]

Suppose $\Omega$ is a domain bounded by the horizontal segment from $(0,0)$ to $(a,0)$, the vertical segment from $(0,0)$ to $(0,b)$ and the graph of a convex function $f:[0,a] \rightarrow [0,b]$ so that $f(0)=b$ and $f(a)=0$. We further assume $f$ is smooth, $f'(0)$ and $f'(a)$ are irrational, $f'(x)$ is constant near $0$ and $a$, and $f''(x)>0$ whenever $f'(x)$ is rational, then we say $X_\Omega$ is a \textbf{concave toric domain}. Note our definition is slightly more restrictive than that of \cite{intoconcave}, because we are not interested in capacities; we need the boundary of $X_\Omega$ to be well behaved enough to define ECH.

For a concave toric domain $X_\Omega$, its boundary $\partial X_\Omega$ is a contact 3-manifold diffeomorphic to $S^3$.
We now describe the Reeb orbits that appear in $\partial X_\Omega$. We also note their Conley Zehnder indices, having chosen the same trivializations as in \cite{intoconcave}.
\begin{enumerate}
    \item $\gamma_1 = \{ (z_1,0) \in \partial X_\Omega \}$. The orbit $\gamma_1$ is elliptic with rotation angle $-1/f'(a)$, hence $CZ(\gamma_1^k) = 2\floor{-k/f'(a)}+1$
    \item $\gamma_2 = \{ (0,z_2) \in \partial X_\Omega \}$. The orbit $\gamma_2$ has rotation angle $-f'(0)$, hence $CZ(\gamma_2^k)=2\floor{-kf'(0)}+1$.
    \item Let $x\in (0,a)$ be such that $f'(x)$ is rational. Then the torus described by $\{(z_1,z_2) | \mu(z_1,z_2) = (x,f(x))\}$ is a (negative) Morse-Bott torus. Each Reeb orbit has Robbin-Salamon index $-1/2$.
\end{enumerate}

We say a bit more about the Reeb dynamics for the third case. Consider the point $(x,f(x))$ so that $f'(x)$ is rational. We set $f'(x) = tan(\phi), \phi \in (-\pi/2,0)$. Then the Reeb vector field is given by (see \cite{mihai})
\[
R = \frac{2\pi}{-x \sin(\phi) + f(x) \cos(\phi)}(-\sin \phi \partial_{\theta_1} + \cos(\phi) \partial_{\theta_2}).
\]

For large action $L>0$, we perturb each Morse-Bott torus to a pair of orbits, one elliptic, the other hyperbolic. Then an ECH generator $\alpha = \{\alpha_i,m_i\}$ is a collection of nondegenerate Reeb orbits with multiplicities. We associate to each ECH generator a \textbf{combinatorial generator}.

\begin{definition}(see \cite{intoconcave})
A combinatorial generator is a quadruple $\tilde{\Lambda} = (\Lambda,\rho,m,n)$ where
\begin{enumerate}
    \item $\Lambda$ is a concave integral path from $(0,B)$ to $(A,0)$ such that the slope of each edge is in the interval $[f'(0),f'(a)]$.
    \item $\rho$ is a labeling of each edge of $\Lambda$ by $e$ or $h$.
    \item $m$ and $n$ are nonnegative integers.
\end{enumerate}
\end{definition}

Let $\Lambda_{m,n}$ denote the concatenation of the following sequence of paths:
\begin{enumerate}
    \item The highest polygonal path with vertices at lattice points from $(0,B+n+\floor{-mf'(0)})$ to $(m,B+n)$ which is below the line through $(m,B+n)$ with slope $f'(0)$.
    \item The image of $\Lambda$ under the translation $(x,y)\mapsto (x+m,y+n)$.
    \item The highest polygonal path with vertices at lattice points from $(A+m,n)$ to $(A+m+\floor{-n/f'(a)},0)$ which is below the line through $(A+m,n)$ with slope $f'(a)$.
\end{enumerate}
Let $\mcal{L}(\Lambda_{m,n})$ denote the number of lattice points bounded by the axes and $\Lambda_{m,n}$, not including the lattice points on the image of $\Lambda$ under the translation $(x,y) \mapsto (x+m,y+n)$. We then define 
\[
I^{comb}(\Lambda_{m,n}) =2 \mcal{L}(\Lambda_{m,n}) + h(\Lambda)
\]
where $h(\Lambda)$ is the number of edges in $\Lambda$ labelled by $h$.
To each ECH generator $\alpha = \{(\alpha_i,m_i)\}$ we associate a combinatorial ECH generator $(\Lambda,m,n)$ as follows. The number $m$ is the multiplicity of $\gamma_2$ as it appears in $\alpha$, and the integer $n$ is the multiplicity of $\gamma_1$ as it appears in $\alpha$. For other (nondegenerate) Reeb orbits of $\alpha$, they all come from small perturbations of Morse-Bott tori. If $\gamma \in \alpha$ is a Reeb orbit that comes from breaking the degeneracy of a Morse-Bott torus at $(x,f(x))$, then let $v_1$ be the smallest positive integer so that $v_2=f'(x)v_1\in \bb{Z}$. Let $v$ denote the vector $v=(v_1,v_2)$. The path is obtained by taking each Reeb orbit $\gamma$ in $\alpha$ that come from Morse-Bott tori, associating to it the vector that is $v$ multiplied by the multiplicity of $\gamma$ as it appears in $\alpha$, and concatenating these vectors in order of increasing slope. The labelling $\rho$ is obtained by labelling the vector associated to $\gamma$ the letter $h$ if $\gamma$ is hyperbolic, and $e$ if $\gamma$ is elliptic.

\begin{proposition}(\cite{intoconcave})
If $C$ is a current from $\alpha$ to $\beta$, its ECH index is given by $I^{comb}(\alpha) - I^{comb}(\beta)$.
\end{proposition}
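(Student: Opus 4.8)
This proposition is quoted from \cite{intoconcave}, and the plan is to reconstruct its proof. \emph{Step 1 (reduction to a single orbit set).} Since $\partial X_\Omega \cong S^3$ and $H_2(S^3)=0$, the relative homology group $H_2(\alpha,\beta,\partial X_\Omega)$ is a single point, so the ECH index of any current $C$ from $\alpha$ to $\beta$ depends only on $\alpha$ and $\beta$; write it $I(\alpha,\beta)$. The ECH index is additive under concatenation of relative homology classes: $c_\tau$ and $CZ^I$ are visibly additive, and the cross term $Q_\tau(Z,W)$ vanishes for $Z\in H_2(\alpha,\beta)$, $W\in H_2(\beta,\gamma)$ since these admit representatives supported in disjoint $s$-intervals (see \cite{Hutchings2002}). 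Taking $\gamma=\emptyset$ gives $I(\alpha,\beta)=I(\alpha,\emptyset)-I(\beta,\emptyset)$, and the combinatorial generator attached to $\emptyset$ is the trivial path with $I^{comb}=0$. Hence it suffices to prove $I(\alpha,\emptyset)=I^{comb}(\tilde\Lambda)$, where $\tilde\Lambda=(\Lambda,\rho,m,n)$ is the combinatorial generator associated to $\alpha$.

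\emph{Step 2 (the Conley--Zehnder term).} Fix the trivialization $\tau$ of $\xi$ over $\gamma_1,\gamma_2$ and over the perturbed Morse--Bott orbits exactly as in \cite{intoconcave}, for which the rotation numbers are $-1/f'(a)$, $-f'(0)$, and the rational slopes along the Morse--Bott tori; an elliptic orbit of rotation number $\theta$ then has $CZ(\gamma^k)=2\lfloor k\theta\rfloor+1$. A direct computation with these floor functions shows that the $CZ^I$-contribution of $\gamma_2^m$ (respectively $\gamma_1^n$), together with the $e$/$h$ labels on the edges of $\Lambda$, reproduces exactly twice the number of lattice points under the upper (respectively lower) staircase pieces (1) and (3) used to build $\Lambda_{m,n}$, plus $h(\Lambda)$.

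\emph{Step 3 ($c_\tau+Q_\tau$ via Pick's theorem).} Represent the relative class by $\tau$-admissible surfaces built from the invariant toric pieces --- chunks of $\{z_1=0\}$, $\{z_2=0\}$, and of the Morse--Bott tori --- concatenated according to $\Lambda_{m,n}$; with respect to $\tau$ these are embedded near the orbits and do not rotate, so they are admissible representatives. Computing $c_\tau$ with a section in a toric direction, and $Q_\tau$ by intersecting two parallel such surfaces while tracking the linking contributions of the several edges meeting at each lattice vertex of $\Lambda$, one finds that $c_\tau(\alpha)+Q_\tau(\alpha)$ equals twice the signed lattice area bounded by $\Lambda_{m,n}$ minus the staircase contributions already accounted for in Step 2; Pick's theorem converts this area into the count $\mcal{L}(\Lambda_{m,n})$. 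Summing Steps 2 and 3 gives $I(\alpha,\emptyset)=c_\tau+Q_\tau+CZ^I=2\mcal{L}(\Lambda_{m,n})+h(\Lambda)=I^{comb}(\tilde\Lambda)$.

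The main obstacle is Step 3: organizing the relative self-intersection $Q_\tau$ as a lattice-point count requires a careful choice of toric representatives near each lattice vertex of $\Lambda$ (where edges of different slopes meet), bookkeeping of all linking numbers and of the ``rounding'' that relates the polygonal path $\Lambda_{m,n}$ to a genuine embedded surface, and then an application of Pick's theorem with the correct boundary count. This is precisely the content of the relevant lemmas in \cite{intoconcave}, which the argument would follow; by contrast Step 1 is formal and Step 2 is routine.
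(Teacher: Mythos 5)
The paper itself does not prove this proposition: it is quoted verbatim from \cite{intoconcave}, and the closest the paper comes to an argument is the convex analogue in Section \ref{section convex}, whose proof is only a sketch reducing to $I(\alpha,\emptyset,Z)$ for the unique relative class and deferring the computation to \cite{beyondech,intoconcave}, with the appended staircase paths accounting for the Conley--Zehnder terms of $\gamma_1,\gamma_2$. Your outline follows exactly that standard route --- triviality of $H_2(S^3)$ plus additivity of $I$ to reduce to $I(\alpha,\emptyset)$, then floor-sum identities for the $CZ$ contributions of $\gamma_1^n,\gamma_2^m$ and a toric computation of $c_\tau+Q_\tau$ organized by Pick's theorem --- so it is faithful to how the cited lemmas actually proceed, and in that sense it supplies more than the paper does. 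Two caveats. First, in Step 1 the additivity of $Q_\tau$ under concatenation needs slightly more than ``representatives supported in disjoint $s$-intervals'': the potential cross term sits at the common orbit set $\beta$, and it vanishes because $\tau$-representatives are required not to rotate with respect to $\tau$ and to approach $\beta$ along distinct rays (this is the content of the additivity lemma in \cite{Hutchings2002}, which you should cite rather than re-derive in one clause). Second, Steps 2--3 are bookkeeping placeholders rather than proofs --- the precise distribution of lattice-point counts among $c_\tau$, $Q_\tau$ and the $CZ$ sums, and the exclusion convention in $\mcal{L}(\Lambda_{m,n})$ for points on the translated image of $\Lambda$, are exactly where the work lies --- but since you explicitly defer that bookkeeping to the lemmas of \cite{intoconcave}, this is consistent with the paper's own treatment, which likewise offers no independent argument.
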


For future usage, we also record how the Chern class is computed (see \cite{intoconcave}).
Let $\alpha$ denote a ECH generator, we associate to it the combinatorial generator $(\Lambda,\rho,m,n)$, then we take 
\[
c_\tau(\alpha) = A+B+m+n. 
\]
Then if we have a $J$-holomorphic curves from ECH generator $\alpha$ to $\beta$, then its relative first Chern class is calculated by $c_\tau(\alpha)-c_\tau(\beta)$.

We need a version of the local energy inequality, which we take up presently. Versions of this inequality have appeared in \cite{pfhdehn,ziwenyao,simplicityconjecture,choi2016combinatorial}. Consider the boundary of $\Omega$ with its intersections with the two coordinate axes removed, then 
its preimage under the moment map is an interval times a two torus. We write the two torus as $(x_1,x_2) \in S^1_1 \times S^1_2$, where the first $S^1_1$ is the $S^1$ coming from rotation in the first complex plane $\bb{C}$, and the second $S^1$ comes from the second copy of $\bb{C}$. We use $\bb{Z}\oplus \bb{Z}$ to denote the lattice of first homology with $\bb{Z}$ coefficients. Consider a Morse-Bott torus at $(x,f(x))$ with $f'(x) = v_2/v_1$ as before, then the homology class of the Reeb orbit is given by the pair $(-v_2,v_1)\in \bb{Z}^2$ (this is true before or after the Morse-Bott perturbation).

Consider $F_{[x_0,x_1]}$, by which we denote the preimage of the graph $\{(x,f(x)) | x\in [x_0,x_1]\}$ under the moment map. We similarly consider $F_x$, which is the preimage of $(x,f(x))$ under the moment map. Let $C$ be a somewhere injective $J$ holomorphic curve, we consider $C\cap F_{x_0}$ (we choose $x_0$ generically so this intersection is transverse). We orient this intersection using the boundary orientation of $C\cap F_{[x_0-\epsilon,x_0]}$. Its homology class in $\bb{Z}^2$ we write as $[F_x]$.

\begin{proposition}
Let $(p,q)\in \bb{Z}^2$ denote the homology of $C\cap F_{x_0}$, then we have the inequality
\[
p +f'(x) q \geq 0.
\]
We further observe equality holds only if $C$ is a trivial cylinder.
\end{proposition}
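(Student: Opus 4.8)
The plan is to compute the symplectic area of the piece of $C$ lying between two adjacent toric fibers in a way that makes the sign of $p + f'(x)\,q$ visible, using positivity of $d\lambda$ on a $J$-holomorphic curve. First I would set up coordinates: near the boundary of $\Omega$ (away from the axes) the contact manifold looks like $F_{[x_0,x_1]} \cong [x_0,x_1]\times S^1_1\times S^1_2$, with $\lambda$ a contact form whose Reeb vector field at the fiber $F_x$ points in the direction $(-\sin\phi)\partial_{\theta_1}+(\cos\phi)\partial_{\theta_2}$ with $f'(x)=\tan\phi$, as recorded just above. In these coordinates $d\lambda$ restricted to each fiber torus vanishes (the torus is foliated by Reeb orbits), so $d\lambda$ pulled back to $F_{[x_0,x_1]}$ is (up to a positive function) $dx\wedge(\text{the 1-form dual to the Reeb direction})$; concretely $d\lambda|_{F_{[x_0,x_1]}}$ integrated over a cylinder $[x_0,x_1]\times \ell$, where $\ell$ is a loop of homology class $(p,q)$, is proportional to $-( -q\sin\phi \cdot(\text{something}) + \dots)$. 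The right normalization, which I would extract from the explicit $\lambda = h\lambda_0$ / the Reeb formula, shows that the $d\lambda$-area of $[x_0 - \epsilon, x_0]\times \ell$ with $\ell$ representing $(p,q)$ is a positive multiple of $p + f'(x_0)\,q$ (this is exactly the statement that the Reeb direction and the radial $\partial_x$ direction are positively oriented, and $(p,q)$ is measured against the dual coframe).

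The second step is to feed in $J$-holomorphicity. Let $C' = C \cap ([x_0-\epsilon, x_0]\times S^1_1\times S^1_2)$, oriented as a domain with boundary. Since $C$ is $J$-holomorphic and $J$ is $\lambda$-compatible, the restriction of $d\lambda$ to $C$ is pointwise nonnegative (this is condition (3) in Definition \ref{compatibleJ}, together with $d\lambda(\partial_s,\cdot)=0$), hence $\int_{C'} d\lambda \ge 0$. By Stokes, $\int_{C'} d\lambda = \int_{\partial C'}\lambda$, and the relevant boundary component is $C \cap F_{x_0}$, whose homology class is $(p,q)$ by definition; the other boundary component sits at $x_0 - \epsilon$ and, after taking $\epsilon \to 0$ along a continuous family of fibers (the homology class $[C\cap F_x]$ is locally constant in $x$ away from the finitely many non-generic fibers), contributes the value of $\lambda$ on a loop of the same class $(p,q)$ at a nearby fiber. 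Dividing out and using the area computation of the first step, $\int_{\partial C'}\lambda$ equals a positive multiple of $p + f'(x_0)\,q$, giving the inequality $p + f'(x_0)\,q \ge 0$.

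Finally, for the equality case: if $p + f'(x_0)\,q = 0$ then $\int_{C'} d\lambda = 0$, so $d\lambda$ vanishes identically on the intersection of $C$ with the whole slab, which forces $C$ there to be tangent to $\ker d\lambda = \operatorname{span}(\partial_s, R)$ at every point — i.e. $C$ is locally an $\mathbb{R}$-invariant cylinder over a Reeb orbit. Since $C$ is somewhere injective and connected, unique continuation upgrades this to: $C$ is globally the trivial cylinder over the Reeb orbit at $(x_0,f(x_0))$.

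The main obstacle I expect is the bookkeeping in the first step: getting the normalization of the area form right so that the homology pairing comes out as precisely $p + f'(x)q$ (with the correct sign and no stray positive factor that could be mishandled at $\phi$ near $-\pi/2$), and making sure the orientation convention on $C \cap F_{x_0}$ (the boundary orientation of $C \cap F_{[x_0-\epsilon,x_0]}$, as stated) matches the sign in the pairing. The $J$-holomorphic positivity and the Stokes argument are routine; the equality/unique-continuation step is standard once $d\lambda \equiv 0$ on the relevant piece is established.
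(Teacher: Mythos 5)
Your main inequality argument is essentially identical to the paper's proof: restrict $C$ to the slab $F_{[x_0-\epsilon,x_0]}$, use that $d\lambda$ is pointwise nonnegative on a $J$-holomorphic curve, apply Stokes so the slab integral becomes the difference of $\int\lambda$ over the two slice circles, evaluate those integrals on the class $(p,q)$ via $\lambda = r_1\,d\theta_1 + r_2\,d\theta_2$ (giving $x\,p + f(x)\,q$ at the fiber over $x$), and then divide by $\epsilon$ and pass to the limit. That is exactly the computation in the paper, and it is correct, orientation bookkeeping included.

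The one place where you assert more than the computation gives is the equality case: from $p+f'(x_0)\,q=0$ you conclude $\int_{C'}d\lambda=0$, but for finite $\epsilon$ the slab integral equals $\epsilon p + \bigl(f(x_0)-f(x_0-\epsilon)\bigr)q = q\,\bigl(f(x_0)-f(x_0-\epsilon)-\epsilon f'(x_0)\bigr)$, which need not vanish just because the limit $p+f'(x_0)q$ does. To close this, observe that if $q\neq 0$ then $p+f'(x_0)q=0$ forces $f'(x_0)=-p/q$ rational, hence $f''(x_0)>0$ by the definition of a concave toric domain; applying the slab inequality on both sides of $x_0$ (the slice class is unchanged for small $\epsilon$ since $C$ has finitely many ends and $x_0$ is generic) gives $q\geq 0$ from the right slab and $q\leq 0$ from the left slab, so $p=q=0$. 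Only then do the slab integrals vanish, and your tangency-to-$\operatorname{span}(\partial_s,R)$ plus unique continuation argument finishes the equality case. This is a genuine (if small) jump in your write-up, though to be fair the paper itself states the equality claim as an observation without proof, so your proposal is a refinement of, not a departure from, its argument.
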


\begin{proof}
We consider $C\cap F_{[x_1,x_2]}$, and observe with our conventions $\partial (C\cap F_{[x_0,x_1]}) = C\cap F_{x_1} - C\cap F_{x_0}$.
We next consider 
\begin{align*}
    \int_{C\cap F_{[x_1,x_2]}} d\lambda &= \int_{C\cap F_{x_1}} \lambda - \int_{C\cap F_{x_0}} \lambda\\
    &= \int_{C\cap F_{x_1}} r_1d\theta_1 + r_2 d\theta_2 - \int_{C\cap F_{x_0}} r_1 d\theta_1 +r_2 d\theta_2\\
    &= (x_1-x_0) p +(f(x_1)-f(x_0))q \geq 0.
\end{align*}
By taking the limit $x_0\rightarrow x_1$, we conclude the proof.
\end{proof}

Suppose the $J$-holomorphic $C$ current connects from $\alpha_+$ to $\alpha_-$ and has ECH index one. Suppose $C$ does not contain trivial cylinder components, hence it is embedded. Let $\alpha_+$ contain $\gamma_1$ with multiplicity $n_+$, the orbit $\gamma_2$ with multiplicity $m_+$, and contains $e_+$ distinct elliptic orbits and $h_+$ hyperbolic orbits. Suppose further $C$ has $k_m^+$ ends at $\gamma_2$, with multiplicities $m_+^i$, and $C$ has $k_n^+$ ends at $\gamma_1$ with multiplicities $n_+^i$ Likewise we use $m_-,n_-,e_-,h_-$ and $k_m^-,m_-^i,k_n^-, n_-^i$ to denote the respective quantities in $\alpha_-$, except here $e_-$ denotes the number of elliptic Reeb orbits counted with multiplicity. Then the key is the following proposition (similar proofs have appeared in \cite{simplicityconjecture,pfhdehn,choi2016combinatorial})

\begin{proposition}\label{thm:concave genus zero}
For the case of concave toric domains, after a small perturbation away from the Morse-Bott degeneracies, all ECH index one curves have genus zero.
\end{proposition}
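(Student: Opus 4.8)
The plan is to use the local energy inequality and the adjunction formula to force the genus to vanish, combining the Fredholm index constraint ($\mathrm{Ind} = I = 1$) with the combinatorial formula for the ECH index. First I would set up the bookkeeping: for the embedded ECH index one curve $C$ from $\alpha_+$ to $\alpha_-$, write out $-\chi(C) = 2g(C) - 2 + (\text{number of ends})$, and recall $\mathrm{Ind}(C) = -\chi(C) + 2c_\tau(C) + CZ^{Ind}(C) = 1$. Using $c_\tau(\alpha) = A + B + m + n$ and the combinatorial formula $I^{comb}(\Lambda_{m,n}) = 2\mathcal{L}(\Lambda_{m,n}) + h(\Lambda)$, I would express both $c_\tau(C)$ and $I(C)$ in terms of the polygonal paths attached to $\alpha_\pm$, so that the relation $\mathrm{Ind}(C) = I(C) - 2\delta(C)$ (from the ECH index inequality, with equality forced since $C$ is counted) becomes a statement purely about lattice-point counts, Euler characteristic, and the writhe.

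Next I would bring in the local energy inequality: for any generic horizontal slice $F_{x_0}$, the homology class $(p,q) = [C \cap F_{x_0}]$ satisfies $p + f'(x_0)\, q \geq 0$, with equality only on trivial cylinders. Since $C$ is not a trivial cylinder, this is a strict inequality on an open dense set of slices. The key point is that this inequality controls how the ends of $C$ can be distributed among the Reeb orbits — in particular it constrains the multiplicities $m_\pm^i$, $n_\pm^i$ and the slopes of the Morse-Bott tori that $C$ can touch, because the total homology class of $C$ must interpolate monotonically (with respect to the relevant slopes) between $\alpha_+$ and $\alpha_-$. Feeding these multiplicity constraints back into the writhe bounds (Theorems \ref{winding neg}, \ref{winding pos} and the associated partition-condition propositions for negative Morse-Bott tori, which is the relevant case here), I would bound $-w_\tau(C)$ from below by an expression matching the $J_0$-type contributions. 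The upshot should be that $-\chi(C) = J_0(C, \alpha_+, \alpha_-) - (\text{nonnegative writhe excess}) - (\text{nonnegative partition excess}) \leq 0$ once the $I = 1$ condition pins everything down, whence $g(C) = 0$.

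The main obstacle I expect is matching the combinatorial ECH index formula of \cite{intoconcave} to the Fredholm-index / adjunction-formula side precisely enough to extract a genus bound, rather than just an inequality that is consistent with positive genus. The delicate part is handling the ``extra'' ends at $\gamma_1$ and $\gamma_2$ (the $k_m^\pm, k_n^\pm$ with their multiplicities $m_\pm^i, n_\pm^i$) and at the perturbed Morse-Bott orbits: each such splitting of an end changes $\chi(C)$, the writhe, and the Conley–Zehnder sum simultaneously, and one must check the net effect is of the correct sign. The local energy inequality is what makes this work — it is precisely what rules out the ``unbalanced'' configurations that would otherwise permit $g(C) > 0$ — so the heart of the argument is a careful slice-by-slice accounting showing that every deviation from the trivial-cylinder case that would raise the genus also raises the writhe (or forces a violation of a partition condition) by at least as much, contradicting $I(C) = \mathrm{Ind}(C) + 2\delta(C) = 1$ when $g(C) \geq 1$.
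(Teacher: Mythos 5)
There is a genuine gap: the inequality you aim for, $-\chi(C)\le 0$, is not attainable and is in fact false for typical ECH index one curves in this setting. Since $-\chi(C)=2g-2+\#\{\text{ends}\}$, an index one curve with three or more ends (which occur in abundance for boundaries of toric domains, e.g. pairs of pants between orbit sets) has $-\chi(C)>0$ even though $g=0$. The correct $J_0$-type statement is $-\chi(C)+\sum_i(n_i^+-1)+\sum_j(n_j^--1)\le J_0(C,\alpha_+,\alpha_-)$, and $J_0$ is generally strictly positive for the curves counted here, so no amount of writhe or partition ``excess'' accounting can push the bound down to $-\chi\le 0$. Moreover, the relation $\mathrm{Ind}(C)=I(C)-2\delta(C)$ with both sides equal to $1$ only reproduces the writhe equalities and partition conditions that are already assumed in the bookkeeping; by itself it carries no genus information, so the proposed slice-by-slice accounting has no mechanism to produce a contradiction with $g\ge 1$. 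You have the right ingredients in the air (index formula, partition identities $\sum_i\floor{m_i\theta}=\floor{m\theta}$, local energy inequality), but the way you propose to combine them cannot close.

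The paper's argument is structured differently and each piece does a specific job you are missing. First, ends on the two distinguished orbits $\gamma_1,\gamma_2$ are handled not by the local energy inequality but by positivity of intersections of $C$ with the cylinders $\bb{R}\times\gamma_1$ and $\bb{R}\times\gamma_2$: the count of such intersections is a difference of linking numbers, and together with the floor/ceiling partition identities this yields inequalities such as \eqref{Eq:yaxiscomparison}; plugging these into the explicit Fredholm index formula shows that any index one curve with an end on $\gamma_1$ or $\gamma_2$ already has $g=0$. Second, for curves with no ends on $\gamma_1,\gamma_2$, the local energy inequality is used in a global, combinatorial way rather than as a writhe estimate: applying it to the slice class $[F_{x_0\mp\ep}]$ at a slope $f'(x_0)$ realized by a chord between two putative crossing points proves a no-crossing lemma, namely that the lattice path $\Lambda_+$ of $\alpha_+$ stays (weakly) outside $\Lambda_-$. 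Third, with no-crossing in hand, the genus one case collapses to the index identity $1=h_++h_-+2e_-$, and a two-case check ($h_+=1,h_-=0$ forces $\alpha_-=\emptyset$; $h_+=h_-=1$ forces $\Lambda_+=\Lambda_-$ a single edge) shows neither is compatible with $I=1$. Your proposal contains neither the linking-number step nor the no-crossing step, and these, not a refined writhe bound, are what actually exclude positive genus.
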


\begin{proof}
\textbf{Step 1}
We know that the integers $m_\pm^i$ and $n_\pm^i$ satisfy partition conditions because $C$ has ECH index one. Recall that for an elliptic Reeb orbit of rotational angle $\theta$, suppose $C$ is asymptotic to this Reeb orbit at its positive ends with multiplicity $m$. Consider the line $y=\theta x$ on the $x-y$ plane, then draw the maximal concave polygonal path connecting lattice points beneath $y=\theta x$. This polygonal path $\mcal{P}$ starts at the origin and connects to $(m,\floor{m\theta})$. The horizontal displacements of the edges in this path we will write as $(m_i)$ and take the convention that if $i<j$, then $m_i$ is the segment before $m_j$ if we count starting from the origin. This gives an integer partition of $m$, which is the partition conditions for positive ends of $C$ that are asymptotic to this Reeb orbit. 

We observe that $\sum_i \floor{m_i \theta} = \floor{\theta m}$. To see this, first it follows from the properties of the floor function that 
\[
\sum_i \floor{m_i\theta} \leq \floor{m\theta}.
\]
For the converse inequality, consider the polygonal path $\mcal{P}$ with vertices at $(\sum_{i}^km_i,\floor{\sum_{i}^km_i\theta })$. It suffices to show 
\[
\floor{m_k\theta} \geq \floor{\sum_i^k m_i \theta} - \floor{\sum_i^{k-1} m_i \theta}.
\]
This follows from the fact that 
\[
\theta \geq \frac{\floor{\sum_i^k m_i \theta} - \floor{\sum_i^{k-1} m_i \theta}}{m_k}
\]
which is a consequence of the fact that $\mcal{P}$ is maximally concave.

We next recall the partition conditions for negative ends of $C$ asymptotic to the Reeb orbit with rotation angle $\theta$. Consider the line $y=\theta x$, and the minimal convex path above $y=\theta x$ that connects between $(0,0)$ and $(m,\ceil{m\theta}$ through lattice points. The horizontal displacements of the edges of of this path are labelled (in order) $m_i$, and form the partition conditions for ends of $C$. Using a very similar proof as before, we can show
\[
\sum \ceil{m_i \theta} = \ceil {m\theta}.
\]
 
Then we can compute the Fredholm index of $C$ as 
\begin{align*}
    Ind(C) =& 2g-2 +(e_+ + h_+ + k_m^++k_n^+)+(e_- + h_- + k_m^-+k_n^-)\\
    &+2(A_++B_+ +m_+ +n_+ -A_--B_--m_--n_-)\\
    & -e_+  +e_- \\
    & + (k_n^+ + k_m^+ +k_m^- + k_n^-) \\
    & + \sum_{i=1}^{k_n^+} 2\floor{- n_+^i /f'(a)} + \sum_{i=1}^{k_m^+} 2\floor{-m_+^i f'(0)} - \sum_{i=1}^{k_n^-} 2\ceil{- n_-^i /f'(a)} - \sum_{i=1}^{k_m^-} 2\ceil{-m_-^i f'(0)}.
\end{align*}

\textbf{Step 2}
To analyze the above equation further, we first note that 
\begin{equation} \label{Eq:yaxiscomparison}
A_+ + n_+ +\sum_{i=1}^{k_m^+}\floor{-m_+^i f'(0)} -A_--n_- -  \sum_{i=1}^{k_m^-}\ceil{-m_-^i f'(0)} \geq 0
\end{equation}

This is accomplished by considering the interior intersections of $C$ with $\gamma_2 \times \bb{R}$. All such intersection points are positive, by positivity of intersections. The count of interior intersections is given by (see \cite{mean_action_calabi})
\[
l_+(C,\gamma_2) - l_-(C,\gamma_2) 
\]
where $l_+$ denotes the linking number of positive ends of $C$ with $\gamma_2$, and $l_-$ is the linking of negative ends of $C$ with $\gamma_2$. We note the linking numbers in a concave toric domain are calculated as follows (\cite{intoconcave}):
\[
lk(\gamma_1,\gamma_2) =1,\quad lk (\gamma_1, o_v) = -v_2, \quad lk (\gamma_2,o_v) =v_1, \quad lk(o_v,o_w) = \min \{-v_1w_2,-v_2w_1\}.
\]

Here we use $o_v$ to denote nondegenerate orbits that come from perturbing a Morse-Bott torus at $(x,f(x))$, with $f'(x) =v_2/v_1$.

From this we see that $lk_+ =A_+ + n_+ +\sum_{i=1}^{k_m^+}\floor{-m_+^i f'(0)} $, and $lk_-= A_-+n_- + \sum_{i=1}^{k_m^-}\ceil{-m_-^i f'(0)}$. The $A_\pm$ terms come from ends of $C$ asymptotic to $o_v$, the $n_\pm$ term comes from ends of $C$ asymptotic to $\gamma_1$, and the floor and ceiling terms come from ends of $C$ asymptotic to $\gamma_2$ and the fact that $C$ has ECH index one.
From the partition conditions we see that $\sum_{i=1}^{k_m^+}\floor{-m_+^i f'(0)} = \floor{-m_+ f'(0)}$. 
Likewise we can show 
\[
B_+ +m_+ +\sum_{i=1}^{k_n^+} 2\floor{- n_+^i /f'(a)} - B_--m_- -\sum_{i=1}^{k_n^-} 2\floor{- n_-^i /f'(a)} \geq 0
\]
Hence we conclude from the Fredholm index formula that if $C$ has ends at $\gamma_+$ or $\gamma_-$, then it must have genus 0.

\textbf{Step 3}
Next we consider the case where $C$ has no ends at $\gamma_+$ or $\gamma_-$. We assume $C$ has genus one. Then $A_+=A_-, B_+=B_-$ from Fredholm index considerations.  Let $\Lambda_\pm$ denote the polygonal paths associated to generators $\alpha_\pm$.
We first show $\Lambda_+$ lies outside $\Lambda_-$. By the above we already know they agree at end points. 

As a preamble, we consider the homology classes $F_x\cap C$. First for $x$ very close to zero, say equal to $\ep>0$, let $[F_\ep] = (p,q)$. Then we have $p+ f'(0)q\geq 0$. Similarly consider $[F_{1-\ep}] = (-p,-q)$. We have $-p -f'(a)q\geq 0$. Adding these inequalities to get $(f'(0)-f'(a))q\geq 0 $ from which we deduce $q\leq 0$. Then we have $-f'(a)q \geq p \geq -f'(0)q$, which implies $p=q=0$. Incidentally this implies a kind of maximal principle for holomorphic curves. Note $p+f'(x)q=0$ only if the curve is a branched cover of a trivial cylinder. This implies for our curves they are confined to have $x\in (0,1)$.

Next we compute $[F_x]$ for any $x$ irrational and $\ep>0$ sufficiently small. We have
\begin{align*}
&[F_x] - [F_\ep] + \text{homology class of Reeb orbits in $[\ep,x]$ approached by positive ends of $C$} \\
&-\text{homology class of Reeb orbits in $[\ep,x]$ approached by negative ends of $C$} = 0.
\end{align*}

Next we consider the no crossing of polygonal paths. 

Suppose the no crossing result does not hold, since we know $\Lambda_\pm$ have the same beginning and end points, there must exists two intersection points which we call $(a,b)$ and $(c,d)$, with $a<c$. Then on the interval $(a,c)$ the path $\Lambda_-$ is strictly above $\Lambda_+$ except at end points where they overlap. Form the line connecting $(a,c)$ and $(b,d)$, we can find $x_0\in (a,c)$ such that $f'(x_0) = \frac{d-b}{c-a}$. We compute $[F_{x_0-\ep}]$ and apply the local energy inequality to it. We use $x_0-\ep$ to avoid the case where $x_0$ is the $x$ coordinate of lattice points in $\Lambda_\pm$, practically this will not make a difference. 

Let the lattice point $(p,q)$ have the following property: it is a vertex on $\Lambda_+$, the edge to the left of this lattice point has slope less than $f'(x_0)$, and the edge to the right of this vertex has slope greater than equal to $f'(x_0)$. Then the contribution to $[F_{x_0-\ep}]$ from $\Lambda_+$ is simply $(-(B-q),-p)$. We also consider the contribution of $F_{x_0-\ep}$ from $\Lambda_-$, which takes the form $(B-q',p')$. The lattice point $(p',q')$ on $\Lambda_-$ is chosen the same way as $(p,q)$. If no such vertex exists, then $\Lambda_-$ must overlap with the line segment connecting $(a,b)$ and $(c,d)$. Then the point $(p',q')$ is still the lattice point on $\Lambda_-$ which corresponds to the left most end point of where $\Lambda_-$ overlaps with the line connecting $(a,b)$ to $(c,d)$. In either case the local energy inequality says that 
\[
(q-q') + \frac{d-b}{c-a} (p'-p) \geq 0
\]
We first assume $(p',q')$ is not on the line connecting $(a,b)$ and $(c,d)$, then this means that the point $(p,q)$ is further away from the line connecting $(a,b)$ to $(c,d)$ than $(p',q')$. Geometrically this is described by
\[
(b-d)(p-p') +(c-a) (q-q') < 0.
\]
which is impossible.
Now assume $(p',q')$ is on the line connecting $(a,b)$ to $(c,d)$, then since we have chosen $[F_{x_0-\epsilon}]$, we must have $p'<p$. The energy inequality implies 
\[
\frac{q-q'}{p-p'} > \frac{d-b}{c-a}
\]
contradicting the geometric picture.

\textbf{Step 4}. After we proved no-crossing in the previous step, we show there cannot be a genus one curve satisfying the assumptions of the previous step. The Fredholm index formula tells us that (recall we are assuming $g=1$)
\[
1 = h_+ + h_- + 2e_-
\]
which means $e_-=0$ and at most one of $h_+$ and $h_-$ is one. If $h_+=1$, and $h_-=0$, then $\alpha_- = \emptyset$. By inspection $C$ cannot have ECH index one.

On the other hand, if $h_+=1$ and $h_-=1$, then $\Lambda_-$ consists of a single line segment. $\Lambda_+$ has the same end points as $\Lambda_-$ and is concave, hence must also agree with $\Lambda_-$ as polygonal paths. One checks easily that in this case the ECH index cannot be one.

This concludes the proof that all ECH index one curves have genus zero.
\end{proof}

After we have proved all ECH index one curves have genus zero, we can then use the tree like compatification to describe the moduli space of cascades. However there is the complication that there are two nondegenerate orbits, $\gamma_+$ and $\gamma_-$. So in the tree like compactification, we allow the ends of $J$-holomorphic curves to land on nondegenerate orbits. Furthermore,  connecting between two nontrivial curves, instead of a gradient trajectory, it could be that adjacent ends of $J$-holomorphic curves land on the same non-degenerate orbits and no gradient trajectories connect between them. See figure \ref{fig:tree like with nondeg}.
\begin{figure}[!ht]
    \centering
    \includegraphics[width=.4\linewidth]{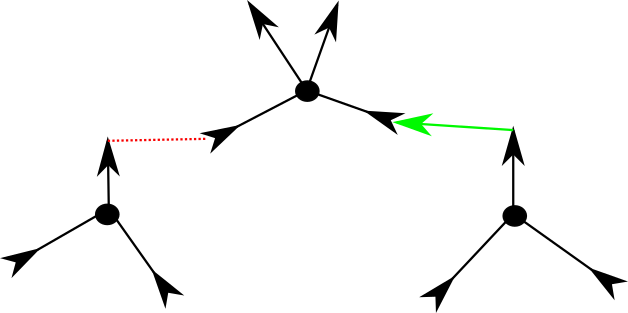}
    \caption{Cascade with tree like compactification for concave toric domains. The unconnected ends of holomorphic curves can land on either Morse-Bott tori or nondegenerate Reeb orbits. The green arrow denotes a finite gradient flow line connecting between two adjacent ends that land on Morse-Bott tori. The dashed line is used to indicate the adjacent ends land on non-degenerate Reeb orbits, and there is no need for gradient trajectories to connect between them.}
    \label{fig:tree like with nondeg}
    \end{figure}

Given such a cascade of ECH index one, we can cut it into subtrees along each matching pair of nondegenerate orbits, see figure \ref{cut tree like cascade}.

\begin{figure}[!ht]
    \centering
    \includegraphics[width=.4\linewidth]{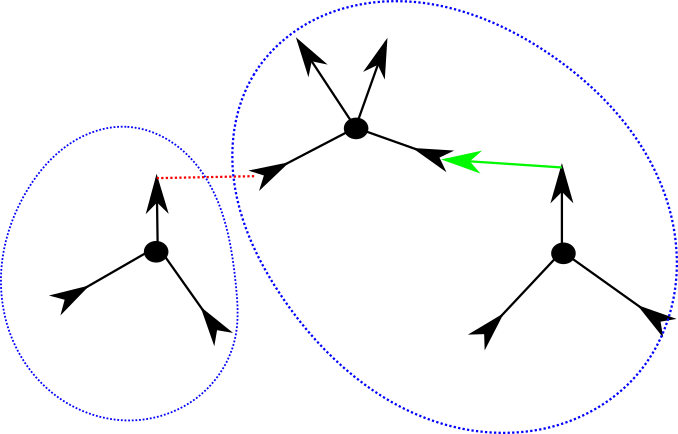}
    \caption{We cut along the red dashed lines to sub trees of cascades. For this figure each subtree is circled by dashed blue lines. The ECH index is additive along concatenation of such sub trees.}
    \label{cut tree like cascade}
    \end{figure}

The ECH index is additive with respect to concatenation of sub-trees. So the ECH index one conditions implies there are no matching along nondegenerate orbits, and we can use the correspondence theorem \ref{theorem:cobordism for genus 0} as before.

\section{Convex Toric Domains}\label{section convex}
In this section we show we can compute the ECH chain complex of convex toric domains via enumeration of $J$-holomorphic cascades. As there are many similarities with the case of concave toric domains, we will be brief in its treatment. 

Suppose $\Omega$ is a domain bounded by the horizontal segment from $(0,0)$ to $(a,0)$, the vertical segment from $(0,0)$ to $(0,b)$ and the graph of a concave function $f:[0,a] \rightarrow [0,b]$ so that $f(0)=b$ and $f(a)=0$. We further assume $f$ is smooth, $f'(0)$ and $f'(a)$ are irrational, $f'(x)$ is constant near $0$ and $a$, and $f''(x)<0$ whenever $f'(x)$ is rational, then we say $X_\Omega$ is a \textbf{convex toric domain}.

As in the case of a concave toric domain, the boundary of $X_\Omega$, written as $\partial X_\Omega$, is a contact 3-manifold diffeomorphic to $S^3$.
We now describe the Reeb orbits that appear in $\partial X_\Omega$. We also note their Conley Zehnder indices, having chosen the same trivializations as in \cite{beyondech}
\begin{enumerate}
    \item $\gamma_1 = \{ (z_1,0) \in \partial X_\Omega \}$. The orbit $\gamma_1$ is elliptic with rotation angle $-1/f'(a)$, hence $CZ(\gamma_1^k) = 2\floor{-k/f'(a)}+1$
    \item $\gamma_2 = \{ (0,z_2) \in \partial X_\Omega \}$. The orbit $\gamma_2$ has rotation angle $-f'(0)$, hence $CZ(\gamma_2^k)=2\floor{-kf'(0)}+1$.
    \item Let $x\in (0,a)$ be such that $f'(x)$ is rational. Then the torus described by $\{(z_1,z_2) | \mu(z_1,z_2) = (x,f(x))\}$ is a (positive) Morse-Bott torus. Each Reeb orbit has Robbin-Salamon index $+1/2$.
\end{enumerate}

\begin{definition}
A combinatorial generator is a quadruple $\tilde{\Lambda} = (\Lambda,\rho,m,n)$ where
\begin{enumerate}
    \item $\Lambda$ is a convex integral path from $(0,B)$ to $(A,0)$ such that the slope of each edge is in the interval $[f'(0),f'(a)]$.
    \item $\rho$ is a labeling of each edge of $\Lambda$ by $e$ or $h$.
    \item $m$ and $n$ are nonnegative integers.
\end{enumerate}
\end{definition}

Let $\Lambda_{m,n}$ denote the concatenation of the following sequence of paths:
\begin{enumerate}
    \item The highest polygonal path with vertices at lattice points from $(0,B+n+\floor{-mf'(0)})$ to $(m,B+n)$ which is below the line through $(m,B+n)$ with slope $f'(0)$.
    \item The image of $\Lambda$ under the translation $(x,y)\mapsto (x+m,y+n)$.
    \item The highest polygonal path with vertices at lattice points from $(A+m,n)$ to $(A+m+\floor{-n/f'(a)},0)$ which is below the line through $(A+m,n)$ with slope $f'(a)$.
\end{enumerate}

Let $\mcal{L}(\Lambda_{m,n})$ denote the number of lattice points bounded by the axes and $\Lambda_{m,n}$, including the lattice points on the edges of $\Lambda_{m,n}$. 
 We then define 
\[
I^{comb}(\Lambda_{m,n}) =2( \mcal{L}(\Lambda_{m,n})-1) - h(\Lambda)
\]
And the Chern class of $\Lambda_{m,n}$ is given by 

\[
c_\tau(\Lambda_{m,n}) = A+B+m+n. 
\]
\begin{theorem}
The ECH index of a holomorphic curve between two ECH generators is the difference of the $I^{comb}$ we associate to their corresponding combinatorial ECH generators.
\end{theorem}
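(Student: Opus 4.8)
The plan is to reduce the statement to a single combinatorial identity and then verify it term by term, following the concave computation of \cite{intoconcave}.

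First, since $\partial X_\Omega$ is diffeomorphic to $S^3$ we have $H_2(\partial X_\Omega)=0$, so for ECH generators $\alpha,\beta$ the relative homology space $H_2(\alpha,\beta,\partial X_\Omega)$ is a single class $Z$. Hence $I(Z)=c_\tau(Z)+Q_\tau(Z)+CZ^I(\alpha,\beta)$ depends on $\alpha$ and $\beta$ alone, with no choice of curve entering. Each of the three terms is additive under concatenation of relative classes, so it suffices to fix a reference generator (say the empty one), assign to each combinatorial generator $\tilde\Lambda=(\Lambda,\rho,m,n)$ the ``absolute'' quantities $c_\tau(\tilde\Lambda),Q_\tau(\tilde\Lambda),CZ(\tilde\Lambda)$ coming from the relative class that joins it to the reference generator, and prove the identity
\[
c_\tau(\tilde\Lambda)+Q_\tau(\tilde\Lambda)+CZ(\tilde\Lambda)=I^{comb}(\Lambda_{m,n})=2\bigl(\mcal{L}(\Lambda_{m,n})-1\bigr)-h(\Lambda).
\]

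Next I would compute the three pieces directly from the lattice-path data of $\Lambda_{m,n}$. The Chern term is $c_\tau(\tilde\Lambda)=A+B+m+n$, already recorded. For the Conley--Zehnder term I would feed in $CZ(\gamma_1^k)=2\floor{-k/f'(a)}+1$ and $CZ(\gamma_2^k)=2\floor{-kf'(0)}+1$, together with the Conley--Zehnder indices of the iterates of the perturbed elliptic and hyperbolic orbits $o_v^e,o_v^h$ attached to a Morse--Bott torus of slope $f'(x)=v_2/v_1$ (these are pinned down by the Robbin--Salamon index $+\tfrac12$ of a positive Morse--Bott torus), and then sum $CZ^I(\gamma)=\sum_{j=1}^{n_\gamma}CZ(\gamma^j)$ over all orbits of the generator. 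Applying the floor-function identities of the form $\sum_i\floor{m_i\theta}=\floor{m\theta}$ for maximally convex lattice partitions --- precisely the identities proved in Step 1 of Proposition \ref{thm:concave genus zero} --- rewrites $CZ(\tilde\Lambda)$ as a lattice-point/area expression plus a term $-h(\Lambda)$ coming from the edges labelled $h$; the sign on $h(\Lambda)$ is opposite to the concave case because the Morse--Bott tori are now positive. For the self-intersection term I would use the toric model formula for $Q_\tau$ of a lattice path from \cite{beyondech} (cf.\ \cite{intoconcave}): $Q_\tau(\tilde\Lambda)$ is twice the number of lattice points in the region bounded by $\Lambda_{m,n}$ and the axes, corrected by $c_\tau$, with the boundary-inclusion convention fixed by the convexity of $\Lambda$. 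Combining these three via Pick's theorem (relating enclosed area, interior lattice points, and boundary lattice points) should collapse the left-hand side to $2(\mcal{L}(\Lambda_{m,n})-1)-h(\Lambda)=I^{comb}(\Lambda_{m,n})$.

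I expect the main obstacle to be the bookkeeping in the $Q_\tau$ and Conley--Zehnder computations: one must check that the partition-independent definition of $I$ (which sums $CZ(\gamma^j)$ over all iterates $j\le n_\gamma$, not over a partition) is compatible with the $\mcal{L}$-count, and that the conventions for the ``staircase'' extensions involving $\gamma_1$ and $\gamma_2$ at the two ends of $\Lambda_{m,n}$ match those used for the interior edges $o_v$. Since the convex case differs from the concave case of \cite{intoconcave} only in (i) the paths $\Lambda$ being convex rather than concave, with the attendant flip in the lattice-point-counting convention defining $\mcal{L}$, and (ii) the Morse--Bott tori being positive (Robbin--Salamon index $+\tfrac12$) rather than negative, which flips the sign of the $h(\Lambda)$ contribution, I would present the argument by running the concave computation with these two changes tracked throughout, rather than reproducing every arithmetic step.
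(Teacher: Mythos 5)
Your proposal follows essentially the same route as the paper's proof: reduce to computing $I(\alpha,\emptyset,Z)$ for the unique relative class (using $H_2(S^3)=0$ and additivity) and then run the combinatorial computation of \cite{beyondech,intoconcave}, with the appended staircase paths absorbing the Conley--Zehnder contributions of the iterates of $\gamma_1,\gamma_2$ and the sign of the $h(\Lambda)$ term flipping because the Morse--Bott tori are now positive. The paper's own argument is exactly this citation-level reduction, so your plan is correct and matches it.
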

\begin{proof}
The proof is a generalization of the computation in \cite{beyondech,intoconcave}. We briefly summarize this below. Let $\alpha$ denote a ECH orbit set. We consider $I(\alpha,\emptyset,Z)$ where $Z$ is the unique relative homology class that is represented by discs with boundary $\alpha$. Let $m,n$ denote the multiplicity of $\gamma_2,\gamma_1$ respectively in $\alpha$, and let $\Lambda$ be the resulting convex integral path defined by associating Reeb orbit sets to integral paths as in \cite{beyondech}. Then it suffices to show $I(\alpha,\emptyset,Z) = I^{comb}(\Lambda_{m,n})$. The computation is the same as the one in \cite{beyondech}, except the Conley-Zehnder index terms arising from $\gamma_1$ and $\gamma_2$ may not just be $1$ due to the fact their rotation angles $\theta$ need not be very close to zero. This is accounted for by the polygonal paths we append to image of $\Lambda$ under the translation $(x,y)\mapsto (x+m,y+n)$.

\end{proof}
\begin{theorem}
A nontrival $J_\dt$-holomorphic curve in a convex toric domain of ECH index one has genus zero. Here we use $J_\dt$ to mean we have perturbed away all Morse-Bott degeneracies.
\end{theorem}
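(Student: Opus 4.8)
The plan is to run, with the roles of concavity and convexity interchanged, the four-step argument of Proposition~\ref{thm:concave genus zero}. First I would record what $I(C)=1$ forces at the level of partition conditions: at an elliptic orbit of rotation angle $\theta$, an end of total multiplicity $m$ is partitioned by the maximal concave (resp.\ minimal convex) lattice path below (resp.\ above) the line $y=\theta x$ at its positive (resp.\ negative) ends, and one has the collapsing identities $\sum_i \floor{m_i\theta}=\floor{m\theta}$ and $\sum_i\ceil{m_i\theta}=\ceil{m\theta}$. Writing $e_\pm,h_\pm$ for the numbers of elliptic/hyperbolic orbits of $\alpha_\pm$ (elliptic counted with multiplicity at the negative end), $k_m^\pm,k_n^\pm$ for the numbers of ends at $\gamma_2,\gamma_1$, and $A_\pm,B_\pm,m_\pm,n_\pm$ for the associated combinatorial data, I would expand the Fredholm index of $C$, now using the Robbin--Salamon index $+1/2$ of the (positive) Morse--Bott tori; the hyperbolic edges enter with the opposite sign from the concave case, in keeping with the minus sign in front of $h(\Lambda)$ in $I^{comb}$ for convex domains.

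Next I would bound intersections of $C$ with the trivial cylinders $\bb{R}\times\gamma_1$ and $\bb{R}\times\gamma_2$. By positivity of intersections the interior intersection number $l_+(C,\gamma_j)-l_-(C,\gamma_j)$ is nonnegative, and evaluating the linking numbers with the formulas for convex toric domains from \cite{beyondech} (the analogues of $lk(\gamma_1,\gamma_2)=1$, $lk(\gamma_1,o_v)=-v_2$, $lk(\gamma_2,o_v)=v_1$, with the appropriate $\max$-type formula for $lk(o_v,o_w)$) yields inequalities of the form
\[
A_+ + n_+ + \floor{-m_+ f'(0)} \;-\; A_- - n_- - \ceil{-m_- f'(0)} \;\ge\; 0
\]
(after the partition identities collapse the sums) together with its $B,m,\gamma_1,f'(a)$ counterpart. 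Substituting these into the Fredholm index formula of the first step forces $g=0$ whenever $C$ has any end on $\gamma_1$ or $\gamma_2$.

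It then remains to rule out $g\ge 1$ when $C$ has no ends at $\gamma_1,\gamma_2$; in that case the index formula gives $A_+=A_-$ and $B_+=B_-$, and I would compare the convex integral paths $\Lambda_\pm$ of $\alpha_\pm$, which share endpoints. The key input is the same local energy inequality as in the concave case: with $(p,q)$ the homology class of $C\cap F_{x_0}$ oriented as $\partial\bigl(C\cap F_{[x_0-\ep,x_0]}\bigr)$, positivity of $\int_{C\cap F_{[x_0,x_1]}}d\lambda$ gives $p+f'(x_0)q\ge 0$, with equality only for branched covers of trivial cylinders. Applying this near $x=0$ and near $x=a$ pins $[F_\ep]=[F_{a-\ep}]=0$, so $C$ is confined to $x\in(0,a)$; then, at a value $x_0$ where $\Lambda_+$ and $\Lambda_-$ would have to cross, computing $[F_{x_0-\ep}]$ from the homology classes of the orbits crossed and comparing with the secant slope between the two crossing vertices contradicts the convexity of $\Lambda_\pm$ --- the mirror of the concave argument with the geometric inequalities reversed. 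Once no-crossing is established, the $g=1$ specialization of the index formula collapses to a relation forcing $e_\pm=0$ and at most one hyperbolic edge, and the two residual cases ($h_+=1,h_-=0$, which forces $\alpha_-=\emptyset$; and $h_+=h_-=1$, which forces both $\Lambda_\pm$ to be single segments) are each seen to be incompatible with $I(C)=1$ by direct evaluation of $I^{comb}$.

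I expect the geometric core of the third step --- fixing the orientation conventions so the local energy inequality has the correct form, running the confinement argument, and deriving the crossing contradiction with \emph{convex} rather than concave lattice paths --- to be the main point requiring care. The other genuine subtlety is that one cannot invoke Proposition~\ref{thm:concave genus zero} as a black box: because the convex combinatorial index $I^{comb}$ carries different signs (the hyperbolic contribution, and the $\mcal{L}$ lattice-point convention), the Fredholm index formula of the first step and the parity arithmetic that rules out genus one in the last step must be re-derived independently, even though the underlying geometry is unchanged.
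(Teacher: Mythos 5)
Your proposal follows essentially the same route as the paper's proof: it mirrors the four-step concave argument, re-deriving the Fredholm index for the now-positive Morse--Bott tori (giving $+e_+-e_-$), using the same linking-number/positivity-of-intersections inequalities at $\gamma_1,\gamma_2$ to handle ends there, then the local energy inequality and no-crossing of the lattice paths (the paper evaluates the slice $[F_{x_0+\ep}]$ rather than $[F_{x_0-\ep}]$, a convention you already flag as needing care), and finally the genus-one case analysis from $1=2e_++h_++h_-$. The only slips are cosmetic and do not affect the argument: it is the elliptic (Morse--Bott) contributions, not the hyperbolic ones, that flip sign in the Fredholm index, and the genus-one relation forces only $e_+=0$ rather than $e_\pm=0$.
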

\begin{proof}
We borrow the notation of the previous section, except here $e_+$ denotes the total multiplicity of elliptic Reeb orbits in $\alpha_+$ arising from Morse-Bott tori and $e_-$ denotes the total number of distinct elliptic Reeb orbits in $\alpha_-$ arising from perturbations of Morse-Bott tori. The Fredholm index of a connected $J$-holomorphic curve $C$ between two orbit sets $\alpha_+$ and $\alpha_-$ is given by
\begin{align*}
    Ind(C) =& 2g-2 +(e_+ + h_+ + k_m^++k_n^+)+(e_- + h_- + k_m^-+k_n^-)\\
    &+2(A_++B_+ +m_+ +n_+ -A_--B_--m_--n_-)\\
    & +e_+  -e_- \\
    & + (k_n^+ + k_m^+ +k_m^- + k_n^-) \\
    & + \sum_{i=1}^{k_n^+} 2\floor{- n_+^i /f'(a)} + \sum_{i=1}^{k_m^+} 2\floor{-m_+^i f'(0)} - \sum_{i=1}^{k_n^-} 2\ceil{- n_-^i /f'(a)} - \sum_{i=1}^{k_m^-} 2\ceil{-m_-^i f'(0)}.
\end{align*}
The same linking number relations as in \ref{thm:concave genus zero} holds in the case of convex toric domains; so similarly by considering the intersections of $C$ with the trivial cylinders at $\gamma_1$ and $\gamma_2$, we conclude 
\begin{equation*} 
A_+ + n_+ +\sum_{i=1}^{k_m^+}\floor{-m_+^i f'(0)} -A_--n_- -  \sum_{i=1}^{k_m^-}\ceil{-m_-^i f'(0)} \geq 0
\end{equation*}
and 
\[
B_+ +m_+ +\sum_{i=1}^{k_n^+} 2\floor{- n_+^i /f'(a)} - B_--m_- -\sum_{i=1}^{k_n^-} 2\floor{- n_-^i /f'(a)} \geq 0.
\]
Hence for $C$ to have genus nonzero it must not have any ends at $\gamma_1$ and $\gamma_2$.

The local energy inequality holds as before, to prove the no-crossing lemma, we can associate two polygonal paths $\Lambda_+$ and $\Lambda_-$ to ECH generators  $\alpha_+$ and $\alpha_-$ respectively. As before from index considerations the $x$ and $y$ intercepts of $\Lambda_+$ and $\Lambda_-$ agree. Hence as before we can choose points $(a,b)$ and $(c,d)$ where $\Lambda_+$ and $\Lambda_-$ intersect, and between these two points $\Lambda_-$ is strictly above $\Lambda_+$. As before we may choose $x_0 \in (a,c)$ so that $f'(x_0) = \frac{d-b}{c-a}$. Let the lattice point $(p',q')$ have the following property: it is a vertex on $\Lambda_-$, the edge to the left of this lattice point has slope greater than or equal to $f'(x_0)$, and the edge to the right of this vertex has slope less than $f'(x_0)$. Let $(p,q)$ denote a vertex of $\Lambda_+$ with the same property. We assume such a vertex $(p,q)$ exists and leave the case where such a vertex does not exist to later. Then consider $[F_{x_0+\ep}] = (q-q',p'-p)$. Now again the energy inequality says
\[
(q-q') + \frac{d-b}{c-a} (p'-p) \geq 0
\]
In this case, the point $(p,q)$ is closer to the line connecting $(a,b)$ and $(c,d)$ than $(p',q')$, but this time on the other side of the line. This means that 
\[
(p-p')(b-d)  + (c-a)  (q-q') < 0
\]
Comparing with the energy inequality we see a contradiction. Now if $(p,q)$ is in fact on the line connecting $(a,b)$ and $(c,d)$, then since we are computing $[F_{x_0+\ep}]$, we must have $p>p'$, from which we have
\[
\frac{d-b}{c-a} > \frac{q-q'}{p-p'}
\]
which is a contradiction.

With the no-crossing result at hand, we turn to the index formula. If $C$ had genus one, then 
\[
1 = 2e_+ + h_++h_-.
\]
As before we break this into cases. We must have $e_+=0$. If $h_+=1$ then $\Lambda_+ $ consists of a single edge, by no-crossing $\Lambda_-$ is either an identical edge or empty. We check either case cannot produce an ECH index 1 curve. $h_+$ cannot equal zero because then $\Lambda_+ =\emptyset$.

\end{proof}

Hence we concluded all ECH index one curves are index zero, a similar description of tree-like cascades shows we can use them to compute the ECH chain complex.
\appendix
\section{Appendix: Transversality Issues}
In this Appendix we describe some the transversality difficulties in the moduli space of cascades, even if all the appearing curves are somewhere injective. Note we are not claiming transversality is impossible, we are simply saying there are issues with the standard universal moduli space approach of transversality. We give some simple examples below to illustrate this.

Consider the universal moduli space of somewhere injective cascades, written as 
\[
\mcal{B} : = \{ (\cas{u},J) | \, \cas{u} \, \textup{is a} \, J \textup{-holomorphic cascade, and that all curves appearing in $\cas{u}$ are simple} \}.
\]
We explain why the standard proof that $\mcal{B}$ is a Banach manifold does not necessarily work. Given a cascade $\cas{u} \in \mcal{B}$, there are two evaluation maps $EV^+$ and $EV^-$ that map into a product of $S^1$, as in Definition \ref{def:transversality conditions}. The usual procedure to show that $\mcal{B}$ is a Banach manifold is to show the maps $EV^\pm$ are transverse to each other. 
However in complicated enough cascades, the same curve can appear in multiple different levels. An illustration is given in the figure below. Here we have a cascade of 5 levels. The red curve is a map $u: \Sigma \rightarrow  \bb{R} \times Y^3$, and the blue curve is a map $v: \Sigma' \rightarrow \bb{R} \times Y^3 $. Green horizontal arrows denote the upwards gradient flow, and the black horizontal lines denote Morse-Bott tori. Diamonds denote the critical points of $f$ on the Morse-Bott tori. For instance, one of the positive ends of the black curve ends on a critical point of $f$, and there is a chain of fixed trivial cylinders atop this end.
\begin{figure}[!ht] \label{cascade with 5 levels}
    \centering
    \includegraphics[width=.5\linewidth]{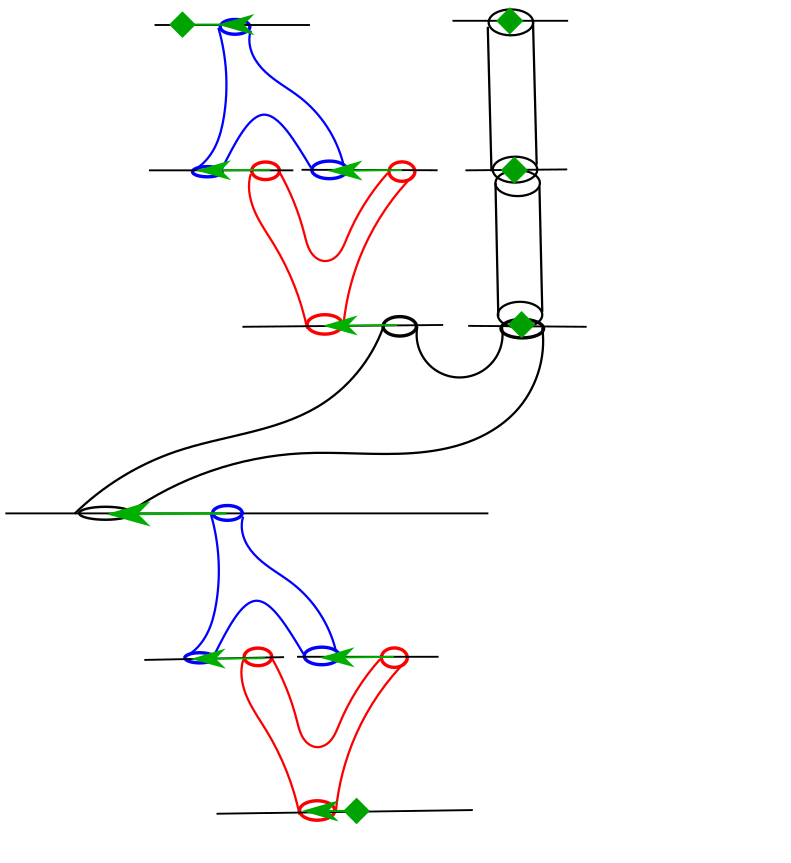}
    \caption{Cascade with 5 levels}
    \label{fig:5level}
    \end{figure}
This is an illustration of how the same curves can happen in the same cascade. To illustrate the transversality issue, we assume that the configuration consisting the red and blue curves (which we labelled $u$ and $v$) in figure \ref{repeat_red_and_blue} happens $n$ times in a cascade $\cas{u}$.
\begin{figure}[!ht]
    \centering
    \includegraphics[width=.5\linewidth]{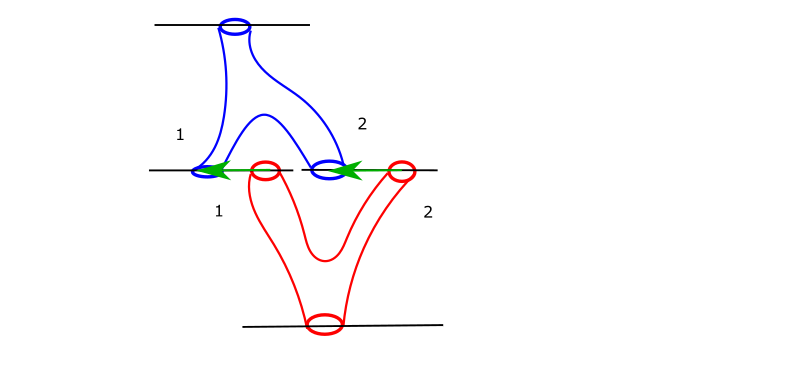}
    \caption{A repetitive pattern that can appear multiple times in a cascade.}
    \label{repeat_red_and_blue}
\end{figure}
We assume both $u$ and $v$ are rigid (we are allowed since we are working in the universal moduli space, in general more complicated things can still happen but the principle is the same). We label the $n$ identical copies of $u$ and $v$ as $u_i,v_i$ with $ i=1,...,n$. The two negative ends of $u_i$ and the two positive ends of $v_i$ are labelled by $1,2$, as shown in the figure. The remaining end of $u_i$ and $v_i$ is labelled $3$. We denote their evaluation maps by $ev(u_i,k)$ and $ev(v_i,k)$ where $k=1,2,3$. As a necessary condition for the $EV^+$ and $EV^-$ to be transverse, we must have 
\begin{equation} \label{equation:transverse}
\bigoplus (dev(u_i,1)+dev(v_i,1)+t_i, dev(u_i,2)+dev(v_i,1)+t_i): T\mcal{W}_u \oplus T\mcal{W}_v \bigoplus_{i=1,..,n} \bb{R} \longrightarrow  \bigoplus_{i=1,...,n} (TS^1\oplus TS^1)
\end{equation}
is surjective. Note $(t_1,...,t_n)\in \bigoplus_{i=1,..,n} \bb{R}$. The vector space $T\mcal{W}_u$ has the following description. Recall a neighborhood of (not necessarily $J$ holomorphic) curves near $u$ can be represented by $W^{2,p,d}(u^*TM) \oplus T\mcal{J} \oplus V_1 \oplus V_2 \oplus V_3$. Here $W^{2,p,d}(u^*TM)$ is the Sobolev space of vector fields on $u$ with exponential weight $e^{d|s|}$ near the cylindrical ends. $T\mcal{J}$ is a finite dimensional Teichmuller slice, and the vector spaces $V_i$ consist of asymptotically constant vectors near each of the cylindrical ends, which we labelled $1,2,3$ (see \cite{Yaocas,wendlauto}). Recalling the coordinate choices of Section \ref{degenerations} near Morse-Bott tori, the $V_i$ is spanned by vector fields of the form
\[
\beta \partial_z, \quad \beta \partial_a, \quad \beta \partial_x.
\]
$\beta$ here is a cutoff function that is one near a cylindrical neighborhood of a puncture and zero elsewhere.
We denote a triple of these vector fields in $V_i$ as $(r,a,p)_i$.

Then the vector space $\mcal{W}_u$ is given by
\[
\{(\xi,(r,a,p)_i,Y) \in W^{2,p,d}(u^*TM) \oplus T\mcal{J} \oplus V_1 \oplus V_2 \oplus V_3 \oplus T \mcal{I} |D\db_J (\xi + \sum_i(r,a,p)_i) +Y\circ Tu \circ j =0\}
\]
$D\db_J$ is the linearization of Cauchy Riemann operator along $u$ that includes deformation of the domain complex structure of $u$.
Here $T\mcal{I}$ denotes the Sobolev space that is the tangent space of all $\lambda$ compatible almost complex structures (we should choose a Sobolev space for this but that is unimportant for now). A similar expression holds for $T\mcal{W}_v$. We note the same $Y\in T\mcal{I}$ appears in the definition of $T\mcal{W}_v$ as well.
Now since $u$ is rigid for given $Y$ there exists a unique tuple $(\xi,(r,a,p)_i)$ (up to translation in the symplectization direction) so that $(\xi,(r,a,p)_i,Y)\in T\mcal{W}_u$. A similar statement holds for $\mcal{W}_v$. Conversely, given two tuples $(p_1(u),p_2(u),p_3(u))$ and $(p_1(v),p_2(v),p_3(v))$ (we use brackets to denote whether the vector field is living over $u$ or $v$, we can find $Y \in T\mcal{I}$ and $(\xi(u),(r(u),a(u))_i)$ and $(\xi(v),(r(v),a(v))_i)$ so that the tuples $(\xi(u),(r(u),a(u),p(u))_i,Y )\in T\mcal{W}_u$, and similarly for $T\mcal{W}_v$.
Hence we can think of the map described in Equation \ref{equation:transverse} as the following. Its imagine is spanned by vector fields of the form  
\[
\bigoplus_i (x_1+y_1+t_i,x_2+y_2+t_i)
\]
where $(x_1,y_1)$ and $(x_2,y_2)$ are arbitrary real numbers. We think of $x_1$ as $p_1(u)$, $x_2$ corresponding to $p_2(u)$, and likewise for $y$ and $p(v)$.
For given $n$ the domain has $2+n$ independent variables, but the target is $2n$ dimensional. Hence for large values of $n$ this space cannot be transverse.
\begin{proof}[Proof of Theorem \ref{thm:list of transversality conditions}]
We note if the above situation does not happen, then the usual proof that $\mcal{B}$ is a Banach manifold follows through. To be precise, if we let $\tilde{B}$ denote the universal moduli space so that

\begin{equation}
\tilde{B}  : =  \left\lbrace  (\cas{u},J)\;\middle|\;
  \begin{tabular}{@{}l@{}}
 $\cas{u}$ \, \textup{is a reduced} $J$ \textup{-holomorphic cascade as in Definition \ref{def:transversality conditions}};\\ \textup{in addition,  either all nontrivial curves}\\ \textup{ are distinct, or the cascade has less than or equal to 3 levels} 
   \end{tabular}
  \right\rbrace
 \end{equation}

Then $\tilde{B}$ is a Banach manifold, and for generic $J$, cascades satisfying the extra hypothesis of $\tilde{B}$ are transversely cut out living in moduli spaces given by the virtual dimension. 
In particular if we take as assumption after we perturb away the Morse-Bott degeneracy, all ECH index one curves degenerate (as reduced cascades) to reduced cascades of the form specified in $\tilde{B}$, then we can choose a $J$ so that the conditions \ref{def:transversality conditions} are satisfied for these cascades. A straightforward modification of the proofs in Sections \ref{Finite}, \ref{section:computing using cascades} shows the Morse-Bott chain complex $(C_*^{MB},\p_{MB})$ when we further restrict the differential to only consider cascades whose reduced versions can appear in $\tilde{B}$ is well defined and computes $ECH(Y,\xi)$. The only different part is showing the cascades counted by $\p_{MB}$ is finite. Consider the following. Suppose $\cas{u}_n$ is a sequence of cascades of the form allowed in $\tilde{B}$ and $\cas{u}_n \rightarrow \cas{u}$. Then for each $\cas{u}_n$ there is a sequence of $J_{\dt_n^m}$-holomorphic curves $v_n^m$ of ECH index one that converges to $\cas{u_n}$ as $m\rightarrow \infty$. We pass to a diagonal subsequence, which we denote by $v_n$, of ECH index one $J_{\dt_n}$-holomorphic curves that degenerate into $u$. By assumption, then the reduced version of $\cas{u}$ must be of the form allowed in $\tilde{B}$, and this concludes the proof of finiteness.
\end{proof}
\printbibliography
\end{document}